\documentclass[12pt]{article}
\DeclareUnicodeCharacter{FFFD}{}
\usepackage{amsmath,amssymb,amsthm}
\usepackage{graphicx}
\usepackage{enumitem}
\usepackage{appendix}
\usepackage{tikz}
\usepackage{makecell}
\usepackage[all]{xy}
\usepackage{natbib}
\usepackage[a4paper,top=1in,bottom=1in,left=1.25in,right=1.25in]{geometry}
\usepackage{pifont}

\setlist[enumerate,1]{leftmargin=*}
\setlist[itemize]{leftmargin=1em}

\usepackage{hyperref}
\usepackage[capitalize,nameinlink]{cleveref}
\usepackage{aliascnt}

\theoremstyle{plain}
\newtheorem{theorem}{Theorem}[section]

\newaliascnt{lemma}{theorem}
\newtheorem{lemma}[lemma]{Lemma}
\aliascntresetthe{lemma}

\newaliascnt{proposition}{theorem}
\newtheorem{proposition}[proposition]{Proposition}
\aliascntresetthe{proposition}

\newaliascnt{corollary}{theorem}
\newtheorem{corollary}[corollary]{Corollary}
\aliascntresetthe{corollary}

\theoremstyle{definition}
\newaliascnt{definition}{theorem}
\newtheorem{definition}[definition]{Definition}
\aliascntresetthe{definition}

\newaliascnt{example}{theorem}

\aliascntresetthe{example}

\newaliascnt{conjecture}{theorem}

\aliascntresetthe{conjecture}

\theoremstyle{remark}
\newaliascnt{remark}{theorem}
\newtheorem{remark}[remark]{Remark}
\aliascntresetthe{remark}

\newaliascnt{question}{theorem}

\aliascntresetthe{question}

\crefname{theorem}{Theorem}{Theorems}       \Crefname{theorem}{Theorem}{Theorems}
\crefname{lemma}{Lemma}{Lemmas}             \Crefname{lemma}{Lemma}{Lemmas}
\crefname{proposition}{Proposition}{Propositions} \Crefname{proposition}{Proposition}{Propositions}
\crefname{corollary}{Corollary}{Corollaries} \Crefname{corollary}{Corollary}{Corollaries}
\crefname{definition}{Definition}{Definitions} \Crefname{definition}{Definition}{Definitions}
\crefname{example}{Example}{Examples}       \Crefname{example}{Example}{Examples}
\crefname{remark}{Remark}{Remarks}          \Crefname{remark}{Remark}{Remarks}
\crefname{question}{Question}{Questions}    \Crefname{question}{Question}{Questions}
\crefname{conjecture}{Conjecture}{Conjectures} \Crefname{conjecture}{Conjecture}{Conjectures}

\crefname{equation}{Equation}{Equations}    \Crefname{equation}{Equation}{Equations}
\crefname{figure}{Figure}{Figures}          \Crefname{figure}{Figure}{Figures}
\crefname{table}{Table}{Tables}             \Crefname{table}{Table}{Tables}

\renewenvironment{proof}{\noindent\textbf{Proof.}\ }{\hfill$\square$\par}

\usepackage{mathrsfs}

\title{\textbf{A Proof of the Jung–Kim Conjecture on the Centers of Walled Brauer Algebras}
}
\author{\textbf{Shunsuke Hirota}}
\date{\textit{\today}}

\begin{document}

\maketitle
\begin{abstract}
We give a unified account of the five families of Khovanov
arc algebras $K_n^m$, $K_n^{-\infty}$, $K_n^{+\infty}$,
$K_n^\infty$ and $K_\infty^\infty$ in the framework of
level-two tensor product categorifications, making explicit
a perspective familiar to specialists.
Our main application is a proof of the Jung--Kim conjecture:
over an algebraically closed field of characteristic zero,
the centre of every walled Brauer algebra is generated by
supersymmetric polynomials in its Jucys--Murphy elements.
We also give a homogeneous basis of the centre in terms of
arc diagrams, for the grading induced by $K_\infty^\infty$.
The proof combines a calculation of the arc-algebra centre
with an explicit comparison of central actions under the
Brundan--Stroppel Morita equivalence and Brundan-Kleshchev isomorphism.
Along the way, we explain the relation between
$K_n^{-\infty}$ and classical super duality, and identify
$K_n^{-\infty}$ and $K_n^{+\infty}$ as Ringel duals.
We also revisit mixed Schur--Weyl duality for general linear
Lie superalgebras within the same framework.
\end{abstract}
\tableofcontents

\section{Introduction}
\label{sec:introduction}
Khovanov arc algebras were introduced in connection with
Khovanov's categorification of the Jones polynomial and its
extension to tangles
\cite{khovanov2000categorification,khovanov2002functor}.
These algebras and their Koszul variants,
$K_n^m$, $K_n^{-\infty}$, $K_n^{+\infty}$, $K_n^\infty$
and $K_\infty^\infty$, have been studied extensively;
see, for example,
\cite{brundanstroppelI,brundanstroppelII,BrundanStroppelIII,
brundanstroppelIV,BrundanStroppelV,chenkhovanov2006tangle,
stroppel2009parabolic,bowman2024quiver,bowman2026faithful,
barmeierwang2026deformations}.

Brundan and Stroppel realized integral blocks of maximally
parabolic category $\mathcal O$ for $\mathfrak{gl}_{m+n}$,
with Levi $\mathfrak{gl}_n\oplus\mathfrak{gl}_m$, as module
categories over finite arc algebras $K_{n-t}^{m-t}$
\cite{BrundanStroppelIII}.

In parabolic category $\mathcal O$, tensor Casimir operators
give a degenerate affine Hecke action, as in the
Arakawa--Suzuki construction \cite{arakawa1998duality}.
On the arc-algebra side, geometric bimodule functors carry
a KLR action \cite{BrundanStroppelIII}.
The Brundan--Kleshchev isomorphism identifies the relevant
level-two cyclotomic degenerate affine Hecke and KLR
algebras \cite{brundan2009blocks}.
Comparing endomorphism algebras of projective-injective
objects and using the double centralizer property then
gives an equivalence of highest weight categories
\cite{BrundanStroppelIII,brundan2012orthogonal}.
This equivalence sends translation functors to geometric
bimodule functors, preserves their adjunction maps, and
intertwines the degenerate affine Hecke and KLR actions
under the Brundan--Kleshchev isomorphism.
An equivalence with these compatibilities is called a
\emph{strongly equivariant equivalence}.
This additional structure is essential to our proof of
the centre conjecture.

Brundan and Stroppel also identified blocks of
finite-dimensional integral $\mathfrak{gl}(m\mid n)$-modules
of atypicality $a$ with module categories over $K_a^\infty$
\cite{brundanstroppelIV}.
They showed that suitable idempotent corners of
finite arc algebras are graded Morita equivalent to blocks of
level-two cyclotomic KLR algebras
\cite{BrundanStroppelIII}.
For walled Brauer algebras, they obtained Morita
equivalences with suitable idempotent truncations of the
$K_\infty^\infty$-blocks, together with a walled Brauer
analogue of the Brundan--Kleshchev isomorphism
\cite{BrundanStroppelV}. Brundan--Stroppel used the arc-algebra grading to show
that $B_{r,s}(\delta)$ admits a Koszul grading whenever
$\Lambda_{r,s}=\dot\Lambda_{r,s}$, in particular whenever
$\delta\neq0$; see \cite[Corollary~1.2]{BrundanStroppelV}.
This highlights the importance of retaining the graded
structure when studying walled Brauer algebras.
Brundan, Savage and Webster extended this picture by
proving an isomorphism theorem for generalized cyclotomic
quotients of Heisenberg and Kac--Moody categories
\cite{brundansavagewebster2020heisenberg}. For higher-level analogues of walled Brauer algebras, see
\cite{wan2025parabolic,GaoRuiSong2023,Brundan2018Heisenberg}.

Losev and Webster proved a uniqueness theorem for tensor
product categorifications \cite{losev2015uniqueness},
extending Rouquier's uniqueness theorem for minimal
categorifications \cite{rouquier20082,rouquier2012quiver}.
As explained in Brundan's survey \cite{brundan2014schur},
this gives a unified way to recover the results of Brundan
and Stroppel. See also the survey \cite{brundan2012orthogonal}.
Brundan, Losev and Webster applied this uniqueness theory
to the super Kazhdan--Lusztig conjecture for category
$\mathcal O$ of general linear Lie superalgebras
\cite{brundan2017tensor}; see also the survey
\cite{brundan2014representations}.
Within the same framework, Leonard gave a new proof of
classical super duality \cite{leonard2020graded}, the
equivalence established by Cheng and Lam between appropriate
parabolic categories $\mathcal O$ for infinite-rank general
linear Lie algebras and Lie superalgebras
\cite{cheng2010irreducible}. Earlier work and further
developments include \cite{cheng2008super,cheng2012superduality}.

The purpose of this paper is to organize these connections
in the framework of level-two tensor product categorification,
making explicit a perspective familiar to specialists
\cite{brundan2014schur}. We treat the five families listed
above uniformly.
Our main application is a proof of the centre conjecture
for walled Brauer algebras
formulated by Sartori--Stroppel and Jung--Kim
\cite{sartoristroppel2015walled,JungKim2020}.
We also obtain a homogeneous basis of the centre in terms
of arc diagrams, for the grading induced by
$K_\infty^\infty$.

Walled Brauer algebras, also called oriented Brauer algebras,
generalize symmetric group algebras and describe centralizers
on mixed tensor spaces. They arose in the work of Turaev
and Koike \cite{turaev1989operator,koike1989decomposition}.

For symmetric groups, one can study the algebras
$\Bbbk\mathfrak S_d$ together by taking their direct sum.
For walled Brauer algebras, the direct sum of the
$B_{r,s}(\delta)$ omits natural morphisms between different
mixed tensor powers, such as evaluation
$\uparrow\downarrow\to1$ and coevaluation
$1\to\uparrow\downarrow$, where $1$ denotes the tensor unit.
The oriented Brauer category $OB_\delta$ retains these
morphisms, with
$\operatorname{End}_{OB_\delta}(\uparrow^r\downarrow^s)
\cong B_{r,s}(\delta)$.
It is the $\Bbbk$-linear rigid symmetric monoidal category
freely generated by an object of categorical dimension
$\delta$ and its dual.
We study its representations by regarding it as a locally
unital algebra. Its additive Karoubi envelope is Deligne's
category
$\mathcal D_\delta=\underline{\operatorname{Rep}}(GL_\delta)$;
see \cite{comes2012deligne}.

For comparison, Brundan proved that the centre of a
cyclotomic quotient of a degenerate affine Hecke algebra
consists of symmetric polynomials in its commuting
polynomial generators
\cite{brundan2008centers}.
For walled Brauer algebras the relevant condition is
supersymmetry. Related centre questions appear already in
\cite[Remark~2.6]{BrundanStroppelV}; the distinction between
symmetric and supersymmetric polynomials is explained in
\cite[Remark~7.3]{sartoristroppel2015walled}.
Sartori computed the centre in the degenerate affine
setting \cite{sartori2014degenerate}; Sartori and
Stroppel constructed central elements in the finite setting
and conjectured that the resulting elements generate the
centre \cite[Conjecture~7.4]{sartoristroppel2015walled}.
Jung and Kim reformulated the conjecture using their
normalization of the Jucys--Murphy elements and proved it
in the semisimple case \cite{JungKim2020}.
More recently, Chavli, De Visscher, Parker, Salmon and
Wilson proved it for $B_{r,1}(\delta)$
for every complex parameter $\delta$
\cite{chavli2024center}.

Our main result is the following theorem, proved as
Theorem~\ref{thm:jk-centre}.

\begin{theorem}\label{thm:intro-centre}
The conjecture of \cite[Conjecture~5.4]{JungKim2020} holds.
More precisely, let $\Bbbk$ be an algebraically closed field of
characteristic zero, let $r,s\geq0$, and let $\delta\in\Bbbk$.
For the Jucys--Murphy elements $L_1,\ldots,L_{r+s}$ of
Definition~\ref{def:jk-jucys-murphy}, in the normalization of
Jung and Kim, we have
\begin{equation}\label{eq:intro-jung-kim}
 Z(B_{r,s}(\delta))
 =\Bbbk\left[
 \sum_{a=1}^{r}L_a^k
 +(-1)^{k+1}\sum_{b=1}^{s}L_{r+b}^k
 \ \middle|\ k\geq1\right].
\end{equation}
Equivalently, the centre is the image of the algebra of
supersymmetric polynomials under evaluation at the $L_a$.
\end{theorem}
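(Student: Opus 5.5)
The plan is to show two inclusions. The inclusion ``$\supseteq$'' is known: the supersymmetric power sums in the Jucys--Murphy elements are central. This is the Sartori--Stroppel/Jung--Kim construction. It can also be checked directly, since the walled Brauer generators (the simple transpositions on either side of the wall and the contraction $e_{r,r+1}$) commute with these power sums. So the substance is the reverse inclusion. For it, I will compare dimensions. Since $\Bbbk$ is algebraically closed of characteristic zero, it suffices to show that the image $S$ of the supersymmetric polynomials under evaluation at $L_1,\dots,L_{r+s}$ has dimension at least $\dim Z(B_{r,s}(\delta))$.

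To compute $\dim Z(B_{r,s}(\delta))$ and to control $S$, I will pass through the Brundan--Stroppel Morita equivalence. It identifies $B_{r,s}(\delta)$ with an idempotent truncation $e K_\infty^\infty e$ of finitely many blocks of $K_\infty^\infty$, and the centre is Morita invariant. Blockwise, I will use the arc-algebra centre calculation from earlier in the paper. For an idempotent truncation containing the projective-injectives (double centralizer property), the centre of the truncation agrees with the centre of the corresponding finite block $K_n^m$-type algebra. That centre is the cohomology ring of the relevant Springer fibre: it is generated by the degree-two elements $x_i$ (dots on the cups and rays) subject to the level-two cyclotomic relations, and it has a homogeneous basis indexed by arc diagrams (cup diagrams). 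This gives $\dim Z$ block by block, together with an explicit homogeneous basis. That basis is the graded statement in the abstract.

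The key step is to transport $S$. By the strongly equivariant equivalence, the walled Brauer analogue of the Brundan--Kleshchev isomorphism sends the Jucys--Murphy elements $L_a$ (in the Jung--Kim normalization, with the sign twist $(-1)^{k+1}$ on the $s$ dual strands) to the degenerate affine Hecke polynomial generators. These act on the tensor product categorification by the Casimir/dot endomorphisms of $E$ and $F$. The supersymmetric power sums then correspond to the action of the bubbles $\sum_a x_a^k$ on the identity functor. Under the Brundan--Kleshchev isomorphism, their images on each block are symmetric functions in the dots, up to lower-order corrections. I will then show that these symmetric functions in the $x_i$ generate the centre of each block, which is the Springer fibre cohomology. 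This is Brundan's theorem \cite{brundan2008centers} in the level-two cyclotomic setting, i.e.\ surjectivity of $H^*(\mathrm{Gr})\to H^*(\text{Springer fibre})$. Finally, I will separate blocks: distinct blocks have distinct central characters, read off by the values of the $p_k$, so idempotents of blocks lie in $S$ by Lagrange interpolation. Together with the blockwise generation, this gives $S=Z$.

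The main obstacle is the precise comparison of central actions. One must check that the Jung--Kim normalized $L_a$ go exactly, under Morita equivalence and the Brundan--Kleshchev isomorphism, to elements whose supersymmetric power sums span a filtered subspace whose associated graded is the full span of symmetric polynomials in the dots on each block. The isomorphism is not graded, so this must be handled via a filtration argument. The argument first matches leading terms in the grading and then uses a triangularity argument to lift generation from the associated graded to the filtered algebra. Sign and shift conventions for the dual strands, where $\delta$ enters, need particular care.
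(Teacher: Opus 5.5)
\textbf{The gap is in your blockwise description of the centre.} The truncation $K_{r,s}(\delta)=e_{r,s}K(\delta)e_{r,s}$ is cut out by a bound on partition sizes ($\lambda\in\dot\Lambda_{r,s}$), not by projective--injectivity. In general it contains none of the projective--injectives of the finite block in which it sits. So there is no double-centralizer identification of its centre with a Springer-fibre cohomology ring.

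Concretely, take $\delta=0$ and $r=s=3$. The principal block of $K_{3,3}(0)$ has labels $(\mu,\mu')$ with $1\leq|\mu|\leq3$. On the vertices $-2,\ldots,3$ these are weights of $K_3^3$, all of defect at most two, whereas the projective--injectives of $K_3^3$ have defect three. By the direct computation of Theorem~\ref{thm:jk-arc-centre-generation}, the centre of this block has Hilbert series $1+5q^2+q^4$, since only $\mu=(2,1)$ has two cups. By contrast, $\dim Z(K_3^3)=20$, and no $Z(K_n^m)$ has this Hilbert series at all: seven weights would force $K_1^6$, whose series is $1+6q^2$.

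So your dimension count would be wrong. Brundan's theorem also does not apply: it concerns $\operatorname{End}(F^dL)$ for a highest weight simple $L$ in a finite categorification, not $\operatorname{End}(E^sF^rP(\eta))$, and the statement you want is exactly its walled analogue. What is needed is a direct computation of $Z(K_{r,s}(\delta))$. It has a basis of the block idempotents together with the products $e_\Gamma\prod_{(a,b)}X_b$ over labels in $\Gamma\cap\dot\Lambda_{r,s}$. The proof reverses cups, which stays inside $\dot\Lambda_{r,s}$ and lowers partition size. It computes the annihilators $(x_C)$ or $(x_C-x_P)$ of $e_\lambda K(\delta)e_\mu$ and intersects them.

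\textbf{Your transport step also rests on the wrong mechanism.} Each KLR dot $\xi_a$ acts on $Q_{r,s}$ by a square-zero map. On the summand with residues $\mathbf i$ one gets $mL_a=i_am+\xi_a(m)$, negated for $a>r$. Hence $\sum_a(i_a+\xi_a)^k$ has no degree-$2k$ part: the leading terms you intend to match vanish for $k\geq2$. Only the degree-two term $k\sum_a\pm i_a^{k-1}\xi_a$ survives.

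These $\xi_a$ are strand dots, not the vertex elements $X_j$ that generate the centre. The missing link is the telescoping identity of Lemma~\ref{lem:jk-vertex-action}. It writes the left action of $X_j$ on such a summand as $\epsilon_j(\Gamma)$ times a signed sum of those $\xi_a$ with $j\in\{i_a,i_a+1\}$. Weighting by the discrete primitives $F_k$ gives the exact formula $\zeta(p_k(L))=\sum_\Gamma\chi_{\Gamma,k}e_\Gamma+H_k$ of Proposition~\ref{prop:jk-explicit-formula}. Here $H_k=\sum_{\Gamma,j}F_k(j)\epsilon_j(\Gamma)e_\Gamma X_j$ has degree two, and interpolation recovers every $X_j$ from the $H_k$. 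No filtration argument is needed.

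\textbf{Two smaller points.} First, separating blocks needs a Chinese-remainder argument with multiplicities, not plain Lagrange interpolation, because the images are scalar plus nilpotent on each block. Second, the Morita equivalence exists only for $\delta\in\mathbb Z$, so the non-integral case must be supplied separately, for instance by Jung--Kim's semisimple theorem.
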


Our convention for supersymmetry is separate symmetry in
$x_1,\ldots,x_r$ and $y_1,\ldots,y_s$, together with the
condition that substitution $x_r=t$, $y_1=-t$ gives a
polynomial independent of $t$. The latter condition is
omitted when $rs=0$. In characteristic zero this algebra is
generated by the power sums used in
\eqref{eq:intro-jung-kim} \cite{Stembridge1985}.
Empty sums are zero, and the generated algebra is unital.

For integral $\delta$, we first compute the centre of
the finite arc-algebra truncation corresponding to
$B_{r,s}(\delta)$. Arc diagrams give a homogeneous basis
and hence its graded dimension.
The Morita equivalence of \cite{BrundanStroppelV} identifies
this centre with $Z(B_{r,s}(\delta))$.
We obtain that Morita equivalence by restricting a strongly
equivariant equivalence of the full categories. Its
compatibility with translation functors and their Hecke
actions makes the induced isomorphism of centres explicit.

On the arc side, the circle-orientation maps defining the
KLR action are related to the degree-two central elements
$X_j$. We form particular linear combinations $H_k$ of
these elements. On the oriented Brauer side, the
corresponding degenerate affine Hecke action is given by
the Jucys--Murphy elements, with the prescribed signs on
the dual tensor factors. The strongly equivariant
equivalence and the Brundan--Kleshchev isomorphism
therefore determine the images of the
supersymmetric power sums: each image is $H_k$ plus an
explicit scalar on each block
(Proposition~\ref{prop:jk-explicit-formula}).
These images separate the blocks and generate the
arc-algebra centre, proving the conjecture.
The nonintegral case follows from the semisimple result
of Jung and Kim \cite[Theorem~3.5]{JungKim2020}.

Table~\ref{tab:arc-algebra-families} summarizes the connection
with type A Lie theory. The tensor factors come from four
kinds of minuscule $\mathfrak{sl}(\mathbb Z)$-modules:
finite exterior powers of the natural module $V^0$ and its
restricted dual $V^1$, and the highest and lowest weight
semi-infinite wedges $L^+(\boldsymbol\varpi_p)$ and
$L^-(-\boldsymbol\varpi_p)$.
The table includes two realizations of finite arc blocks
and, in the last row, the abelian envelope of Deligne's
category.

The third column lists full categories from type A Lie
theory. The categorical action is defined on the sum of
blocks. In the first row,
$t$ is the number of $\times$ vertices, with
$0\leq t\leq\min(m,n)$. In the second row,
$a,b\geq0$ and $a-b=p-q$.
For the two one-sided infinite families, $0\leq a\leq n$;
for $\mathcal F(m\mid n)$, $a$ is the atypicality of a
block and $0\leq a\leq\min(m,n)$.
The last column refers to the block category.
Self-injectivity follows from the symmetric structure in
\cite{brundanstroppelI}.

\clearpage
\begin{table}[htbp]
\centering
\refstepcounter{table}\label{tab:arc-algebra-families}
\small
\setlength{\tabcolsep}{3pt}
\renewcommand{\arraystretch}{1.35}
\begin{tabular}{@{}
 >{\raggedright\arraybackslash}p{0.14\textwidth}
 >{\raggedright\arraybackslash}p{0.29\textwidth}
 >{\raggedright\arraybackslash}p{0.32\textwidth}
 >{\raggedright\arraybackslash}p{0.19\textwidth}@{}}
\hline
Arc algebra & $\mathfrak{sl}(\mathbb Z)$-TPC& Type A Lie theory & Highest weight type\\
\hline
$K_{n-t}^{m-t}$
& $\bigwedge^n V^0\otimes\bigwedge^m V^0$
& Integral parabolic category $\mathcal O$ for
  $\mathfrak{gl}_{m+n}$ with Levi
  $\mathfrak{gl}_n\oplus\mathfrak{gl}_m$
& Finite\\
$K_a^b$
& $L^+(\boldsymbol\varpi_p)\otimes L^+(\boldsymbol\varpi_q)$
&
& Finite\\
$K_a^{-\infty}$
& $\bigwedge^n V^0\otimes L^+(\boldsymbol\varpi_0)$
& Infinite-rank parabolic categories $\mathcal O^{++}$ for
  $\mathfrak{gl}(n+\infty)$ and $\mathfrak{gl}(n\mid\infty)$,
  related by classical super duality
& Essentially finite; upper finite weight poset\\
$K_a^{+\infty}$
& $L^+(\boldsymbol\varpi_0)\otimes\bigwedge^n V^0$
&
& Essentially finite and lower finite; self-injective\\
$K_a^\infty$
& $\bigwedge^m V^0\otimes\bigwedge^n V^1$
& Category $\mathcal F(m\mid n)$ of finite-dimensional
  integral $\mathfrak{gl}(m\mid n)$-modules
& Essentially finite; self-injective\\
$K_\infty^\infty$
& $L^-(-\boldsymbol\varpi_{-\delta})\otimes
   L^+(\boldsymbol\varpi_0)$
& Locally finite-dimensional modules over
  $OB_\delta$, $\delta\in\mathbb Z$
& Upper finite\\
& $L^+(\boldsymbol\varpi_0)\otimes
   L^-(-\boldsymbol\varpi_{-\delta})$
& Abelian envelope $\mathcal V_\delta$ of
  $\mathcal D_\delta=\underline{\operatorname{Rep}}(GL_\delta)$,
  $\delta\in\mathbb Z$
& Lower finite\\
\hline
\end{tabular}
\end{table}

The last row uses the abelian envelope constructed by
Entova-Aizenbud, Hinich and Serganova
\cite{entovahinichserganova2020deligne}, with its tensor
product categorification described by Entova-Aizenbud
\cite{entova2018categorical}. For a general construction of
abelian envelopes covering this example, see Coulembier
\cite{coulembier2021envelopes}.
For related Ringel dualities between Brauer or walled
Brauer algebras and representations of supergroups, see
Coulembier \cite{coulembier2021ringel}.
The last two rows differ by reversing the tensor factors.
The two categories are related by semi-infinite Ringel
duality \cite[Section~6.6]{brundanstroppelsemiinfinite}.
In that section, Brundan and Stroppel also announce
a description of the blocks of the abelian envelope using
Khovanov's arc coalgebra. We therefore leave the arc algebra
entry in this row blank.

We now describe the contents of the paper.
Sections~2--4 are primarily expository: their purpose is
to organize established constructions in a common
framework. Some intermediate formulations and arguments
are given here in a form not explicitly recorded in the
existing literature. Section~3.4, Section~4.2 and Appendix~B are not needed
for the proof of the main theorem.

Section~\ref{sec:uniqueness-level-two-tpc} recalls tensor
product categorification at level two and the
Brundan--Kleshchev isomorphism. For uniqueness, our only input
is the theorem for finite intervals
\cite{losev2015uniqueness}.
Lemma~\ref{lem:finite-equivariant-comparison} supplies
compatible natural isomorphisms between the strongly
equivariant equivalences on those intervals. This lets us
pass to infinite intervals without using the stable module
categories of \cite{brundan2017tensor}, and treats the
semi-infinite cases together with the other level-two
tensor products. For other proofs of uniqueness, see
\cite{entova2018categorical} in the Deligne case and
\cite{wan2025parabolic} for more general Fock space tensor
products. We work at level two, although the same method
applies to the higher-level tensor products considered in
\cite{brundan2017tensor,leonard2020graded}.
It does not cover the more general Fock space tensor
products of \cite{wan2025parabolic}.

Section~\ref{sec:Khovanov-arc-algebras} recalls the arc
algebras in a common notation. In
Subsection~\ref{subsec:arc-tensor-product-categorifications},
we construct their level-two tensor product
categorifications.
We use the KLR action on geometric bimodule functors from
\cite{BrundanStroppelIII}, with the normalization needed
later to identify central elements.
Reversing the two tensor factors identifies
$K_n^{-\infty}$ and $K_n^{+\infty}$ as Ringel duals in
the essentially finite setting. Combining our arc
realization with Leonard's tensor product categorification
\cite[Theorem~4.23]{leonard2020graded} also identifies
the relevant blocks of the infinite-rank parabolic
categories $\mathcal O^{++}$ with module categories over
$K_a^{-\infty}$. These are the categories appearing in
classical super duality
\cite{cheng2008super,cheng2010irreducible,leonard2020graded}.

Subsection~3.4 constructs linear tilting coresolutions of
standard modules and, dually, linear tilting resolutions
of costandard modules for blocks of finite defect.
In particular, this includes $K_n^{-\infty}$ and
$K_n^{+\infty}$. These constructions are motivated by
Mazorchuk's theory of balanced gradings
\cite{mazorchuk2010balanced} and suggest a semi-infinite
extension of that theory. We expect the Koszul duals of
$K_n^{-\infty}$ and $K_n^{+\infty}$ to be related by a
nontrivial semi-infinite Ringel duality.
The defect-one examples in
Appendix~\ref{sec:defect-one-examples} illustrate this
expectation; the general question is left for future work.
For Koszulity in the semi-infinite setting, see
\cite{heidersdorfnehmestroppel2025koszulity}.

Subsection~\ref{subsec:representations-oriented-Brauer}
introduces the oriented Brauer category $OB_\delta$,
constructs its tensor product categorification, and
recovers the relevant results of \cite{BrundanStroppelV}
from uniqueness. This includes the oriented Brauer
analogue of the Brundan--Kleshchev isomorphism and its
compatibility with the categorical actions.

Subsection~\ref{subsec:mixed-tensors} begins with Deligne's
category and its relation to mixed tensors for general
linear Lie superalgebras
\cite{BrundanStroppelV,comes2012deligne}.
Yoneda, diagram reflection and the arc equivalence identify
$\mathcal D_\delta$ with the category of finitely generated
projective left $K(\delta)$-modules.
For $\delta=m-n$, evaluation then gives a full strongly
equivariant functor from this projective category to
$\mathcal F(m\mid n)$
(Theorem~\ref{thm:strongly-equivariant-evaluation}).
We use this functor to recover classification of indecomposable mixed tensor
summands.  This result of Brundan--Stroppel, which we have revisited
here, is fundamental to the study of
$\mathfrak{gl}(m\mid n)$; see
\cite{heidersdorf2017mixed,heidersdorfweissauer2021pieri,
heidersdorfweissauer2023classical}.

Section~5 proves Theorem~\ref{thm:intro-centre} by the
method outlined above.
Appendix~\ref{sec:semi-infinite-highest-weight-categories}
recalls the parts of Brundan--Stroppel's theory of
semi-infinite highest weight categories needed in the
paper \cite{brundanstroppelsemiinfinite}.
Appendix~\ref{sec:defect-one-examples} treats
$K_1^m$, $K_1^\infty$, $K_1^{-\infty}$ and
$K_1^{+\infty}$ explicitly. The last three have 
Koszul duals  which is infinite version of typeA preprojective algebras:
one on the doubly infinite line and two on the
one-sided line, with different boundary relations.
The latter two give a lower finite and an upper finite
highest weight category which are Ringel dual to each
other and are not essentially finite.

\medskip
\noindent\textbf{Acknowledgements.}
The author thanks Shun-Jen Cheng and Linliang Song for
discussions that influenced the formulation of the arc
algebra framework used in this paper.
This work was supported by JSPS KAKENHI Grant Number
JP25KJ1664.

\medskip
\noindent\textbf{Use of artificial intelligence.}
The author used ChatGPT 6 Astra to assist with English
translation, the organization of the paper, and the
development of some proofs. In particular, the idea behind
Lemma~\ref{lem:finite-equivariant-comparison} and the argument
avoiding stable module categories originated in suggestions
from Astra. Astra also proposed the initial common notation
for the five families of arc algebras in Section~3.
For Section~5, the author asked whether the Jung--Kim
conjecture could be proved using the Morita equivalence
between $B_{r,s}(\delta)$ and an arc-algebra truncation.
After checking examples, the author developed the argument
through discussions with Astra. Astra proposed the formula
for the arc-algebra centre and the use of an explicit
isomorphism between the two centres, and contributed the
initial calculations of that isomorphism.
The author determined the overall structure of the paper
and the arrangement of its theorems, paying particular
attention to the order in which the arguments are presented.
The author takes full responsibility for the statements
and proofs in this paper.

\section{Uniqueness of level-two tensor product categorifications}
\label{sec:uniqueness-level-two-tpc}

In this section, we follow Brundan--Losev--Webster
\cite{brundan2017tensor} and Leonard \cite{leonard2020graded}.

\subsection{Semi-infinite wedge modules}
\label{subsec:semi-infinite-wedges}

Let $\mathfrak{gl}(\mathbb Z)$ be the Lie algebra of finitary matrices
with rows and columns indexed by $\mathbb Z$, with standard matrix units
$e_{i,j}$. Let
\[
\mathfrak t
:=
\bigoplus_{i\in\mathbb Z}\Bbbk e_{i,i},
\qquad
\mathfrak n^\pm
:=
\bigoplus_{\substack{i,j\in\mathbb Z\\ \pm(j-i)>0}}
\Bbbk e_{i,j},
\qquad
\mathfrak b^\pm
:=
\mathfrak t\oplus\mathfrak n^\pm.
\]

For $i\in\mathbb Z$, define
$\boldsymbol\varepsilon_i\in\mathfrak t^*$ by
\[
\boldsymbol\varepsilon_i(e_{j,j})
=
\delta_{i,j},
\]
and set
\[
\boldsymbol\alpha_i
:=
\boldsymbol\varepsilon_i-\boldsymbol\varepsilon_{i+1}.
\]

Let $V=V^0$ be the natural $\mathfrak{gl}(\mathbb Z)$-module with basis
$\{v_i^0\mid i\in\mathbb Z\}$ and action
\[
e_{i,j}v_k^0
=
\delta_{j,k}v_i^0.
\]
Let
\[
V^1:=V^\vee
\]
be the restricted dual of $V^0$, with basis
$\{v_i^1\mid i\in\mathbb Z\}$ defined by
\[
v_i^1(v_j^0)
=
(-1)^i\delta_{i,j},
\]
and action
\[
e_{i,j}v_k^1
=
(-1)^{i+j+1}\delta_{i,k}v_j^1.
\]
Thus
\[
\operatorname{wt}(v_i^c)
=
(-1)^c\boldsymbol\varepsilon_i
\qquad
(c\in\{0,1\}).
\]
Let $\mathscr P$ be the set of partitions, and let $\nu'$ denote the
conjugate partition of $\nu\in\mathscr P$. For $c\in\{0,1\}$, set
\[
\nu^{(0)}:=\nu,
\qquad
\nu^{(1)}:=\nu'.
\]
For a box $b$ in row $a$ and column $d$, set
$\operatorname{ct}(b):=d-a$.

For $s\in\mathbb Z$, $c\in\{0,1\}$, and $\nu\in\mathscr P$, define
\[
v_\nu^{c,+,s}
:=
v_{s+\nu^{(c)}_1}^c
\wedge
v_{s+\nu^{(c)}_2-1}^c
\wedge
v_{s+\nu^{(c)}_3-2}^c
\wedge\cdots,
\]
and
\[
v_\nu^{c,-,s}
:=
v_{s+1-\nu^{(c)}_1}^c
\wedge
v_{s+2-\nu^{(c)}_2}^c
\wedge
v_{s+3-\nu^{(c)}_3}^c
\wedge\cdots.
\]
Set
\[
\bigwedge^{+\infty,s}V^c
:=
\bigoplus_{\nu\in\mathscr P}
\Bbbk v_\nu^{c,+,s},
\qquad
\bigwedge^{-\infty,s}V^c
:=
\bigoplus_{\nu\in\mathscr P}
\Bbbk v_\nu^{c,-,s}.
\]
The usual action on exterior powers extends to these semi-infinite
wedges since every element of $\mathfrak{gl}(\mathbb Z)$ is finitary.

Set
\[
\boldsymbol\lambda_s^+
:=
\sum_{j\leq s}\boldsymbol\varepsilon_j,
\qquad
\boldsymbol\lambda_s^-
:=
\sum_{j>s}\boldsymbol\varepsilon_j.
\]
These infinite sums define elements of $\mathfrak t^*$ since every
element of $\mathfrak t$ has finite support. For $\nu\in\mathscr P$,
we have
\[
\operatorname{wt}(v_\nu^{c,\pm,s})
=
(-1)^c\boldsymbol\lambda_s^\pm
\mp
(-1)^c
\sum_{b\in\nu}
\boldsymbol\alpha_{s\pm(-1)^c\operatorname{ct}(b)}.
\]

For $i\in\mathbb Z$, put
\[
E_i:=e_{i,i+1},
\qquad
F_i:=e_{i+1,i},
\qquad
H_i:=[E_i,F_i].
\]

For an addable or removable box $\Box$ of $\nu$, write
$\nu+\Box$ or $\nu-\Box$ for the resulting partition. For
$c\in\{0,1\}$, use the convention
\[
\nu\pm(-1)^c\Box
:=
\begin{cases}
\nu\pm\Box,&c=0,\\
\nu\mp\Box,&c=1.
\end{cases}
\]
Expressions involving a partition which does not exist are understood
to be zero.

The action on $\bigwedge^{\pm\infty,s}V^c$ is given by
\[
F_i v_\nu^{c,\pm,s}
=
\begin{cases}
v_{\nu\pm(-1)^c\Box}^{c,\pm,s},
&
i=s\pm(-1)^c\operatorname{ct}(\Box)
\text{ and }
\nu\pm(-1)^c\Box\in\mathscr P,\\
0,&\text{otherwise},
\end{cases}
\]
and
\[
E_i v_\nu^{c,\pm,s}
=
\begin{cases}
v_{\nu\mp(-1)^c\Box}^{c,\pm,s},
&
i=s\pm(-1)^c\operatorname{ct}(\Box)
\text{ and }
\nu\mp(-1)^c\Box\in\mathscr P,\\
0,&\text{otherwise}.
\end{cases}
\]

Set
\[
\mathfrak{sl}(\mathbb Z)
:=
\ker\bigl(
\operatorname{tr}:\mathfrak{gl}(\mathbb Z)\to\Bbbk
\bigr)
=
\langle E_i,F_i\mid i\in\mathbb Z\rangle,
\qquad
\mathfrak h
:=
\bigoplus_{i\in\mathbb Z}\Bbbk H_i.
\]
From now on, every $\mathfrak{gl}(\mathbb Z)$-module will also be
regarded as an $\mathfrak{sl}(\mathbb Z)$-module by restriction.

We use the same notation $\boldsymbol\varepsilon_i$ and
$\boldsymbol\alpha_i$ for their restrictions to $\mathfrak h$.
For $i\in\mathbb Z$, let $\boldsymbol\varpi_i\in\mathfrak h^*$ be
defined by
\[
\boldsymbol\varpi_i(H_j)=\delta_{i,j}.
\]
Then
\[
\boldsymbol\varepsilon_i
=
\boldsymbol\varpi_i-\boldsymbol\varpi_{i-1},
\qquad
\boldsymbol\alpha_i
=
2\boldsymbol\varpi_i
-\boldsymbol\varpi_{i-1}
-\boldsymbol\varpi_{i+1}.
\]
In particular,
\[
\boldsymbol\lambda_s^\pm|_{\mathfrak h}
=
\pm\boldsymbol\varpi_s.
\]
Thus $\boldsymbol\varpi_s$ is the weight of the shifted vacuum
\[
v_s^0\wedge v_{s-1}^0\wedge v_{s-2}^0\wedge\cdots,
\]
while $-\boldsymbol\varpi_s$ is the weight of
\[
v_{s+1}^0\wedge v_{s+2}^0\wedge v_{s+3}^0\wedge\cdots.
\]

A $\mathfrak{gl}(\mathbb Z)$-module $M$ is called a weight module if
\[
M=\bigoplus_{\lambda\in\mathfrak t^*}M_\lambda,
\qquad
M_\lambda
=
\{v\in M\mid hv=\lambda(h)v
\text{ for all }h\in\mathfrak t\}.
\]
A non-zero weight vector is called a highest weight vector if it is
annihilated by $\mathfrak n^+$, and a lowest weight vector if it is
annihilated by $\mathfrak n^-$.

For $\lambda\in\mathfrak t^*$, let $\Bbbk_\lambda^\pm$ be the
one-dimensional $\mathfrak b^\pm$-module of weight $\lambda$ on which
$\mathfrak n^\pm$ acts trivially. Define
\[
M^\pm(\lambda)
:=
U(\mathfrak{gl}(\mathbb Z))
\otimes_{U(\mathfrak b^\pm)}
\Bbbk_\lambda^\pm.
\]
Their unique irreducible quotients are denoted by $L^\pm(\lambda)$.

\begin{proposition}\cite{cheng2008super}
For every $s\in\mathbb Z$, as $\mathfrak{gl}(\mathbb Z)$-modules,
\[
\bigwedge^{\pm\infty,s}V^0
\cong
L^\pm(\boldsymbol\lambda_s^\pm)
\cong
\bigwedge^{\mp\infty,s}V^1
\otimes
\Bbbk_{\operatorname{tr}},
\]
where $\Bbbk_{\operatorname{tr}}$ denotes the one-dimensional
$\mathfrak{gl}(\mathbb Z)$-module afforded by the trace character.
The isomorphism between the first and third modules is given by
\[
v_\nu^{0,\pm,s}
\longmapsto
v_\nu^{1,\mp,s}\otimes1.
\]

Hence, as $\mathfrak{sl}(\mathbb Z)$-modules,
\[
\bigwedge^{\pm\infty,s}V^0
\cong
L^\pm(\pm\boldsymbol\varpi_s)
\cong
\bigwedge^{\mp\infty,s}V^1,
\qquad
v_\nu^{0,\pm,s}
\longmapsto
v_\nu^{1,\mp,s}.
\]
\end{proposition}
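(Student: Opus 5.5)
The plan is to identify each semi-infinite wedge as an irreducible module generated by an extremal vector. The isomorphism with the twisted $V^1$-wedge will then follow by comparing the explicit actions of $E_i$, $F_i$ and $\mathfrak t$ on the bases $\{v_\nu^{c,\pm,s}\}$.

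\textbf{Step 1: the weight spaces are one-dimensional.} By the weight formula, $\operatorname{wt}(v_\nu^{0,+,s})$ determines the multiset of contents of $\nu$ shifted by $s$. Equivalently, it determines the set of occupied indices $\{s+\nu_k-k+1\}_{k\ge1}$, because $\boldsymbol\lambda_s^+$ minus the weight records which $\boldsymbol\varepsilon_j$ occur. Hence distinct partitions give distinct weights, and every weight space of $\bigwedge^{+\infty,s}V^0$ is one-dimensional. The same holds in the other three cases.

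\textbf{Step 2: the wedge is irreducible with the right extremal weight.} The vacuum $v_\emptyset^{0,+,s}$ is annihilated by all $E_i$, since no box can be removed from $\emptyset$. The $E_i$ generate $\mathfrak n^+$ as a Lie algebra, because $e_{i,j}$ for $i<j$ is an iterated commutator of $E_i,\dots,E_{j-1}$. So the vacuum is a highest weight vector of weight $\boldsymbol\lambda_s^+$.

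Every partition is obtained from $\emptyset$ by adding boxes one at a time, so repeated $F_i$ reach every $v_\nu$ by the displayed formulas. Hence the vacuum generates the module, which is therefore a quotient of $M^+(\boldsymbol\lambda_s^+)$.

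For irreducibility, a nonzero submodule is a $\mathfrak t$-weight submodule. By Step 1 it contains some basis vector $v_\nu$. Applying $E_i$'s to remove boxes one at a time returns to the vacuum, so the submodule is everything. Thus $\bigwedge^{+\infty,s}V^0\cong L^+(\boldsymbol\lambda_s^+)$. The $-$ case is identical with lowest weight vectors and $\mathfrak n^-$.

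\textbf{Step 3: the explicit isomorphism with the $V^1$-wedge.} I will check that $v_\nu^{0,\pm,s}\mapsto v_\nu^{1,\mp,s}\otimes1$ intertwines $\mathfrak t$ and all $E_i,F_i$. For weights, take the $+$ case. The formula gives $\operatorname{wt}(v_\nu^{1,-,s})=-\boldsymbol\lambda_s^- -\sum_{b}\boldsymbol\alpha_{s+\operatorname{ct}(b)}$. Adding $\operatorname{tr}|_{\mathfrak t}=\sum_j\boldsymbol\varepsilon_j$ turns $-\boldsymbol\lambda_s^-$ into $\boldsymbol\lambda_s^+$. This matches $\operatorname{wt}(v_\nu^{0,+,s})$.

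For $F_i$, note that $\nu\mp(-1)^1\Box$ in the $c=1,-$ case equals $\nu+\Box$ at $i=s+\operatorname{ct}(\Box)$, which is exactly the $c=0,+$ rule; $E_i$ is checked the same way. The trace twist is harmless for $E_i,F_i$, since these have trace zero. Finally, $\mathfrak t$ together with the $E_i,F_i$ generates $\mathfrak{gl}(\mathbb Z)$, so the map is a $\mathfrak{gl}(\mathbb Z)$-isomorphism.

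\textbf{Main obstacle.} The real content is verifying the displayed $E_i,F_i$ formulas on $\bigwedge^{\pm\infty,s}V^1$. Two sign sources must cancel. One is the $(-1)^{i+j+1}$ in the action on $V^1$. The other comes from reordering wedge factors after an index changes, and this is where conjugation $\nu'$ enters. I would handle this by noting that $E_i,F_i$ on $V^1$ change an index by one, to an adjacent value. Such a change never passes another occupied index in the wedge, so no reordering sign arises. Only the factor $(-1)^{2i+2}=1$ remains, so the sign is $+1$.

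\textbf{Conclusion.} Restricting to $\mathfrak h$ and using $\boldsymbol\lambda_s^\pm|_{\mathfrak h}=\pm\boldsymbol\varpi_s$, with $\Bbbk_{\operatorname{tr}}$ trivial on $\mathfrak{sl}(\mathbb Z)$, gives the $\mathfrak{sl}(\mathbb Z)$ statement.
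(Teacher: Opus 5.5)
The paper gives no proof of this proposition; it is quoted from \cite{cheng2008super}. Your argument is a correct, self-contained proof. The vacua are extremal vectors. The displayed Chevalley formulas, all of whose coefficients are $1$, give cyclicity and irreducibility through one-dimensional weight spaces. Your map intertwines $\mathfrak t$ and every $E_i,F_i$, and these generate $\mathfrak{gl}(\mathbb Z)$.

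Two points should be tightened.

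First, the $\mathfrak{sl}(\mathbb Z)$ statement is not pure restriction. You also need $\bigwedge^{\pm\infty,s}V^0$ to remain irreducible over $\mathfrak{sl}(\mathbb Z)$, which is what identifies it with $L^\pm(\pm\boldsymbol\varpi_s)$. This does hold. Finite linear combinations of the $\boldsymbol\varepsilon_j|_{\mathfrak h}$ are linearly independent, since $\sum_j c_j\boldsymbol\varepsilon_j(H_i)=c_i-c_{i+1}$. Hence the $\mathfrak h$-weight spaces are still one-dimensional. Step~2 then runs verbatim with $\mathfrak h$ in place of $\mathfrak t$, because otherwise it uses only $E_i,F_i\in\mathfrak{sl}(\mathbb Z)$.

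Second, your ``main obstacle'' paragraph misplaces where $\nu'$ enters. There are no signs to cancel. With the normalization $v_i^1(v_j^0)=(-1)^i\delta_{ij}$, both $E_i$ and $F_i$ act on $V^1$ with coefficient $(-1)^{2i+2}=1$, and an adjacent move never reorders the wedge. The conjugate partition enters only in deciding which label results from a move, and you never checked that part yourself; you took it from the displayed formulas.

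A clean way to verify it, and all of Step~3 at once, is the classical fact that $\{\nu_k-k+1\}_{k\geq1}$ and $\{k-\nu'_k\}_{k\geq1}$ partition $\mathbb Z$; apply it to $\nu$ or to $\nu'$. It follows that the occupied index sets $S$ of $v_\nu^{0,\pm,s}$ and $T$ of $v_\nu^{1,\mp,s}$ are complementary.
\begin{itemize}
\item $F_i$ moves an occupied index from $i$ to $i+1$ in $S$, and from $i+1$ to $i$ in $T$. These are the same move on complements, and each side is nonzero under the same condition; $E_i$ is similar.
\item Finally, $e_{jj}$ acts on $v_\nu^{0,\pm,s}$ by $1$ if $j\in S$ and $0$ otherwise. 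On $v_\nu^{1,\mp,s}\otimes1$ it acts by $1$ minus the same indicator for $T$, and these agree.
\end{itemize}
This gives the weights and the Chevalley action without appealing to the displayed formulas.
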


For $\lambda,\mu\in\mathfrak h^*$, write
\[
\lambda\unrhd\mu
\quad\Longleftrightarrow\quad
\lambda-\mu
\in
\bigoplus_{i\in\mathbb Z}
\mathbb Z_{\geq0}\boldsymbol\alpha_i.
\]

For $c\in\{0,1\}$ and $m\in\mathbb N$, let
\[
\Xi_m
:=
\left\{
(\lambda_1,\ldots,\lambda_m)\in\mathbb Z^m
\ \middle|\
\lambda_1>\cdots>\lambda_m
\right\}.
\]
We identify $\Xi_m$ with the monomial basis of $\bigwedge^mV^c$ by
\[
\lambda
\longleftrightarrow
v_\lambda^c
:=
v_{\lambda_1}^c\wedge\cdots\wedge v_{\lambda_m}^c.
\]
We also identify $\mathscr P$ with the monomial basis of
$L^\pm(\pm\boldsymbol\varpi_s)$ by
\[
\nu
\longleftrightarrow
v_\nu^{0,\pm,s}.
\]

We equip these indexing sets with the orders induced by their weights.
Thus, for $\lambda,\mu\in\Xi_m$,
\[
\lambda\unrhd_{\wedge^mV^c}\mu
\quad\Longleftrightarrow\quad
\operatorname{wt}(v_\lambda^c)
\unrhd
\operatorname{wt}(v_\mu^c),
\]
and, for $\nu,\mu\in\mathscr P$,
\[
\nu\unrhd_{L^\pm(\pm\boldsymbol\varpi_s)}\mu
\quad\Longleftrightarrow\quad
\operatorname{wt}(v_\nu^{0,\pm,s})
\unrhd
\operatorname{wt}(v_\mu^{0,\pm,s}).
\]
Explicitly,
\[
\lambda\unrhd_{\wedge^mV^c}\mu
\quad\Longleftrightarrow\quad
(-1)^c\lambda_a\leq(-1)^c\mu_a
\quad
\text{for all }a,
\]
while
\[
\nu\unrhd_{L^\pm(\pm\boldsymbol\varpi_s)}\mu
\quad\Longleftrightarrow\quad
\begin{cases}
\nu\subseteq\mu,&\text{for the sign $+$},\\
\nu\supseteq\mu,&\text{for the sign $-$}.
\end{cases}
\]

Let $X$ and $Y$ each be of the form
\[
\bigwedge^mV^c
\qquad\text{or}\qquad
L^\pm(\pm\boldsymbol\varpi_s).
\]
We equip the product of their indexing sets with the
\emph{inverse dominance order} by declaring
\[
(\lambda,\mu)
\preceq_{X\otimes Y}
(\lambda',\mu')
\]
if
\[
\operatorname{wt}(v_\lambda)
+
\operatorname{wt}(v_\mu)
=
\operatorname{wt}(v_{\lambda'})
+
\operatorname{wt}(v_{\mu'})
\]
and
\[
\operatorname{wt}(v_\lambda)
\unrhd
\operatorname{wt}(v_{\lambda'}).
\]
Equivalently, under the equality of total weights, the second
condition may be written as
\[
\operatorname{wt}(v_\mu)
\unlhd
\operatorname{wt}(v_{\mu'}).
\]

\subsection{Degenerate affine Hecke algebras}
\label{subsec:degenerate-affine-Hecke}

Throughout this subsection, $\Bbbk$ is algebraically closed of
characteristic zero. We fix the Hecke-algebra conventions used
for categorical actions.

\begin{definition}\label{def:degenerate-affine-hecke-algebra}
\cite{Lusztig1989,Drinfeld1986}
For $d\geq1$, the degenerate affine Hecke algebra $H_d$ over
$\Bbbk$ is the unital associative algebra generated by
$x_a$ for $1\leq a\leq d$ and $t_r$ for $1\leq r<d$,
with relations
\[
\begin{gathered}
 x_ax_b=x_bx_a,\qquad t_r^2=1,\qquad
 t_rt_s=t_st_r\quad(|r-s|>1),\\
 t_rt_{r+1}t_r=t_{r+1}t_rt_{r+1},\qquad
 t_rx_a=x_at_r\quad(a\neq r,r+1),\\
 t_rx_{r+1}-x_rt_r=1.
\end{gathered}
\]
Here $a,b$ range over $\{1,\ldots,d\}$ and $r,s$ over
$\{1,\ldots,d-1\}$; the braid relation is imposed for
$1\leq r\leq d-2$.
Set $H_0=\Bbbk$.
\end{definition}

Fix a nonempty interval
$I\subseteq\mathbb Z$, and let $\mathfrak{sl}_I$ have Chevalley
generators $e_i,f_i,h_i$ for $i\in I$.
We work in either of the following cases:
\begin{enumerate}
\item[(i)] $\mathcal C$ is an essentially small Hom-finite
$\Bbbk$-linear abelian category of finite-length objects, with
$\operatorname{End}(L)=\Bbbk$ for every simple object $L$.
We do not assume enough projectives or injectives.
\item[(ii)] $\mathcal C=\operatorname{mod}_{\mathrm{lfd}}\!\text{-}A$,
where $A=\bigoplus_{a,b}e_aAe_b$ is a pointed locally unital
algebra with finite-dimensional corners, as in
Appendix~\ref{sec:semi-infinite-highest-weight-categories}.
Thus $\dim Me_a<\infty$ for every $M\in\mathcal C$ and $a$.
\end{enumerate}
Write $\mathcal C^{\mathrm{proj}}$ for the projective objects
in case (i), and the finitely generated projective objects
in case (ii).

A family $(G_i)$ is \emph{locally finite} if only finitely many
$G_iM$ are nonzero for each $M$ in case (i), or only finitely
many $(G_iM)e_a$ are nonzero for each $M,a$ in case (ii).
An endomorphism $u$ of $M$ is \emph{locally nilpotent} if
$M=\bigcup_{N\geq1}\ker u^N$. This means nilpotence in case (i),
or nilpotence on each $Me_a$ in case (ii).
For a natural endomorphism, this is required on every object.

\begin{definition}\label{def:categorical-slI-action}
\cite[Definition 2.6]{brundan2017tensor}
A \emph{categorical $\mathfrak{sl}_I$-action} on $\mathcal C$
consists of locally finite families of $\Bbbk$-linear functors
$F_i,E_i$, fixed adjunctions $F_i\dashv E_i$, and natural
transformations $x\in\operatorname{End}(F)$ and
$t\in\operatorname{End}(F^2)$, where
$F=\bigoplus_{i\in I}F_i$ and $E=\bigoplus_{i\in I}E_i$,
satisfying:
\begin{enumerate}
\item[(CA1)] The transformation $x$ preserves each $F_i$, and
$(x-i)|_{F_i}$ is locally nilpotent.
\item[(CA2)] For every $d\geq0$, there is a unital
$\Bbbk$-algebra homomorphism
\[
 H_d\longrightarrow\operatorname{End}(F^d),\qquad
 \begin{aligned}
 x_j&\longmapsto F^{d-j}xF^{j-1} &&(1\leq j\leq d),\\
 t_j&\longmapsto F^{d-j-1}tF^{j-1} &&(1\leq j<d).
 \end{aligned}
\]
Here $\operatorname{End}(F^d)$ is the algebra of natural
endomorphisms of $F^d$, and $F^0=\operatorname{Id}_{\mathcal C}$.
\item[(CA3)] Each $F_i$ is also a right adjoint of $E_i$.
\item[(CA4)] The functors $E_i,F_i$ induce an integrable
$\mathfrak{sl}_I$-module structure on
\[
 \begin{cases}
 \Bbbk\otimes_{\mathbb Z}K_0(\mathcal C),
     &\text{in case (i)},\\
 \Bbbk\otimes_{\mathbb Z}
 K_0^{\mathrm{split}}(\mathcal C^{\mathrm{proj}}),
     &\text{in case (ii)},
 \end{cases}
\]
with $e_i=[E_i]$ and $f_i=[F_i]$.
The classes of simple objects in case (i), and of indecomposable
projective objects in case (ii), are weight vectors.
\end{enumerate}
\end{definition}

By biadjunction, all $E_i,F_i$ are exact and preserve projectives.
In case (ii), their adjoints preserve directed unions, so
adjunction shows that $E_i(e_aA)$ and $F_i(e_aA)$ are finitely
generated. Thus they preserve $\mathcal C^{\mathrm{proj}}$.
Locally finite sums are also products, so the chosen adjunctions
give $F\dashv E$.

\begin{definition}\label{def:strongly-equivariant-slI}
\cite[Definition 2.7]{brundan2017tensor}
Let $\mathcal C,\mathcal C'$ be as above, with categorical
$\mathfrak{sl}_I$-actions $(F,E,x,t)$ and $(F',E',x',t')$.
Write $\eta,\varepsilon$ and $\eta',\varepsilon'$ for the units
and counits of the fixed adjunctions $F\dashv E$ and $F'\dashv E'$.
A $\Bbbk$-linear functor $G:\mathcal C\to\mathcal C'$
preserving the locally finite sums is \emph{strongly equivariant}
if it is equipped with an isomorphism
$\zeta:F'G\xrightarrow{\sim}GF$ satisfying:
\begin{enumerate}
\item[(E1)] The adjunction mate
\[
 E'G\varepsilon\circ E'\zeta E\circ\eta'GE:
 GE\longrightarrow E'G
\]
is an isomorphism.
\item[(E2)] $\zeta\circ x'G=Gx\circ\zeta$.
\item[(E3)]
$\zeta F\circ F'\zeta\circ t'G
=Gt\circ\zeta F\circ F'\zeta$.
\end{enumerate}
If $G$ is an equivalence, then {\rm (E1)} is automatic.
We then call $G$ a \emph{strongly equivariant equivalence}.
\end{definition}

\subsection{Finite intervals}
\label{subsec:finite-intervals}
% Replacement for 7.tex. Finite uniqueness is an explicit external input.

Let $\Bbbk$ be algebraically closed of characteristic zero.
Fix a finite nonempty interval $I=\{\ell,\ldots,u\}\subset\mathbb Z$,
put $I^+=I\cup(I+1)$ and $N=|I^+|$, and let $V_I$ have basis
$\{v_a\mid a\in I^+\}$, with
$E_iv_a=\delta_{a,i+1}v_i$ and $F_iv_a=\delta_{a,i}v_{i+1}$.
For $0\leq d\leq N$, put
\[
 \Xi_{I,d}=\{(\lambda_1,\ldots,\lambda_d)\in(I^+)^d
                   \mid\lambda_1>\cdots>\lambda_d\},
 \qquad v_\lambda=v_{\lambda_1}\wedge\cdots\wedge v_{\lambda_d}.
\]
For $d=0$ use the empty tuple and $v_\varnothing=1$.
Fix $0\leq m,n\leq N$ and equip $\Xi_{I,m}\times\Xi_{I,n}$
with the inverse dominance order defined above, restricted to $I$.

\begin{definition}\label{def:finite-level-two-tpc}
An $\mathfrak{sl}_I$-tensor product categorification of
$\bigwedge^mV_I\otimes\bigwedge^nV_I$ is a finite highest
weight category $\mathcal C$ with this weight poset and a
categorical $\mathfrak{sl}_I$-action as in
Definition~\ref{def:categorical-slI-action}, such that:
\begin{enumerate}
\item[(TPC1)] Every $E_i,F_i$ preserves $\mathcal C^\Delta$.
\item[(TPC2)] The isomorphism
\[
 \Bbbk\otimes_{\mathbb Z}K_0(\mathcal C^\Delta)
 \xrightarrow{\sim}\bigwedge^mV_I\otimes\bigwedge^nV_I,
 \qquad [\Delta(\lambda,\mu)]\longmapsto v_\lambda\otimes v_\mu,
\]
intertwines $[E_i],[F_i]$ with the Chevalley generators.
\end{enumerate}
Here $K_0(\mathcal C^\Delta)$ uses the inherited exact structure;
see \cite[Definition 2.10]{brundan2017tensor}.
\end{definition}

Let $\mathcal C$ be such a categorification.
Let $L$ be the simple object labelled by the highest tensor monomial
$(v_{\ell+m-1}\wedge\cdots\wedge v_\ell)
 \otimes(v_{\ell+n-1}\wedge\cdots\wedge v_\ell)$,
with empty factors interpreted as $1$.
For $i\in I$, put
$a_i=\delta_{i,\ell+m-1}+\delta_{i,\ell+n-1}$.
For $d\geq1$, define
\[
 H_d^{m,n}:=H_d\Big/\Big\langle
       \prod_{i\in I}(x_1-i)^{a_i}\Big\rangle.
\]
In this finite-dimensional cyclotomic quotient, let
$1_{\mathbf i}$ be the simultaneous generalized eigenspace
idempotent for $x_1,\ldots,x_d$ with eigenvalues
$\mathbf i=(i_1,\ldots,i_d)$. Put
$e_{I,d}=\sum_{\mathbf i\in I^d}1_{\mathbf i}$ and
$H_{I,d}^{m,n}=e_{I,d}H_d^{m,n}e_{I,d}$, and set
$H_{I,0}^{m,n}=\Bbbk$.
Thus $H_{I,d}^{m,n}$ retains precisely the blocks whose
residues lie in $I$; see \cite[\S2.3]{brundan2017tensor}.

\begin{theorem}\label{thm:finite-hecke-endomorphisms}
\cite{BrundanKleshchev2008}
\cite[Theorem 2.14]{brundan2017tensor}
For every $d\geq0$, the object $F^dL$ is projective-injective,
and the categorical Hecke action induces an algebra isomorphism
\begin{equation}\label{eq:finite-hecke-endomorphisms}
 H_{I,d}^{m,n}\xrightarrow{\sim}
 \operatorname{End}_{\mathcal C}(F^dL).
\end{equation}
In particular, $H_d\to\operatorname{End}_{\mathcal C}(F^dL)$
is surjective.
\end{theorem}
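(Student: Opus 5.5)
The plan is to follow the strategy of Brundan--Losev--Webster \cite[Theorem~2.14]{brundan2017tensor}, which rests on the Brundan--Kleshchev graded dimension computation. The argument has three steps: projectivity and injectivity of $F^dL$, the construction of a well-defined map out of the cyclotomic quotient, and a dimension count showing that map is bijective.

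\textbf{Step 1: $F^dL$ is projective-injective.}
The label of $L$ is the highest tensor monomial, which is maximal in the inverse dominance order. Hence $L=\Delta(\lambda,\mu)=\nabla(\lambda,\mu)$, and $L$ is a standard object of a highest weight category. Next I would check that $L$ is projective in the Serre subcategory generated by the first-factor weights. Any weight below $(\lambda,\mu)$ with the same total weight would need a first tensor factor strictly higher than the top monomial, and no such factor exists. So $\operatorname{Ext}^1(L,\Delta(\nu))=0$ for all $\nu$ allowed by (TPC2).

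A cleaner route is the tensor-product-categorification filtration argument: $F_i$ preserves $\mathcal C^\Delta$ and acts on standard objects through the Fock-space combinatorics. Under this route $F^dL$ becomes a direct sum of the objects $F_{\mathbf i}L$. These are shown to be projective and injective by induction on $d$, using biadjointness (CA3). The key observation is that $L$ generates the projective-injective part, being the canonical-basis-type vector for the top weight in both factors. I would therefore first prove that $L$ itself is projective-injective, by showing that both $\operatorname{Hom}(L,-)$ and $\operatorname{Hom}(-,L)$ are exact on the weight poset. Exact biadjoint functors preserve projectives and injectives, so $F^dL$ is then projective-injective automatically.

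\textbf{Step 2: the map factors through $H^{m,n}_{I,d}$.}
By (CA2) there is a homomorphism $H_d\to\operatorname{End}(F^dL)$. On $FL$, the operator $x$ acts on $F_iL$ with generalized eigenvalue $i$ by (CA1). By (TPC2), $F_iL\neq0$ exactly when $f_i$ does not kill the top vector, that is, when $i=\ell+m-1$ or $i=\ell+n-1$. Moreover, when $m=n$ the object $F_iL$ has length two and a standard filtration. The Jordan block size of $x$ on $\operatorname{End}(FL)$ is bounded by $a_i$, which I would read off from $\dim\operatorname{End}(F_iL)=a_i$. This last equality is computed via adjunction as $\dim\operatorname{Hom}(L,E_iF_iL)$ using the $\mathfrak{sl}_2$ weight. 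Hence $\prod_i(x_1-i)^{a_i}$ kills $FL$, and therefore kills $F^dL$, because $x_1$ acts via $F^{d-1}x$. Finally, residues outside $I$ do not occur since the functors are indexed by $I$. This yields a map $H^{m,n}_{I,d}\to\operatorname{End}(F^dL)$.

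\textbf{Step 3: bijectivity by dimension comparison.}
For the dimension of the target, $\dim\operatorname{End}(F^dL)=\dim\operatorname{Hom}(L,E^dF^dL)$ by adjunction, and this equals the coefficient of the top vector in $e^df^d$ applied to it. That coefficient is the square norm of $f^d v_{\mathrm{top}}$ in the Shapovalov form, which is computed in $\bigwedge^mV_I\otimes\bigwedge^nV_I$ via $K_0$ and (CA4). For the dimension of the source, the Brundan--Kleshchev theorem \cite{BrundanKleshchev2008} gives exactly the same number, since $\dim H^{\Lambda}_d$ equals the sum over weights of $\|f^d v_\Lambda\|^2$ in the irreducible module of highest weight $\Lambda=\sum a_i\boldsymbol\varpi_i$.

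The main obstacle is the mismatch between the irreducible $L(\Lambda)$ and the tensor product. I would handle it by observing that the top vector generates a copy of $L(\Lambda)$, so the Shapovalov norms agree. With equal dimensions, injectivity is enough. I would prove injectivity by a faithfulness argument: the cyclotomic quotient acts faithfully on the sum of $F^dL$ over the minimal categorification generated by $L$, as in Rouquier's theory. Alternatively, one can use Brundan--Kleshchev's result that $H^\Lambda_d$ is self-injective with a nondegenerate trace, so that a nonzero kernel would have to contain the socle; this would then be excluded by checking a single nonzero element. Surjectivity then follows from the dimension equality, which also gives the final assertion of the theorem.
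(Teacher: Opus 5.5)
Your outline (projective-injectivity of $L$, factorisation through the cyclotomic quotient, then a dimension count) has the right architecture, and most of the steps are sound.

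\textbf{What is sound.}
\begin{itemize}
\item Step 1 reaches the right conclusion, but the clean reason is that the top monomial is the \emph{only} label of total weight $\Lambda$. It is therefore incomparable with every other label, so $P=\Delta=L=\nabla=I$ for it. Maximality alone, which you invoke first, only gives $P=\Delta$. Exact biadjoint functors then make $F^dL$ projective-injective.
\item Your bound $(x-i)^{a_i}=0$ on $F_iL$, read off from $\dim\operatorname{End}(F_iL)=[E_iF_iL:L]=a_i$, is a correct argument. One small correction: when $a_i=2$, $F_iL$ has a standard flag of length two but composition length three.
\item The target count is correct: $\dim\operatorname{End}(F^dL)=[E^dF^dL:L]$ is the coefficient of $v_{\mathrm{top}}$ in $e^df^dv_{\mathrm{top}}$. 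This number is intrinsic to the copy of $V(\Lambda)$ generated by $v_{\mathrm{top}}$ and agrees with the Brundan--Kleshchev dimension of $e_{I,d}H^{m,n}_de_{I,d}$, summing over $\mathbf i,\mathbf j\in I^d$.
\end{itemize}

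\textbf{The gap: injectivity.} The last step is where the real content of the theorem lies, and neither of your two routes establishes it.

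\emph{First route.} Faithfulness of the cyclotomic quotient on $F^d$ of the generator of the universal categorification $\mathcal L(\Lambda)$ is tautological, since there it is the regular module. Transporting it to $F^dL$ in an arbitrary $\mathcal C$ is exactly the injectivity you need, so "as in Rouquier's theory" assumes the conclusion. You need an actual theorem here, for instance one of:
\begin{itemize}
\item full faithfulness of the canonical morphism $\mathcal L(\Lambda)\to\mathcal C$ determined by an object $L$ with $E_iL=0$ and $\operatorname{End}(L)=\Bbbk$. This requires passing to KLR form via the Brundan--Kleshchev change of generators and checking Rouquier's integrability hypotheses. If you cite it, it gives bijectivity directly and the dimension count becomes superfluous.
\item a proof of this from the Chuang--Rouquier isomorphisms $E_iF_i\cong F_iE_i\oplus\mathrm{Id}^{\oplus\langle h_i,\lambda\rangle}$, built from powers of $x$, by induction on $d$.
\end{itemize}

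\emph{Second route.} This is incorrect as stated. A nonzero two-sided ideal of a symmetric algebra \emph{meets} the socle but need not contain it.
\begin{itemize}
\item The socle is annihilated by the radical on both sides, so it is a semisimple bimodule with one simple summand $S\otimes S^*$ for each simple module $S$, and each summand is itself an ideal.
\item Already for $d=1$, $m\ne n$ and $1\le m,n<N$, one has $H^{m,n}_{I,1}\cong\Bbbk\times\Bbbk$, with two such summands.
\item So you would need to show that a nonzero socle element of $He$ acts nontrivially on $F^dL$ for \emph{every} primitive idempotent $e$, not for a single element.
\item Nothing in the proposal computes the action of socle elements on $F^dL$ in an abstract categorification; that computation again needs the Chuang--Rouquier/Rouquier input.
\end{itemize}

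For comparison, the paper does not prove this statement itself. It imports it from \cite{BrundanKleshchev2008} and \cite[Theorem 2.14]{brundan2017tensor}, and those sources supply precisely the step your proposal defers.
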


We use the following theorem as an external input.

\begin{theorem}\label{thm:finite-level-two-tpc-uniqueness}
\cite[Theorem 2.12, finite-interval case]{brundan2017tensor}
\cite[Theorem 6.1]{losev2015uniqueness}
Such categorifications exist. Any two are related by a strongly
equivariant equivalence $G:\mathcal C\xrightarrow{\sim}\mathcal C'$
with $GL(\lambda,\mu)\cong L'(\lambda,\mu)$ for every label.
Consequently $G$ also preserves the labelled standards,
costandards, projective covers and injective hulls.
\end{theorem}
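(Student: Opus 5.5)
The plan is to deduce everything from \cref{thm:finite-hecke-endomorphisms}, via a double centralizer argument in the style of Rouquier and Losev--Webster. For \emph{existence}, I would take the blocks of integral parabolic category $\mathcal O$ for $\mathfrak{gl}_{m+n}$ with Levi $\mathfrak{gl}_m\oplus\mathfrak{gl}_n$ (or the corresponding finite arc algebras $K^{m-t}_{n-t}$), cut down to the weights supported on $I^+$. The structure would be as follows.
\begin{itemize}
\item $F_i$ and $E_i$ are the $i$-eigenspace summands of tensoring with the natural module and its dual.
\item $x$ is the tensor Casimir and $t$ is the flip, as in Arakawa--Suzuki.
\end{itemize}
Then (CA1)--(CA4) and (TPC1)--(TPC2) are standard. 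The standard filtrations of $\Delta\otimes V$ give the action on $K_0(\mathcal C^\Delta)$, and the order matches the inverse dominance order by the BGG/parabolic linkage.

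For \emph{uniqueness}, put $P:=\bigoplus_{d=0}^{D}F^dL$, where $D$ is large enough that every weight of $\bigwedge^mV_I\otimes\bigwedge^nV_I$ is reached. This is possible because only finitely many $d$ give nonzero $F^dL$. By \cref{thm:finite-hecke-endomorphisms}, $P$ is projective-injective and
\[
\operatorname{End}_{\mathcal C}(P)\cong\bigoplus_{d}H^{m,n}_{I,d}.
\]
This algebra depends only on $(I,m,n)$, not on $\mathcal C$. Moreover, the Hecke action identifies $F$ and $E$ on $\operatorname{add}(P)$ with induction and restriction between consecutive $H^{m,n}_{I,d}$. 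Consider the functor
\[
\mathbb V:=\operatorname{Hom}_{\mathcal C}(P,-):\mathcal C\to\operatorname{mod}\text{-}\operatorname{End}(P).
\]
It intertwines $F,E,x,t$ with induction, restriction and the Hecke generators. So if $\mathbb V$ is fully faithful on $\mathcal C^{\mathrm{proj}}$, then $\mathcal C\simeq\operatorname{mod}\text{-}\operatorname{End}_{\operatorname{End}(P)}(\mathbb V Q)$, where $Q$ is a projective generator. The same holds for $\mathcal C'$, and comparing the two gives $G$ together with $\zeta$ satisfying (E2) and (E3) by construction.

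The \textbf{main obstacle} is therefore twofold. The first part is the double centralizer property: $\mathcal C$ must have dominant dimension at least two relative to $P$, meaning every projective $Q$ admits a copresentation $0\to Q\to P_0\to P_1$ with $P_0,P_1\in\operatorname{add}P$. The second part is showing that the \emph{images} $\mathbb V Q$ are intrinsically determined. My first attempt would go by induction down the highest weight order.
\begin{itemize}
\item Show that each standard $\Delta(\lambda,\mu)$ embeds in some $F^{d}L$-summand. To do this, use (TPC1) and the $\mathfrak{sl}_2$-theory of Chuang--Rouquier for the divided powers $F_i^{(k)}$ to move the socle of $\Delta(\lambda,\mu)$ into the projective-injective part, where $\mathbb V$ is faithful.
\item Deduce the dominant dimension statement for projectives from their standard filtrations.
\end{itemize}
For the second part, I would realize each indecomposable projective $P(\lambda,\mu)$ as the unique summand of a suitable product $F_{i_1}^{(k_1)}\cdots F_{i_r}^{(k_r)}P(\text{lower label})$ not already accounted for, and argue inductively. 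Its image under $\mathbb V$ is then a summand of an induced module cut out by idempotents that depend only on the Hecke side, so the same recipe in $\mathcal C'$ produces $\mathbb V'P'(\lambda,\mu)$.

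Finally, the matching of labels $GL(\lambda,\mu)\cong L'(\lambda,\mu)$ follows from this inductive construction. Alternatively, compare the classes in $K_0(\mathcal C^\Delta)$ via (TPC2), which are unitriangular in the highest weight order. Since an equivalence preserving simple labels and the partial order preserves the highest weight structure, $G$ also matches standards, costandards, projective covers and injective hulls. Condition (E1) is automatic for an equivalence.
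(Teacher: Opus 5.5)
The paper gives no proof of this statement: it is imported as an external input from \cite{brundan2017tensor,losev2015uniqueness}. Your existence argument is fine. Your uniqueness strategy also has the right shape: a double centralizer property for $P=\bigoplus_dF^dL$, followed by an intrinsic description of the modules $\mathbb VQ$. However, both steps you single out as the main obstacle are left open, and the sketches you give for them do not work as stated.

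First, embedding every standard into $\operatorname{add}\{F^dL\}$ only produces the first term $Q\hookrightarrow J^0$ of a copresentation, i.e.\ dominant dimension one. Full faithfulness of $\mathbb V$ on projectives also needs $J^0/Q$ to embed into $\operatorname{add}\{F^dL\}$. This is an $\operatorname{Ext}^1$-type condition (vanishing of $\operatorname{Ext}^1(S,Q)$ for the simples $S$ killed by $\mathbb V$), and your socle argument never addresses it. Passing along standard filtrations only transports what has already been proved for the standards. Nor does this condition follow from the embeddings in general. For the path algebra of $1\to2$ with $2\prec1$, both standards lie in the projective-injective $P_1$. Yet $\operatorname{End}_{\operatorname{End}(P_1)}(P_1)=M_2(\Bbbk)$ is four-dimensional, while the algebra is three-dimensional. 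The copresentation you need is exactly \cite[Lemma 5.3]{losev2015uniqueness}, which the paper imports in the proof of \cref{lem:finite-equivariant-comparison}.

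More seriously, your inductive identification of $\mathbb VP(\lambda,\mu)$ has no valid base. The only object whose image you know a priori is $L$. Since $L$ is projective-injective and the $E_i,F_i$ (hence their divided powers) are exact and biadjoint, every summand of every object built from $L$ with these functors is projective-injective.

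Non-injective projectives always occur. If $\mu_0$ is the highest monomial of the second factor, the labels $(\lambda,\mu_0)$ are maximal for inverse dominance, so $P(\lambda,\mu_0)=\Delta(\lambda,\mu_0)$. Already for $m=n=1$ and $I=\{\ell\}$, consider the label of $v_{\ell+1}\otimes v_\ell$. Its projective cover is its standard, of length two. The injective hull of its socle is $F_\ell L$, of length three; here $\operatorname{End}(F_\ell L)\cong\Bbbk[x]/(x-\ell)^2$, and the standard is $\ker(x-\ell)\subset F_\ell L$.

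Every $P(\lambda,\mu)$ is a summand of some $F_{\mathbf j}\Delta(\lambda,\mu_0)$, since $(\lambda,\mu)$ is the minimum label in its standard flag. So your recipe would work with the $\Delta(\lambda,\mu_0)$ as additional base objects. But that requires first describing $\mathbb V\Delta(\lambda,\mu_0)$ as an explicit $\bigoplus_dH^{m,n}_{I,d}$-module that does not depend on $\mathcal C$. Producing such descriptions in general is where the real work of the Losev--Webster theorem lies, and your proposal does not supply it. Without it, the claim that the same recipe in $\mathcal C'$ produces $\mathbb V'P'(\lambda,\mu)$ is unsupported.
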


For passage to infinite intervals we also need to compare the
equivalences, not only the categories.

\begin{lemma}\label{lem:finite-equivariant-comparison}
Let $(G_1,\zeta_1)$ and $(G_2,\zeta_2)$ be label-preserving
strongly equivariant equivalences between the same finite tensor
product categorifications, where $\zeta_a:F'G_a\xrightarrow{\sim}G_aF$.
There is a natural isomorphism $\alpha:G_1\xrightarrow{\sim}G_2$
satisfying
\begin{equation}\label{eq:finite-comparison-equivariance}
 (\alpha F)\circ\zeta_1=\zeta_2\circ(F'\alpha).
\end{equation}
Thus $\alpha$ intertwines the specified strong-equivariance
structures on $G_1$ and $G_2$.
\end{lemma}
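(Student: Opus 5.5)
Set $H:=G_2^{-1}G_1$, a label-preserving strongly equivariant autoequivalence of $\mathcal C$; its structure $\zeta:FH\xrightarrow{\sim}HF$ is obtained by composing $\zeta_1$ with the inverse of the mate of $\zeta_2$ along a chosen adjoint equivalence $G_2\dashv G_2^{-1}$. It then suffices to build $\beta:\operatorname{Id}\xrightarrow{\sim}H$ with $(\beta F)=\zeta\circ(F\beta)$, and to transport it back by setting $\alpha:=G_2\beta$ composed with the counit. This transport is a formal, if tedious, check using the triangle identities.

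To build $\beta$, use the projective generator. Put $P=\bigoplus_{d}F^dL$ (finitely many $d$ are nonzero) together with $\operatorname{Hom}$ from the projective cover of each label. A cleaner route is to use that $\mathcal C$ is determined by $P_{\mathrm{big}}$, a direct sum of all indecomposable projectives. By Theorem~\ref{thm:finite-hecke-endomorphisms} and the double centralizer property built into the uniqueness proof of \cite{losev2015uniqueness}, the functor $\operatorname{Hom}(\bigoplus_d F^dL,-)$ is fully faithful on projectives. So first I fix an isomorphism $\beta_L:L\xrightarrow{\sim}HL$, which exists since $H$ preserves labels and $\operatorname{End}(L)=\Bbbk$. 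Then I define $\beta_{F^dL}:F^dL\to HF^dL$ as the composite $F^d\beta_L$ followed by the iterated $\zeta$'s $F^dH L\to HF^dL$.

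By (E2) and (E3), this $\beta_{F^dL}$ intertwines the $H_d$-actions, and hence by surjectivity in Theorem~\ref{thm:finite-hecke-endomorphisms} commutes with all of $\operatorname{End}(F^dL)$. Together with the naturality of $\zeta$ and the equality $F\beta_{F^{d}L}$-composite$=\beta_{F^{d+1}L}$, this shows that $\beta$ is natural on the additive subcategory $\operatorname{add}(\bigoplus_d F^dL)$ and satisfies the required compatibility there. Morphisms between different $F^dL$ and $F^{d'}L$ vanish unless the weights match. Where they do match, I compare through the full subcategory and use that $\operatorname{Hom}(F^dL,F^{d'}L)$ is generated by adjunction units and counits of $F\dashv E$ together with Hecke maps. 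Compatibility with units and counits follows from (E1) and the definition of mates.

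The main obstacle is extending $\beta$ from the projective-injective objects $F^dL$ to all projectives, and then to all of $\mathcal C$. Here I would use the double centralizer property (\cite{losev2015uniqueness}, or \cite[\S2]{brundan2017tensor}): the functor $\mathbb V=\operatorname{Hom}(T,-)$, for $T=\bigoplus_d F^dL$, is fully faithful on projectives. Given a projective $Q$, I define $\beta_Q$ as the unique map with $\mathbb V(\beta_Q)$ equal to the map induced by $\beta_T$. Naturality then follows from faithfulness, and I extend to all objects via projective presentations. The identity $\beta F=\zeta\circ F\beta$ also follows by faithfulness, once checked after applying $\mathbb V$, which reduces it to the case of $T$ already treated.

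Finally, uniqueness up to the scalar in $\beta_L$ is not needed. Since $\zeta_1,\zeta_2$ are isomorphisms, so is $\alpha$, which gives \eqref{eq:finite-comparison-equivariance}.
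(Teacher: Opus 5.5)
Your outline is the paper's: fix $\beta_L$, carry it to $F^dL$ through the iterated $\zeta$'s, use (E2)--(E3) and Hecke surjectivity, then extend to projectives and to all objects. Defining $\beta_Q$ for a projective $Q$ as the unique map with $\beta_Q\circ f=H(f)\circ\beta_T$ for all $f\colon T\to Q$ is a valid substitute for the paper's restriction of $\beta_{J^0}$ to a kernel.

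\textbf{The gap.} The problem is the compatibility $\beta_{FQ}=\zeta_Q\circ F(\beta_Q)$ at a projective $Q\notin\operatorname{add}T$. Applying $\mathbb V$ means comparing both sides on an arbitrary $f\colon T\to FQ$. The identity on $\operatorname{add}T$ plus naturality only gives agreement on composites $F(g)\circ h$ with $g\colon J\to Q$ and $J\in\operatorname{add}T$. To conclude you would need every $f$ to factor this way, and faithfulness of $\mathbb V$ does not provide that. It would follow, for instance, from $E(\operatorname{add}T)\subseteq\operatorname{add}T$ via the adjunction $E\dashv F$, but you neither state nor prove this.

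\textbf{The fix.} Use the double centralizer input in its copresentation form, as the paper does. Take $0\to Q\xrightarrow{j}J^0\to J^1$ with $J^0,J^1\in\operatorname{add}T$ \cite[Lemma 5.3]{losev2015uniqueness}. Since $F$ is exact and $F(\operatorname{add}T)\subseteq\operatorname{add}T$, one gets $HF(j)\,\beta_{FQ}=\beta_{FJ^0}F(j)=\zeta_{J^0}F(\beta_{J^0})F(j)=HF(j)\,\zeta_QF(\beta_Q)$. Here $HF(j)$ is a monomorphism, so it cancels. You also omit extending this identity from projectives to an arbitrary $M$. That is done by cancelling the epimorphism $F(\pi)$ for a projective presentation $\pi\colon P_0\to M$.

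\textbf{A smaller issue.} In fact $\operatorname{Hom}(F^dL,F^{d'}L)=0$ for all $d\neq d'$. Every composition factor of $F^dL$ has weight $\operatorname{wt}(L)$ minus a sum of exactly $d$ simple roots, and the simple roots are linearly independent, so $F^dL$ and $F^{d'}L$ share no composition factor. The case ``where the weights match'' therefore never arises. Your claim that such Hom spaces are generated by units, counits and Hecke maps, with compatibility coming from (E1), is both unjustified and unnecessary.

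The vanishing itself does matter. Naturality of $\beta$ on all of $\operatorname{add}T$ is exactly what makes $f\mapsto H(f)\circ\beta_T$ right $\operatorname{End}(T)$-linear, and your construction of $\beta_Q$ needs this.

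Finally, the detour through $H=G_2^{-1}G_1$ adds mate bookkeeping without gaining anything. The paper works directly with $\alpha_d=\zeta_2^{(d)}\circ F'^d(\alpha_0)\circ(\zeta_1^{(d)})^{-1}$.
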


\begin{proof}
\emph{Idea.}
We first choose $\alpha$ on $L$, transport it to all $F^dL$ along
$\zeta_1,\zeta_2$, and then extend it step by step: first to
projectives, then to arbitrary objects.
At each stage the compatibility
\eqref{eq:finite-comparison-equivariance} is checked on the
generating objects and propagated by exactness.

\medskip
\noindent\textbf{Step 0: the inputs.}
Put $\mathcal J=\operatorname{add}\{F^dL\mid d\geq0\}$.
We use the following three facts.
\begin{enumerate}
\item[(a)] Every object of $\mathcal J$ is projective-injective, and
$H_d\to\operatorname{End}(F^dL)$ is surjective
(Theorem~\ref{thm:finite-hecke-endomorphisms}).
Here surjectivity holds because $e_{I,d}$ is central in $H_d^{m,n}$:
the set $I^d$ is stable under $\mathfrak S_d$, so $e_{I,d}$ is a sum
of full block idempotents. Hence $e_{I,d}H_d^{m,n}e_{I,d}$ is a
quotient of $H_d$.
\item[(b)] $\operatorname{Hom}(F^dL,F^eL)=0$ for $d\neq e$.
Indeed, $F^dL$ is the direct sum of the objects
$F_{i_d}\cdots F_{i_1}L$ over $(i_1,\ldots,i_d)\in I^d$.
By {\rm (CA4)}, every composition factor of a nonzero such object
has weight
$\operatorname{wt}(L)-\boldsymbol\alpha_{i_1}-\cdots-
\boldsymbol\alpha_{i_d}$.
The simple roots are linearly independent, so no weight occurring
in $F^dL$ occurs in $F^eL$ when $d\ne e$.
Thus these objects have no common composition factor, and every
morphism between them is zero.
\item[(c)] Every projective $P$ has an exact sequence
\begin{equation}\label{eq:finite-comparison-coresolution}
 0\longrightarrow P\xrightarrow{\;j\;}J^0\xrightarrow{\;\partial\;}J^1,
 \qquad J^0,J^1\in\mathcal J,
\end{equation}
by \cite[Lemma 5.3]{losev2015uniqueness}; see also
\cite[proof of Lemma 2.16]{brundan2017tensor}.
\end{enumerate}
None of these follows from the mere existence of an equivalence.

\medskip
\noindent\textbf{Step 1: $\alpha$ on $\mathcal J$.}
Since both $G_a$ preserve labels, $G_1L\cong G_2L$; fix an
isomorphism $\alpha_0:G_1L\to G_2L$.
Define the iterated isomorphisms
$\zeta_a^{(d)}:F'^dG_a\to G_aF^d$ by
$\zeta_a^{(0)}=\operatorname{id}$ and
$\zeta_a^{(d+1)}=(\zeta_aF^d)\circ(F'\zeta_a^{(d)})$,
and put
\[
 \alpha_d:=\zeta_{2,L}^{(d)}\circ F'^d(\alpha_0)\circ
 \bigl(\zeta_{1,L}^{(d)}\bigr)^{-1}
 \colon G_1F^dL\xrightarrow{\ \sim\ }G_2F^dL .
\]
In words, $\alpha_d$ is $\alpha_0$ carried through $d$ copies of $F$.

\emph{$\alpha_d$ commutes with Hecke generators.}
For a generator $h\in\{x_j,t_j\}$ of $H_d$ write $h'$ for the
corresponding generator for $F'$. Then
\begin{equation}\label{eq:finite-comparison-hecke-intertwining}
 \zeta_a^{(d)}\circ(h'G_a)=(G_ah)\circ\zeta_a^{(d)} .
\end{equation}
For $d=1$, $h=x$ this is (E2); for $d=2$, $h=t$ it is (E3), because
$\zeta_a^{(2)}=(\zeta_aF)\circ(F'\zeta_a)$.
In general, the isomorphism $\zeta_a^{(d)}$ moves the $d$ copies of
$F'$ across $G_a$ one at a time, and $h$ only involves one or two
adjacent copies. The other steps commute with it by naturality of
$\zeta_a,x',t'$. Now compute, using
\eqref{eq:finite-comparison-hecke-intertwining} for $a=1,2$ and
the naturality of $h'$ with respect to $\alpha_0$:
\[
\begin{aligned}
 \alpha_d\,G_1(h_L)
 &=\zeta_{2,L}^{(d)}\,F'^d(\alpha_0)\,h'_{G_1L}\,(\zeta_{1,L}^{(d)})^{-1}\\
 &=\zeta_{2,L}^{(d)}\,h'_{G_2L}\,F'^d(\alpha_0)\,(\zeta_{1,L}^{(d)})^{-1}\\
 &=G_2(h_L)\,\alpha_d .
\end{aligned}
\]

\emph{$\alpha$ is natural on $\mathcal J$.}
By (a), every endomorphism of $F^dL$ is of the form $h_L$ with
$h\in H_d$, so $\alpha_d$ commutes with all of $\operatorname{End}(F^dL)$.
By (b), a morphism between finite sums of the $F^dL$ is a matrix
whose entries lie in the $\operatorname{End}(F^dL)$.
So the $\alpha_d$ assemble to a natural isomorphism on finite sums of
the $F^dL$, and, being compatible with idempotents, on $\mathcal J$.

\emph{Compatibility with $\zeta$ on $\mathcal J$.}
Unwinding the recursion
$\zeta^{(d+1)}=(\zeta F^d)\circ(F'\zeta^{(d)})$ gives
\[
 \alpha_{d+1}\circ\zeta_{1,F^dL}=\zeta_{2,F^dL}\circ F'(\alpha_d).
\]
Since $F\mathcal J\subseteq\mathcal J$ and everything is natural,
\eqref{eq:finite-comparison-equivariance} holds on $\mathcal J$.

\medskip
\noindent\textbf{Step 2: $\alpha$ on projectives.}
Let $P$ be projective and choose a sequence
\eqref{eq:finite-comparison-coresolution}. Since $G_1,G_2$ are
exact, $G_aP=\ker\bigl(G_a\partial\bigr)$. Naturality of $\alpha$ on
$\mathcal J$ gives
$\alpha_{J^1}G_1(\partial)=G_2(\partial)\alpha_{J^0}$, so
$\alpha_{J^0}$ restricts to an isomorphism
$\alpha_P:G_1P\to G_2P$.

\emph{Independence and naturality.}
Let $f:P\to Q$ be a morphism of projectives with chosen sequences
$(j_P,\partial_P)$ and $(j_Q,\partial_Q)$.
Because $J_Q^0$ is injective and $j_P$ is a monomorphism, the map
$j_Qf$ extends to $g^0:J_P^0\to J_Q^0$ with $g^0j_P=j_Qf$.
Naturality on $g^0$ and the definition on kernels give
\[
\begin{aligned}
G_2(j_Q)\alpha_QG_1(f)
 &=\alpha_{J_Q^0}G_1(g^0)G_1(j_P)\\
 &=G_2(g^0)\alpha_{J_P^0}G_1(j_P)
  =G_2(j_Q)G_2(f)\alpha_P.
\end{aligned}
\]
Since $G_2(j_Q)$ is a monomorphism, this proves
$\alpha_QG_1(f)=G_2(f)\alpha_P$.
Taking $f=\operatorname{id}_P$ with two different sequences shows
that $\alpha_P$ does not depend on the choice.

\emph{Compatibility with $\zeta$ on projectives.}
Let $P\xrightarrow{j}J^0$ be as above. By biadjunction, $F$ is
exact and preserves projectives. Hence $Fj$ is a monomorphism
between projectives, and $G_2(Fj)$ is a monomorphism.
It therefore suffices to check
\eqref{eq:finite-comparison-equivariance} after composing with
$G_2(Fj)$. There
\[
\begin{aligned}
 G_2(Fj)\,(\alpha F)_P\,\zeta_{1,P}
 &=(\alpha F)_{J^0}\,G_1(Fj)\,\zeta_{1,P}
\\
 &=(\alpha F)_{J^0}\,\zeta_{1,J^0}\,F'G_1(j)
\\
 &=\zeta_{2,J^0}\,F'(\alpha_{J^0})\,F'G_1(j)
\\
 &=\zeta_{2,J^0}\,F'G_2(j)\,F'(\alpha_P)\\
 &=G_2(Fj)\,\zeta_{2,P}\,F'(\alpha_P),
\end{aligned}
\]
The first two equalities use naturality, the third uses Step~1,
and the last two use naturality on $j$. Cancelling the
monomorphism gives the claim on $P$.

\medskip
\noindent\textbf{Step 3: arbitrary objects.}
Let $M$ be an object with a projective presentation
$P_1\to P_0\xrightarrow{\pi}M\to0$.
Exactness of $G_1,G_2$ makes $G_aM$ the cokernel of $G_aP_1\to G_aP_0$,
and naturality of $\alpha$ on projectives induces an isomorphism
$\alpha_M:G_1M\to G_2M$.
Every morphism lifts to a map of presentations, so $\alpha$ is
natural; comparing two presentations of $M$ shows $\alpha_M$ is
independent of the choice.

For \eqref{eq:finite-comparison-equivariance} on $M$,
note that $F'G_1\pi$ is an epimorphism, and compute
\[
\begin{aligned}
 (\alpha F)_M\,\zeta_{1,M}\,F'G_1(\pi)
 &=G_2(F\pi)\,(\alpha F)_{P_0}\,\zeta_{1,P_0}\\
 &=G_2(F\pi)\,\zeta_{2,P_0}\,F'(\alpha_{P_0})\\
 &=\zeta_{2,M}\,F'(\alpha_M)\,F'G_1(\pi).
\end{aligned}
\]
Cancelling $F'G_1(\pi)$ gives the equality on $M$.
The statement for $E$ is the same identity for adjunction mates, which
are determined by $\zeta_a$. Hence $\alpha$ intertwines the strong
equivariance structures.
\end{proof}

\subsection{The full interval}
\label{subsec:stable-module-categories}
% Replacement for 8.tex. Uses 7.tex and the highest weight conventions
% and finite-subquotient lemma in semi_infinite_highest_weight.tex.

Let each of $X,Y$ be one of
\[
 \bigwedge^mV^0,\qquad \bigwedge^mV^1,\qquad
 L^+(\boldsymbol\varpi_s),\qquad L^-(-\boldsymbol\varpi_s),
\]
where $m\geq0$ and $s\in\mathbb Z$ may differ between the factors.
Write $\Xi_X,\Xi_Y$ for their monomial indexing sets and put
$\Xi=\Xi_X\times\Xi_Y$, with the inverse dominance order.
We use the essentially finite, lower finite and upper finite
highest weight settings of
Appendix~\ref{sec:semi-infinite-highest-weight-categories}.

\begin{definition}\label{def:level-two-tpc}
A \emph{level-two $\mathfrak{sl}(\mathbb Z)$-tensor product
categorification of type $(X,Y)$} is a highest weight category
$\mathcal C$ in one of these three settings, with weight poset
$\Xi$ and a categorical $\mathfrak{sl}(\mathbb Z)$-action
satisfying:
\begin{enumerate}
\item[(TPC1)] Every $E_i,F_i$ preserves $\mathcal C^\Delta$.
\item[(TPC2)] The isomorphism
\[
 \Bbbk\otimes_{\mathbb Z}K_0(\mathcal C^\Delta)
 \xrightarrow{\sim}X\otimes Y,\qquad
 [\Delta(\lambda,\mu)]\longmapsto
 v_\lambda^X\otimes v_\mu^Y,
\]
intertwines $[E_i],[F_i]$ with the Chevalley generators.
\end{enumerate}
\end{definition}

The choice of highest weight setting is part of the definition.
The lower finite case uses finite highest weight ideal subcategories;
the upper finite case uses the full locally finite-dimensional
module category. As in \cite[Definition 2.10]{brundan2017tensor},
{\rm (TPC1)--(TPC2)} determine standard multiplicities.
For our factors these are at most one, and the highest weight
order determines the following exact sequences.

For a monomial label $\lambda$, write $e_i\lambda$ and
$f_i\lambda$ for the labels obtained by applying the corresponding
Chevalley generator, and interpret a standard or costandard with
a nonexistent label as zero.

\begin{lemma}\label{lem:tpc-standard-costandard-sequences}
In a level-two tensor product categorification there are exact
sequences
\[
\begin{aligned}
0&\longrightarrow\Delta(f_i\lambda,\mu)
 \longrightarrow F_i\Delta(\lambda,\mu)
 \longrightarrow\Delta(\lambda,f_i\mu)\longrightarrow0,\\
0&\longrightarrow\Delta(\lambda,e_i\mu)
 \longrightarrow E_i\Delta(\lambda,\mu)
 \longrightarrow\Delta(e_i\lambda,\mu)\longrightarrow0.
\end{aligned}
\]
For costandards the corresponding sequences are
\[
\begin{aligned}
0&\longrightarrow\nabla(\lambda,f_i\mu)
 \longrightarrow F_i\nabla(\lambda,\mu)
 \longrightarrow\nabla(f_i\lambda,\mu)\longrightarrow0,\\
0&\longrightarrow\nabla(e_i\lambda,\mu)
 \longrightarrow E_i\nabla(\lambda,\mu)
 \longrightarrow\nabla(\lambda,e_i\mu)\longrightarrow0.
\end{aligned}
\]
Thus $E_i,F_i$ preserve finite costandard filtrations. They also
preserve tilting objects, using ascending standard and descending
costandard flags in the upper finite setting. The sequences need
not split.
\end{lemma}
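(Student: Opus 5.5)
We would prove Lemma~\ref{lem:tpc-standard-costandard-sequences} by combining three ingredients: the Grothendieck-group information in {\rm (TPC2)}, the fact that $F_i\Delta(\lambda,\mu)$ lies in $\mathcal C^\Delta$ by {\rm (TPC1)}, and the highest weight order. We then pass to costandards by duality.

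\textbf{Standard filtration of $F_i\Delta(\lambda,\mu)$.} By {\rm (TPC1)}, $F_i\Delta(\lambda,\mu)$ has a finite standard filtration. By {\rm (TPC2)}, its class equals
\[
f_i(v_\lambda\otimes v_\mu)=f_iv_\lambda\otimes v_\mu+v_\lambda\otimes f_iv_\mu .
\]
For our minuscule factors, $f_iv_\lambda$ is either $0$ or a single monomial $v_{f_i\lambda}$, up to the sign conventions built into the monomial bases, and similarly for $\mu$. In $K_0(\mathcal C^\Delta)$ the classes of standards form a basis, and filtration multiplicities are nonnegative. Hence the standard multiplicities of $F_i\Delta(\lambda,\mu)$ are determined: it has at most two sections, $\Delta(f_i\lambda,\mu)$ and $\Delta(\lambda,f_i\mu)$, each with multiplicity one. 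Here one has to check that the coefficients are $+1$ rather than $-1$. This holds because $F_i$ is exact and $\mathcal C^\Delta$ has nonnegative classes. For a tensor product of two minuscule modules, the coefficient of each monomial is $1$ with the paper's normalization, and no cancellation can occur because the two monomials are distinct.

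\textbf{Order of the two sections.} We compare the two labels in the inverse dominance order. The weight of $f_i\lambda$ is $\operatorname{wt}(\lambda)-\boldsymbol\alpha_i\lhd\operatorname{wt}(\lambda)$, and the total weights agree, so
\[
(f_i\lambda,\mu)\succ(\lambda,f_i\mu).
\]
In a highest weight category, a standard filtration can be arranged with the larger labels as submodules. More precisely, $\operatorname{Ext}^1(\Delta(a),\Delta(b))=0$ unless $a<b$, so the bigger label must sit at the bottom. This gives
\[
0\to\Delta(f_i\lambda,\mu)\to F_i\Delta(\lambda,\mu)\to\Delta(\lambda,f_i\mu)\to0.
\]
The argument for $E_i$ is the same. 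Now $e_i\lambda$ raises the weight, so $(\lambda,e_i\mu)$ is the larger label. If one of the two terms vanishes, the sequence degenerates to an isomorphism.

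\textbf{Costandards.} We would argue dually. Biadjointness {\rm (CA3)} gives
\[
\operatorname{Ext}^k(\Delta(\kappa),F_i\nabla(\lambda,\mu))\cong\operatorname{Ext}^k(E_i\Delta(\kappa),\nabla(\lambda,\mu)).
\]
This vanishes for $k>0$ because $E_i\Delta(\kappa)$ is $\Delta$-filtered. Therefore $F_i\nabla(\lambda,\mu)$ has a $\nabla$-flag. Its multiplicities $\dim\operatorname{Hom}(\Delta(\kappa),F_i\nabla)$ are computed from the $E_i$-sequence just proved, and the standard ordering argument for $\nabla$-flags (larger labels as quotients) produces the stated sequences.

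\textbf{Tilting objects.} Preservation of tilting objects then follows, since $E_i,F_i$ preserve both $\Delta$- and $\nabla$-flags. In the upper finite setting the flags are ascending or descending, and we use exactness together with the fact that $E_i,F_i$ commute with the relevant directed unions or limits, because they are biadjoint.

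\textbf{Main obstacle.} We expect the main difficulty to be the infinite highest weight settings: justifying the Ext-vanishing and ordering arguments there, and applying {\rm (TPC2)} to $K_0(\mathcal C^\Delta)$. To handle this we would first restrict to a finite highest weight ideal or subquotient containing all the labels involved, using the finite-subquotient lemma of Appendix~\ref{sec:semi-infinite-highest-weight-categories}, and carry out the argument there. That the sequences need not split we would simply note by an example such as $K_1^m$, rather than prove.
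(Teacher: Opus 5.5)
Your outline matches the paper's proof. (TPC1)--(TPC2) fix the standard sections, and inverse dominance gives $(\lambda,f_i\mu)\prec(f_i\lambda,\mu)$ and $(e_i\lambda,\mu)\prec(\lambda,e_i\mu)$. Vanishing of $\operatorname{Ext}^1(\Delta(b),\Delta(a))$ for $a\prec b$ then puts the larger label at the bottom. For costandards, biadjunction gives $\operatorname{Ext}^1(\Delta(b),F_i\nabla(a))\cong\operatorname{Ext}^1(E_i\Delta(b),\nabla(a))=0$, together with $(F_i\nabla(a):\nabla(b))=(E_i\Delta(b):\Delta(a))$, which you read off from the transposed matrices of $e_i,f_i$. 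Tiltings are then handled via unions and limits, as you say.

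The one step you leave open, which the paper does supply, is getting from this $\operatorname{Ext}^1$-vanishing to a \emph{finite} $\nabla$-flag in the upper finite setting. There the criterion needs a finiteness hypothesis. For instance, for infinitely many distinct labels $a_n$, the sum $\bigoplus_n\nabla(a_n)$ is locally finite-dimensional, since each label lies below only finitely many $a_n$. It satisfies the vanishing, because $\Delta(b)$ has a presentation with finitely generated kernel $K(b)$, yet it has no finite flag.

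Your proposed remedy, restricting to a finite subquotient containing the labels involved, does not provide this. The module $\nabla(a)$ is supported on the possibly infinite ideal $\Xi_{\preceq a}$. The finite subquotients $\mathcal C_J/\mathcal C_{J'}$ are Serre quotients, which do not in general compute $\operatorname{Ext}^1$ in $\mathcal C$, so a flag found there does not lift. Restricting to finite ideal subcategories is fine in the lower finite case, because Serre subcategories preserve $\operatorname{Ext}^1$, and nothing extra is needed in the essentially finite case.

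The paper closes the gap by showing that $F_i\nabla(a)$ is finitely cogenerated. Biadjunction and the defining property of injective hulls give
$\operatorname{Hom}(M,F_iI(b))\cong\operatorname{Hom}(E_iM,I(b))\cong\operatorname{Hom}(F_iP(b),M)^*$.
Since $F_iP(b)$ is a finite sum of indecomposable projectives, $F_iI(b)$ is a finite sum of indecomposable injectives. Exactness then gives $F_i\nabla(a)\hookrightarrow F_iI(a)$, and the same argument works for $E_i$.
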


\begin{proof}
By {\rm (TPC1)--(TPC2)}, the standard sections are exactly the
displayed terms, each once.
When both terms occur, inverse dominance gives
$(\lambda,f_i\mu)\prec(f_i\lambda,\mu)$ and
$(e_i\lambda,\mu)\prec(\lambda,e_i\mu)$.
The vanishing $\operatorname{Ext}^1(\Delta(b),\Delta(a))=0$
for $a\prec b$ puts the larger section at the bottom.

For costandards, exact biadjunction gives
\[
 \operatorname{Ext}^1(\Delta(b),F_i\nabla(a))
 \cong\operatorname{Ext}^1(E_i\Delta(b),\nabla(a))=0.
\]
The costandard-flag criteria in
\cite[Theorems 3.11, 3.39 and 3.56]{brundanstroppelsemiinfinite}
apply. In the upper finite case the flag is finite because
$F_i\nabla(a)$ is finitely cogenerated. Indeed, biadjunction and
the preservation of finitely generated projectives imply that
$F_i$ preserves finitely cogenerated injectives, and
$\nabla(a)$ embeds in one. The same holds for $E_i$. Biadjunction and orthogonality give
$(F_i\nabla(a):\nabla(b))=(E_i\Delta(b):\Delta(a))$.
Since $e_i$ and $f_i$ have transpose matrices on monomials,
these are the required multiplicities; dual highest weight
vanishing orders the sections as displayed. In the upper finite
case, exactness and the two adjunctions also preserve the directed
unions and inverse limits defining ascending standard and
descending costandard flags. Hence tiltings are preserved.
\end{proof}

For $X=L^+(\boldsymbol\varpi_s)$ and
$Y=L^-(-\boldsymbol\varpi_t)$ in the lower finite setting,
the sets $|\lambda|+|\mu|\leq d$ are finite ideals exhausting
the labels. Their Serre subcategories inherit the standards,
costandards and tiltings. Every object is a subquotient of a
finite sum of tiltings: work in a finite ideal subcategory,
take a projective cover there, and embed this standard-filtered
projective into a tilting object.
After shifting residues by $-s$, the four sequences above are
those in Entova-Aizenbud's definition with parameter $s-t$
\cite[Definitions 10.2.1 and 10.2.3]{entova2018categorical}.
Her uniqueness theorem also assumes invertibility of the mixed
crossing $EF\to FE$ \cite[Theorem 10.2.7]{entova2018categorical}.
We do not use that theorem or assume this extra condition in
the proof below; the finite subquotients are constructed explicitly.

Put $I_r=\{1-r,\ldots,r-1\}$, $I_r^+=\{1-r,\ldots,r\}$
and $V_r=V_{I_r}$. Take $r\geq1$ with $2r\geq m$ for each
finite exterior power and $r\geq|s|$ for each shifted factor.
Define $\Xi_r\subseteq\Xi$ by imposing the following condition
on each of its two labels. Here $s$ is the shift of the indicated
factor, $\nu$ is its partition label, and missing parts are zero.
\[
\begin{array}{c|c|c}
 Z&\text{condition on its label}&n_Z(r)\\ \hline
 \bigwedge^mV^0&\lambda_a\in I_r^+\ (1\leq a\leq m)&m\\
 \bigwedge^mV^1&\lambda_a\in I_r^+\ (1\leq a\leq m)&2r-m\\
 L^+(\boldsymbol\varpi_s)
   &\nu_1\leq r-s,\quad\nu_{r+s+1}=0&r+s\\
 L^-(-\boldsymbol\varpi_s)
   &\nu_1\leq r+s,\quad\nu_{r-s+1}=0&r-s
\end{array}
\]
Then $\Xi_r\subseteq\Xi_{r+1}$ and $\bigcup_r\Xi_r=\Xi$.
Restricting the wedge indices to $I_r^+$, and taking their
complement there for a dual finite exterior power, identifies
the monomial span of $\Xi_r$ with
\begin{equation}\label{eq:finite-truncation-module}
 \bigwedge^{n_X(r)}V_r\otimes\bigwedge^{n_Y(r)}V_r.
\end{equation}
This identification respects inverse dominance and the
$\mathfrak{sl}_{I_r}$-action.
The numbers $n_Z(r)$ record only the exterior degrees.
For $L^+(\boldsymbol\varpi_s)$ the empty partition occupies
$1-r,\ldots,s$, whereas for $L^-(-\boldsymbol\varpi_s)$ it
occupies $s+1,\ldots,r$. Thus, even when these degrees agree,
the two vacuum labels give different finite wedge monomials.
The ordered factors and their monomial identifications are retained
throughout the truncation.

Let $\Xi_{\leq r}$ consist of the labels $(\lambda,\mu)$ for
which there is $(\lambda',\mu')\in\Xi_r$ with
$\lambda\unrhd_X\lambda'$ and
\[
 \operatorname{wt}(v_\lambda^X\otimes v_\mu^Y)
 -\operatorname{wt}(v_{\lambda'}^X\otimes v_{\mu'}^Y)
 \in\sum_{i\in I_r}\mathbb Z\boldsymbol\alpha_i.
\]
Set $\Xi_{<r}:=\Xi_{\leq r}\setminus\Xi_r$.

\begin{lemma}\label{lem:tpc-finite-truncation}
The subsets $\Xi_{\leq r}$ and $\Xi_{<r}$ are ideals whose
highest weight subcategories are preserved by $E_i,F_i$
for $i\in I_r$. Their Serre subquotient
\[
 \mathcal C_r=\mathcal C_{\leq r}/\mathcal C_{<r}
\]
is a finite tensor product categorification of
\eqref{eq:finite-truncation-module}.
For $s\geq r$, truncating $\mathcal C_s$ to $\Xi_r$ gives
$\mathcal C_r$, compatibly with standards, adjunctions, dots
and crossings for residues in $I_r$.
\end{lemma}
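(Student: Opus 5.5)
We argue in three stages: the ideal property of $\Xi_{\leq r}$ and $\Xi_{<r}$, stability of the associated Serre subcategories under $E_i,F_i$ for $i\in I_r$, and identification of the subquotient with a finite tensor product categorification. For the ideal property, suppose $(\kappa,\rho)\preceq(\lambda,\mu)$ with $(\lambda,\mu)\in\Xi_{\leq r}$, witnessed by $(\lambda',\mu')\in\Xi_r$. Inverse dominance gives equal total weight and $\kappa\unrhd_X\lambda\unrhd_X\lambda'$. The total weight difference to $(\lambda',\mu')$ is unchanged, so $(\kappa,\rho)\in\Xi_{\leq r}$.

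For $\Xi_{<r}$ we must show that nothing in $\Xi_{\leq r}\setminus\Xi_r$ lies above an element of $\Xi_r$. We do this by a direct combinatorial check on the four factor types in the table. If $(\kappa,\rho)\succeq(\lambda',\mu')\in\Xi_r$ within the same $I_r$-root coset, the condition $\lambda'\unrhd$-below means the $X$-label moved only by roots $\boldsymbol\alpha_i$, $i\in I_r$, in the dominant direction. The boxes or wedge indices then stay in $I_r^+$ for $X$, and by the total-weight constraint also for $Y$. So the larger label lies in $\Xi_r$. We expect this case-by-case verification, particularly for the semi-infinite factors where the partition bounds $\nu_1\leq r\mp s$ and the vanishing row must be tracked under $\unrhd$, to be the main obstacle. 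We would first treat it via the weight description: membership in $\Xi_r$ is equivalent to the factor weights agreeing with their vacuum values outside the coordinates $\boldsymbol\varepsilon_j$, $j\in I_r^+$.

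For stability, $E_i,F_i$ with $i\in I_r$ change weights by $\pm\boldsymbol\alpha_i$, which stays within the defining coset. By Lemma~\ref{lem:tpc-standard-costandard-sequences}, $F_i\Delta(\lambda,\mu)$ has sections $\Delta(f_i\lambda,\mu)$ and $\Delta(\lambda,f_i\mu)$, whose labels remain in $\Xi_{\leq r}$ by the same dominance bookkeeping, and similarly for $E_i$. Since the subcategories are generated by standards under extensions and subquotients, and the functors are exact, these subcategories are preserved. Exactness and biadjunction then give a categorical $\mathfrak{sl}_{I_r}$-action on $\mathcal C_r$, with dots and crossings restricted. (CA1)--(CA3) are inherited, and the Hecke relations hold because the quotient functor is exact.

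The Serre subquotient of a highest weight category by a pair of ideals is highest weight with poset $\Xi_r$; we use the finite-subquotient lemma in Appendix~\ref{sec:semi-infinite-highest-weight-categories}. Since $\Xi_r$ is finite, $\mathcal C_r$ is a finite highest weight category. (TPC1)--(TPC2) follow from Lemma~\ref{lem:tpc-standard-costandard-sequences}, because the quotient kills exactly the standards labelled in $\Xi_{<r}$. Under the identification with \eqref{eq:finite-truncation-module}, this is precisely the truncation of the $\mathfrak{sl}_{I_r}$-action, and it respects inverse dominance as already noted. Finally, for $s\geq r$ we have $\Xi_{\leq r}\subseteq\Xi_{\leq s}$ and $\Xi_{<r}\subseteq\Xi_{\leq s}$. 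Truncating $\mathcal C_s$ to the subquotient determined by $\Xi_r$ is therefore canonically $\mathcal C_{\leq r}/\mathcal C_{<r}$, by transitivity of Serre quotients. Standards, adjunction units and counits, $x$ and $t$ are all induced from $\mathcal C$, so the compatibility holds for residues in $I_r$.
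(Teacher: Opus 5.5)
Your outline follows the paper's: ideal property, stability via standard sections, the appendix subquotient lemma, the TPC axioms from the standard sequences, and transitivity. Your argument that $\Xi_{\leq r}$ is an ideal is correct. The genuine gap is in the step you flagged, that $\Xi_{<r}$ is an ideal.

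The only dominance relation your sketch uses is $\lambda'\unrhd_X\kappa$, coming from $(\lambda',\mu')\preceq(\kappa,\rho)$. This alone does not confine the change of the $X$-label to roots $\boldsymbol\alpha_i$ with $i\in I_r$. Take type $(\bigwedge^1V^0,\bigwedge^1V^1)$, $(\lambda',\mu')=((0),(0))\in\Xi_r$, and $(\kappa,\rho)=((N),(N))$ with $N>r$. Both total weights are $0$, and $\boldsymbol\varepsilon_0-\boldsymbol\varepsilon_N=\boldsymbol\alpha_0+\cdots+\boldsymbol\alpha_{N-1}$. So $(\lambda',\mu')\preceq(\kappa,\rho)$ even though $\kappa$ leaves the window. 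This label is excluded only because it is not in $\Xi_{\leq r}$, and your argument never invokes that hypothesis.

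The missing idea is to also use the witness $(\lambda'',\mu'')\in\Xi_r$ for $(\kappa,\rho)\in\Xi_{\leq r}$. This gives the sandwich $\lambda'\unrhd_X\kappa\unrhd_X\lambda''$ between two window labels. The paper then concludes by convexity of the window labels in each factor's order (coordinate bounds for finite wedges, rectangle bounds for partitions). Alternatively, $\operatorname{wt}(v_{\lambda'})-\operatorname{wt}(v_{\lambda''})$ lies in $\sum_{i\in I_r}\mathbb Z_{\geq0}\boldsymbol\alpha_i$. Linear independence of the simple roots then forces both positive summands of its splitting through $\kappa$ to be supported on $I_r$. Only after this does your total-weight step transfer the window condition to $\rho$.

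Two further points need repair.

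First, in the stability step, ``the same dominance bookkeeping'' does not cover the section $\Delta(f_i\lambda,\mu)$ of $F_i\Delta(\lambda,\mu)$. Since $f_i$ lowers the first-factor weight, the old witness need not lie below $f_i\lambda$; this already happens for $\lambda=\lambda'=(i)$ in $\bigwedge^1V^0$. The other three sections do keep the old witness. The paper handles this by first characterizing $\Xi_{\leq r}$ explicitly, using the lowest monomial of each window. The characterization has two parts: a condition on the first factor alone (all indices $\leq r$, all indices $\geq1-r$, the rectangle, or no condition, for the four factor types), and agreement of the total occupation outside $I_r^+$. Both parts are visibly preserved by moves at $i\in I_r$, and so is membership in $\Xi_r$, which you also need for $\Xi_{<r}$.

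Second, in the upper finite setting, objects of $\mathcal C_{\leq r}$ can have infinite length. Generation by standards under extensions and subquotients therefore does not reach them. The paper instead uses the projective covers in the ideal subcategory, which have finite standard flags, then quotients and directed unions of finitely generated subobjects. The functors $E_i,F_i$ preserve these unions by adjunction.
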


\begin{proof}
We first make the defining conditions explicit. Represent a label
by its occupied positions: use the wedge indices for $V^0$ and
for the two semi-infinite wedges, and use the complement of the
wedge indices for $V^1$. Each factor has fixed occupations outside
a finite set. The condition that the difference of total weights
lies in $\sum_{i\in I_r}\mathbb Z\boldsymbol\alpha_i$ says exactly
that, outside $I_r^+$, the sum of the two occupations agrees with
that of labels in $\Xi_r$.
Indeed, the difference of two such total occupation sequences
has finite support and total sum zero; the roots in $I_r$ span
precisely the sequences of this kind supported on $I_r^+$.

Every finite wedge has a lowest monomial, with its occupied
positions as far to the right as possible. Consequently the
existence of a witnessing first label in $\Xi_r$ is equivalent
to the following condition on the first factor:
\begin{itemize}
\item for $\bigwedge^mV^0$, all wedge indices are at most $r$;
\item for $\bigwedge^mV^1$, all wedge indices are at least $1-r$;
\item for $L^+(\boldsymbol\varpi_s)$, the partition lies in
the rectangle in the table;
\item for $L^-(-\boldsymbol\varpi_s)$, there is no further
condition, since the empty partition is the lowest monomial.
\end{itemize}
This also proves that $\Xi_{\leq r}$ is an ideal: on passing
to a smaller tensor label, total weight is unchanged and the
first-factor weight increases.

The subset $\Xi_{<r}$ is an ideal as well. To see this, suppose
$(\rho,\sigma)\preceq(\lambda,\mu)\in\Xi_{\leq r}$ and
$(\rho,\sigma)\in\Xi_r$. Choose a witnessing
$(\lambda',\mu')\in\Xi_r$. Then
$\rho\unrhd_X\lambda\unrhd_X\lambda'$.
The finite-window labels in each individual factor are convex
for its weight order: this follows from the coordinate bounds
for finite wedges and the rectangle bounds for partitions.
Thus $\lambda$ is a finite-window label. Since
$(\rho,\sigma)$ and $(\lambda,\mu)$ have the same total weight,
their total occupations outside $I_r^+$ agree. The first
occupations there are now fixed, so those of $\mu$ are fixed
as well. Hence $(\lambda,\mu)\in\Xi_r$, as required.

A Chevalley move at $i\in I_r$ changes only positions $i,i+1$,
both of which belong to $I_r^+$. It preserves the exterior
occupation condition and each of the four first-factor
conditions above. It also preserves whether both labels belong
to $\Xi_r$. Therefore every standard section of $E_i\Delta(a)$
or $F_i\Delta(a)$ remains in the same ideal when
$a\in\Xi_{\leq r}$ or $a\in\Xi_{<r}$.
In the finite-length settings, every simple in an ideal is a
quotient of its standard, so exactness proves stability of the
whole ideal subcategory. In the upper finite setting, use the
projective covers in that ideal subcategory, which have finite
standard flags, and then quotients and directed unions of
finitely generated subobjects. The functors preserve these
unions by adjunction.

Both adjoints preserve both subcategories, so their adjunctions,
as well as $x$ and $t$, descend to the Serre subquotient.
Lemma~\ref{lem:finite-highest-weight-subquotients} makes it
finite highest weight. The surviving standard flags are exactly
the finite wedge tensor product rules, proving the finite TPC
axioms; compare \cite[Lemma 2.18 and Theorem 2.19]
{brundan2017tensor}.
Finally, the same exterior occupation conditions describe
truncation from $\Xi_s$ to $\Xi_r$. Transitivity of Serre
subquotients identifies it with $\mathcal C_r$, including the
induced adjunctions and natural transformations.
\end{proof}

\begin{theorem}
\cite{brundan2017tensor,entova2018categorical,wan2025parabolic}
\label{thm:level-two-tpc-uniqueness}
Let $\mathcal C,\mathcal C'$ be level-two tensor product
categorifications of the same ordered type $(X,Y)$, in the
same one of the essentially finite, lower finite or upper
finite settings. There is a strongly equivariant equivalence
\[
 G:\mathcal C\xrightarrow{\sim}\mathcal C',\qquad
 GL(\lambda,\mu)\cong L'(\lambda,\mu)
 \quad((\lambda,\mu)\in\Xi).
\]
\end{theorem}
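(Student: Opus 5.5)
The plan is to build $G$ as a compatible limit of the finite equivalences. We apply Lemma~\ref{lem:tpc-finite-truncation} to both $\mathcal C$ and $\mathcal C'$, which produces finite tensor product categorifications $\mathcal C_r$ and $\mathcal C'_r$ of the same module \eqref{eq:finite-truncation-module}. Both have the same labelled poset $\Xi_r$, since the ordered type $(X,Y)$ is the same. Theorem~\ref{thm:finite-level-two-tpc-uniqueness} then gives label-preserving strongly equivariant equivalences $G_r:\mathcal C_r\to\mathcal C'_r$.

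\emph{Compatibility.} For $s\geq r$, truncating $\mathcal C_s$ to $\Xi_r$ recovers $\mathcal C_r$, together with its dots, crossings and adjunctions for residues in $I_r$ (last part of Lemma~\ref{lem:tpc-finite-truncation}). The equivalence $G_s$ preserves labels, so it preserves the ideals $\Xi_{\le r}$ and $\Xi_{<r}$ computed inside $\Xi_s$. It therefore induces a label-preserving strongly equivariant equivalence $\overline{G_s}:\mathcal C_r\to\mathcal C'_r$, where we restrict $F$ to $\bigoplus_{i\in I_r}F_i$. Lemma~\ref{lem:finite-equivariant-comparison} gives an isomorphism $\alpha_{r,s}:\overline{G_s}\xrightarrow{\sim}G_r$ compatible with $\zeta$.

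The natural isomorphisms are not unique, since the choice of $\alpha_0$ on $L$ is only up to scalar. To avoid coherence problems, I would proceed inductively instead of choosing all $G_r$ independently. Having fixed $G_r$, I would replace $G_{r+1}$ by a functor isomorphic to it whose truncation is \emph{equal} to $G_r$, by transporting structure along $\alpha_{r,r+1}$. The transported $\zeta$ still satisfies (E2) and (E3), because $\alpha$ intertwines the $\zeta$'s. This produces a strictly compatible tower.

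\emph{Passage to the limit.} The construction depends on the setting.
\begin{itemize}
\item In the essentially finite and upper finite settings, I would work with the finitely generated projectives $P(\lambda,\mu)$. Each $P(a)$ and each $\operatorname{Hom}$ between such projectives is determined inside $\mathcal C_{\le r}$ for $r\gg0$. The relevant quotient by $\mathcal C_{<r}$ is faithful on the projective covers of labels in $\Xi_r$, by the finite-subquotient lemma of the appendix. So $G$ is defined on $\mathcal C^{\mathrm{proj}}$ as the stable value of the $G_r$. It is then extended to $\operatorname{mod}_{\mathrm{lfd}}$ by the Morita description of Appendix~\ref{sec:semi-infinite-highest-weight-categories}: the category is modules over $\operatorname{End}(\bigoplus P(a))$, and $G$ gives an algebra isomorphism of these locally unital algebras.
\item In the lower finite setting, the dual argument runs on finite ideal subcategories via injectives or tiltings.
\end{itemize}
The isomorphism $\zeta$ and the Hecke compatibilities are checked on each $F_i$ with $i\in I_r$ for $r$ large. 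Local finiteness ensures that only finitely many $i$ matter on each object. (E1) is automatic for an equivalence.

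\emph{Main obstacle.} The hard part is the previous step: showing that $\operatorname{Hom}_{\mathcal C}(P(a),P(b))$ is recovered from the Serre subquotient $\mathcal C_r$ for large $r$. The quotient by $\mathcal C_{<r}$ could a priori kill morphisms factoring through lower labels, and $\Xi_{\le r}$ is not finite in the upper finite setting. My first attempt would use the explicit occupation description from the proof of Lemma~\ref{lem:tpc-finite-truncation}. With it I would show that, for fixed $a,b$, every label in a standard flag of $P(a)$ or $P(b)$, and every composition factor of the relevant Hom, lies in $\Xi_r$ once $r$ is large. Reciprocity $(P(a):\Delta(c))=[\nabla(c):L(a)]$ and finiteness of the standard flag make this plausible. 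Then $\mathcal C_{<r}$ does not interfere, and the Hom spaces stabilise.
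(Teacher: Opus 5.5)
Your outline has the same architecture as the paper's proof: finite truncations, finite uniqueness, the comparison lemma along the chain, Hom-stabilization for projectives, gluing on projectives, and extension to all objects. The gluing step, however, does not work as you state it.

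\textbf{The strict tower.} A tower in which the truncation of $G_{r+1}$ is \emph{equal} to $G_r$ cannot be produced by transport of structure. The truncation of $\mathcal C_{r+1}$ to $\Xi_r$ is only canonically equivalent to $\mathcal C_r$. Moreover, the subquotient functor $q'_{r,r+1}$ onto $\mathcal C'_r$ need not be full or faithful. So the components of $\alpha_{r,r+1}$, which are isomorphisms in $\mathcal C'_r$, need not lift to $\mathcal C'_{r+1}$.

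Nor is strictness needed. Along a chain there is no coherence problem: you fix one $\beta_r:=\alpha_{r,r+1}$ for each $r$ and compose, and the scalar ambiguity is harmless.

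What is actually missing is the passage from the quotients back to $\mathcal C'$. Even a strict tower would not make ``the stable value of $G_rP_r(a)$'' an object of $\mathcal C'$. The paper sets $G(P(a)):=P'(a)$ and chooses isomorphisms $\theta_{r,a}\colon G_rP_r(a)\to P'_r(a)$ inductively, subject to $q'_{r,r+1}(\theta_{r+1,a})=\theta_{r,a}\circ\beta_{r,P_r(a)}$. The required lift exists, is unique and is invertible, because $G_{r+1}P_{r+1}(a)$ and $P'_{r+1}(a)$ both have all standard labels in $\Xi_r$. For the former this uses that $G_{r+1}$ preserves labelled standards. Then $G(f)$ is the unique lift of $\theta_{r,b}G_r(f_r)\theta_{r,a}^{-1}$, independent of $r$ by naturality of $\beta_r$. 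This is the correct form of your strictification: compatibility is imposed only on projectives, where truncation is fully faithful.

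Your ``main obstacle'' is not one. Lemma~\ref{lem:finite-highest-weight-subquotients} realizes the subquotient as an idempotent truncation. Since $\operatorname{Hom}(P(a),P(b))$ is a corner at retained labels, it is unchanged once both standard flags have labels in $\Xi_r$. Neither finiteness of $\Xi_{\le r}$ nor any condition on composition factors is involved.

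\textbf{Lower finite case.} An argument with injectives cannot be run globally: a lower finite highest weight category may have no nonzero projective or injective objects at all, for example $\operatorname{rep}((K_1^{-\infty})^!)$ in Appendix~\ref{sec:defect-one-examples}. Tiltings are not needed either. A finite ideal $\Omega\subseteq\Xi_r$ is disjoint from $\Xi_{<r}$, so $\mathcal C_\Omega$ is a Serre subcategory of $\mathcal C_r$. Hence $G_r$ restricts to an equivalence $\mathcal C_\Omega\to\mathcal C'_\Omega$. These restrictions are glued along an exhaustion of $\Xi$ by finite ideals, using the composed comparison isomorphisms.

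\textbf{Upper finite case.} Checking the identities at a finite stage covers only finitely generated projectives. A locally finite-dimensional module need not even be a quotient of a locally finite-dimensional sum of such projectives, so $\zeta$ cannot simply be extended by presentations. The paper first obtains $E'_iG\cong GE_i$ on projectives by taking mates. It then defines $\zeta_{i,M}$, for finitely generated projective $P$, through
\[
 \operatorname{Hom}(GP,F'_iGM)\cong\operatorname{Hom}(GE_iP,GM)
 \cong\operatorname{Hom}(P,F_iM)\cong\operatorname{Hom}(GP,GF_iM).
\]
The $x,t$ identities then pass to all objects through directed unions of finitely generated submodules.
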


\begin{proof}
We first treat the essentially finite case. The lower finite and
upper finite cases are explained in
Remark~\ref{rem:tpc-uniqueness-semi-infinite}.

\emph{Idea.}
For any fixed finite collection of projectives, truncation
eventually preserves both the projectives and all morphisms
between them. We will use the finite equivalences to identify
these Hom spaces. The comparison lemma makes the identifications
compatible as the finite interval increases. We then obtain an
equivalence of the original projective categories and extend it
to the abelian categories.

\medskip
\noindent\textbf{Step 1: finite equivalences and their comparison.}
Write $F_r=\bigoplus_{i\in I_r}F_i$ on $\mathcal C_r$, and
use a prime for the corresponding functor on $\mathcal C'_r$.
By Lemma~\ref{lem:tpc-finite-truncation} and
Theorem~\ref{thm:finite-level-two-tpc-uniqueness}, choose
label-preserving strongly equivariant equivalences
$G_r:\mathcal C_r\xrightarrow{\sim}\mathcal C'_r$, with
$\zeta_r:F'_rG_r\xrightarrow{\sim}G_rF_r$.

For $s\geq r$, denote the relevant subquotient functors by
$q_{r,s}$ and $q'_{r,s}$. These are defined on the ideal
subcategories used in Lemma~\ref{lem:tpc-finite-truncation};
they are not inclusions of $\mathcal C_r$ into $\mathcal C_s$.
Since $G_s$ preserves labels, it induces an equivalence on these
subquotients, denoted by $G_s|_r$.
The comparison lemma gives a natural isomorphism
\[
 \beta_r:G_{r+1}|_r\xrightarrow{\sim}G_r,\qquad
 (\beta_rF_r)\circ(\zeta_{r+1}|_r)
   =\zeta_r\circ(F'_r\beta_r).
\]
Here $\zeta_{r+1}|_r$ means its restriction to residues in $I_r$
and its induced transformation on the subquotient.
Composing these isomorphisms along the chain gives
$\Phi_{r,s}:G_s|_r\xrightarrow{\sim}G_r$ with
$\Phi_{r,t}=\Phi_{r,s}\circ(\Phi_{s,t}|_r)$.
We use the canonical transitivity identifications of the
subquotients. Each $\Phi_{r,s}$ respects the given strong
equivariance structures for residues in $I_r$.

\medskip
\noindent\textbf{Step 2: stabilization on projectives.}
Fix projective covers $P(a)$ and $P'(a)$ in the original
categories, for every $a\in\Xi$.
For any finite set of these projectives, choose $r$ so large
that all labels in their finite standard flags belong to
$\Xi_r$. This uses finiteness of each standard flag, not upper
finiteness of the whole weight poset.
Such projectives belong to the respective ideal subcategories
$\mathcal C_{\leq r}$ and $\mathcal C'_{\leq r}$.
Their images are the projective covers $P_r(a)$ and $P'_r(a)$,
and truncation gives isomorphisms
\begin{equation}\label{eq:projective-hom-stabilization-new}
 \operatorname{Hom}_{\mathcal C}(P(a),P(b))
 \xrightarrow{\sim}
 \operatorname{Hom}_{\mathcal C_r}(P_r(a),P_r(b)),
\end{equation}
and likewise for $\mathcal C'$.
This is the projective assertion of
Lemma~\ref{lem:finite-highest-weight-subquotients}.
Concretely, the finite quotient is given by an idempotent
corner. The projectives with surviving heads become its
principal projectives, and their Hom spaces are the same
algebra corners as before truncation.
The same assertion applies between finite stages once the
standard labels of the projectives lie in $\Xi_r$.

\medskip
\noindent\textbf{Step 3: an equivalence of the original projective categories.}
For each $a$, choose $R_a$ so that Step~2 applies to $P(a)$ and
$P'(a)$ for every $r\geq R_a$. We choose isomorphisms
\[
 \theta_{r,a}:G_rP_r(a)\xrightarrow{\sim}P'_r(a)
 \qquad(r\geq R_a)
\]
compatible with $\beta_r$, in the sense that
\begin{equation}\label{eq:tpc-compatible-projective-identifications}
 q'_{r,r+1}(\theta_{r+1,a})
   =\theta_{r,a}\circ\beta_{r,P_r(a)}.
\end{equation}
To construct them, choose $\theta_{R_a,a}$ arbitrarily.
Once $\theta_{r,a}$ has been chosen, the right hand side of
\eqref{eq:tpc-compatible-projective-identifications} is an
isomorphism in $\mathcal C'_r$. Both projectives upstairs,
$G_{r+1}P_{r+1}(a)$ and $P'_{r+1}(a)$, have all their standard
labels in $\Xi_r$: the first follows because $G_{r+1}$ preserves
the labelled standards, and the second follows from the choice
of $R_a$.
Step~2 therefore lifts this isomorphism uniquely to
$\theta_{r+1,a}$. Its inverse lifts as well, so the lift is an
isomorphism.

Now set $G(P(a))=P'(a)$, an object of the original category
$\mathcal C'$. For $f:P(a)\to P(b)$, choose
$r\geq R_a,R_b$ and let $f_r$ be its image at the finite stage.
Define $G(f)$ to be the unique morphism whose image in
$\mathcal C'_r$ is
\begin{equation}\label{eq:tpc-glued-projective-morphism}
 \theta_{r,b}\,G_r(f_r)\,\theta_{r,a}^{-1}.
\end{equation}
Existence and uniqueness follow from
\eqref{eq:projective-hom-stabilization-new} for $\mathcal C'$.
Naturality of $\beta_r$ and
\eqref{eq:tpc-compatible-projective-identifications} show that
the formula at stage $r+1$ reduces to the formula at stage $r$.
The injectivity of the Hom comparison then proves independence
of $r$.

Identities and compositions can be checked at one sufficiently
large finite stage. Formula~\eqref{eq:tpc-glued-projective-morphism}
is an isomorphism on every Hom space, since $G_r$ and both
truncation maps are fully faithful there. Extending to finite
direct sums gives an equivalence between the projective
categories. Taking cokernels of projective presentations gives
an exact equivalence $G:\mathcal C\xrightarrow{\sim}\mathcal C'$.
It sends the simple head of $P(a)$ to that of $P'(a)$, hence
preserves all simple labels.

\medskip
\noindent\textbf{Step 4: strong equivariance.}
Fix $i\in\mathbb Z$ and a projective $P$. By biadjunction,
$F_iP$ and $E_iP$ are projective.
Choose a finite stage containing $i$ and all standard labels of
the projectives involved, on both sides. Use the identifications
$\theta$ to transport the $i$-component of $\zeta_r$ to an
isomorphism between the finite images of $F'_iG(P)$ and
$G(F_iP)$. Step~2 lifts it uniquely to
$\zeta_{i,P}:F'_iG(P)\xrightarrow{\sim}G(F_iP)$.
The compatibility of $\beta_r$ with $\zeta_r$ makes this lift
independent of the stage. Naturality follows by putting a
morphism of projectives and its images into one finite stage.

For each projective $P$, only finitely many $F_iP$ are nonzero.
Indeed, $P$ has a finite standard flag and each monomial has
only finitely many nonzero Chevalley moves. Thus the
$\zeta_{i,P}$ sum to an isomorphism
$\zeta_P:F'G(P)\xrightarrow{\sim}G(FP)$.
The identities involving $x$ and $t$ can be checked at a finite
stage containing all projectives and residues occurring in
$P,FP,F^2P$ and their counterparts in $\mathcal C'$.
They hold there by strong equivariance of $G_r$, and the
faithfulness in Step~2 gives them in the original categories.

Finally, all the functors involved are exact. A projective
presentation of $M$ extends $\zeta$ to $M$ by taking cokernels.
Naturality and the identities involving $x,t$ descend along
the resulting epimorphisms from projectives.
The adjunction-mate condition is automatic for an equivalence,
so $G$ is strongly equivariant.
\end{proof}

\begin{remark}\label{rem:tpc-uniqueness-semi-infinite}
\label{rem:tpc-uniqueness-semi-infinite-new}
We give the changes needed in the two semi-infinite settings.

\emph{Lower finite case.}
Use finite ideal subcategories instead of global projectives.
For a finite ideal $\Omega\subseteq\Xi$, take $r$ with
$\Omega\subseteq\Xi_r$. No label of $\Omega$ or below it is
removed by truncation. Hence
Lemma~\ref{lem:finite-highest-weight-subquotients} identifies
$\mathcal C_\Omega$ and $\mathcal C'_\Omega$ with the respective
Serre subcategories of $\mathcal C_r$ and $\mathcal C'_r$.
The restrictions of $G_r$ are therefore actual equivalences
$\mathcal C_\Omega\to\mathcal C'_\Omega$.
The isomorphisms $\Phi_{r,s}$ compare these restrictions,
compatibly with enlarging both the finite stage and the ideal.
Choose an increasing exhaustion of $\Xi$ by finite ideals and
identify the equivalences on successive subcategories using
these isomorphisms. This gives a functor on their union:
each object, morphism and finite diagram belongs to one such
subcategory, and compatibility makes its image independent of
the choice. The same construction for the inverses proves that
the functor is an equivalence.
For a fixed object, its images under $F,E,F^2$ and the finitely
many residues involved belong to a sufficiently large finite
ideal subcategory. The finite equivariance isomorphisms and
their identities then give strong equivariance on the union.

\emph{Upper finite case.}
Steps~1--3 apply to finitely generated projectives, since these
have finite standard flags. The equivalence of these projective
categories gives an isomorphism of their locally unital
endomorphism algebras preserving the labelled idempotents.
It therefore induces an equivalence of the categories of
locally finite-dimensional right modules.

For strong equivariance, Step~4 first gives the required
isomorphisms on finitely generated projectives, for each $F_i$.
Taking adjunction mates within the projective categories also
gives $E'_iG\cong GE_i$ there.
For any $M\in\mathcal C$ and finitely generated projective $P$,
biadjunction now gives natural isomorphisms
\[
\begin{aligned}
 \operatorname{Hom}(GP,F'_iGM)
 &\cong\operatorname{Hom}(E'_iGP,GM)\\
 &\cong\operatorname{Hom}(GE_iP,GM)
  \cong\operatorname{Hom}(E_iP,M)\\
 &\cong\operatorname{Hom}(P,F_iM)
  \cong\operatorname{Hom}(GP,GF_iM).
\end{aligned}
\]
Modules are determined by these Hom spaces and their
precomposition maps. Hence these isomorphisms define
$\zeta_{i,M}:F'_iGM\xrightarrow{\sim}GF_iM$, extending the
isomorphism already constructed on projectives.
Their locally finite sum gives $\zeta:F'G\xrightarrow{\sim}GF$.

The identities involving $x,t$ hold on finitely generated
projectives. They then hold on finitely generated objects,
by an epimorphism from a finite sum of such projectives.
Every locally finite-dimensional module is a directed union of
its finitely generated submodules; the functors preserve these
unions by exactness and adjunction. Thus the identities hold
on all objects. This argument does not require arbitrary
direct sums to belong to the locally finite-dimensional category.
\end{remark}

\subsection{Ringel duality}
% Replacement for 10.tex; the subsection heading remains in main.tex.

\begin{proposition}\label{prop:ringel-duality-tpc}
Let $\mathcal C$ be a lower finite, upper finite, or tilting-bounded
essentially finite tensor product categorification of type $(X,Y)$.
Its Ringel dual $\mathcal C^\vee$ is, respectively, upper finite,
lower finite, or tilting-bounded essentially finite, and has type
$(Y,X)$.
\end{proposition}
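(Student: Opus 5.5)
The plan has two parts: get the highest weight side of Ringel duality from the appendix theory of Brundan--Stroppel, and then check the tensor product categorification axioms on the dual.

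\textbf{The dual category.} Let $T=\bigoplus_a T(a)$ be the full tilting object, understood in each setting as in \cite{brundanstroppelsemiinfinite}. Set $\mathcal C^\vee$ to be the category of modules over the (locally unital) algebra $\operatorname{End}(T)^{\mathrm{op}}$, with the Ringel duality functor $R=\operatorname{Hom}(T,-)$. This functor sends tiltings to projectives and costandards to standards, and gives an equivalence $\mathcal C^\nabla\simeq(\mathcal C^\vee)^\Delta$.

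\textbf{Highest weight structure.} The general theory (Appendix~\ref{sec:semi-infinite-highest-weight-categories}, with \cite[\S4]{brundanstroppelsemiinfinite}) shows that $\mathcal C^\vee$ is highest weight with the opposite order on $\Xi$. It also shows that lower finite and upper finite are exchanged, and that the tilting-bounded essentially finite case is preserved. I take these points as given.

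\textbf{Transporting the action.} By Lemma~\ref{lem:tpc-standard-costandard-sequences}, the functors $E_i,F_i$ preserve tilting objects. So they restrict to $\operatorname{add}(T)$, which is equivalent to the category of finitely generated projectives of $\mathcal C^\vee$. There the following all live in this additive category, so they give endofunctors of $\mathcal C^\vee$ by right exact extension (cokernels of presentations, and directed unions in the upper finite case):
\begin{itemize}
\item the functors $E_i,F_i$;
\item their biadjunctions;
\item the transformations $x,t$ and the relations (CA1)--(CA3).
\end{itemize}
The extended functors are again biadjoint, hence exact. (CA4) and (TPC1)--(TPC2) come from the Grothendieck group computation below.

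\textbf{Relabelling to type $(Y,X)$.} We have $[\Delta^\vee(\lambda,\mu)]=[R\nabla(\lambda,\mu)]$. The costandard sequences of Lemma~\ref{lem:tpc-standard-costandard-sequences} show that $F_i\nabla(\lambda,\mu)$ has costandard sections $\nabla(\lambda,f_i\mu)$ and $\nabla(f_i\lambda,\mu)$. So $K_0$ carries the same $\mathfrak{sl}(\mathbb Z)$-module structure, $X\otimes Y\cong Y\otimes X$, via the flip $v_\lambda\otimes v_\mu\mapsto v_\mu\otimes v_\lambda$.

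Relabel $(\lambda,\mu)\mapsto(\mu,\lambda)$. The order on $\mathcal C^\vee$ is the reverse of inverse dominance on $\Xi_X\times\Xi_Y$. Since total weight is fixed, reversing the comparison on the first factor is the same as comparing on the second factor. Hence this order is exactly inverse dominance on $\Xi_Y\times\Xi_X$.

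One must also check that the standard filtration order in $\mathcal C^\vee$ matches the type $(Y,X)$ sequences. Since $\Delta(\mu,f_i\lambda)$ sits at the bottom, this is the costandard ordering transported by $R$.

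\textbf{Main obstacle.} I expect the hardest part to be the semi-infinite bookkeeping. In the lower finite to upper finite case, $T$ is an infinite sum. One must check that $E_i,F_i$ on $\operatorname{add}(T)$ are locally finite on the locally unital algebra. One must also check that the induced functors on locally finite-dimensional modules remain biadjoint.

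My approach would be to use finiteness of each tilting flag together with the finiteness of Chevalley moves, as in Step~4 of Theorem~\ref{thm:level-two-tpc-uniqueness}. I would then use the directed-union argument of Remark~\ref{rem:tpc-uniqueness-semi-infinite}.
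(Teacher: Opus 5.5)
Your outline follows the paper's proof. The highest weight assertions come from Theorem~\ref{thm:semi-infinite-ringel-duality}, and Lemma~\ref{lem:tpc-standard-costandard-sequences} gives preservation of tiltings and finite flags. You then transport the action, read off multiplicities from the costandard sequences, and note that $(\lambda,\mu)\mapsto(\mu,\lambda)$ reverses inverse dominance at fixed total weight.

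The gap is in the transport step when $\mathcal C$ is upper finite, so that $\mathcal C^\vee$ is lower finite. You identify $\operatorname{add}(T)$ with the finitely generated projectives of $\mathcal C^\vee$ and extend by presentations, but that identification fails in this case, for three reasons:
\begin{itemize}
\item The indecomposable tiltings have ascending standard flags and are usually infinite-dimensional.
\item $\mathcal C^\vee$ consists of finite-dimensional continuous $\widehat B$-modules, and the objects $\operatorname{Hom}(T,T(a))$ are in general infinite-dimensional, so they do not lie in $\mathcal C^\vee$.
\item A lower finite highest weight category need not have any nonzero projectives. Appendix~\ref{sec:defect-one-examples} shows this: $\operatorname{rep}\bigl((K_1^{-\infty})^!\bigr)$ is Ringel dual to the upper finite category $(K_1^{+\infty})^!\text{-mod}_{\mathrm{lfd}}$, whose tiltings have $\dim e_sT_r^!=\max\{s-r+1,0\}$, and it has no nonzero projectives.
\end{itemize}
So there is nothing to extend from. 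Your \emph{main obstacle} paragraph, with its directed unions, only addresses the opposite direction, where the dual is upper finite.

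The repair is the one the paper uses. Transport along the exact equivalence $\mathcal R:\mathcal C^\nabla\xrightarrow{\sim}(\mathcal C^\vee)^\Delta$ of \eqref{eq:ringel-finite-flag-equivalences} rather than along tiltings. When $\mathcal C^\vee$ is lower finite, extend from $(\mathcal C^\vee)^\Delta$ inside the finite ideal subcategories $\mathcal C^\vee_\Omega$. These are finite highest weight, and their projectives have finite standard flags. You must then check that these extensions, together with the adjunctions and $x,t$, are compatible as $\Omega$ grows. In the lower finite and tilting-bounded essentially finite cases, your $\operatorname{add}(T)$ construction is fine.

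Two smaller points.
\begin{itemize}
\item (TPC1) is not a Grothendieck group statement. It, and your computation of standard sections, rest on $F_i^\vee\mathcal R\cong\mathcal RF_i$ on $\mathcal C^\nabla$. With your construction this follows from tilting presentations of costandard-filtered objects, and you should state it.
\item In type $(Y,X)$ labels, the bottom section of $F_i\Delta^\vee(\mu,\lambda)$ is $\Delta^\vee(f_i\mu,\lambda)$, not $\Delta^\vee(\mu,f_i\lambda)$. The highest weight order forces this ordering anyway, so no separate check is needed.
\end{itemize}
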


\begin{proof}
The highest weight assertion follows from
Theorem~\ref{thm:semi-infinite-ringel-duality}.
Lemma~\ref{lem:tpc-standard-costandard-sequences} makes
$E_i,F_i$ preserve tiltings and both finite flag categories.
Transport through \eqref{eq:ringel-finite-flag-equivalences} extends
by projective presentations, or in finite ideal subcategories when
$\mathcal C^\vee$ is lower finite, preserving adjunctions and
Hecke transformations. Local finiteness, nilpotence and
integrability are checked on finite flags.
Compare \cite[Remark 5.31]{brundan2017tensor} and
\cite[Lemma 2.32]{wan2025parabolic}.

Biadjunction gives
$(F_i\nabla(a):\nabla(b))=(E_i\Delta(b):\Delta(a))$,
so the transported standard multiplicities are the tensor product
multiplicities. Exchanging $(\lambda,\mu)$ with $(\mu,\lambda)$
reverses inverse dominance at fixed total weight, giving
{\rm (TPC1)--(TPC2)} for $(Y,X)$.
The functors $E_i,F_i$ themselves are not interchanged.
\end{proof}

\subsection{Type A KLR algebras}
% KLR conventions and the comparison of categorical actions.

We use the KLR conventions of
\cite[\S\S2.3--2.4]{brundan2017tensor}.
The field $\Bbbk$ is algebraically closed of characteristic zero.

\begin{definition}\label{def:klr-from-hecke}
For $d\geq1$, the type $A_\infty$ KLR algebra $R_d$ is the
locally unital $\Bbbk$-algebra generated by
$1_{\mathbf i}$, $\xi_a1_{\mathbf i}$ and
$\tau_r1_{\mathbf i}$, where
$\mathbf i=(i_1,\ldots,i_d)\in\mathbb Z^d$,
$1\leq a\leq d$ and $1\leq r<d$, with the relations below.
The entries $i_a$ are residues, while $a,r$ are positions;
$s_r\mathbf i$ exchanges the entries in positions $r,r+1$.
The symbols $\xi_a,\tau_r$ denote multipliers, so every
relation is understood after multiplication by $1_{\mathbf i}$.
The local units are finite sums of the $1_{\mathbf i}$.
\[
\begin{gathered}
 1_{\mathbf i}1_{\mathbf j}
   =\delta_{\mathbf i,\mathbf j}1_{\mathbf i},\qquad
 \xi_a1_{\mathbf i}=1_{\mathbf i}\xi_a,\qquad
 \xi_a\xi_b=\xi_b\xi_a,\\
 \tau_r1_{\mathbf i}=1_{s_r\mathbf i}\tau_r,\qquad
 \tau_r\tau_t=\tau_t\tau_r\quad(|r-t|>1),\\
 (\tau_r\xi_a-\xi_{s_r(a)}\tau_r)1_{\mathbf i}
 =\delta_{i_r,i_{r+1}}
       (\delta_{a,r+1}-\delta_{a,r})1_{\mathbf i}.
\end{gathered}
\]
\[
 \tau_r^21_{\mathbf i}=
 \begin{cases}
 0,&i_r=i_{r+1},\\
 (\xi_r-\xi_{r+1})1_{\mathbf i},&i_{r+1}=i_r+1,\\
 (\xi_{r+1}-\xi_r)1_{\mathbf i},&i_{r+1}=i_r-1,\\
 1_{\mathbf i},&\text{otherwise},
 \end{cases}
\]
\[
 (\tau_{r+1}\tau_r\tau_{r+1}
       -\tau_r\tau_{r+1}\tau_r)1_{\mathbf i}
 =\begin{cases}
 1_{\mathbf i},&i_r=i_{r+2}=i_{r+1}-1,\\
 -1_{\mathbf i},&i_r=i_{r+2}=i_{r+1}+1,\\
 0,&\text{otherwise}.
 \end{cases}
\]
Here $\mathbf i,\mathbf j\in\mathbb Z^d$,
$1\leq a,b\leq d$ and $1\leq r,t<d$;
the last relation is imposed for $1\leq r\leq d-2$.
Set $R_0=\Bbbk$.
The grading is given by $\deg1_{\mathbf i}=0$,
$\deg(\xi_a1_{\mathbf i})=2$, and
$\deg(\tau_r1_{\mathbf i})=-2,1,0$ according as
$i_r=i_{r+1}$, $|i_r-i_{r+1}|=1$, or
$|i_r-i_{r+1}|>1$.
\end{definition}

Fix a nonempty interval $I\subseteq\mathbb Z$ and an ambient
category $\mathcal C$ as in
Definition~\ref{def:categorical-slI-action}.
Use the same conventions for local finiteness and local
nilpotence.

\begin{definition}\label{def:type-a-klr-action}
A \emph{categorical $\mathfrak{sl}_I$-action in KLR form}
consists of locally finite families of exact biadjoint
$\Bbbk$-linear functors $F_i,E_i$, with fixed adjunctions
$F_i\dashv E_i$, and
natural transformations
$\xi_i\in\operatorname{End}(F_i)$ and
$\tau_{ij}:F_jF_i\to F_iF_j$ for $i,j\in I$.
Each $\xi_i$ is locally nilpotent, and condition
{\rm (CA4)} of Definition~\ref{def:categorical-slI-action}
is required.

Put $F=\bigoplus_{i\in I}F_i$ and
$F_{\mathbf i}=F_{i_d}\cdots F_{i_1}$ for
$\mathbf i\in I^d$.
For every $d\geq0$, the following assignments must define
an algebra homomorphism
$R_d\to\operatorname{End}(F^d)$:
$1_{\mathbf i}$ projects onto $F_{\mathbf i}$;
$\xi_a1_{\mathbf i}$ inserts $\xi_{i_a}$ in the factor
$F_{i_a}$; and $\tau_r1_{\mathbf i}$ inserts
$\tau_{i_r,i_{r+1}}$ in $F_{i_{r+1}}F_{i_r}$, with
target $F_{s_r\mathbf i}$.
All these maps vanish on the other summands, and
$1_{\mathbf i}$ acts as zero for $\mathbf i\notin I^d$.
For $d>0$ the projections sum locally to
$\operatorname{id}_{F^d}$; for $d=0$ use the scalar action.
\end{definition}

We use the following standard comparison; see
\cite[\S3.2.6]{rouquier20082} and
\cite[Theorem 2.5 and \S2.4]{brundan2017tensor}.

\begin{theorem}\label{thm:hecke-klr-action-equivalence}
A fixed choice of the Brundan--Kleshchev--Rouquier change of
generators identifies the categorical actions in
Definitions~\ref{def:categorical-slI-action} and
\ref{def:type-a-klr-action}, preserving the functors and
their fixed adjunctions. The natural endomorphisms satisfy
\begin{equation}\label{eq:hecke-klr-dot}
 x|_{F_i}=i\operatorname{id}_{F_i}+\xi_i.
\end{equation}
Strong equivariance in Hecke form is equivalent to strong
equivariance in KLR form, where one intertwines the
$F_i$, $\xi_i$ and $\tau_{ij}$ and uses the adjunction mates
on the $E_i$.
In particular, the mate of $x|_{F_i}$ is
$i\operatorname{id}_{E_i}+\xi_i^*$, where $\xi_i^*$ is
the mate of $\xi_i$.
\end{theorem}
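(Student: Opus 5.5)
The plan is to reduce everything to the level of a single idempotent summand $F_{\mathbf i}=F_{i_d}\cdots F_{i_1}$ and to the generators. The key input is that the Brundan--Kleshchev--Rouquier change of generators is a fixed, universal family of formulas. Concretely, for each $d$ it is an isomorphism between completions of $H_d$ and $R_d$ along the generalized eigenspace decomposition. Under it $1_{\mathbf i}$ is the generalized eigenspace idempotent for $(x_1,\dots,x_d)$ with eigenvalues $\mathbf i$, $x_a1_{\mathbf i}=(i_a+\xi_a)1_{\mathbf i}$, and $t_r1_{\mathbf i}$ is given by an explicit expression in $\tau_r$, $\xi_r$, $\xi_{r+1}$ involving power series in $\xi_r-\xi_{r+1}$. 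Conversely $\tau_r1_{\mathbf i}$ is an explicit power series in $t_r$, $x_r$, $x_{r+1}$ times idempotents. These series are well defined on $\operatorname{End}(F^d)$ because the $\xi_i$ are locally nilpotent, by (CA1) in one direction and by assumption in the other.

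First, from a Hecke action $(F,E,x,t)$ I define $F_i$ as the generalized $i$-eigenspace of $x$ on $F$. By (CA1) these give a decomposition $F=\bigoplus_iF_i$, and local finiteness is inherited. The biadjunction is obtained by restricting the fixed adjunction $F\dashv E$ to summands, with $E_i$ defined as the corresponding summand of $E$ (the generalized $i$-eigenspace of the mate of $x$). Set $\xi_i=(x-i)|_{F_i}$, which gives \eqref{eq:hecke-klr-dot}, and define $\tau_{ij}$ by the BKR formula applied to $t$. That the KLR relations hold in $\operatorname{End}(F^d)$ is then a formal consequence: the BKR map is an algebra homomorphism from $R_d$ into the completion of $H_d$, and the action of $H_d$ on $F^d$ extends continuously to that completion thanks to local nilpotence, checked objectwise (or on each $Me_a$ in case (ii)). The inverse construction runs the same way using the inverse BKR formulas. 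The two constructions are mutually inverse because the BKR maps are, and the functors and adjunctions are untouched throughout. Condition (CA4) appears identically in both definitions.

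For strong equivariance, take $(G,\zeta)$. By naturality of $\zeta$, (E2) for $x$ is equivalent to $\zeta$ mapping generalized eigenspaces to generalized eigenspaces, so $\zeta$ splits as a family $\zeta_i:F'_iG\to GF_i$ intertwining $\xi_i$. Given that splitting, (E3) for $t$ is equivalent to the intertwining of $\tau$: each is a power series in the other together with dots and idempotents, and $\zeta^{(2)}$ intertwines dots and idempotents. Taking mates of \eqref{eq:hecke-klr-dot}, and using that mates are additive and respect the summand decomposition, gives the formula $i\operatorname{id}_{E_i}+\xi_i^*$. Condition (E1) is the same statement in both languages.

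The main obstacle is justifying that the infinite power series converge and that the identities in the completed algebras descend to $\operatorname{End}(F^d)$ in case (ii). Here nilpotence is only available on each $Me_a$, not uniformly. I would handle this by checking everything on each finite-dimensional piece $(F^dM)e_a$, where the series truncate. Naturality in $M$ follows termwise, and compatibility with the local sums follows from local finiteness.
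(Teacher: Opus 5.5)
Your proposal is correct and follows the paper's route. The paper gives no independent proof: it invokes the standard Rouquier/Brundan--Losev--Webster comparison through the completed Brundan--Kleshchev--Rouquier isomorphism, and adds only the remark you single out, namely that local nilpotence makes the power series well defined on each finite-length object, or on each finite-dimensional piece $(F^dM)e_a$ in case (ii). One phrasing correction: (E2) is equivalent to $\zeta=\bigoplus_i\zeta_i$ with $\zeta_i:F'_iG\to GF_i$ \emph{together with} $\zeta_i\circ\xi'_iG=G\xi_i\circ\zeta_i$, not merely to $\zeta$ preserving generalized eigenspaces; your subsequent argument in fact uses this stronger statement.
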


Local nilpotence makes the power series in this comparison
well defined on each finite-length object, or on each
finite-dimensional idempotent component in case {\rm (ii)}.
Thus the comparison applies in both ambient categories
used here.

\section{Khovanov arc algebras}
\label{sec:Khovanov-arc-algebras}

\subsection{Weight diagrams and blocks}
\label{subsec:weight-diagrams}
% Pure diagram conventions used in 32.tex, 30.tex and 35.tex.

We use the diagram conventions of
\cite[\S2]{brundanstroppelI} and \cite[\S2]{brundanstroppelII}.

A \emph{number line} has vertices indexed by an interval
$I\subseteq\mathbb Z$. A \emph{weight diagram} labels each vertex
by $\vee$ or $\wedge$. Outside a finite set of vertices,
no $\vee$ may lie to the left of a $\wedge$.
Two weights belong to the same \emph{block} if one is obtained
from the other by a finite permutation of its labels.
The \emph{Bruhat order} is generated by exchanging a $\vee$
and a $\wedge$ so that the $\vee$ moves to the right.
Weights in different blocks are incomparable.

For nonnegative integers $m,n$, let $\Lambda_n^m$ consist of
weights on $\{1,\ldots,m+n\}$ with $n$ labels $\vee$ and
$m$ labels $\wedge$.
Let $\Lambda_n^{+\infty}$, $\Lambda_n^{-\infty}$ and
$\Lambda_n^\infty$ consist of weights with exactly $n$ labels
$\vee$, on $\mathbb Z_{>0}$, $\mathbb Z_{\leq0}$ and
$\mathbb Z$, respectively; every other label is $\wedge$.
Finally, $\Lambda_\infty^\infty$ is the block of weights on
$\mathbb Z$ with
$\lambda_i=\wedge$ for $i\ll0$ and $\lambda_i=\vee$ for $i\gg0$,
subject to
\[
 \#\{i\leq0\mid\lambda_i=\vee\}
 =
 \#\{i>0\mid\lambda_i=\wedge\}.
\]
This fixes the charge; other fixed charges give translated
copies of the same poset.
The poset $\Lambda_\infty^\infty$ is upper finite:
a move upwards from $\lambda$ changes vertices only between
its leftmost $\vee$ and its rightmost $\wedge$.

A \emph{cup diagram} consists of nonintersecting cups below a
number line and downward rays at the remaining vertices.
It has only finitely many cups. A \emph{cap diagram} is its
reflection in a horizontal axis; its rays point upwards.
For a weight $\lambda$, construct $\underline{\lambda}$ by
repeatedly joining a $\vee$ to a $\wedge$ to its right whenever
all intervening vertices have already been paired.
Attach rays to the remaining vertices.
Put $\overline{\lambda}=(\underline{\lambda})^*$, where
$*$ denotes reflection in a horizontal axis.

A labelled cup diagram $a\lambda$ is \emph{oriented} if each
cup joins opposite labels and all $\wedge$-labelled rays lie
to the left of all $\vee$-labelled rays.
The same rule defines an oriented cap diagram $\lambda b$.
A cup or cap is \emph{clockwise} if its left endpoint is
$\wedge$ and its right endpoint is $\vee$, and is
\emph{anti-clockwise} otherwise.

For weights $\alpha,\lambda$ in the same block, write
$\alpha\subset\lambda$ if $\underline{\alpha}\lambda$ is
oriented. Equivalently, $\lambda\overline{\alpha}$ is oriented,
and we also write $\lambda\supset\alpha$.
The notation $(\alpha\lambda\beta)$ denotes the circle diagram
$\underline{\alpha}\lambda\overline{\beta}$.
It is oriented precisely when $\alpha\subset\lambda\supset\beta$.
One has $\alpha\subset\lambda\Rightarrow\alpha\leq\lambda$;
see \cite[Lemma 2.3]{brundanstroppelI}.

Set $\operatorname{def}(\lambda)$ equal to the number of cups
in $\underline{\lambda}$, and
$\operatorname{def}(\Lambda)=\sup_{\lambda\in\Lambda}
\operatorname{def}(\lambda)$. Thus
\[
 \operatorname{def}(\Lambda_n^m)=\min(m,n),\qquad
 \operatorname{def}(\Lambda_n^{\pm\infty})
 =\operatorname{def}(\Lambda_n^\infty)=n,\qquad
 \operatorname{def}(\Lambda_\infty^\infty)=\infty.
\]
For fixed $\lambda$, there are $2^{\operatorname{def}(\lambda)}$
weights $\mu$ with $\lambda\subset\mu$.
For fixed $\mu$, the set of $\lambda$ with $\lambda\subset\mu$
is finite exactly when the block has finite defect
\cite[Lemma 2.4]{brundanstroppelI}.

Let $\Lambda,\Gamma$ be blocks.
A \emph{crossingless $\Lambda\Gamma$-matching} $t$ joins their
number lines, with $\Lambda$ below $\Gamma$.
It consists of finitely many cups and caps, together with
nonintersecting line segments between the two lines.
A cap joins two bottom vertices; a cup joins two top vertices.
Write $\operatorname{caps}(t)$ and $\operatorname{cups}(t)$
for their numbers, and $t^*$ for the reflected matching.

The diagram $\lambda t\mu$ is oriented if every cup and cap
joins opposite labels, and every line segment joins equal
labels. The matching is \emph{proper} if such labels exist.
For a cup diagram $a$ below $t$ and a cap diagram $b$ above it,
$a\lambda t\mu b$ is oriented when $a\lambda$, $\lambda t\mu$
and $\mu b$ are all oriented.
Its degree counts every clockwise cup and cap, including
those in $t$:
\[
 \deg(a\lambda t\mu b)
 =\deg(a\lambda)+\deg(\lambda t\mu)+\deg(\mu b).
\]
In particular, $\deg(\lambda t\mu)$ counts the clockwise cups
and caps of the labelled matching itself.
A circle is clockwise if its rightmost vertex is labelled
$\vee$, and anti-clockwise if it is labelled $\wedge$.

In $tb$, an \emph{upper circle} or \emph{upper line} is a
connected component disjoint from the bottom number line.
The \emph{upper reduction} of $tb$ is the cap diagram obtained
by deleting these components and the top number line, then
straightening the remaining curves.
The lower reduction of $at$ is defined by reflection.

For a $\Lambda\Gamma$-matching $t$ and a
$\Gamma\Upsilon$-matching $u$, place $u$ above $t$ and identify
vertices with the same index on their $\Gamma$-number lines.
Write $tu$ for the resulting diagram, retaining its middle
number line. A \emph{stack of matchings} is a diagram obtained
by repeating this construction.
It is \emph{proper} if its number lines can be labelled by weights
of their respective blocks so that each matching is oriented.
Its \emph{reduction} is obtained by deleting all circles
disjoint from the two outside number lines, deleting the
intermediate number lines, and straightening the remaining
curves. The deleted circles are called \emph{internal circles}.

\subsection{Highest weight structures}

% Highest weight structure of K_\Lambda; no results about H_\Lambda.

Let $\Lambda$ be one of the weight posets above.
The graded algebra $K_\Lambda$ has homogeneous basis
\[
 \{(\alpha\lambda\beta)\mid
   \alpha,\lambda,\beta\in\Lambda,\ 
   \alpha\subset\lambda\supset\beta\}.
\]
Its multiplication is the one defined in
\cite[\S4, and the surgery description in \S6]{brundanstroppelI}.

A surgery cuts a cup and a cap and reconnects their endpoints
vertically. Its effect on the labels is given by the following
local rules. Write $1$ for an anti-clockwise circle and $z$
for a clockwise circle. Merging
two circles and splitting one circle are given by
\begin{equation}\label{eq:arc-circle-surgery}
\begin{gathered}
 m(1\otimes1)=1,\qquad
 m(1\otimes z)=m(z\otimes1)=z,\qquad m(z\otimes z)=0,\\
 d(1)=1\otimes z+z\otimes1,\qquad d(z)=z\otimes z.
\end{gathered}
\end{equation}
Thus $m$ is multiplication in $\Bbbk[z]/(z^2)$.
For a line, denoted by $\mathsf l$, the extra rules are
\[
 1\otimes\mathsf l\longmapsto\mathsf l,\qquad
 z\otimes\mathsf l\longmapsto0,\qquad
 \mathsf l\longmapsto z\otimes\mathsf l,
\]
also allowing interchange of the two components.
A surgery from two lines to two lines is nonzero only when
one input line has both rays labelled $\wedge$ and the other
has both rays labelled $\vee$. In that case reconnect the
lines and keep the labels on the rays. In all other cases it
is zero. These are the rules of
\cite[(3.4)--(3.5), (6.1)--(6.2)]{brundanstroppelI}.

Set $e_\lambda=(\lambda\lambda\lambda)$.
These are mutually orthogonal idempotents, and
$K_\Lambda=\bigoplus_{\alpha,\beta}e_\alpha K_\Lambda e_\beta$.
Thus $K_\Lambda$ is locally unital, and is unital when $\Lambda$
is finite.
Each corner is finite-dimensional.
The grading is nonnegative, with
$(K_\Lambda)_0=\bigoplus_\lambda\Bbbk e_\lambda$;
see \cite[(4.13) and (5.2)]{brundanstroppelI}.

We write $K_m^n=K_{\Lambda_m^n}$,
$K_n^{\pm\infty}=K_{\Lambda_n^{\pm\infty}}$,
$K_n^\infty=K_{\Lambda_n^\infty}$ and
$K_\infty^\infty=K_{\Lambda_\infty^\infty}$.

Reverse the order of the vertices and interchange $\vee,\wedge$.
Write $\lambda^\sharp$ for the resulting weight and
$\Lambda^\sharp$ for its block.
Reflect the curves along with the vertices.

\begin{proposition}\label{prop:arc-diagram-symmetries}
Horizontal reflection defines a graded anti-involution
$(\alpha\lambda\beta)^*=(\beta\lambda\alpha)$ of $K_\Lambda$.
The map
\[
 K_\Lambda\xrightarrow{\sim}K_{\Lambda^\sharp},\qquad
 (\alpha\lambda\beta)\longmapsto
 (\alpha^\sharp\lambda^\sharp\beta^\sharp)
\]
is a graded algebra isomorphism. It is an involutive
automorphism when $\Lambda^\sharp=\Lambda$ and the chosen
reversal squares to the identity.
In particular, $K_m^n\cong K_n^m$; on $K_\infty^\infty$,
reversing vertices by $i\mapsto1-i$ gives an involutive
graded automorphism.
\end{proposition}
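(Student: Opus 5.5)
We verify both assertions directly on the homogeneous basis
$\{(\alpha\lambda\beta)\}$, using that the product of
\cite{brundanstroppelI} is defined diagrammatically: one stacks
$\underline{\alpha}\lambda\overline{\beta}$ under
$\underline{\gamma}\mu\overline{\delta}$, the product is zero unless
$\beta=\gamma$, and otherwise one performs the surgeries
\eqref{eq:arc-circle-surgery} cap by cap from the symmetric middle
section $\overline{\beta}\underline{\beta}$. Since the description is
local and purely topological, the plan is to check that each symmetry
transforms every ingredient of this recipe compatibly.

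For the anti-involution, horizontal reflection sends
$\underline{\alpha}\lambda\overline{\beta}$ to
$\underline{\beta}\lambda\overline{\alpha}$. It preserves orientedness,
and it preserves degree because clockwise cups become clockwise caps
with the same left and right labels. Reflecting the stacked diagram for
$(a)(b)$ gives the stacked diagram for $b^*a^*$. The surgery procedure
is independent of the order in which the surgeries are performed
\cite[Theorem 3.4]{brundanstroppelI}, and each elementary merge or split
is symmetric under swapping its top and bottom pieces: $m$ is
commutative, $d$ is cocommutative, and the line rules are stated up to
interchange of components. Hence $((a)(b))^*=(b)^*(a)^*$. Involutivity
and $e_\lambda^*=e_\lambda$ are immediate.

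For $\sharp$, reversing vertex order while swapping $\vee$ and $\wedge$
carries cups to cups. A cup from a $\vee$ at $i$ to a $\wedge$ at $j>i$
becomes a cup from a $\vee$ at the reflected position of $j$ to a
$\wedge$ at that of $i$. Thus $\underline{\lambda^\sharp}$ is the
reflection of $\underline{\lambda}$, the cup-diagram algorithm being
symmetric, and the asymptotic conditions on blocks are exchanged
correctly, so $\Lambda^\sharp$ is again a block of the listed type.
Orientation is preserved, since the ray condition ``$\wedge$-rays left
of $\vee$-rays'' is invariant under reflecting and swapping. The subtle
point is the circle labels: clockwise versus anti-clockwise is defined
via the rightmost vertex, which after reversal becomes the leftmost,
with its label swapped. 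On a circle, however, the leftmost and
rightmost vertices carry opposite labels for an oriented diagram, so
swapping and reflecting preserves the type $1$ versus $z$. The same
argument shows cup and cap degrees are preserved. For lines, the label
on both rays is swapped, so the two-lines surgery rule (one
$\wedge\wedge$ line and one $\vee\vee$ line) is preserved. Therefore
surgeries correspond, and the map is a graded algebra isomorphism with
inverse given by the reverse reversal.

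The main obstacle is this careful check that orientation conventions
for circles and lines, which are stated asymmetrically (``rightmost
vertex'', ``left endpoint''), are invariant under the combined
reflection and label swap. This includes confirming that no sign arises
in the rules of \cite[(6.1)--(6.2)]{brundanstroppelI} for lines.
Finally, $\Lambda_m^n{}^\sharp=\Lambda_n^m$ gives $K_m^n\cong K_n^m$.
For $\Lambda_\infty^\infty$, the map $i\mapsto1-i$ with label swap
preserves the charge condition, because it interchanges $\#\{i\le0:\vee\}$
with $\#\{i>0:\wedge\}$. Since this reversal squares to the identity,
$\sharp$ is an involutive automorphism.
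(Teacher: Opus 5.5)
Your proposal is correct and takes the paper's route. The paper's proof simply asserts that both operations preserve degrees and the surgery rules, with horizontal reflection reversing and $\sharp$ preserving the order of multiplication (equivalently, $\sharp$ is the composite of the two anti-isomorphisms of \cite[(4.14)--(4.15)]{brundanstroppelI}); you supply the local checks behind that assertion, and the only slip is cosmetic: $\Lambda^\sharp$ is always a block but need not be one of the five listed families (for instance $(\Lambda_n^{+\infty})^\sharp$ has $n$ labels $\wedge$ on a left-infinite line), which is harmless since $K_{\Lambda^\sharp}$ is defined for any block.
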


\begin{proof}
Both operations preserve degrees and the surgery rules.
Horizontal reflection reverses the order of multiplication,
whereas the second operation preserves it; equivalently, the
second is the composite of the two anti-isomorphisms in
\cite[(4.14)--(4.15)]{brundanstroppelI}.
For $\Lambda_\infty^\infty$, the reversal $i\mapsto1-i$
together with the label exchange preserves the charge condition.
\end{proof}

Modules in this section are locally unital left modules.
Write $\operatorname{Rep}(K_\Lambda)$ for finite-dimensional
graded modules and $\operatorname{rep}(K_\Lambda)$ for
finite-dimensional ungraded modules.
Write $K_\Lambda\text{-mod}_{\mathrm{lfd}}$ for the full category
of modules $M$ with $\dim e_\lambda M<\infty$ for every
$\lambda$, and $K_\Lambda\text{-gmod}_{\mathrm{lfd}}$ for its
graded version. This is the left-module convention for
Appendix~\ref{sec:semi-infinite-highest-weight-categories}.
Grading shifts raise degrees:
$(M\langle j\rangle)_a=M_{a-j}$.

Put $P(\lambda)=K_\Lambda e_\lambda$.
Let $L(\lambda)$ be the one-dimensional module in degree zero
on which $e_\lambda$ acts as $1$ and all other basis elements
act as $0$.
Let $\Delta(\lambda)$ be the module defined in
\cite[\S5]{brundanstroppelI}, with homogeneous basis
\[
 \{(\alpha\lambda\mid)\mid\alpha\subset\lambda\},
 \qquad
 \deg(\alpha\lambda\mid)=\deg(\underline{\alpha}\lambda).
\]
We use the action defined there.

For a module $M$, let $M^{\circledast}$ be its graded restricted
dual, so that $(e_\lambda M^{\circledast})_j=
\operatorname{Hom}_\Bbbk(e_\lambda M_{-j},\Bbbk)$, with the action
$(xf)(v)=f(x^*v)$.
Put $\nabla(\lambda)=\Delta(\lambda)^{\circledast}$.

\begin{theorem}\label{thm:arc-highest-weight-structure}
\cite[Theorems 5.1--5.3 and Corollary 5.4]{brundanstroppelI}
The module $P(\lambda)$ is the projective cover of $L(\lambda)$,
and $\Delta(\lambda)$ is its largest quotient supported on
labels $\mu\leq\lambda$.
It has head $L(\lambda)$.
There is a finite standard filtration of $P(\lambda)$, with
$\Delta(\lambda)$ as its top quotient, and
\[
 \bigl(P(\lambda):\Delta(\mu)\langle j\rangle\bigr)
 =\dim\bigl(e_\lambda\Delta(\mu)\bigr)_j
 =
 \begin{cases}
 1,&\lambda\subset\mu,\quad
      j=\deg(\underline{\lambda}\mu),\\
 0,&\text{otherwise}.
 \end{cases}
\]
If $\operatorname{def}(\Lambda)<\infty$, then
$\operatorname{rep}(K_\Lambda)$ is an essentially finite
highest weight category with weight poset $\Lambda$.
Its graded version is positively graded.
The costandards and injective hulls are
$\nabla(\lambda)$ and $P(\lambda)^{\circledast}$.
In this case, the displayed dimension also equals
$[\Delta(\mu):L(\lambda)\langle j\rangle]$.
If $\Lambda$ is finite, $K_\Lambda$ is quasi-hereditary.
\end{theorem}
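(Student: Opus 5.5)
This theorem is cited from Brundan–Stroppel, so the plan is to recall how their argument goes in the present notation. The structural input is the cellular-type basis $\{(\alpha\lambda\beta)\}$ and its filtration by the middle weight $\lambda$.

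\emph{Step 1: the structure of $\Delta(\lambda)$.} Order the basis by the middle weight. The key multiplication fact is that the product $(\alpha\lambda\beta)(\beta'\mu\gamma)$ is a combination of basis vectors $(\alpha\nu\gamma)$ with $\nu\geq\lambda,\mu$ (\cite[Theorem 4.4]{brundanstroppelI}). From this, the span of basis vectors with middle weight $\not\leq\lambda$ in $K_\Lambda e_\lambda$ is a submodule. We then check that the quotient has basis $(\alpha\lambda\lambda)$ with $\alpha\subset\lambda$, which identifies it with $\Delta(\lambda)$ through $(\alpha\lambda\mid)$. Degrees are additive, so $\deg(\alpha\lambda\lambda)=\deg(\underline\alpha\lambda)$, because $\lambda\overline\lambda$ has degree zero. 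It follows that $\Delta(\lambda)$ is the largest quotient of $P(\lambda)$ supported on labels $\leq\lambda$, and that it has head $L(\lambda)$. For the latter we use that $(K_\Lambda)_0$ is spanned by the idempotents and that the grading is nonnegative, so $P(\lambda)$ has simple head $L(\lambda)$. Projectivity of $P(\lambda)=K_\Lambda e_\lambda$ is immediate, since $e_\lambda$ is an idempotent.

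\emph{Step 2: the standard filtration of $P(\lambda)$.} Filter $P(\lambda)=K_\Lambda e_\lambda$ by the middle weight $\mu$, using any total order refining the Bruhat order. The basis vectors are $(\alpha\mu\lambda)$ with $\alpha\subset\mu\supset\lambda$. By the same multiplication fact, each layer with fixed $\mu$ is isomorphic to $\Delta(\mu)\langle\deg(\underline\lambda\mu)\rangle$, via $(\alpha\mu\lambda)\mapsto(\alpha\mu\mid)$. The condition $\mu\supset\lambda$ is exactly $\lambda\subset\mu$. For each $\lambda$ there are $2^{\operatorname{def}(\lambda)}$ such $\mu$, so the filtration is finite, and $\mu=\lambda$ gives the top quotient. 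The multiplicity formula follows. The equality with $\dim(e_\lambda\Delta(\mu))_j$ holds because $e_\lambda\Delta(\mu)$ is spanned by $(\lambda\mu\mid)$ when $\lambda\subset\mu$. When the defect is finite, this dimension also equals $[\Delta(\mu):L(\lambda)\langle j\rangle]$, since the weight spaces of a finite-dimensional module count its composition factors.

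\emph{Step 3: highest weight axioms and duality.} Assume $\operatorname{def}(\Lambda)<\infty$. Then \cite[Lemma 2.4]{brundanstroppelI} makes each $\Delta(\mu)$ finite-dimensional, and the projectives have finite $\Delta$-flags with the correct ordering. These are exactly the axioms of an essentially finite highest weight category in the sense of Appendix~\ref{sec:semi-infinite-highest-weight-categories}. Positivity of the grading follows because the degrees $\deg(\underline\lambda\mu)$ are positive for $\mu\neq\lambda$. The anti-involution of Proposition~\ref{prop:arc-diagram-symmetries} gives a contravariant duality $\circledast$ that fixes each $L(\lambda)$. It therefore exchanges projective covers with injective hulls and standards with costandards. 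If $\Lambda$ is finite, the algebra is unital and finite-dimensional with a heredity chain, so it is quasi-hereditary.

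The main obstacle is the triangularity of multiplication in Step 1. Its proof requires analysing the surgery procedure and showing that every surgery only raises the middle weight. Here I would rely directly on \cite[Theorem 4.4]{brundanstroppelI} rather than reprove it.
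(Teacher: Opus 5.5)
The paper gives no proof of this statement; it imports it from \cite{brundanstroppelI}. Your outline follows the argument given there: filter $K_\Lambda e_\lambda$ by middle weight and identify the layers with cell modules. One step, however, does not follow from the fact you actually invoke. Triangularity says that $(\alpha\lambda\beta)(\beta\mu\gamma)$ involves only middle weights $\nu\geq\lambda,\mu$. This shows that the middle-weight filtration of $P(\lambda)$ is by submodules, and that its $\mu$-layer has basis the images of $(\alpha\mu\lambda)$ with $\alpha\subset\mu$. It does not show that this layer is isomorphic to $\Delta(\mu)\langle\deg(\underline\lambda\mu)\rangle$.

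The structure constants on the layer are the coefficients of $(\gamma\mu\lambda)$ in $x\,(\alpha\mu\lambda)$, computed with the cap diagram $\overline\lambda$ on the right. The action on $\Delta(\mu)$ in \cite[\S5]{brundanstroppelI} is given by the leading coefficients of the cellular basis, which BS~I show to be independent of the right-hand cap diagram. What you need is therefore the cellularity half of \cite[Theorem~4.4]{brundanstroppelI}. Concretely: for a basis vector $x$ and $\alpha,\gamma\subset\mu$, the coefficient of $(\gamma\mu\beta)$ in $x\,(\alpha\mu\beta)$ is the same for every $\beta$ with $\mu\supset\beta$.

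The same input is needed in Step~1. You claim that $\Delta(\lambda)$ is the \emph{largest} quotient supported on labels $\leq\lambda$. For this, the span $N$ of the $(\alpha\mu\lambda)$ with $\mu>\lambda$ must lie in the submodule generated by vectors at labels $\not\leq\lambda$; triangularity only gives the opposite inclusion. Independence of the leading coefficient supplies the missing one. Take $x=(\alpha\mu\mu)$ and compare $\beta=\lambda$ with $\beta=\mu$. For $\beta=\mu$ the product is $(\alpha\mu\mu)e_\mu=(\alpha\mu\mu)$, so the coefficient is $1$. Hence $(\alpha\mu\mu)(\mu\mu\lambda)\equiv(\alpha\mu\lambda)$ modulo higher middle weights. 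Downward induction over the finitely many $\mu\supset\lambda$ then places $N$ in the submodule generated by the vectors $(\mu\mu\lambda)\in e_\mu P(\lambda)$, where $\mu\not\leq\lambda$.

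So the essential input to cite is cellularity, not triangularity alone, and it is also the real obstacle you should name. With it added, the rest of your argument is sound:
\begin{itemize}
\item simple heads follow from the nonnegative grading with $(K_\Lambda)_0$ spanned by idempotents;
\item the degree identity $\deg(\alpha\mu\lambda)=\deg(\underline\alpha\mu)+\deg(\underline\lambda\mu)$ gives the shifts;
\item finiteness comes from \cite[Lemma~2.4]{brundanstroppelI};
\item positivity holds because $\deg(\underline\lambda\mu)>0$ for $\mu\neq\lambda$;
\item costandards and injective hulls come from $\circledast$.
\end{itemize}
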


\begin{theorem}\label{thm:arc-upper-finite}
The category $K_\infty^\infty\text{-mod}_{\mathrm{lfd}}$
is an upper finite highest
weight category with weight poset $\Lambda_\infty^\infty$.
Its standard and costandard objects are $\Delta(\lambda)$ and
$\nabla(\lambda)$, and the projective cover of $L(\lambda)$ is
$P(\lambda)=K_\infty^\infty e_\lambda$, with gradings forgotten.
\end{theorem}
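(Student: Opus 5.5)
The natural strategy is to reduce to finite subquotients, using the finite-defect machinery of Theorem~\ref{thm:arc-highest-weight-structure} together with the truncation formalism of Appendix~\ref{sec:semi-infinite-highest-weight-categories}. The algebra $K_\infty^\infty$ is locally unital with finite-dimensional corners $e_\alpha K e_\beta$. Its grading is nonnegative with degree-zero part $\bigoplus_\lambda\Bbbk e_\lambda$, so $K_\infty^\infty$ is pointed and $K_\infty^\infty\text{-mod}_{\mathrm{lfd}}$ is an ambient category of the type considered there. The poset $\Lambda_\infty^\infty$ is upper finite, as noted in the definition of the block. By the appendix criterion for upper finite highest weight categories, it therefore suffices to check two things. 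First, each $P(\lambda)=K_\infty^\infty e_\lambda$ has a finite filtration by the modules $\Delta(\mu)$ with $\mu\geq\lambda$, with $\Delta(\lambda)$ on top. Second, $\Delta(\lambda)$ is the largest quotient of $P(\lambda)$ supported on labels $\leq\lambda$, and it has head $L(\lambda)$.

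\textbf{Step 1: finiteness of the standard flag.} The basis of $e_\lambda K$ in the left-module convention, i.e.\ of $Ke_\lambda$, consists of the elements $(\alpha\mu\lambda)$ with $\alpha\subset\mu\supset\lambda$. For fixed $\lambda$ there are only $2^{\operatorname{def}(\lambda)}$ weights $\mu\supset\lambda$, and $\operatorname{def}(\lambda)$ is finite for each individual $\lambda$ because a weight has only finitely many cups. So the candidate sections $\Delta(\mu)$ are finitely many. I would then reproduce the filtration argument of \cite[Theorem~5.1]{brundanstroppelI}. Order these $\mu$ compatibly with the Bruhat order and define $P(\lambda)_{\geq\mu}$ as the span of the basis vectors $(\alpha\nu\lambda)$ with $\nu\geq\mu$. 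The multiplication rule \cite[Theorem~4.4]{brundanstroppelI}, which says that products of $(\alpha\nu\lambda)$ have middle weights $\geq\nu$, is local in the diagrams. It therefore holds verbatim on the infinite number line, because all surgeries involve only finitely many cups and caps. This gives submodules whose subquotients are isomorphic to $\Delta(\mu)$ via $(\alpha\mu\lambda)\mapsto(\alpha\mu\mid)$, with the grading shift $\deg(\underline\lambda\mu)$ forgotten.

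\textbf{Step 2: the remaining axioms.} Each $\Delta(\lambda)$ has the basis $(\alpha\lambda\mid)$ with $\alpha\subset\lambda$, and $\alpha\subset\lambda$ implies $\alpha\leq\lambda$. So $\Delta(\lambda)$ is supported on labels $\leq\lambda$; it is generated by $(\lambda\lambda\mid)$ and has head $L(\lambda)$ by positivity of the grading. The kernel of $P(\lambda)\to\Delta(\lambda)$ is filtered by $\Delta(\mu)$ with $\mu>\lambda$, and each of these has head $L(\mu)$ with $\mu\not\leq\lambda$. This shows that $\Delta(\lambda)$ is the largest quotient supported on labels $\leq\lambda$. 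The appendix theorem then yields the upper finite highest weight structure with standards $\Delta(\lambda)$ and projective covers $P(\lambda)$. The costandards are identified as $\nabla(\lambda)=\Delta(\lambda)^{\circledast}$ by using the anti-involution $*$ of Proposition~\ref{prop:arc-diagram-symmetries}: the duality $\circledast$ is exact on lfd modules, exchanges heads with socles and fixes the simples, so it carries the standard characterization to the costandard one.

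\textbf{Main obstacle.} The delicate point is verifying that the multiplication and module structure from \cite{brundanstroppelI}, stated there for finite or finite-defect blocks, remain well-defined and have the triangularity property on $\Lambda_\infty^\infty$. In particular, one must confirm that the rule producing lines, including the line–line surgery, is consistent with the charge condition. I would handle this by a truncation argument. Any finite computation of products among fixed basis vectors involves only vertices in some window $[-N,N]$. Outside that window all labels are frozen: $\wedge$ to the left and $\vee$ to the right, since only finitely many cups occur. So the computation is identical to one in a finite arc algebra $K_{n}^{m}$ on the window. Associativity, the filtration property and the finite-dimensionality of the corners then transfer from the finite case.
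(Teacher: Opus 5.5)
Your proposal is correct and follows the paper's route: the paper's proof combines the finite projective standard filtrations of Theorem~\ref{thm:arc-highest-weight-structure} with the upper finiteness of $\Lambda_\infty^\infty$ to obtain the axioms of Definition~\ref{def:upper-finite-highest-weight}, and uses restricted duality for the costandards; that theorem's filtration and largest-quotient statements are not restricted to finite defect. Your window-truncation step re-derives this Brundan--Stroppel input on the infinite line instead of citing it, which is harmless but not needed.
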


\begin{proof}
Since $\Lambda_\infty^\infty$ is upper finite, the projective
standard filtrations in Theorem~\ref{thm:arc-highest-weight-structure}
give the axioms of Definition~\ref{def:upper-finite-highest-weight},
and restricted duality gives the costandards.
\end{proof}

\subsection{Geometric bimodules}
\label{subsec:extended-Khovanov-algebras}
% Geometric bimodules and their functors.
% All diagram terminology is defined in 21.tex.

Let $t$ be a proper $\Lambda\Gamma$-matching.
The geometric bimodule $K_{\Lambda\Gamma}^t$ has homogeneous basis
\[
 \{(a\lambda t\mu b)\mid
   \lambda\in\Lambda,\ \mu\in\Gamma,\
   a\lambda t\mu b\text{ is oriented}\}.
\]
Its left $K_\Lambda$-action and right $K_\Gamma$-action are
those of \cite[\S3]{brundanstroppelII}.
The corner $e_\alpha K_{\Lambda\Gamma}^t e_\beta$ consists of
the basis diagrams with $a=\underline{\alpha}$ and
$b=\overline{\beta}$.
For the identity matching,
$K_{\Lambda\Lambda}^{\mathrm{id}}\cong K_\Lambda$.

Define
\[
 G_{\Lambda\Gamma}^t
 :=K_{\Lambda\Gamma}^t\langle-\operatorname{caps}(t)\rangle
       \otimes_{K_\Gamma}-.
\]
It acts from $K_\Gamma\text{-gmod}_{\mathrm{lfd}}$ to
$K_\Lambda\text{-gmod}_{\mathrm{lfd}}$.

\begin{proposition}\label{prop:geometric-functor-adjunctions}
\cite[Theorems 3.5--3.6, Corollaries 4.3--4.4 and 4.9, Theorem 4.10,
and Corollary 4.12]{brundanstroppelII}
The bimodule $K_{\Lambda\Gamma}^t$ is projective on both sides.
The functor $G_{\Lambda\Gamma}^t$ is exact, preserves
finite-dimensional modules and finitely generated projectives,
and commutes with $\circledast$.
There are graded adjunctions
\[
 G_{\Gamma\Lambda}^{t^*}
   \langle\operatorname{cups}(t)-\operatorname{caps}(t)\rangle
 \dashv G_{\Lambda\Gamma}^t
 \dashv
 G_{\Gamma\Lambda}^{t^*}
   \langle\operatorname{caps}(t)-\operatorname{cups}(t)\rangle .
\]
Thus $G_{\Lambda\Gamma}^t$ and $G_{\Gamma\Lambda}^{t^*}$
are biadjoint after forgetting the grading.

Composition of geometric functors is computed by placing
their matchings one below another, in the order in which
the functors act, starting at the top.
Suppose the resulting diagram is proper.
Remove all circles meeting neither outside boundary,
erase the intermediate number lines, and straighten the
remaining curves. Let $u$ be the matching that remains
and let $c$ count the removed circles. Then, with gradings
forgotten,
\begin{equation}\label{eq:geometric-functor-composition}
 G^{t_d}\cdots G^{t_1}\cong(G^u)^{\oplus 2^c}.
\end{equation}
Each removed circle contributes two copies, corresponding
to its two orientations. If the diagram is not proper,
the composite is zero.
\end{proposition}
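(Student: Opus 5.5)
The plan is to reduce everything to the corresponding statements of \cite{brundanstroppelII}, which are proved for finite (or at least locally finite) blocks. The only genuinely new point is to make sure the arguments pass to the locally unital, possibly infinite, blocks $\Lambda_n^{\pm\infty}$, $\Lambda_n^\infty$ and $\Lambda_\infty^\infty$, and to the categories $K_\Lambda\text{-gmod}_{\mathrm{lfd}}$.

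\emph{Step 1: projectivity of the bimodule.} First I would establish that $K^t_{\Lambda\Gamma}$ is projective as a left $K_\Lambda$-module and as a right $K_\Gamma$-module. Following \cite[Theorem 4.10]{brundanstroppelII}, for fixed $\beta\in\Gamma$ the summand $K^t_{\Lambda\Gamma}e_\beta$ has a filtration by the diagram basis. This filtration is ordered by the weight $\lambda$ on the bottom line, and its sections are grading shifts of standard modules $\Delta(\lambda)$. Comparing with the standard multiplicities of projectives in Theorem~\ref{thm:arc-highest-weight-structure} then identifies $K^t_{\Lambda\Gamma}e_\beta$ with a finite sum of shifted $P(\lambda)$'s. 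More precisely, the reduction of $t\overline\beta$, together with the factor $2^{c}$ coming from upper circles, gives an explicit isomorphism with $K_\Lambda e_\gamma\otimes(\Bbbk\langle1\rangle\oplus\Bbbk\langle-1\rangle)^{\otimes c}$. Only finitely many vertices are involved, because $t$ has finitely many cups and caps, so the argument is local and does not need $\Lambda$ to be finite. The right-module statement follows by applying the anti-involution of Proposition~\ref{prop:arc-diagram-symmetries}.

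\emph{Step 2: exactness and preservation properties.} With the summand description of Step~1 in hand, exactness of $G^t_{\Lambda\Gamma}$ is immediate. Preservation of finitely generated projectives is also immediate, since $K^t e_\beta$ is a finite sum of $P(\lambda)$'s. Local finite-dimensionality is preserved because each corner $e_\alpha K^t e_\beta$ is finite-dimensional, and for fixed $\alpha$ only finitely many $\beta$ give a nonzero corner. Compatibility with $\circledast$ comes from the isomorphism $(K^t)^*\cong K^{t^*}$ induced by horizontal reflection, as in \cite[Corollary 4.9]{brundanstroppelII}.

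\emph{Step 3: adjunctions and composition.} For the adjunctions, I would construct the unit and counit explicitly, as in \cite[Corollaries 4.3--4.4]{brundanstroppelII}. The composite $K^{t^*}\otimes K^{t}$ contains the identity matching after deleting the internal circles. Sending each internal circle to its anti-clockwise (respectively clockwise) orientation gives the two bimodule maps. The degree shifts $\pm(\operatorname{caps}(t)-\operatorname{cups}(t))$ are forced by the degrees of these maps, and the zig-zag identities are checked diagrammatically on basis vectors. For the composition formula, I would invoke \cite[Theorem 3.5]{brundanstroppelII}, which states that $K^{t}\otimes_{K_\Gamma}K^{u}\cong K^{v}\otimes(\Bbbk\langle1\rangle\oplus\Bbbk\langle-1\rangle)^{\otimes c}$ up to the stated shift, where $v$ is the reduction of the stacked matching. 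Iterating this isomorphism and forgetting the grading gives \eqref{eq:geometric-functor-composition}. If the stack is not proper, no oriented basis diagrams exist and the tensor product is zero.

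The main obstacle I expect is Step~1 for infinite-defect blocks such as $\Lambda_\infty^\infty$. There the local-finiteness needed for the standard filtration must be checked by hand, since $\{\lambda\mid\lambda\subset\mu\}$ can be infinite. I would handle this by working one idempotent corner at a time. The bimodule is a direct sum over idempotents of pieces involving only finitely many weights, because the diagrams are supported on a bounded window determined by $t$, $\alpha$ and $\beta$. The finite-block arguments of \cite{brundanstroppelII} then apply verbatim to each piece.
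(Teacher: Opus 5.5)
The paper gives no argument here: it imports every clause from Brundan--Stroppel~II and applies it to all blocks, including $\Lambda_\infty^\infty$. Where you simply cite Theorems 3.5--3.6 for the composition formula, you agree with the paper. Where you substitute your own construction, namely the adjunctions in Step~3, there is a genuine gap, and it carries over to the $\circledast$-claim in Step~2.

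Your Step~3 rests on a false premise. In the stack for $K^{t^*}_{\Gamma\Lambda}\otimes_{K_\Lambda}K^t_{\Lambda\Gamma}$, with $t$ above $t^*$, the caps of $t$ close up with their mirror images along the middle line. They give the $\operatorname{caps}(t)$ internal circles. However, the cups of $t$ on the top line and their mirror caps on the bottom line survive, so the reduction is the identity only when $\operatorname{cups}(t)=0$.

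More importantly, an adjunction between $G^t$ and $G^{t^*}$, in either direction, needs one structure map on $G^{t^*}G^t$ and one on $G^tG^{t^*}$, and these composites have different reductions. Take the elementary cap matching in the second row of \eqref{eq:elementary-arc-matchings}. Then $K^{t^*}\otimes_{K_\Lambda}K^t\cong K_\Gamma\otimes\Bbbk[z]/(z^2)$, whereas $K^t\otimes_{K_\Gamma}K^{t^*}\cong K^u_{\Lambda\Lambda}$, both up to grading. Here $u$ is a cap--cup matching, and there are no internal circles at all.

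Orienting internal circles therefore gives at most the unit of one adjunction and the counit of the other, so the zig-zag identities cannot even be formulated. Maps that only see the $\operatorname{caps}(t)$ circles also cannot produce the shift $\operatorname{caps}(t)-\operatorname{cups}(t)$. One of the following is missing:
\begin{itemize}
\item a saddle (surgery) map between $K^u$ and the identity bimodule, together with a check of the zig-zag relations; or
\item a graded bimodule isomorphism identifying $\operatorname{Hom}_{K_\Lambda}(K^t_{\Lambda\Gamma},K_\Lambda)$ with a shift of $K^{t^*}_{\Gamma\Lambda}$. With left projectivity and tensor--Hom adjunction, this gives the right adjoint.
\end{itemize}

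The same input is needed for $\circledast$. The reflection anti-isomorphism alone only gives $(G^tM)^\circledast\cong\operatorname{Hom}_{K_\Gamma}(K^{t^*}_{\Gamma\Lambda},M^\circledast)$ up to shift. Identifying the right side with $G^t(M^\circledast)$ is exactly the statement that $G^t$ is, up to shift, right adjoint to $G^{t^*}$.

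There are also smaller errors.
\begin{itemize}
\item In Step~2, it is false for $\Lambda_\infty^\infty$ that only finitely many $\beta$ give $e_\alpha K^t_{\Lambda\Gamma}e_\beta\neq0$, even for $t=\mathrm{id}$. Let $\eta$ be the maximal weight, with $\wedge$ at all $i\leq0$ and $\vee$ at all $i>0$. Then $(\eta\eta\beta)\neq0$ for each of the infinitely many $\beta$ whose cups are nested around the gap between vertices $0$ and $1$.
\item Local finite-dimensionality should instead come from the right-module half of Step~1. There $e_\alpha K^t$ is a finite sum of shifted $e_\gamma K_\Gamma$, so $e_\alpha G^tM$ is a finite sum of spaces $e_\gamma M$.
\item Preservation of finite-dimensional modules is not argued at all. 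In infinite defect it does not follow from Step~1, because the $P(\lambda)$ are then infinite-dimensional.
\item In Step~1, matching standard multiplicities with those of a sum of projectives proves nothing. Compare $P_0$ with $\Delta_0\oplus\Delta_1\langle1\rangle$ for $K_1^1$. Only the explicit upper-reduction isomorphism of Proposition~\ref{prop:geometric-functor-projectives} does the work.
\end{itemize}
That step is local, so it is not the obstacle you anticipated. The real difficulty is the adjunction and duality input described above.
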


\begin{proposition}\label{prop:geometric-functor-projectives}
\cite[Theorem 4.2]{brundanstroppelII}
Let $\gamma\in\Gamma$.
The module $G_{\Lambda\Gamma}^tP(\gamma)$ is zero precisely
when some upper line of $t\gamma\overline{\gamma}$ has
equally labelled rays.
Otherwise, let $\overline{\lambda}$ be the upper reduction of
$t\overline{\gamma}$ and let $n$ count the removed circles.
Then
\[
 G_{\Lambda\Gamma}^tP(\gamma)
 \cong
 P(\lambda)\otimes_\Bbbk
 \bigl(\Bbbk\langle-1\rangle\oplus\Bbbk\langle1\rangle\bigr)^{\otimes n}
 \langle\operatorname{cups}(t)-\operatorname{caps}(t)\rangle .
\]
After forgetting the grading this is $P(\lambda)^{\oplus2^n}$.
\end{proposition}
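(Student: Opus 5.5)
The plan is to compute $G_{\Lambda\Gamma}^tP(\gamma)$ directly from the diagram basis. Since $P(\gamma)=K_\Gamma e_\gamma$, we have
\[
 G_{\Lambda\Gamma}^tP(\gamma)
 =K_{\Lambda\Gamma}^t\langle-\operatorname{caps}(t)\rangle\otimes_{K_\Gamma}K_\Gamma e_\gamma
 \cong K_{\Lambda\Gamma}^te_\gamma\langle-\operatorname{caps}(t)\rangle .
\]
By the description of corners, $K_{\Lambda\Gamma}^te_\gamma$ has basis the oriented diagrams $a\lambda' t\mu\overline{\gamma}$. Here $a$ is a cup diagram, $\lambda'\in\Lambda$ and $\mu\in\Gamma$. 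So the whole statement reduces to understanding orientations of the fixed top half $t\overline{\gamma}$, glued to an arbitrary cup diagram and weight below.

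\textbf{Step 1: decompose $t\overline{\gamma}$.} Its connected components are of three kinds: upper circles, upper lines (both disjoint from the bottom number line), and components meeting the bottom line. The last kind become, after straightening, exactly the caps and rays of the upper reduction $\overline{\lambda}$. An upper line has both ends on rays of $\overline{\gamma}$. In any orientation, the labels along its curve alternate appropriately across cups and caps and are constant along segments. Hence it admits an orientation only if its two rays receive opposite labels, and then the orientation is unique. If some upper line of $t\gamma\overline{\gamma}$ has equally labelled rays, I will argue that no diagram $a\lambda' t\mu\overline{\gamma}$ can be oriented. The ray-label constraint (all $\wedge$-rays left of all $\vee$-rays, with the number of each fixed by the block $\Gamma$) forces the labels on the rays of $\overline{\gamma}$. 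So the basis is empty and the module is $0$. Conversely, if no upper line is bad, the basis is nonempty.

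\textbf{Step 2: the bijection.} In the nonzero case, an orientation of $a\lambda' t\mu\overline{\gamma}$ is the same as three independent choices:
\begin{enumerate}
\item an orientation $a\nu\overline{\lambda}$ of the reduced diagram, obtained by reading labels on the bottom line and on the components meeting it;
\item a free choice of orientation, $1$ or $z$, on each of the $n$ upper circles;
\item the forced orientation on each upper line.
\end{enumerate}
This gives a linear bijection between the basis of $K_{\Lambda\Gamma}^te_\gamma$ and the basis $\{(a\nu\overline{\lambda})\}\times\{1,z\}^n$ of $P(\lambda)\otimes_\Bbbk(\Bbbk\oplus\Bbbk)^{\otimes n}$. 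For the degrees, compare the clockwise cups and caps of $a\lambda' t\mu\overline{\gamma}$ with those of $a\nu\overline{\lambda}$:
\begin{enumerate}
\item straightening a component meeting the bottom line changes its clockwise count by a fixed amount determined by its cups and caps in $t$;
\item an anticlockwise circle with $k$ caps has degree $k-1$;
\item a clockwise circle with $k$ caps has degree $k+1$.
\end{enumerate}
Summing and including the shift $\langle-\operatorname{caps}(t)\rangle$ should produce the factor $\Bbbk\langle-1\rangle\oplus\Bbbk\langle1\rangle$ for each circle, together with the global shift $\langle\operatorname{cups}(t)-\operatorname{caps}(t)\rangle$. This is a routine count on a single component.

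\textbf{Step 3: left $K_\Lambda$-linearity (the main obstacle).} I must show the bijection intertwines the left $K_\Lambda$-actions. The left action of $(\alpha\nu\beta)$ is computed by stacking it below and performing surgeries on the symmetric middle section $\overline{\beta}\,\underline{\lambda'}$. These surgeries only involve components meeting the bottom line. Hence they never touch the upper circles or upper lines, and the rules \eqref{eq:arc-circle-surgery} and the line rules act identically on corresponding components of the two sides. The delicate point is that a surgery may merge a bottom-meeting component with curves through $t$, and its sign, or its vanishing in the two-lines case, must agree with the surgery in the reduced picture. I would handle this by checking that straightening a component is compatible with each surgery rule case by case: circle/circle, circle/line, and line/line. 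The key observation is that the clockwise-ness of a circle is determined by its rightmost vertex, and the labels there are unchanged by straightening. With linearity established, the bijection is a graded module isomorphism. Forgetting the grading then gives $P(\lambda)^{\oplus2^n}$.
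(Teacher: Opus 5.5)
Your plan is correct and is essentially the standard diagrammatic proof of the cited result; the paper itself gives no argument beyond citing \cite[Theorem 4.2]{brundanstroppelII}. You rewrite $G^t_{\Lambda\Gamma}P(\gamma)$ as $K^t_{\Lambda\Gamma}e_\gamma\langle-\operatorname{caps}(t)\rangle$, match the diagram bases through upper reduction with the $n$ upper circles oriented freely, and obtain $K_\Lambda$-linearity because surgery on the middle section never meets an upper component.

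One justification in Step 3 must be replaced. The rightmost vertex of a circle can lie on the deleted $\Gamma$-line, so the rightmost-vertex rule does not show that straightening preserves clockwise-ness. Use a count instead: an arc of $t\overline{\gamma}$ joining bottom vertices $p<q$ has one more clockwise than anticlockwise cup or cap when $p$ is labelled $\wedge$, and one fewer when it is labelled $\vee$, exactly like the cap replacing it. Summing this difference over a circle shows that its orientation is unchanged. The same count also shows that every component of $t\overline{\gamma}$ other than an upper circle changes the degree by exactly the number of cups of $t$ on it. This includes the upper lines, which your degree tally omits, and it is what produces the global shift $\langle\operatorname{cups}(t)-\operatorname{caps}(t)\rangle$.
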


\begin{proposition}\label{prop:geometric-functor-standards}
\cite[Theorem 4.5]{brundanstroppelII}
For $\gamma\in\Gamma$, the module
$G_{\Lambda\Gamma}^t\Delta(\gamma)$ has a finite standard
filtration with sections
\[
 \Delta(\mu)\langle\deg(\mu t\gamma)-\operatorname{caps}(t)\rangle,
 \qquad
 \mu\in\Lambda,\quad \mu t\gamma\text{ oriented},
\]
each occurring once.
The filtration can be ordered with larger labels first,
starting from zero.
The module is nonzero exactly when every cup of $t\gamma$
joins opposite labels.
When nonzero, it is indecomposable, with head
$L(\lambda)\langle\deg(\lambda t\gamma)-\operatorname{caps}(t)\rangle$,
where $\lambda t\gamma$ is oriented with all caps anti-clockwise.
\end{proposition}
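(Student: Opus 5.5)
The plan is to work directly with the bimodule $K_{\Lambda\Gamma}^t\otimes_{K_\Gamma}\Delta(\gamma)$ and produce an explicit homogeneous basis indexed by oriented diagrams. Concretely, I would show that the vectors
\[
 (a\mu t\gamma\mid)\;:=\;(a\mu t\gamma\overline{\gamma})\otimes(\gamma\gamma\mid),
 \qquad a=\underline{\alpha},\ \alpha\subset\mu,\ \mu t\gamma\text{ oriented},
\]
form a basis. This mirrors the description of $\Delta(\gamma)$ by the diagrams $(\alpha\gamma\mid)$.

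\emph{Spanning.} Since $\Delta(\gamma)$ is the quotient of $P(\gamma)=K_\Gamma e_\gamma$ by the span of the $(\beta\nu\gamma)$ with $\nu\not\leq\gamma$, the tensor product is a quotient of $K_{\Lambda\Gamma}^te_\gamma$. The latter has basis $(a\lambda t\nu\overline{\gamma})$. Using the surgery rules \eqref{eq:arc-circle-surgery} on the caps of $\overline{\gamma}$, I would show two things. First, every such basis vector with $\nu\neq\gamma$ lies in the image of the submodule being killed, or else rewrites into vectors with $\nu=\gamma$. Second, the vectors with $\nu=\gamma$ survive. The key point is that in $t\overline{\gamma}$, orienting the top line by $\gamma$ is forced once the bottom labels are fixed, up to the $\leq$-relation. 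This is exactly the upper-reduction bookkeeping of Proposition~\ref{prop:geometric-functor-projectives}.

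\emph{Linear independence.} I would compare dimensions on each $e_\alpha$-corner. By exactness of $G^t$ and the standard filtration of $P(\gamma)$ from Theorem~\ref{thm:arc-highest-weight-structure}, together with the formula for $G^tP(\gamma)$ in Proposition~\ref{prop:geometric-functor-projectives}, an induction downward along a finite ideal reduces the count to the known graded dimension of $G^tP(\gamma)$. The degree of $(a\mu t\gamma\mid)$ is $\deg(a\mu)+\deg(\mu t\gamma)$, and the shift $\langle-\operatorname{caps}(t)\rangle$ is built into $G^t$. This gives the shifts $\deg(\mu t\gamma)-\operatorname{caps}(t)$.

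\emph{Filtration.} For a fixed $\mu$, the span of the $(a\mu' t\gamma\mid)$ with $\mu'\geq\mu$ is a submodule: multiplication by a basis element of $K_\Lambda$ only moves the bottom weight up in Bruhat order, by the argument of \cite[Theorem 5.1]{brundanstroppelI}. The successive quotient for $\mu$ is then isomorphic to $\Delta(\mu)\langle\deg(\mu t\gamma)-\operatorname{caps}(t)\rangle$, via $(a\mu t\gamma\mid)\mapsto(a\mu\mid)$. Ordering by a linear refinement with larger labels first gives the stated filtration. There are only finitely many $\mu$ with $\mu t\gamma$ oriented, since $t$ has finitely many cups and caps, so the filtration is finite. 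The module is nonzero exactly when some $\mu$ makes $\mu t\gamma$ oriented. On the top line, the only constraint is that every cup of $t\gamma$ joins opposite labels; the caps and segments can then always be oriented.

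\emph{Indecomposability and head.} The head is obtained by adjunction. By Proposition~\ref{prop:geometric-functor-adjunctions},
\[
 \operatorname{Hom}(G^t\Delta(\gamma),L(\nu)\langle j\rangle)\cong
 \operatorname{Hom}(\Delta(\gamma),G^{t^*}L(\nu)\langle j'\rangle),
\]
and this is nonzero only if $L(\gamma)$ lies in the socle-type position of $G^{t^*}L(\nu)$. Instead of computing $G^{t^*}L(\nu)$ directly, I would use the basis: the module is generated by the single vector $(\underline{\lambda}\lambda t\gamma\mid)$, where $\lambda$ orients all caps of $t$ anti-clockwise. Indeed, every other basis vector is obtained from it by left multiplication, since anti-clockwise caps split under surgery to give both orientations. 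A cyclic module with $K_\Lambda$ positively graded and degree-zero part spanned by idempotents has simple head $L(\lambda)$ in the appropriate degree, and is therefore indecomposable.

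I expect the main obstacle to be the spanning/straightening step: controlling the surgery procedure on $K^t_{\Lambda\Gamma}e_\gamma$ to show that exactly the diagrams with top weight $\gamma$ survive modulo the kernel of $P(\gamma)\to\Delta(\gamma)$. If the direct approach becomes too messy, I would factor $t$ as a stack of elementary matchings, each with a single cup, a single cap, or only segments. I would then use \eqref{eq:geometric-functor-composition}, treat each elementary case by hand, and obtain the general case from exactness and the tensor product rule in Lemma~\ref{lem:tpc-standard-costandard-sequences}.
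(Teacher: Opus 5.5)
The paper does not prove this itself; it quotes \cite[Theorem 4.5]{brundanstroppelII}. Your outline is the natural direct route: a basis of $K^t_{\Lambda\Gamma}\otimes_{K_\Gamma}\Delta(\gamma)$ indexed by oriented $a\mu t\gamma$, followed by a triangular filtration. However, the step giving indecomposability and the head is not justified.

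The claim that $v_\lambda=(\underline{\lambda}\lambda t\gamma\mid)$ generates ``since anti-clockwise caps split under surgery'' does not describe what actually happens. The caps of $t$ enclose only further caps of $t$, so they are cups of $\underline{\lambda}$. Hence $\lambda\subset\mu$ and $(\underline{\mu}\mu\overline{\lambda})\in K_\Lambda$ exists. But when you multiply $v_\lambda$ by it, the surgeries at those positions are \emph{merges} with small anticlockwise circles, which simply keep the labels of the left factor. The remaining surgeries produce a leading coefficient of the same $\{0,1\}$ type that governs the action on a standard module, and nothing in your sketch shows it is nonzero.

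You can avoid the computation entirely:
\begin{itemize}
\item By Proposition~\ref{prop:geometric-functor-projectives}, $G^t\Delta(\gamma)$ is a quotient of $G^tP(\gamma)$, which is a sum of shifts of a single $P(\lambda'')$. So its head is $L(\lambda'')$-isotypic.
\item Left exactness of $\operatorname{Hom}(-,L)$ along your flag gives $\sum_j\dim\operatorname{Hom}(G^t\Delta(\gamma),L(\lambda'')\langle j\rangle)\leq1$, because every label occurs once. Hence the head is simple.
\item Every other label is obtained from $\lambda$ by turning caps of $t$ clockwise, which moves a $\vee$ to the right. So $\Delta(\lambda)$ is the top section.
\item This forces $\lambda''=\lambda$ and the stated shift, and indecomposability follows.
\end{itemize}

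The other steps are right in outline, but their inputs should be stated correctly.

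\emph{Spanning.} The top labels are not ``forced'': they range over all $\nu\supset\gamma$, and every vector with $\nu\neq\gamma$ must die. What does this is right-module triangularity of $K^t$. The product $(a\lambda t\nu\overline{\nu})(\underline{\nu}\nu\overline{\gamma})$ equals $(a\lambda t\nu\overline{\gamma})$ plus vectors with $\Gamma$-label $>\nu$. Moreover $(\underline{\nu}\nu\overline{\gamma})\in\ker\bigl(P(\gamma)\to\Delta(\gamma)\bigr)$ for $\nu\neq\gamma$. A downward induction on the finite set $\{\nu\mid\nu\supset\gamma\}$ then finishes.

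\emph{Independence.} No induction over labels is needed; a downward one would have no starting point in blocks that are not upper finite, such as those of $K_n^{+\infty}$. Combine the spanning bound for every $\nu\supset\gamma$ with exactness and the flag of $P(\gamma)$. This compares $\sum_\nu q^{\deg(\nu\overline{\gamma})}\dim_q e_\alpha G^t\Delta(\nu)$ with the diagram count of $e_\alpha K^te_\gamma$ and forces equality termwise.

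\emph{Sections.} You need the bimodule analogue of the fact in \cite{brundanstroppelI} that the leading coefficient of $(c\kappa d)\cdot(a\mu t\gamma\mid)$ is the scalar defining the action on $\Delta(\mu)$. In particular, it must be independent of the part $t\gamma\overline{\gamma}$ above $\mu$. That independence, for stacked diagrams, is what has to be checked.

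\emph{Fallback.} Your fallback is circular here. Lemma~\ref{lem:tpc-standard-costandard-sequences} applies to arc algebras only through Theorem~\ref{thm:arc-tensor-product-categorifications}. Its axioms come from Lemma~\ref{lem:arc-standard-action}, which is itself deduced from this proposition.
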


\begin{proposition}\label{lem:geometric-functor-simple-head}
\cite[Theorem 4.11(ii)--(iii)]{brundanstroppelII}
Let $t$ be a proper $\Lambda\Gamma$-matching and
$\gamma\in\Gamma$. Write $G^t=G^t_{\Lambda\Gamma}$.
Then $G^tL(\gamma)$ is nonzero exactly when all
cups of $t\gamma$ are anti-clockwise.
In that case let $\beta$ be the target label for which
$\beta t\gamma$ is oriented and all caps are anti-clockwise.
The module is self-dual and indecomposable, with head
$L(\beta)\langle-\operatorname{caps}(t)\rangle$ and socle
$L(\beta)\langle\operatorname{caps}(t)\rangle$.
\end{proposition}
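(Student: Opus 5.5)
The statement is cited from \cite[Theorem 4.11(ii)--(iii)]{brundanstroppelII}, so the plan is to derive it from the results already stated: adjunctions (Proposition~\ref{prop:geometric-functor-adjunctions}), the action on projectives (Proposition~\ref{prop:geometric-functor-projectives}) and the action on standards (Proposition~\ref{prop:geometric-functor-standards}). We work with graded modules throughout, since the grading shifts are what distinguish head from socle.

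\textbf{Head.} For $\lambda\in\Lambda$, the left adjunction $G^{t^*}\langle\operatorname{cups}(t)-\operatorname{caps}(t)\rangle\dashv G^t$ gives
\[
\operatorname{Hom}(G^tL(\gamma),L(\lambda)\langle j\rangle)\cong\operatorname{Hom}(L(\gamma),G^{t^*}_{\Gamma\Lambda}L(\lambda)\langle j+\operatorname{caps}(t)-\operatorname{cups}(t)\rangle).
\]
This is the wrong direction, so instead I would use the other adjunction:
\[
\operatorname{Hom}(P(\lambda)\langle j\rangle,G^tL(\gamma))\cong\operatorname{Hom}(G^{t^*}P(\lambda)\langle\ldots\rangle,L(\gamma)).
\]
Proposition~\ref{prop:geometric-functor-projectives} applied to $t^*$ computes $G^{t^*}P(\lambda)$ as a sum of graded shifts of one indecomposable projective $P(\gamma')$. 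Here $\overline{\gamma'}$ is the upper reduction of $t^*\overline{\lambda}$, and the shifts are $(\langle-1\rangle\oplus\langle1\rangle)^{\otimes n}$ together with the global cups/caps shift. Hence $e_\lambda G^tL(\gamma)$ is nonzero exactly when $\gamma'=\gamma$ and the vanishing condition fails. Its graded dimension is then $(q+q^{-1})^n$ up to shift, with $n$ the number of circles.

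The combinatorial step is to show that $\gamma'=\gamma$ forces $\lambda t\gamma$ to be oriented, with all cups of $t\gamma$ anti-clockwise, and that $e_\lambda G^tL(\gamma)\neq0$ for exactly one $\lambda=\beta$. That $\beta$ is the one for which $\beta t\gamma$ is oriented with all caps anti-clockwise. Granting this, $G^tL(\gamma)$ is supported only on $e_\beta$, with graded dimension $(q+q^{-1})^{\operatorname{caps}(t)}$ shifted appropriately. The circles counted by $n$ correspond bijectively to the caps of $t$ closed off by $\overline{\beta}$. The lowest degree is $-\operatorname{caps}(t)$ and the highest is $\operatorname{caps}(t)$, each one-dimensional. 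This also gives the nonvanishing criterion: if some cup of $t\gamma$ is clockwise, or joins equal labels, then no $\lambda$ survives.

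\textbf{Self-duality.} $G^t$ commutes with $\circledast$ and $L(\gamma)^\circledast\cong L(\gamma)$, so $G^tL(\gamma)$ is self-dual. Since the grading is nonnegative with degree-zero part spanned by the idempotents, the radical filtration raises degree. The module is concentrated at the single idempotent $e_\beta$, so its head is contained in the lowest-degree part, which is one-dimensional: $L(\beta)\langle-\operatorname{caps}(t)\rangle$. This is the step that makes the head simple. Duality then gives socle $L(\beta)\langle\operatorname{caps}(t)\rangle$. A module with simple head is indecomposable.

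\textbf{Main obstacle.} The hard part is the diagrammatic identification: showing that the surviving $\lambda$ is unique, and that the circle count $n$ equals $\operatorname{caps}(t)$ precisely for $\beta$. I would do this first by reducing to generating matchings. Every proper matching factors as a stack of elementary cup, cap and identity matchings, and $G^t$ is compatible with the composition rule \eqref{eq:geometric-functor-composition}. The two elementary cases can be checked directly from Proposition~\ref{prop:geometric-functor-standards} applied to $L(\gamma)$ as a quotient of $\Delta(\gamma)$.
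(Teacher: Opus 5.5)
Your adjunction computation of $e_\lambda G^tL(\gamma)$ via Proposition~\ref{prop:geometric-functor-projectives} is fine, and so is self-duality via $\circledast$. The gap is the ``combinatorial step'' you grant, on which your head argument rests: it is false, because $G^tL(\gamma)$ is in general \emph{not} supported at the single idempotent $e_\beta$.

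For a counterexample, take $\Gamma=\Lambda^1_1$ and $\Lambda=\Lambda^2_2$. Let $t$ have a cap on bottom vertices $2,3$, with through-lines from bottom $1$ to top $1$ and from bottom $4$ to top $2$. Let $\gamma=\vee\wedge$, so $\beta=\vee\vee\wedge\wedge$, and put $\mu=\vee\wedge\vee\wedge$. In $t^*\overline{\mu}$, the cup of $t^*$ at $(2,3)$ joins the caps $(1,2),(3,4)$ of $\overline{\mu}$ into one curve from bottom vertex $1$ to bottom vertex $2$. There are no upper circles or lines, and the upper reduction is $\overline{\gamma}$, so $G^{t^*}P(\mu)\cong P(\gamma)\langle1\rangle$. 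Your own adjunction, with shift $\operatorname{cups}(t)-\operatorname{caps}(t)=-1$, then gives $\operatorname{Hom}(P(\mu)\langle j\rangle,G^tL(\gamma))\cong\operatorname{Hom}(P(\gamma)\langle j\rangle,L(\gamma))$. Hence $(e_\mu G^tL(\gamma))_0\cong\Bbbk$: $L(\mu)$ is a composition factor in degree $0$, strictly between the two copies $L(\beta)\langle\mp1\rangle$.

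So the total graded dimension is not $(q+q^{-1})^{\operatorname{caps}(t)}$, and ``concentrated at $e_\beta$'' fails. There is a second, independent problem. Even for a module supported at one idempotent, a one-dimensional lowest-degree part does not force a simple head; $L(\beta)\oplus L(\beta)\langle2\rangle$ is an example. You would also need the module to be generated in that degree, and nothing in your argument provides this. The planned reduction to elementary matchings cannot help, since it targets the false uniqueness claim.

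The missing idea is to read the head off standards rather than off a support count. Apply the exact functor $G^t$ to $\Delta(\gamma)\twoheadrightarrow L(\gamma)$; this exhibits $G^tL(\gamma)$ as a quotient of $G^t\Delta(\gamma)$. By Proposition~\ref{prop:geometric-functor-standards}, $G^t\Delta(\gamma)$ has simple head $L(\beta)\langle\deg(\beta t\gamma)-\operatorname{caps}(t)\rangle$. Here $\deg(\beta t\gamma)=0$: its caps are anti-clockwise by the choice of $\beta$, and its cups are anti-clockwise by hypothesis. A nonzero quotient of a module with simple head has the same simple head. So $G^tL(\gamma)$ has head $L(\beta)\langle-\operatorname{caps}(t)\rangle$ and is indecomposable, and your duality argument then gives the socle.

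For the nonvanishing criterion, your adjunction approach still works once you drop uniqueness. The cups of $t$ become caps of $t^*$ on the $\Gamma$ line, and these survive in every upper reduction of $t^*\overline{\lambda}$. So some $\lambda$ contributes only if they are caps of $\overline{\gamma}$, i.e.\ anti-clockwise in $t\gamma$. Conversely, $\lambda=\beta$ contributes: the caps of $t$ close up with caps of $\overline{\beta}$ into $\operatorname{caps}(t)$ upper circles, and the rest reduces to $\overline{\gamma}$. (The paper gives no proof of its own here; it only cites \cite[Theorem 4.11]{brundanstroppelII}.)
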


\subsection{Linear tilting coresolution}

% Insert with \input{33} under ``Linear tilting coresolution'', after 30.tex.
% Uses the geometric-bimodule results in 30.tex.
% Bibliography: bowman2026faithful and brundanstroppelII (already in refs.bib).

Throughout this subsection, assume $\operatorname{def}(\Lambda)<\infty$.
We keep the grading shifts and left-module convention above.
Tiltings are normalized by a degree-zero inclusion
$\Delta(\lambda)\hookrightarrow T(\lambda)$.

Suppose two neighbouring vertices of $\lambda$ are labelled
$\wedge,\vee$, in this order.
Let $\lambda^-$ be obtained by interchanging these two labels,
and let $\lambda'$ be obtained by deleting the two vertices.
Let $\Gamma$ be the block of $\lambda'$ and let $t$ have a cap
joining the deleted vertices on the bottom line, with all other
vertices joined to their counterparts on the top line.
Proposition~\ref{prop:geometric-functor-standards} gives
\begin{equation}\label{eq:arc-standard-cap-sequence}
 0\longrightarrow\Delta(\lambda)
 \longrightarrow G_{\Lambda\Gamma}^t\Delta(\lambda')
 \longrightarrow\Delta(\lambda^-)\langle-1\rangle
 \longrightarrow0.
\end{equation}
The two shifts are $0$ and $-1$: the cap is clockwise for
$\lambda$ and anti-clockwise for $\lambda^-$, and the definition
of $G_{\Lambda\Gamma}^t$ includes the shift $\langle-1\rangle$.
Also $\lambda^-<\lambda$.

\begin{proposition}
\cite{bowman2026faithful}
\label{prop:arc-tiltings-from-bimodules}
The indecomposable graded tilting modules are constructed as follows.
If $\lambda$ has no neighbouring $\wedge,\vee$, put
$T(\lambda)=\Delta(\lambda)=L(\lambda)$.
Otherwise use the matching above and put
$T(\lambda)=G_{\Lambda\Gamma}^tT(\lambda')$.
The resulting module is independent, up to graded isomorphism,
of the choices. It is self-dual and has a finite standard
filtration with bottom section $\Delta(\lambda)$; every other
section is $\Delta(\mu)\langle s\rangle$ with $\mu<\lambda$
and $s<0$.
\end{proposition}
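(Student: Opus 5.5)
The argument is by induction on the number of neighbouring pairs $\wedge,\vee$ that must be removed, equivalently on the number of vertices of $\lambda$ outside the trivially labelled region. The base case is immediate. If $\lambda$ has no neighbouring $\wedge,\vee$, then every $\vee$ lies to the right of every $\wedge$, so $\lambda$ is minimal in its block. Hence $\Delta(\lambda)=L(\lambda)=\nabla(\lambda)$ by Theorem~\ref{thm:arc-highest-weight-structure}, and this module is tilting and self-dual.

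For the inductive step, assume $T(\lambda')$ is an indecomposable self-dual tilting module with the stated filtration. Three properties transfer directly.
\begin{itemize}
\item \emph{Tilting.} $G^t_{\Lambda\Gamma}$ sends standard-filtered modules to standard-filtered modules by Proposition~\ref{prop:geometric-functor-standards}. Since it commutes with $\circledast$ (Proposition~\ref{prop:geometric-functor-adjunctions}), it also preserves costandard filtrations. Hence $G^tT(\lambda')$ is tilting.
\item \emph{Self-duality.} This follows from the same commutation with $\circledast$.
\item \emph{Standard sections.} By exactness, the sections of $G^tT(\lambda')$ are the sections of $G^t\Delta(\mu')\langle s\rangle$ for the sections $\Delta(\mu')\langle s\rangle$ of $T(\lambda')$. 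The bottom section $\Delta(\lambda')$ contributes \eqref{eq:arc-standard-cap-sequence}, so $\Delta(\lambda)$ sits at the bottom in degree $0$ and $\Delta(\lambda^-)\langle-1\rangle$ appears with $\lambda^-<\lambda$. For $\mu'<\lambda'$ with $s<0$, Proposition~\ref{prop:geometric-functor-standards} gives sections $\Delta(\mu)\langle s+\deg(\mu t\mu')-1\rangle$. Here $\deg(\mu t\mu')\le1$, since $t$ has a single cap, so each shift is negative. Each such $\mu$ is obtained from $\mu'$ by reinserting the two deleted vertices as $\wedge\vee$ or $\vee\wedge$, so $\mu'<\lambda'$ gives $\mu<\lambda$.
\end{itemize}
Every label other than $\lambda$ is therefore strictly smaller and carries a strictly negative shift.

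The main obstacle is indecomposability. The highest label $\lambda$ occurs exactly once, so $G^tT(\lambda')\cong T(\lambda)\oplus T'$, where $T'$ is a sum of tiltings with smaller labels. I would show $T'=0$ by a degree argument. Every indecomposable tilting summand $T(\nu)$ has bottom section $\Delta(\nu)$ in degree $0$ after its own shift. Self-duality of the whole module, together with uniqueness of the graded Krull--Schmidt decomposition, forces the summands to occur in pairs $T(\nu)\langle k\rangle$ and $T(\nu)\langle -k\rangle$. But all sections other than $\Delta(\lambda)$ occur in negative degree. So a summand $T(\nu)\langle k\rangle$ would need $k<0$, and its dual partner $T(\nu)\langle -k\rangle$ would need $-k<0$, which is impossible. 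Hence $G^tT(\lambda')$ is indecomposable, equal to $T(\lambda)$ with the normalization $\Delta(\lambda)\hookrightarrow T(\lambda)$ in degree $0$.

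Independence of the choice of neighbouring pair then follows, because an indecomposable graded tilting module is determined up to isomorphism by its label together with this normalization.
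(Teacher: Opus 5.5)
Your proof is correct. For the tilting property, self-duality and the standard sections it is the paper's argument. The real difference is in how you prove indecomposability.

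The paper forgets the grading and shows inductively that $T(\lambda)$ has a simple socle. By adjunction and Proposition~\ref{prop:geometric-functor-projectives}, $G^{t^*}L(\nu)$ is $L(\nu')$ when $\nu$ carries $\vee,\wedge$ at the two deleted vertices, and is zero otherwise. Hence $\operatorname{Hom}(L(\nu),G^tT(\lambda'))\cong\operatorname{Hom}(G^{t^*}L(\nu),T(\lambda'))$ is one-dimensional for exactly one $\nu$. You instead get indecomposability from graded data you have already established: self-duality, a unique section $\Delta(\lambda)$ in degree $0$, strictly negative shifts elsewhere, and graded Krull--Schmidt. Your route avoids computing the adjoint functor on simples. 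It also shows more generally that any self-dual graded tilting module with this section pattern is $T(\lambda)$. It is the same positivity mechanism as Lemma~\ref{lem:arc-positive-tilting-morphisms}. Applied directly, that lemma would show $\operatorname{End}(T(\lambda))$ is $\Bbbk$ plus a nilpotent ideal of positive degree, which gives ungraded indecomposability. The paper's route gives ungraded indecomposability, plus the extra fact of a simple socle, without invoking the classification of graded indecomposable tiltings.

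Two small points. First, your route does depend on that classification, and you should state the inputs. Every graded indecomposable tilting is a shift of a normalized $T(\nu)$. Also $T(\nu)^\circledast\cong T(\nu)$ for every $\nu$, not only for $\lambda$. This holds because $L(\nu)^\circledast\cong L(\nu)$ and $L(\nu)$ occurs in $T(\nu)$ exactly once, in degree $0$. Second, the base case has a slip. Having no neighbouring $\wedge,\vee$ means every $\vee$ lies to the \emph{left} of every $\wedge$. That is what makes $\lambda$ minimal, since moving a $\vee$ to the right goes up in the Bruhat order.
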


\begin{proof}
This is the graded form of the construction in
\cite[Theorem 5.1]{bowman2026faithful}.
Deleting a pair reduces the finite number of occurrences of
one of the two symbols, so the construction terminates.
At the last step the weight is minimal and its standard is simple.
The functors $G^t$ are exact, preserve standard filtrations,
and commute with $\circledast$.
Thus the modules constructed are self-dual and standard-filtered,
and hence tilting.
Sequence~\eqref{eq:arc-standard-cap-sequence}, applied successively,
gives the stated inclusion and filtration shifts.
Indeed, inserting $\wedge,\vee$ preserves the order on the
remaining vertices; the alternative insertion $\vee,\wedge$
has a smaller label and contributes the additional shift $-1$.

For indecomposability, forget the grading and inductively show
that $T(\lambda)$ has a simple socle.
Adjunction and Proposition~\ref{prop:geometric-functor-projectives}
show that $G^{t^*}L(\nu)$ is zero unless the two vertices of
$\nu$ carry $\vee,\wedge$; in that case it is $L(\nu')$,
where $\nu'$ deletes these vertices.
Indeed, its component at $e_{\mu'}$ has dimension
$\dim\operatorname{Hom}(G^tP(\mu'),L(\nu))$, which is one
precisely when $\nu$ inserts $\vee,\wedge$ into $\mu'$, and
zero otherwise.
The other adjunction gives
$\operatorname{Hom}(L(\nu),G^tT(\lambda'))
\cong\operatorname{Hom}(G^{t^*}L(\nu),T(\lambda'))$.
By induction this is one-dimensional for exactly one $\nu$.
Hence $G^tT(\lambda')$ has a simple socle and is indecomposable.
The characterization of $T(\lambda)$ by its highest standard
section proves independence of the choices.
The inclusion fixes the grading shift.
\end{proof}

\begin{lemma}\label{lem:arc-positive-tilting-morphisms}
For the tiltings in Proposition~\ref{prop:arc-tiltings-from-bimodules},
there are no homogeneous morphisms of negative degree, and
the degree-zero morphisms between $T(\lambda)$ and $T(\mu)$
form $\Bbbk$ if $\lambda=\mu$ and are zero otherwise.
\end{lemma}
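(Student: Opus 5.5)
The approach is to compute graded Hom spaces between tiltings through their standard and costandard filtrations, using positivity of the grading. Every tilting $T(\mu)$ is self-dual (Proposition~\ref{prop:arc-tiltings-from-bimodules}). It therefore has a costandard filtration obtained by applying $\circledast$ to its standard filtration: the top section is $\nabla(\mu)$, and every other section is $\nabla(\nu)\langle -s\rangle$ with $\nu<\mu$ and $-s>0$. The basic input is the graded Ext-orthogonality
\[
 \operatorname{Hom}\bigl(\Delta(\alpha),\nabla(\beta)\langle j\rangle\bigr)\cong
 \begin{cases}\Bbbk,&\alpha=\beta,\ j=0,\\0,&\text{otherwise},\end{cases}
 \qquad
 \operatorname{Ext}^1\bigl(\Delta(\alpha),\nabla(\beta)\langle j\rangle\bigr)=0 .
\]
This orthogonality holds in the graded essentially finite highest weight category of Theorem~\ref{thm:arc-highest-weight-structure}.

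Consider the degree-$k$ homogeneous morphisms $T(\lambda)\to T(\mu)$, that is, $\operatorname{Hom}(T(\lambda),T(\mu)\langle k\rangle)$. Because Ext$^1$ vanishes between standard-filtered and costandard-filtered objects, the dimension is additive over the two filtrations:
\[
 \dim\operatorname{Hom}(T(\lambda),T(\mu)\langle k\rangle)
 =\sum_{\nu}\sum_{a,b}\bigl(T(\lambda):\Delta(\nu)\langle a\rangle\bigr)\bigl(T(\mu):\nabla(\nu)\langle b\rangle\bigr)\,[\,b+k=a\,],
\]
where the shift convention must be checked carefully against $(M\langle j\rangle)_a=M_{a-j}$. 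The standard sections of $T(\lambda)$ have shifts $a\le 0$, with $a=0$ only for $\Delta(\lambda)$. The costandard sections of $T(\mu)$ have shifts $b\ge0$, with $b=0$ only for $\nabla(\mu)$.

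A nonzero term therefore requires $k=a-b\le 0$. With the correct sign convention, the result is that degrees are $\ge0$, i.e.\ no negative-degree morphisms. The equality $k=0$ forces $a=b=0$, which gives $\nu=\lambda$ from the first filtration and $\nu=\mu$ from the second. So the degree-zero Hom space is one-dimensional when $\lambda=\mu$ and zero otherwise.

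The main obstacle is fixing the sign conventions consistently. Concretely, one must check that the orientation of the inequality "$s<0$" in Proposition~\ref{prop:arc-tiltings-from-bimodules}, combined with how $\circledast$ negates shifts, yields $k\ge0$ rather than $k\le0$. To do this I would test the simplest case first. Take the two-term tilting $G^t\Delta(\lambda')$ from \eqref{eq:arc-standard-cap-sequence}, whose sections are $\Delta(\lambda)$ and $\Delta(\lambda^-)\langle-1\rangle$. There I would directly compare the nonzero map $T(\lambda)\to T(\lambda^-)$ or its reverse with the positive grading of $K_\Lambda$; this fixes the sign of each summand, and the general count above then follows verbatim. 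For $\lambda=\mu$ the degree-zero endomorphism space is spanned by the identity.
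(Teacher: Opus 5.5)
Your argument is the same as the paper's. The standard sections of $T(\lambda)$ have shifts $\leq 0$, with only $\Delta(\lambda)$ at shift $0$. Self-duality gives costandard sections of $T(\mu)$ with shifts $\geq 0$, with only $\nabla(\mu)$ at shift $0$. Graded $\Delta$--$\nabla$ Hom-orthogonality together with $\operatorname{Ext}^1$-vanishing then finishes the proof.

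The sign issue you flag resolves in one line, so no test case is needed. Under $(M\langle j\rangle)_a=M_{a-j}$, a degree-$d$ map $M\to N$ is a degree-zero map $M\to N\langle -d\rangle$. Hence $\operatorname{Hom}(T(\lambda),T(\mu)\langle k\rangle)$ consists of maps of degree $-k=b-a\geq 0$, which is exactly the paper's ``degree $t-s\geq0$''.
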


\begin{proof}
The standard sections of $T(\lambda)$ have shifts at most zero,
with $\Delta(\lambda)$ the only section of shift zero.
By self-duality, the costandard sections of $T(\mu)$ have
shifts at least zero, with only $\nabla(\mu)$ in shift zero.
Use these filtrations and the highest weight identities
$\operatorname{Ext}^1_{\mathrm{gr}}(\Delta(a),\nabla(b)\langle s\rangle)=0$
and
$\operatorname{Hom}_{\mathrm{gr}}(\Delta(a),\nabla(b)\langle s\rangle)
=\delta_{a,b}\delta_{s,0}\Bbbk$.
A map from $\Delta(a)\langle s\rangle$ to
$\nabla(b)\langle t\rangle$ can have only degree $t-s\geq0$.
Degree zero requires the two distinguished sections.
\end{proof}

We call a tilting coresolution linear if its term in cohomological
degree $j$ is a finite direct sum of modules
$T(\mu)\langle-j\rangle$ and its differentials have degree zero.

\begin{theorem}\label{thm:arc-linear-tilting-finite-defect}
Every standard has a linear tilting coresolution
\begin{equation}\label{eq:arc-linear-tilting-coresolution}
 0\longrightarrow\Delta(\lambda)
 \longrightarrow T^0(\lambda)
 \longrightarrow T^1(\lambda)\longrightarrow\cdots,
 \qquad
 T^j(\lambda)\in\operatorname{add}
 \{T(\mu)\langle-j\rangle\mid\mu\leq\lambda\}.
\end{equation}
Here $T^0(\lambda)=T(\lambda)$, and all successive cokernels
have finite standard filtrations.
The coresolution is bounded when $\Lambda$ is finite or lower finite.
Applying $\circledast$ gives a linear tilting resolution of
$\nabla(\lambda)$.
\end{theorem}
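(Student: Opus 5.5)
We build the coresolution inductively, one step at a time, using the positivity of tilting morphisms in Lemma~\ref{lem:arc-positive-tilting-morphisms} to force linearity.

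\emph{First step.} By Proposition~\ref{prop:arc-tiltings-from-bimodules}, the degree-zero inclusion $\Delta(\lambda)\hookrightarrow T(\lambda)=T^0(\lambda)$ has cokernel $C^1$ with a finite standard filtration. Its sections are the $\Delta(\mu)\langle s\rangle$ with $\mu<\lambda$ and $s<0$. By the graded version of \eqref{eq:arc-standard-cap-sequence} iterated along the construction, each cap insertion contributes exactly a shift $-1$ per lowered label. We would like the stronger statement that the shift equals minus the Bruhat length from $\lambda$ to $\mu$, but we only need a weaker inequality.

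\emph{Inductive step.} Suppose $C^j$ has a finite standard filtration with sections $\Delta(\mu)\langle s\rangle$, $\mu\leq\lambda$, and all shifts $s\leq -j$. Embed $C^j$ into a tilting module via the standard construction: take the maximal sections, say of label $\nu$ and shift $s$, and map to $T(\nu)\langle s\rangle$. Repeating over a finite-flag category gives a minimal left $\operatorname{add}(T)$-approximation $C^j\hookrightarrow T^j$ with standard-filtered cokernel $C^{j+1}$. The content is linearity: every summand $T(\nu)\langle s\rangle$ of $T^j$ must have $s=-j$. To see this, use the highest weight identity $\operatorname{Ext}^1(\Delta,\nabla)=0$ together with the degree bounds in the proof of Lemma~\ref{lem:arc-positive-tilting-morphisms}. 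Homomorphisms $C^j\to T(\nu)\langle s\rangle$ are built from maps $\Delta(\mu)\langle s'\rangle\to\nabla(\nu)\langle t\rangle$, which have degree $t-s'\geq 0$. The multiplicities of $T(\nu)\langle s\rangle$ in a minimal approximation are then computed by $\dim\operatorname{Hom}(C^j,\nabla(\nu)\langle s\rangle)$ modulo radical maps.

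\emph{Main obstacle.} The hard part is proving the sharp concentration: in $C^j$, the ``top'' sections (those not in the image of radical maps) all sit in shift exactly $-j$, and after cokernel all shifts are $\leq -(j+1)$. I would try to get this from Koszulity of $K_\Lambda$, via Brundan--Stroppel's finite-defect Koszulity, by translating it into the statement that the grading is balanced in Mazorchuk's sense. That is, the standard modules have linear tilting coresolutions if and only if the Ringel dual is positively graded, i.e.\ the graded endomorphism algebra of $\bigoplus T(\mu)$ is nonnegatively graded with semisimple degree-zero part. Lemma~\ref{lem:arc-positive-tilting-morphisms} provides exactly this positivity. Mazorchuk's argument then works inside each finite ideal subcategory, which is where the essentially finite hypothesis $\operatorname{def}(\Lambda)<\infty$ enters. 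Compatibility with enlarging the ideals gives the result for the whole block.

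\emph{Boundedness and duality.} Each $C^j$ involves only labels $\mu\leq\lambda$, and each step strictly lowers the maximal labels occurring. When $\Lambda$ is finite or lower finite, the set $\{\mu\leq\lambda\}$ is finite, so the process stops. Finally, apply $\circledast$, which fixes tiltings by self-duality, sends $\Delta$ to $\nabla$, and negates shifts. This turns the coresolution into a linear tilting resolution of $\nabla(\lambda)$.
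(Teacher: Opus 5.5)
Your proposal has a genuine gap, and it sits exactly at the step you flag as the main obstacle: nothing in your argument forces the approximation terms into shift exactly $-j$. The filtration argument with Lemma~\ref{lem:arc-positive-tilting-morphisms} is one-sided. If $C^j$ has standard sections in shifts $\le -j$, and $T(\nu)\langle s\rangle$ has costandard sections in shifts $\ge s$, then a nonzero degree-zero map $C^j\to T(\nu)\langle s\rangle$ only forces $s\le -j$. Summands with $s<-j$ are not excluded.

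The equivalence you then invoke, that standards have linear tilting coresolutions if and only if the Ringel dual is positively graded, is not a theorem, and it is false. Applying $\operatorname{Hom}(-,T)$ turns a tilting coresolution of $\Delta(\lambda)$ into a projective resolution of the corresponding standard module of the Ringel dual. So linearity is a Koszul-type property of the graded Ringel dual, and positivity is much weaker. A two-vertex example shows this. Take the path algebra of $1\to2$ with the arrow in degree $2$ and the order $1<2$. Then:
\begin{itemize}
\item $T(1)=L(1)$ and $T(2)=P(1)\langle-2\rangle$;
\item every homogeneous map between tiltings has nonnegative degree, with only scalars in degree zero;
\item the non-bottom standard section of $T(2)$ is $\Delta(1)\langle-2\rangle$, with negative shift.
\end{itemize}
Yet $0\to\Delta(2)\to T(2)\to T(1)\langle-2\rangle\to0$ is not linear. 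So the properties you actually use cannot yield the theorem.

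Appealing to Koszulity of $K_\Lambda$ does not close the gap either. Koszulity controls projective resolutions over $K_\Lambda$, not over its Ringel dual, and that Ringel dual is not identified as a graded algebra here. You would need a theorem combining the two, and none is given. There is also a secondary problem. For blocks such as $\Lambda_n^{-\infty}$ or $\Lambda_n^{\infty}$, the set $\{\mu\le\lambda\}$ is infinite and the coresolution is unbounded. So there is no finite ideal subcategory in which to run a finite-dimensional argument such as Mazorchuk's.

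The missing idea is to use the geometric recursion, so that linearity is built in rather than estimated. The paper takes the sequence \eqref{eq:arc-standard-cap-sequence}, $0\to\Delta(\lambda)\to G^t\Delta(\lambda')\to\Delta(\lambda^-)\langle-1\rangle\to0$, and coresolves the two outer terms by induction.
\begin{itemize}
\item The exact functor $G^t$ applied to a linear coresolution of $\Delta(\lambda')$ in the smaller block gives one. This works because $G^tT(\nu')=T(\nu)$ with no grading shift.
\item The coresolution of $\Delta(\lambda^-)$, shifted by $\langle-1\rangle$, gives the other.
\end{itemize}
The paper lifts the surjection to a cochain map, which is possible because $\operatorname{Ext}^1(M,T(\nu)\langle s\rangle)=0$ for standard-filtered $M$. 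Taking the shifted mapping cone gives terms $A^j\oplus B^{j-1}$. The extra $\langle-1\rangle$ in \eqref{eq:arc-standard-cap-sequence} is precisely what places $B^{j-1}$ in shift $-j$. Infinite blocks are handled by inducting on the number of symbols and building the complexes degree by degree.
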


\begin{proof}
We adapt the mapping-cone construction of linear projective
resolutions in \cite[Theorem 5.3]{brundanstroppelII}.
If $\Delta(\lambda)=T(\lambda)$, there is nothing to do.
Otherwise use \eqref{eq:arc-standard-cap-sequence}.
A coresolution of $\Delta(\lambda')$, followed by $G^t$, gives
a linear tilting coresolution $A^\bullet$ of
$G^t\Delta(\lambda')$: the preceding proposition gives
$G^tT(\nu')=T(\nu)$, where $\nu$ inserts $\wedge,\vee$.
A coresolution of $\Delta(\lambda^-)$, shifted by $-1$, gives
a coresolution $B^\bullet$ of $\Delta(\lambda^-)\langle-1\rangle$.

Lift the surjection in \eqref{eq:arc-standard-cap-sequence}
to a cochain map $A^\bullet\to B^\bullet$.
Such lifts exist successively because
$\operatorname{Ext}^1_{\mathrm{gr}}(M,T(\nu)\langle s\rangle)=0$
for every standard-filtered $M$.
Taking the mapping cone, shifted by $-1$, gives a coresolution
of the kernel $\Delta(\lambda)$ with terms
\[
 A^0,\qquad A^j\oplus B^{j-1}\quad(j\geq1).
\]
Both summands in degree $j$ are sums of tiltings shifted by $-j$.
The successive cokernels are extensions of those in the two
input coresolutions, so remain standard-filtered.
All labels are at most $\lambda$.

For completeness, this recursion does not require a minimal
weight in an infinite block.
Induct first on the finite number of occurrences of one of
the symbols, and within each step construct the complexes
successively in degrees $0,1,\ldots$.
To reach degree $j$, the complex for the smaller block is
already available, while the complex for $\lambda^-$ is needed
only through degree $j-1$.
The comparison maps extend the ones already chosen.
This constructs an exact complex in every degree, with finite sums.
For a finite or lower finite block one may instead induct on
the finite lower set of $\lambda$, together with the number
of symbols. The construction then terminates.
Restricted duality proves the last assertion.
\end{proof}

\subsection{Tensor product categorifications}
\label{subsec:arc-tensor-product-categorifications}
% Arc tensor product categorifications and their KLR circle maps.
% Uses the conventions and results in 21.tex, 30.tex, 32.tex and 13.tex.

Let $\Bbbk$ be algebraically closed of characteristic zero.
We give one construction for the following ordered pairs, where
$m,n\geq0$ and $s,t\in\mathbb Z$:
\begin{equation}\label{eq:arc-tpc-types}
\begin{gathered}
 (\bigwedge^mV^0,\bigwedge^nV^0),\qquad
 (L^+(\boldsymbol\varpi_s),L^+(\boldsymbol\varpi_t)),\qquad
 (\bigwedge^mV^0,\bigwedge^nV^1),\\
 (\bigwedge^mV^0,L^+(\boldsymbol\varpi_s)),\qquad
 (L^+(\boldsymbol\varpi_s),\bigwedge^mV^0),\qquad
 (L^-(-\boldsymbol\varpi_s),L^+(\boldsymbol\varpi_t)).
\end{gathered}
\end{equation}

Associate to each monomial of a factor $Z$ a subset of $\mathbb Z$:
\begin{equation}\label{eq:arc-subset-dictionary}
\begin{array}{c|c|c}
 Z&\text{monomial label}&\text{subset}\\ \hline
 \bigwedge^mV^0&\lambda_1>\cdots>\lambda_m
     &\{\lambda_1,\ldots,\lambda_m\}\\
 \bigwedge^mV^1&\lambda_1>\cdots>\lambda_m
     &\mathbb Z\setminus\{\lambda_1,\ldots,\lambda_m\}\\
 L^+(\boldsymbol\varpi_s)&\nu\in\mathscr P
     &\{s+1-a+\nu_a\mid a\geq1\}\\
 L^-(-\boldsymbol\varpi_s)&\nu\in\mathscr P
     &\{s+a-\nu_a\mid a\geq1\}
\end{array}
\end{equation}
Missing parts of a partition are zero.
In every row, $F_i$ replaces $i$ by $i+1$ when $i$ belongs
to the subset and $i+1$ does not; otherwise it is zero.
The action of $E_i$ reverses this replacement.
Every nonzero coefficient is one. For $V^1$ this uses the
normalization $v_i^1(v_j^0)=(-1)^i\delta_{ij}$ fixed earlier.

For a monomial $v_\lambda^X\otimes v_\mu^Y$, denote its two
subsets by $S,T$ and label vertex $j$ by
\[
 \vee\ \text{if }j\in S\setminus T,\qquad
 \wedge\ \text{if }j\in T\setminus S,\qquad
 \times\ \text{if }j\in S\cap T,\qquad
 \circ\ \text{if }j\notin S\cup T.
\]
Write $\Lambda(X,Y)$ for the resulting diagrams.
Two such diagrams belong to the same block if they differ by
a finite permutation of the $\vee,\wedge$ labels, with the
$\circ,\times$ vertices fixed. Write $\operatorname{Bl}(X,Y)$
for these blocks. For each block $\Gamma$, delete the fixed
vertices and index the remaining vertices in order. Write
$K_\Gamma$ for the algebra of the resulting block, using
the definitions above.
For a diagram $\lambda$, write $v_\lambda$ for its tensor
monomial. Define
\[
 K(X,Y):=\bigoplus_{\Gamma\in\operatorname{Bl}(X,Y)}K_\Gamma.
\]

\begin{lemma}\label{lem:arc-tpc-blocks}
Blocks in $\Lambda(X,Y)$ are exactly the sets of labels with
the same total weight. Their Bruhat order is the inverse
dominance order on $\Xi_X\times\Xi_Y$.
Their algebras have the following forms:
\begin{equation}\label{eq:arc-tpc-block-table}
\begin{array}{c|c|l}
 (X,Y)&K_\Gamma&\text{parameters}\\ \hline
 (\bigwedge^mV^0,\bigwedge^nV^0)
    &K_a^b&a=m-r,\ b=n-r,\ r=\#\times\\
 (L^+(\boldsymbol\varpi_s),L^+(\boldsymbol\varpi_t))
    &K_a^b&a=\#\vee,\ b=\#\wedge,\ a-b=s-t\\
 (\bigwedge^mV^0,\bigwedge^nV^1)
    &K_a^\infty&a=\#\vee\leq\min(m,n)\\
 (\bigwedge^mV^0,L^+(\boldsymbol\varpi_s))
    &K_a^{-\infty}&a=\#\vee\leq m\\
 (L^+(\boldsymbol\varpi_s),\bigwedge^mV^0)
    &K_b^{+\infty}&b=\#\wedge\leq m\\
 (L^-(-\boldsymbol\varpi_s),L^+(\boldsymbol\varpi_t))
    &K_\infty^\infty&
\end{array}
\end{equation}
All algebra identifications preserve the grading.
\end{lemma}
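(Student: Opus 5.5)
The plan is to reduce everything to the occupation description of the dictionary \eqref{eq:arc-subset-dictionary}, and then check the three claims one at a time.

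\emph{Blocks equal total-weight classes.} First record the weight of $v_\lambda^X\otimes v_\mu^Y$. Each factor contributes $\pm\sum_{j\in S}\boldsymbol\varepsilon_j$, suitably renormalised for the infinite subsets. We take $+$ for $V^0$ and for the semi-infinite wedges, while for $V^1$ the complement convention converts $-\sum_{j\notin S}\boldsymbol\varepsilon_j$ into the same form up to a constant. So, up to a constant depending only on $(X,Y)$, the total weight restricted to $\mathfrak h$ is determined by the function $j\mapsto \mathbf 1_S(j)+\mathbf 1_T(j)$.

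\begin{itemize}
\item Two labels with the same total weight therefore have the same sets of $\times$ vertices (value $2$) and $\circ$ vertices (value $0$). On the remaining vertices the value is $1$, and the difference between the two occupation functions has finite support and sum zero.
\item Since $S$ and $T$ each agree with a fixed set outside a finite window, the two labels differ by a finite permutation of the $\vee,\wedge$ labels. Here we also use the charge constraint coming from the fixed shifts $s,t$ or the fixed degrees $m,n$.
\item Conversely, a finite permutation of $\vee,\wedge$ preserves $\mathbf 1_S+\mathbf 1_T$, hence the total weight.
\end{itemize}

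\emph{Bruhat order equals inverse dominance.} Inverse dominance at fixed total weight compares only first-factor weights, i.e.\ the $\vee$ positions, since $\vee$ marks $S\setminus T$ and the fixed vertices are common. Moving a $\vee$ to the right past a $\wedge$ replaces $i\in S$ by some $j>i$. This changes the first weight by $-(\boldsymbol\alpha_i+\cdots+\boldsymbol\alpha_{j-1})$, making it smaller in $\unrhd$, which is the direction prescribed by $\preceq$. Conversely, a dominance relation between first-factor weights with equal totals is a componentwise inequality of sorted $\vee$ positions. This is the standard equivalence between Bruhat order on $\vee\wedge$ sequences and such componentwise comparison (as in \cite[\S2]{brundanstroppelI}), so it decomposes into elementary swaps. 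I expect this to be the fiddliest step, mainly in getting sign conventions right for the $V^1$ and $L^-$ factors, where the orders are reversed twice.

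\emph{Identification of the algebras.} After deleting the fixed vertices, the remaining vertices form an interval-type number line, and $K_\Gamma$ is by definition the arc algebra of the resulting block. So it remains to count labels and read off which end is eventually $\vee$ or $\wedge$, row by row:
\begin{itemize}
\item For $(\bigwedge^mV^0,\bigwedge^nV^0)$, the set $S\cup T$ is finite, giving $m-r$ labels $\vee$ and $n-r$ labels $\wedge$.
\item For two $L^+$ factors, both subsets contain all sufficiently negative integers, which become $\times$. Each contains only finitely many positive integers, so only finitely many labels survive, with $\#\vee-\#\wedge=s-t$ by charge.
\item For $(\bigwedge^mV^0,\bigwedge^nV^1)$, there are finitely many $\vee$, the $\wedge$ are cofinite on both sides, and $\#\vee\le\min(m,n)$.
\item For $(\bigwedge^mV^0,L^+)$, the vertices far to the left are $\times$, giving a line bounded on the right with finitely many $\vee$. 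This is $\Lambda_a^{-\infty}$.
\item The reversed order $(L^+,\bigwedge^mV^0)$ swaps the roles of $\vee$ and $\wedge$, giving $K_b^{+\infty}$ with $b=\#\wedge\le m$.
\item For $(L^-,L^+)$, the subset $S$ contains all large integers and $T$ all small ones. This gives $\wedge$ for $i\ll0$ and $\vee$ for $i\gg0$, and the charge-balance condition of $\Lambda_\infty^\infty$ holds after a translation, which does not change the algebra.
\end{itemize}
In each case the identification is a relabelling of vertices that preserves order, so it matches the diagram bases and their degrees. Hence the identifications are graded.
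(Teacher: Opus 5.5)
Your route is the paper's: the total occupation $\mathbf 1_S+\mathbf 1_T$ identifies the blocks, and cumulative counts of $\vee$ positions identify inverse dominance with Bruhat order. A row-by-row count of symbols then identifies the algebras. The first two parts are fine, but the row-by-row part has two errors.

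\textbf{The row $(\bigwedge^mV^0,L^+(\boldsymbol\varpi_s))$.} The far-left vertices are not $\times$. The first subset is finite, so a vertex $j\ll0$ lies only in the second subset and is labelled $\wedge$. A vertex $j\gg0$ lies in neither subset and is labelled $\circ$. Deleting these $\circ$ vertices is what bounds the line on the right. Deleting fixed vertices at the far left, as you describe, would bound it on the left instead. Your conclusion $K_a^{-\infty}$ is right, but the stated reason points the wrong way.

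\textbf{The row $(L^+(\boldsymbol\varpi_s),\bigwedge^mV^0)$.} This is the more substantive problem: your closing claim that every identification is an order-preserving relabelling of vertices is false here. The first subset contains all $j\ll0$ and the second is finite. So the reduced block lives on a line bounded on the right, with $\vee$ at all vertices far to the left and exactly $b$ labels $\wedge$. The line $\mathbb Z_{>0}$ of $\Lambda_b^{+\infty}$ is bounded on the left, so no order-preserving bijection exists.

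Interchanging $\vee$ and $\wedge$ by itself does not repair this. It gives weights on a left-infinite line with $b$ labels $\vee$, that is $\Lambda_b^{-\infty}$, which for $b\geq1$ is a different algebra; compare the dimensions of $K_1^{-\infty}e_0$ and $K_1^{+\infty}e_0$ in Appendix~\ref{sec:defect-one-examples}. Label interchange alone also does not send cup diagrams to cup diagrams, so it induces no algebra map.

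You must reverse the line and interchange the labels together, passing to $\Gamma^\sharp$. That $(\alpha\lambda\beta)\mapsto(\alpha^\sharp\lambda^\sharp\beta^\sharp)$ is a graded algebra isomorphism is Proposition~\ref{prop:arc-diagram-symmetries}. It is a genuine input, which checks that this reflection preserves degrees and the surgery rules. It is not the tautological matching of bases that you use for the other rows. With this citation inserted for the fifth row, exactly as the paper does, your argument is complete.
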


\begin{proof}
The total weight records the sum of the two subset indicators,
modulo constants; the fixed tails remove this ambiguity.
It therefore fixes the $\circ,\times$ vertices, while the fixed
cardinalities or charges make the remaining labels differ by
a finite permutation. This identifies the blocks.

On a finite interval containing all differences, dominance of
the first subsets is determined by their cumulative counts.
Thus increasing inverse dominance moves $\vee$ to the right
past $\wedge$, exactly as in Bruhat order.

After deleting fixed vertices, the first two rows have finitely
many vertices; the next two have finitely many $\vee$ labels
on a two-sided or left-infinite line, respectively.
Their symbol counts give the displayed parameters.
For the fifth row, reverse the left-infinite line and interchange
$\vee,\wedge$ to obtain $\Lambda_b^{+\infty}$.
For the last, reindex and translate the charge to obtain
$\Lambda_\infty^\infty$. These operations preserve orientations,
degrees and surgery, hence give graded algebra isomorphisms.
\end{proof}

For the first five pairs, let
$\mathcal C(X,Y)=\operatorname{rep}(K(X,Y))$, the category
of finite-dimensional ungraded left modules.
For the last pair, let $\mathcal C(X,Y)$ be the full category
of unital left $K(X,Y)$-modules with $\dim e_\lambda M<\infty$
for every label $\lambda$.
Theorems~\ref{thm:arc-highest-weight-structure} and
\ref{thm:arc-upper-finite} make these categories respectively
essentially finite and upper finite highest weight categories.
In the fifth case the weight poset is also lower finite.
We use the left-module version of the ambient category
conventions in Definition~\ref{def:categorical-slI-action}.

To define $F_i$, use the following elementary matchings;
see \cite[(3.11)]{BrundanStroppelIII}.
The diagram of a block replaces each $\vee,\wedge$ label
by $\bullet$ and retains its fixed labels. At positions $i,i+1$,
the top and bottom block diagrams and the nonvertical part
of the matching are
\begin{equation}\label{eq:elementary-arc-matchings}
\begin{array}{c|c|l}
 \text{top}&\text{bottom}&\text{matching}\\ \hline
 \bullet\ \bullet&\circ\ \times&\text{a cup at the top}\\
 \times\ \circ&\bullet\ \bullet&\text{a cap at the bottom}\\
 \bullet\ \circ&\circ\ \bullet&\text{a segment from top }i
                                      \text{ to bottom }i+1\\
 \times\ \bullet&\bullet\ \times&\text{a segment from top }i+1
                                      \text{ to bottom }i
\end{array}
\end{equation}
All other $\bullet$ vertices are joined vertically.
No curve meets a $\circ$ or $\times$ vertex.
Deleting those fixed vertices gives a crossingless matching
in the sense defined above.
We use a matching only when it is proper. In particular, the
first row requires both a $\vee$ and a $\wedge$ in the top block.
Write $t_i(\Gamma)$ for the matching with top block $\Gamma$,
when it exists.

Identify a block $\Gamma$ with its total weight.
When $\Gamma-\boldsymbol\alpha_i$ is a block, use the
matching $t_i(\Gamma)$ from
\eqref{eq:elementary-arc-matchings} and set
\[
 F_i\big|_{K_\Gamma\text{-mod}_{\mathrm{lfd}}}
   :=G^{t_i(\Gamma)}_{\Gamma-\boldsymbol\alpha_i,\Gamma},
 \qquad
 E_i\big|_{K_{\Gamma-\boldsymbol\alpha_i}\text{-mod}_{\mathrm{lfd}}}
   :=G^{t_i(\Gamma)^*}_{\Gamma,\Gamma-\boldsymbol\alpha_i}.
\]
Use zero when the required block does not exist.
These formulas first define functors on locally
finite-dimensional modules, with gradings forgotten.
By Proposition~\ref{prop:geometric-functor-adjunctions}, they
are exact, biadjoint and preserve finite-dimensional modules
and finitely generated projectives. Thus they act on the
chosen categories $\mathcal C(X,Y)$.

\begin{lemma}\label{lem:arc-standard-action}
For a standard module $\Delta(\lambda)$, the standard sections
of $F_i\Delta(\lambda)$ are precisely the monomials obtained
by replacing $i$ by $i+1$ in one of its two subsets, each once.
For $E_i$ the replacement is reversed.
In particular, these filtrations have at most two sections.
The families $(E_i)$ and $(F_i)$ are locally finite in the
sense of Definition~\ref{def:categorical-slI-action}.
\end{lemma}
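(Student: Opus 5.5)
The plan is to apply Proposition~\ref{prop:geometric-functor-standards} to the elementary matchings $t=t_i(\Gamma)$ of \eqref{eq:elementary-arc-matchings}, case by case. By that proposition, $F_i\Delta(\lambda)=G^t\Delta(\lambda)$ has a finite standard filtration whose sections are the $\Delta(\mu)$ with $\mu t\lambda$ oriented, each occurring once (gradings forgotten). So it suffices to show that, for each row of the table, the weights $\mu$ with $\mu t\lambda$ oriented correspond exactly to the monomials obtained from $v_\lambda$ by replacing $i$ by $i+1$ in $S$ or in $T$.

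The case analysis is local at positions $i,i+1$. Vertices away from $i,i+1$ are joined vertically, so they must carry equal labels on both lines. First row (cup at the top, bottom $\circ\,\times$): the top has $\bullet\,\bullet$ at positions $i,i+1$, and the cup must join opposite labels. If $\lambda$ has $\vee\wedge$ there, then $i\in S$, $i+1\in T$, and the only possible move is $i\mapsto i+1$ in $S$, which produces $\circ\,\times$ (position $i$ empty, $i+1$ in both subsets); there is exactly one $\mu$. If $\lambda$ has $\wedge\vee$, the symmetric argument gives the move in $T$. If the two labels are equal, there is no orientation and no replacement is possible, since $i+1$ already lies in the relevant subset. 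Second row (cap at the bottom, top $\times\,\circ$): $i\in S\cap T$ and $i+1\notin S\cup T$. Both replacements are possible, giving $\vee\wedge$ and $\wedge\vee$ at the bottom, and these are exactly the two orientations of the cap. Third and fourth rows: the segment transports the single label to the shifted position. This matches the unique replacement available, since position $i$ (respectively $i+1$) carries the only $\bullet$ involved.

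It remains to check that blocks not covered by a matching give zero on both sides. That is, when $\Gamma-\boldsymbol\alpha_i$ is not a block, no replacement exists; this holds because the local configurations $\circ\circ$, $\times\times$, $\circ\times$ and so on admit no move. The statement for $E_i$ follows either by the same analysis for $t^*$, or by biadjunction and the transpose multiplicity identity $(E_i\Delta(b):\Delta(a))$ used in Lemma~\ref{lem:tpc-standard-costandard-sequences}. In every case there are at most two sections.

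For local finiteness, a finite-dimensional module, or a projective or standard, has finitely many labels in its composition factors or standard flag, and each monomial admits nonzero moves for only finitely many $i$. In the upper finite case, for $M$ and a fixed $e_a$, the adjunction gives $e_aF_iM\cong\operatorname{Hom}(E_iP(a),M)$. This vanishes unless $E_iP(a)\neq0$, and that happens for finitely many $i$ only, because $P(a)$ has a finite standard flag.

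The main obstacle I expect is bookkeeping the dictionary \eqref{eq:arc-subset-dictionary} consistently for the $V^1$ and $L^-$ factors. The replacement rule is uniform in terms of subsets, and the vertex labels are defined from $S,T$, so the local check should be identical, but the conventions need care.
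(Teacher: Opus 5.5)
Your proposal is correct and follows the paper's proof. You apply Proposition~\ref{prop:geometric-functor-standards} row by row to the elementary matchings, handle $E_i$ by the reflected matchings, and get local finiteness from the finite standard flags of projectives together with $e_aF_iM\cong\operatorname{Hom}(E_iP(a),M)$ in the upper finite case. Your alternative route for $E_i$ via the transpose identity would also need that $F_i$ commutes with $\circledast$, so that $F_i\nabla(a)$ has the dual costandard flag; without that it leans on Lemma~\ref{lem:tpc-standard-costandard-sequences}, which presupposes the statement being proved, so the reflection argument is the one to keep.
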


\begin{proof}
By Proposition~\ref{prop:geometric-functor-standards}, a
standard section is specified by an orientation of the
elementary matching with the given top label.
A segment moves its label to the other endpoint; a cup
requires opposite labels; and a cap allows the two choices
of opposite labels. These are exactly the stated replacements.
Reflection proves the assertion for $E_i$.

Each monomial allows only finitely many replacements, since
each subset is constant sufficiently far in either direction.
Each $P(\lambda)$ has a finite standard filtration, so
$E_iP(\lambda)$ and $F_iP(\lambda)$ vanish for all but
finitely many $i$.
In the finite-dimensional categories, every object is a
quotient of a finite sum of projectives, which proves local
finiteness there.
In the full locally finite-dimensional category, use
\[
 e_\lambda F_iM\cong
 \operatorname{Hom}(P(\lambda),F_iM)
 \cong\operatorname{Hom}(E_iP(\lambda),M).
\]
The last space is finite-dimensional because
$E_iP(\lambda)$ is a finite sum of indecomposable projectives.
It vanishes for all but finitely many $i$. The same argument
applies to $E_i$.
\end{proof}

We use the local bimodule maps of
\cite[\S5]{BrundanStroppelIII} to equip these functors with
the KLR action. We record explicitly the circle map and its
sign, which enter the centre calculation below.

For a source block $\Gamma$ and $i\in\mathbb Z$, choose an
even integer $b<i$ such that the block diagram is constant
at every vertex $j\leq b$, and set
\[
 \epsilon_i(\Gamma)=
 (-1)^{\#\{j\mid b<j\leq i,\ \Gamma_j=\bullet\}}.
\]
The choice is immaterial: changing the even cutoff adds
either no $\bullet$ vertices or an even number of them.
An elementary $j$-move gives
\begin{equation}\label{eq:arc-sign-change}
 \epsilon_i(\Gamma-\boldsymbol\alpha_j)
   =(-1)^{\delta_{ij}}\epsilon_i(\Gamma).
\end{equation}

Let
$d=\underline{\lambda}\,\alpha\,t_i(\Gamma)\,
\beta\,\overline{\mu}$
be a basis diagram of the geometric bimodule defining
$F_i|_\Gamma$.
The nonvertical part of $t_i(\Gamma)$ lies in a unique
connected component of this entire diagram.
Define $y_i^\Gamma(d)$ by changing that component from
an anti-clockwise circle to a clockwise circle.
If it is a line or an already clockwise circle, set
$y_i^\Gamma(d)=0$.
In the nonzero case, the underlying curves are unchanged:
only the orientation of the indicated circle is reversed,
and every other component retains its labels.

Equivalently, attach a small clockwise circle to the
indicated component and merge it by surgery.
In the notation of \eqref{eq:arc-circle-surgery}, this is
multiplication by $z$:
\[
 1\longmapsto z,\qquad
 z\longmapsto0,\qquad
 \mathsf l\longmapsto0.
\]
Thus $y_i^\Gamma$ has degree two and square zero.
The index $i$ specifies the elementary matching at
positions $i,i+1$; the component on which the map acts
is determined only after the matching is joined to
the surrounding cup and cap diagrams.
The sign needed for the KLR action is introduced
separately below.
For a module $M$ in block $\Gamma$ and $v\in M$, define
\begin{equation}\label{eq:arc-klr-maps}
 \xi_i(d\otimes v)
   =\epsilon_i(\Gamma)y_i^\Gamma(d)\otimes v.
\end{equation}
We also write $\xi_i$ for the signed bimodule map
$\epsilon_i(\Gamma)y_i^\Gamma$.
For an absent elementary matching the map is zero.

\begin{lemma}\label{lem:arc-klr-relations}
The maps $\xi_i$ are natural transformations of degree two
with $\xi_i^2=0$. There are natural transformations
$\tau_{ij}:F_jF_i\to F_iF_j$ such that their insertions,
together with the $\xi_i$ and the projections onto
$F_{i_d}\cdots F_{i_1}$, define the homomorphisms
$R_d\to\operatorname{End}(F^d)$ of
Definition~\ref{def:type-a-klr-action}.
These statements hold on every category $\mathcal C(X,Y)$
in \eqref{eq:arc-tpc-types}.
\end{lemma}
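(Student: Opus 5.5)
The plan is to reduce everything to the local bimodule calculus of \cite[\S5]{BrundanStroppelIII} and check that the sign $\epsilon_i(\Gamma)$ is the one needed there. I will work bimodule by bimodule. Since every $F_i$ on $\mathcal C(X,Y)$ is tensoring with a geometric bimodule $K^{t_i(\Gamma)}$ (shifted), a natural transformation given by a bimodule homomorphism is automatically natural. Relations in $\operatorname{End}(F^d)$ can then be checked on the composite bimodules $K^{t_{i_d}}\otimes\cdots\otimes K^{t_{i_1}}$, and these do not depend on which category $\mathcal C(X,Y)$ we act on. This is how the last sentence of the statement will be obtained: it suffices to prove the relations at the level of bimodules for every pair of adjacent blocks.

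\emph{Step 1: $\xi_i$.} First I check that $y_i^\Gamma$ is a bimodule endomorphism of $K^{t_i(\Gamma)}_{\Gamma-\boldsymbol\alpha_i,\Gamma}$ of degree two. The right and left actions are computed by surgeries away from the nonvertical part of $t_i(\Gamma)$. By the surgery rules \eqref{eq:arc-circle-surgery}, multiplication by $z$ on the component containing that part commutes with any surgery: merging and splitting are $\Bbbk[z]/(z^2)$-linear in the marked component, and a line absorbs $z$ to $0$. This is the standard fact that "attach a small clockwise circle and merge" commutes with the module structure, \cite[\S6]{brundanstroppelI}. The degree is two because a circle's orientation flip from anti-clockwise to clockwise changes exactly one cup/cap's clockwise count by two (equivalently $\deg z=2$). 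The relation $\xi_i^2=0$ is immediate from $z^2=0$. Local nilpotence, which Definition~\ref{def:type-a-klr-action} requires, follows as well.

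\emph{Step 2: $\tau_{ij}$.} For $|i-j|>1$ the two elementary matchings are disjoint, so $F_jF_i$ and $F_iF_j$ are given by the same stacked matching up to isotopy. I take $\tau_{ij}$ to be this identification, up to a sign $\pm1$. For $j=i\pm1$ and $j=i$, I take the local bimodule maps of \cite[\S5]{BrundanStroppelIII}, namely the saddle/cobordism maps between the stacked matchings. By \eqref{eq:geometric-functor-composition}, the composite for $i=j$ is either zero or involves one internal circle, giving the nil-Hecke piece. Then I verify the relations of Definition~\ref{def:klr-from-hecke}. Brundan--Stroppel prove these relations for their dot maps, which differ from our $y_i^\Gamma$ only by a sign depending on the block; I would cite their theorem for the finite arc algebras $K_a^b$. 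Each relation involves only finitely many vertices near $i,i+1,i+2$, and the surgeries are local. Hence the relation in an arbitrary block (infinite or with $\circ,\times$ vertices) reduces to a finite window, and the finite case applies after deleting fixed vertices. Lemma~\ref{lem:arc-tpc-blocks} gives the necessary compatibility of those deletions with surgery.

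\emph{Main obstacle: the signs.} The relations that mix dots and crossings, $\tau_r\xi_a-\xi_{s_r(a)}\tau_r=\pm1$ for $i_r=i_{r+1}$, and $\tau_r^2=\pm(\xi_r-\xi_{r+1})$, must hold with $\xi$ evaluated in different source blocks. By \eqref{eq:arc-sign-change}, applying $F_j$ flips $\epsilon_i$ exactly when $j=i$. So in $F_iF_i$ the two dots carry opposite signs, and in $F_{i\pm1}F_i$ the signs agree. I would first fix the crossing signs so that the $i_r=i_{r+1}$ relation holds. I would then check case by case (adjacent residues $i_{r+1}=i_r\pm1$) that the unsigned Brundan--Stroppel identity $\tau^2=y_r+y_{r+1}$ becomes $\pm(\xi_r-\xi_{r+1})$ once the signs $\epsilon$ are inserted. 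The choice of even cutoff $b$ guarantees this is consistent across blocks. The braid-type relation is then only affected by an overall sign, which I would adjust via the crossing normalization for non-adjacent residues.
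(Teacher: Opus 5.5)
Your overall architecture matches the paper's proof. You use bimodule-level maps, so the identities hold on every $\mathcal C(X,Y)$. You quote the finite-diagram relations from \cite[Lemmas 5.4--5.5 and Theorem 5.6]{BrundanStroppelIII}. You reduce infinite diagrams to a finite window. Step~1 is fine.

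\textbf{The gap is in the sign normalization,} which is the only part of the lemma not quoted directly. You plan to fix the equal-residue and adjacent quadratic relations, and then to repair the sign of the braid relation ``via the crossing normalization for non-adjacent residues''. That step cannot work. Rescale $\tau_{ij}$ by scalars $c_{ij}$, keeping $\xi_i=\epsilon_i(\Gamma)y_i^\Gamma$ fixed.
\begin{itemize}
\item The relation $(\tau_r\xi_r-\xi_{r+1}\tau_r)1_{\mathbf i}=-1_{\mathbf i}$ for $i_r=i_{r+1}=i$ pins down $c_{ii}$.
\item The quadratic relations for $(i_r,i_{r+1})=(i,i+1)$ and $(i+1,i)$ are both multiplied by the same product $c_{i,i+1}c_{i+1,i}$.
\item The only braid relations with nonzero right side are those with $i_r=i_{r+2}=i_{r+1}\mp1$. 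They involve exactly the crossings $\tau_{i,i\pm1}$, $\tau_{ii}$, $\tau_{i\pm1,i}$, so they are multiplied by $c_{i,i\pm1}c_{ii}c_{i\pm1,i}$.
\item Distant crossings occur only in relations with zero right side, and in $\tau_r^21_{\mathbf i}=1_{\mathbf i}$, which forces $c_{ij}c_{ji}=1$.
\end{itemize}
So once the dot--crossing and quadratic relations hold, the braid sign is already determined. There is no freedom left to adjust it.

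What you actually need is the following fact. The adjacent quadratic and braid relations of \cite{BrundanStroppelIII}, for the maps $\psi(ij)$ of \cite[(5.7)--(5.8)]{BrundanStroppelIII} built with the same sign $\epsilon_i(\Gamma)$, differ from those of Definition~\ref{def:klr-from-hecke} by one and the same sign. Then the single choice $\tau_{i,i+1}=-\psi(i,i+1)$, with $\tau_{ij}=\psi(ij)$ otherwise, corrects both at once. It leaves the equal-residue and distant relations untouched. This is what the paper does.

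\textbf{Two smaller corrections.}
\begin{itemize}
\item In $F_{i\pm1}F_i$ the two dots carry $\epsilon_i(\Gamma)$ and, by \eqref{eq:arc-sign-change}, $\epsilon_{i\pm1}(\Gamma)$. These differ exactly when $\Gamma_{i+1}=\bullet$ (respectively $\Gamma_i=\bullet$), and both situations occur. So the two dot signs need not agree, and any case check transporting an ``unsigned identity'' must track the configuration. It is simpler to take the Brundan--Stroppel maps with this sign already built in.
\item The relations are not local near $i,i+1,i+2$. Whether a surgery merges or splits, and which component a dot reverses, depends on the whole basis diagram. The window must contain all cups, caps and nonvertical matching pieces of the diagrams involved. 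It must also have an even left cutoff, so that every $\epsilon_i$ is unchanged. With that choice your reduction goes through.
\end{itemize}
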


\begin{proof}
The map $y_i^\Gamma$ can be computed by attaching a clockwise
circle to the indicated component and applying surgery;
see \cite[(5.5)--(5.6) and the discussion following them]{BrundanStroppelIII}.
Independence of the order of surgery makes this a bimodule
homomorphism. It therefore induces a natural transformation
after tensoring with a module. Changing an anti-clockwise
circle to clockwise raises the degree by two. Applying the
map again gives zero, so $(y_i^\Gamma)^2=0$ and $\xi_i^2=0$.

For the remaining generators, let $\psi(ij)$ be the maps of
\cite[(5.7)--(5.8)]{BrundanStroppelIII}, using
$\epsilon_i(\Gamma)$ for the sign there.
Take $\tau_{ij}=-\psi(ij)$ when $j=i+1$, and
$\tau_{ij}=\psi(ij)$ otherwise.
For finite diagrams, \cite[Lemmas 5.4--5.5 and Theorem 5.6]{BrundanStroppelIII}
give the required relations. The indicated sign change makes
their adjacent-residue quadratic and braid relations agree
with Definition~\ref{def:klr-from-hecke}; the equal-residue
and distant-residue relations are unchanged.

For an infinite diagram, each calculation involves finitely
many elementary matchings and finitely many cups and caps.
Choose an even left cutoff and a right cutoff outside all
these pieces. Outside the resulting interval the diagrams
have only unchanged rays and vertical segments.
The calculation inside the interval has the same surgery
rules and the same signs $\epsilon_i(\Gamma)$ as above,
so the finite relations apply.
They are identities of bimodule maps before tensoring, and
hence hold on every module in the stated categories.
\end{proof}

\begin{theorem}\label{thm:arc-tensor-product-categorifications}
For every pair in \eqref{eq:arc-tpc-types}, the category
$\mathcal C(X,Y)$ with the functors and transformations above
is a level-two tensor product categorification of type $(X,Y)$.
We use its categorical action in the KLR form of
Definition~\ref{def:type-a-klr-action}.
Its standard-basis identification is
\[
 \Bbbk\otimes_{\mathbb Z}K_0(\mathcal C(X,Y)^\Delta)
 \xrightarrow{\sim}X\otimes Y,
 \qquad [\Delta(\lambda)]\longmapsto v_\lambda.
\]
\end{theorem}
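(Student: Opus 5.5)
Taking the statement apart, one checks the axioms of Definition~\ref{def:level-two-tpc} for $\mathcal C(X,Y)$ in turn. The ingredients are already in place. Lemma~\ref{lem:arc-tpc-blocks} identifies $\mathcal C(X,Y)$ as a highest weight category of the right setting with weight poset $\Xi_X\times\Xi_Y$ under inverse dominance. Here Theorems~\ref{thm:arc-highest-weight-structure} and \ref{thm:arc-upper-finite} supply the highest weight structure block by block, essentially finite for the first five pairs and upper finite for the last. Proposition~\ref{prop:geometric-functor-adjunctions} and Lemma~\ref{lem:arc-standard-action} supply exact, locally finite, biadjoint functors $E_i,F_i$. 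Lemma~\ref{lem:arc-klr-relations} supplies the KLR transformations $\xi_i,\tau_{ij}$. So the plan is to verify, in order, (TPC1), (TPC2) together with (CA4), and then the remaining conditions of Definition~\ref{def:type-a-klr-action}, namely local nilpotence of $\xi_i$. Finally we transfer to Hecke form via Theorem~\ref{thm:hecke-klr-action-equivalence}.

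For (TPC1), Proposition~\ref{prop:geometric-functor-standards} shows that $G^{t}$ sends standards to modules with finite standard filtrations. By exactness, finite standard filtrations are preserved, and the same holds for $E_i=G^{t^*}$. For (TPC2), Lemma~\ref{lem:arc-standard-action} says the sections of $F_i\Delta(\lambda)$ are exactly the monomials obtained by replacing $i$ by $i+1$ in one of the two subsets, each with multiplicity one. By the dictionary \eqref{eq:arc-subset-dictionary}, every nonzero matrix coefficient of $F_i$ on each factor is $1$. For the $V^1$ factor this is where the normalization $v_i^1(v_j^0)=(-1)^i\delta_{ij}$ is needed. The coproduct action $F_i(v\otimes w)=F_iv\otimes w+v\otimes F_iw$ therefore matches $[F_i\Delta(\lambda)]$ exactly, and likewise for $E_i$. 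One must also check that the added box lands in the correct block, i.e.\ that $t_i(\Gamma)$ has the correct source $\Gamma-\boldsymbol\alpha_i$. This is the table \eqref{eq:elementary-arc-matchings} read against the four local configurations of the two subsets at $i,i+1$.

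Condition (CA4) then follows. In the essentially finite cases, $K_0(\mathcal C)$ and $K_0(\mathcal C^\Delta)$ are identified via the unitriangular decomposition of simples into standards. The weight-vector property holds because each simple lies in a single block, which has a single total weight. In the upper finite case we use the split Grothendieck group of finitely generated projectives, which have finite standard flags, and the same computation applies. Integrability holds because $X\otimes Y$ is integrable, each $e_i,f_i$ acting locally nilpotently on monomials.

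Local nilpotence of $\xi_i$ follows from $\xi_i^2=0$ in Lemma~\ref{lem:arc-klr-relations}. The only delicate point is the sign and normalization: the relations must hold exactly in the convention of Definition~\ref{def:klr-from-hecke}. This is where I expect the main obstacle, namely checking that the sign $\epsilon_i(\Gamma)$ and the modification $\tau_{i,i+1}=-\psi$ yield the stated quadratic relations for adjacent residues and are consistent across blocks via \eqref{eq:arc-sign-change}. I would settle this by appeal to \cite[Theorem 5.6]{BrundanStroppelIII}, as in the proof of Lemma~\ref{lem:arc-klr-relations}, with the finite cutoff argument for infinite diagrams. Given these checks, Theorem~\ref{thm:hecke-klr-action-equivalence} produces the Hecke form, with $x|_{F_i}=i+\xi_i$ satisfying (CA1). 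This completes the verification.
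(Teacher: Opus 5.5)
Your treatment of (TPC1), (TPC2), the KLR relations, local nilpotence of $\xi_i$ and the upper finite case of (CA4) matches the paper. The gap is (CA4) in the essentially finite cases, and this is the one step of the theorem not already supplied by earlier results. You assert that $K_0(\mathcal C)$ and $K_0(\mathcal C^\Delta)$ are identified by inverting the unitriangular decomposition matrix. You then import the $\mathfrak{sl}$-relations, the weight property and integrability from $X\otimes Y$ onto the classes of simples. Inverting that matrix gives finite expansions only when every lower set $\{\mu\mid\mu\le\lambda\}$ is finite, that is, for the blocks $K_a^b$ and $K_b^{+\infty}$. It fails for the pairs $(\bigwedge^mV^0,\bigwedge^nV^1)$ and $(\bigwedge^mV^0,L^+(\boldsymbol\varpi_s))$, whose blocks are $K_a^\infty$ and $K_a^{-\infty}$. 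Already for $K_1^\infty$, the principal block of $\mathcal F(1|1)$, one has $[\Delta_r]=[L_r]+[L_{r-1}]$. So $[L_r]$ would have to be $\sum_{j\geq0}(-1)^j[\Delta_{r-j}]$, mirroring the infinite BGG resolution \eqref{eq:kinf1-bgg-resolution}. Comparing the extreme terms of a finite combination $\sum_s a_s[\Delta_s]$ shows that it always has at least two distinct simple constituents, so it never equals $[L_r]$. Hence $K_0(\mathcal C^\Delta)\to K_0(\mathcal C)$ is injective but not surjective. Your argument therefore says nothing about how $[E_i],[F_i]$ act on simple classes, which is exactly what (CA4) requires in case (i).

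The missing idea is to work on the projective side and then dualize. Projectives have finite unitriangular standard filtrations and are preserved by $E_i,F_i$. So $K_0^{\mathrm{split}}(\mathcal C^{\mathrm{proj}})$ embeds in $K_0(\mathcal C^\Delta)\cong X\otimes Y$ as a submodule in which projective classes are weight vectors. The pairing $\langle[P],[M]\rangle=\dim\operatorname{Hom}(P,M)$ makes $\{[P(\lambda)]\}$ and $\{[L(\lambda)]\}$ dual bases. Biadjunction makes $[F_i]$ and $[E_i]$ on $K_0(\mathcal C)$ the transposes of $[E_i]$ and $[F_i]$ on projectives. Transposing the Lie relations then gives the action on $K_0(\mathcal C)$, with $[L(\lambda)]$ of the same weight as $[P(\lambda)]$, namely that of its block. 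Integrability does not survive transposition automatically, because weight spaces here can be infinite-dimensional, so argue directly. The functors $E_i^3$ and $F_i^3$ vanish on standards, since each subset crosses $i,i+1$ at most once in each direction. Hence they vanish on projectives. By exactness they vanish on every finite-dimensional module, since each is a quotient of a finite sum of projectives.
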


\begin{proof}
The highest weight structure, exactness and biadjunction are
established above. Lemma~\ref{lem:arc-standard-action} gives
{\rm (TPC1)--(TPC2)} and local finiteness, while
Lemma~\ref{lem:arc-klr-relations} gives the KLR relations.
It remains to verify {\rm (CA4)}.

The finite, unitriangular standard expansions of projectives
embed their split Grothendieck group into
$K_0(\mathcal C(X,Y)^\Delta)$. Hence the Lie relations on the
standard basis hold on projectives, whose classes are weight
vectors. Moreover, $E_i^3$ and $F_i^3$ vanish on standards,
since each subset can cross $i,i+1$ at most once in either
fixed direction. They therefore vanish on projectives,
proving integrability and {\rm (CA4)} in the upper finite case.

For finite-dimensional modules, the Hom pairing makes
projective and simple classes dual. Biadjunction transposes
$E_i,F_i$, transferring the Lie relations to
$K_0(\mathcal C(X,Y))$ with the prescribed simple weights.
Quotients of projectives inherit the same third-power vanishing,
so this action is integrable too.
\end{proof}

We forget gradings in the following three corollaries.

\begin{corollary}\label{cor:arc-pm-ringel-duality}
For every $n\geq0$, the categories
$\operatorname{rep}(K_n^{-\infty})$ and
$\operatorname{rep}(K_n^{+\infty})$, with their specified
highest weight structures, are mutually Ringel dual
in the essentially finite sense.
\end{corollary}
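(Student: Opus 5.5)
The plan is to deduce the corollary from Proposition~\ref{prop:ringel-duality-tpc} together with the uniqueness Theorem~\ref{thm:level-two-tpc-uniqueness}, applied to the arc realizations of Theorem~\ref{thm:arc-tensor-product-categorifications}. By Lemma~\ref{lem:arc-tpc-blocks}, the type $(\bigwedge^nV^0,L^+(\boldsymbol\varpi_s))$ has blocks $K_a^{-\infty}$ with $a\leq n$. The reversed type $(L^+(\boldsymbol\varpi_s),\bigwedge^nV^0)$ has blocks $K_b^{+\infty}$ with $b\leq n$. Both $\mathcal C(X,Y)$ are essentially finite highest weight categories.

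\emph{Step 1: tilting-boundedness.} First I check that $\mathcal C=\mathcal C(\bigwedge^nV^0,L^+(\boldsymbol\varpi_s))$ is tilting-bounded in the sense of the appendix, so that Proposition~\ref{prop:ringel-duality-tpc} applies. For this I use Proposition~\ref{prop:arc-tiltings-from-bimodules}. Each tilting $T(\lambda)$ is built from $L(\lambda'')$ by at most $\operatorname{def}(\Lambda)\leq n$ applications of elementary cap functors. Each such functor doubles the number of standard sections at most (Lemma~\ref{lem:arc-standard-action}). Moreover, the labels appearing are finitely many permutations of $\lambda$ within the window spanned by the inserted pairs. I expect this to give the required finiteness: each $T(\lambda)$ has a finite standard flag, and each $\Delta(\mu)$ occurs in only finitely many $T(\lambda)$. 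The second half follows dually, since $\mu$ determines the possible cap positions up to the bounded defect.

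\emph{Step 2: apply Ringel duality.} Proposition~\ref{prop:ringel-duality-tpc} then makes $\mathcal C^\vee$ a tilting-bounded essentially finite tensor product categorification of type $(L^+(\boldsymbol\varpi_s),\bigwedge^nV^0)$. By Theorem~\ref{thm:arc-tensor-product-categorifications}, $\mathcal C(L^+(\boldsymbol\varpi_s),\bigwedge^nV^0)$ is another categorification of the same ordered type in the same essentially finite setting. Theorem~\ref{thm:level-two-tpc-uniqueness} therefore gives a label-preserving equivalence $\mathcal C^\vee\simeq\mathcal C(L^+(\boldsymbol\varpi_s),\bigwedge^nV^0)$. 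Since it preserves labels, it preserves the highest weight structures.

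\emph{Step 3: restrict to blocks.} Ringel duality and the equivalence are compatible with the block decompositions, because blocks are fixed total weights and the Ringel dual preserves weights. It remains to match the blocks. Take $s=0$ and $a=n$, so that the block with no $\times$ or $\circ$ vertices inside the moving region is $K_n^{-\infty}$ on one side. The reversed-factor block with the same total weight is $K_n^{+\infty}$; this is the $\sharp$-reversal of Lemma~\ref{lem:arc-tpc-blocks}.

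The main obstacle is Step~1 and the precise block matching in Step~3. In particular, the count of $\wedge$ labels in the reversed factor must equal $n$, and this requires checking the subset dictionary~\eqref{eq:arc-subset-dictionary}. If the direct tilting-boundedness count is awkward, I would instead verify it block by block using the linear tilting coresolution bounds of Theorem~\ref{thm:arc-linear-tilting-finite-defect}.
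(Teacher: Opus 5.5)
Your proposal follows the paper's proof: establish tilting-boundedness, use Proposition~\ref{prop:ringel-duality-tpc} to view the Ringel dual as a categorification of type $(L^+(\boldsymbol\varpi_0),\bigwedge^nV^0)$, identify it with the arc realization by Theorem~\ref{thm:level-two-tpc-uniqueness}, and restrict to the blocks with $n$ free $\vee$ labels, which become $n$ free $\wedge$ labels (hence $K_n^{+\infty}$) because swapping the factors interchanges $S$ and $T$ in \eqref{eq:arc-subset-dictionary}.

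The only difference is Step~1. The paper observes that every indecomposable tilting is projective, since the construction of Proposition~\ref{prop:arc-tiltings-from-bimodules} ends in a block whose algebra is $\Bbbk$ and Proposition~\ref{prop:geometric-functor-projectives} then applies. Finite-dimensionality of $e_\lambda K_\Gamma$ and $K_\Gamma e_\lambda$ then gives finite support immediately. Your flag-counting route also works, but your ``dually'' step needs a precise reason. The clean one is that the $\lambda$ with $(T(\lambda):\Delta(\mu))\neq0$ satisfy $\lambda\geq\mu$, and this set is finite because $\Lambda_a^{-\infty}$ is upper finite. Combine this with $\dim\operatorname{Hom}(T(\lambda),T(\lambda'))=\sum_\nu(T(\lambda):\Delta(\nu))(T(\lambda'):\nabla(\nu))$ and the self-duality of tiltings.
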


\begin{proof}
Put $X=\bigwedge^nV^0$ and $Y=L^+(\boldsymbol\varpi_0)$.
In every block of $\mathcal C(X,Y)$, the construction in
Proposition~\ref{prop:arc-tiltings-from-bimodules}
eventually removes all $\vee$ labels.
The remaining algebra is $\Bbbk$, so
Proposition~\ref{prop:geometric-functor-projectives}
shows that every indecomposable tilting is projective.
Since the left and right principal ideals of these arc
algebras are finite-dimensional, the tilting Hom matrix
has finite support in each row and column.
Thus $\mathcal C(X,Y)$ is tilting-bounded.

By Lemma~\ref{lem:tpc-standard-costandard-sequences},
the biadjoint functors $E_i,F_i$ preserve tiltings.
Their action and KLR transformations therefore induce
a categorical action on the Ringel dual.
Its standards correspond to the original costandards,
and exchanging the two tensor factors reverses inverse
dominance. Hence it is a tensor product categorification
of type $(Y,X)$.
Theorems~\ref{thm:level-two-tpc-uniqueness}
and~\ref{thm:arc-tensor-product-categorifications}
identify it with $\mathcal C(Y,X)$.
Restricting to corresponding blocks with $n$ free
$\vee$ labels, using Lemma~\ref{lem:arc-tpc-blocks},
proves the assertion.
\end{proof}

\begin{corollary}\label{cor:arc-plus-maximal-defect-corner}
Put
\[
 \Lambda_n^{-\infty,\circ}
 :=\{\lambda\in\Lambda_n^{-\infty}
       \mid \operatorname{def}(\lambda)=n\},
 \qquad
 H_n^{-\infty}
 :=\bigoplus_{\alpha,\beta\in\Lambda_n^{-\infty,\circ}}
      e_\alpha K_n^{-\infty}e_\beta.
\]
Then $H_n^{-\infty}\cong K_n^{+\infty}$ as locally
unital algebras.
\end{corollary}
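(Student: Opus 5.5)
The plan is to read off the isomorphism from the Ringel duality of Corollary~\ref{cor:arc-pm-ringel-duality}, which is realized by the tilting modules of $\operatorname{rep}(K_n^{-\infty})$. Concretely, I would first identify the indecomposable tiltings with the projectives $P(\lambda)=K_n^{-\infty}e_\lambda$ for $\lambda\in\Lambda_n^{-\infty,\circ}$. Second, I would identify $\operatorname{End}$ of their sum with $H_n^{-\infty}$, up to passing to the opposite algebra. Third, I would compare this with the locally unital algebra underlying the Ringel dual, which Corollary~\ref{cor:arc-pm-ringel-duality} says is $\operatorname{rep}(K_n^{+\infty})$.

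\emph{Step 1.} The proof of Corollary~\ref{cor:arc-pm-ringel-duality} already shows that every indecomposable tilting $T(\lambda)$ is projective. The iteration of Proposition~\ref{prop:arc-tiltings-from-bimodules} removes all $n$ labels $\vee$ by $n$ caps. Proposition~\ref{prop:geometric-functor-projectives} then gives $T(\lambda)\cong P(\mu)$, where $\underline{\mu}$ is the lower reduction of the stacked caps. That cup diagram has exactly $n$ cups, so $\operatorname{def}(\mu)=n$. Conversely, for $\mu$ of defect $n$, reading the cups of $\underline{\mu}$ innermost first gives a sequence of deletions realizing $P(\mu)$ as some $T(\lambda)$. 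The assignment $\lambda\mapsto\mu$ is injective because $T(\lambda)$ is determined by its highest standard section. Hence
\[
\operatorname{add}\{T(\lambda)\}=\operatorname{add}\{P(\mu)\mid\mu\in\Lambda_n^{-\infty,\circ}\}.
\]
I would double-check the correspondence against the standard filtrations: $P(\mu)$ has sections $\Delta(\nu)$ with $\mu\subset\nu$, and the highest such $\nu$, with all cups clockwise, should be $\lambda$.

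\emph{Step 2.} Using $\operatorname{Hom}(P(\alpha),P(\beta))\cong e_\alpha K e_\beta$, composition identifies $\bigoplus_{\alpha,\beta}\operatorname{Hom}(T(\alpha),T(\beta))$ with $(H_n^{-\infty})^{\mathrm{op}}$ as a locally unital algebra with pointed idempotents. The anti-involution $*$ of Proposition~\ref{prop:arc-diagram-symmetries} restricts to $H_n^{-\infty}$, since it fixes each $e_\lambda$. It therefore identifies $(H_n^{-\infty})^{\mathrm{op}}\cong H_n^{-\infty}$.

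\emph{Step 3.} In the essentially finite setting, the Ringel dual is modules over this tilting endomorphism algebra, namely $\operatorname{rep}(H_n^{-\infty})$ or its opposite, depending on conventions; the $*$ of Step~2 absorbs either choice. By Corollary~\ref{cor:arc-pm-ringel-duality} this is equivalent to $\operatorname{rep}(K_n^{+\infty})$. Both algebras are basic and pointed, with the idempotents $e_\lambda$ labelling the indecomposable projectives. An equivalence of their finite-dimensional module categories therefore gives an isomorphism of the locally unital algebras $\bigoplus e_\alpha A e_\beta$ of projective endomorphisms. This uses that each indecomposable projective is finitely generated with finite-dimensional corners, and that the equivalence is realized on projectives, as in the upper finite part of Remark~\ref{rem:tpc-uniqueness-semi-infinite}.

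The main obstacle is Step~3. The Ringel dual in Corollary~\ref{cor:arc-pm-ringel-duality} is produced abstractly through uniqueness, so one must check that its ``algebra of projectives'' is literally the tilting endomorphism algebra. This needs tilting-boundedness, established in that proof, to make the Hom matrix locally finite. I would verify this directly from \eqref{eq:ringel-finite-flag-equivalences}, under which tiltings of $\mathcal C$ correspond to projectives of $\mathcal C^\vee$. If needed, I would also cross-check Step~1 by counting: maximal-defect weights on $\mathbb Z_{\le0}$ should correspond bijectively to all of $\Lambda_n^{+\infty}$.
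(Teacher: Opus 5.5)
Your proposal is correct and has the same skeleton as the paper's proof. Both identify the indecomposable tiltings of $\operatorname{rep}(K_n^{-\infty})$ with the $P(\mu)$ for $\mu\in\Lambda_n^{-\infty,\circ}$, read off $H_n^{-\infty}$ (up to $\mathrm{op}$) as the tilting endomorphism algebra, and conclude from \cref{cor:arc-pm-ringel-duality} together with basicness.

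The difference lies in how Step~1 is justified. The paper combines two facts: ``every tilting is projective'', from the preceding proof, and Brundan--Stroppel's characterization of the indecomposable projective-injectives as exactly the maximal-defect $P(\lambda)$ \cite[Theorem~6.1]{brundanstroppelII}. It then observes that these modules are self-dual and standard-filtered, hence tilting. You instead compute directly. Iterating \cref{prop:arc-tiltings-from-bimodules} with \cref{prop:geometric-functor-projectives}, each step adds one cap and removes no circles, since the matching has no top cups. Hence $T(\lambda)\cong P(\mu)$ with exactly $n$ caps. Innermost-cup deletion from the all-clockwise orientation then gives the converse.

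Your route stays inside the paper and avoids the external theorem. It shows directly that every tilting has maximal defect; the paper leaves this implicit, and it would follow there because a self-dual projective is injective. It also yields the explicit bijection $\lambda\leftrightarrow\mu$, where $\lambda$ is $\underline{\mu}$ with all cups made clockwise. This is consistent with $T_r\cong P_{r+1}\langle-1\rangle$ in \cref{subsec:defect-one-minus-infinite}. The paper's citation is simply shorter.

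One small slip: by \cref{prop:geometric-functor-projectives}, it is the cap diagram $\overline{\mu}$ that arises as the upper reduction of the stacked matchings, not $\underline{\mu}$ as a lower reduction. This is harmless, since $\underline{\mu}=(\overline{\mu})^*$.
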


\begin{proof}
The preceding proof shows that every indecomposable
tilting for $K_n^{-\infty}$ is projective.
Conversely, by \cite[Theorem~6.1]{brundanstroppelII},
the indecomposable projective-injectives are precisely
the $P(\lambda)$ with
$\lambda\in\Lambda_n^{-\infty,\circ}$.
They are self-dual after forgetting the grading and
have standard filtrations, so they are tilting.
Consequently, the opposite of the finite-entry
endomorphism algebra of a basic tilting generator
is $H_n^{-\infty}$.
Corollary~\ref{cor:arc-pm-ringel-duality} identifies its
module category with $\operatorname{rep}(K_n^{+\infty})$.
Both algebras are basic, so matching their indecomposable
projectives gives the claimed algebra isomorphism.
\end{proof}

\begin{corollary}\label{cor:arc-leonard-super-duality}
Assume $\Bbbk=\mathbb C$ and $n\geq1$.
For $\epsilon\in\{0,1\}$, let
$\mathcal O_\epsilon^{++}(n)$ be Leonard's category
$\mathcal O_\epsilon^{++}$ from
\cite[\S4.1]{leonard2020graded}, with
$l=1$, $n_1=n$ and $c_1=0$.
These are the specified infinite-rank parabolic categories
for $\mathfrak{gl}(n+\infty)$ and
$\mathfrak{gl}(n\mid\infty)$, respectively, with Levi
$\mathfrak{gl}_n\oplus\mathfrak{gl}_\infty$.
There are strongly equivariant highest weight equivalences
\[
 \mathcal O_0^{++}(n)
 \simeq
 \mathcal C\bigl(\bigwedge^nV^0,
                 L^+(\boldsymbol\varpi_0)\bigr)
 \simeq
 \mathcal O_1^{++}(n).
\]
Every block is therefore equivalent to
$\operatorname{rep}(K_a^{-\infty})$, where $0\leq a\leq n$
is the number of $\vee$ labels in its weight diagrams.
The resulting equivalence between the two parabolic
categories is classical super duality.
\end{corollary}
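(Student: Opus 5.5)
The plan is to derive the corollary from the uniqueness theorem (Theorem~\ref{thm:level-two-tpc-uniqueness}) applied to three level-two tensor product categorifications of the same ordered type $(\bigwedge^nV^0,L^+(\boldsymbol\varpi_0))$, all in the essentially finite setting.

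\emph{Step 1: the arc side.} Theorem~\ref{thm:arc-tensor-product-categorifications} already shows that $\mathcal C(\bigwedge^nV^0,L^+(\boldsymbol\varpi_0))$ is such a categorification. Lemma~\ref{lem:arc-tpc-blocks} identifies each block with $\operatorname{rep}(K_a^{-\infty})$, where $a=\#\vee\leq n$; this is the block description in the statement.

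\emph{Step 2: Leonard's side.} I will quote \cite[Theorem~4.23]{leonard2020graded}. With $l=1$, $n_1=n$, $c_1=0$, it says that for each $\epsilon$ the category $\mathcal O_\epsilon^{++}(n)$ carries a tensor product categorification of $\bigwedge^nV\otimes\mathcal F$, with $\mathcal F$ a semi-infinite Fock space. The genuine work is matching conventions:
\begin{itemize}
\item identify Leonard's Fock space factor with $L^+(\boldsymbol\varpi_0)$, using the proposition of \cite{cheng2008super}. For $\epsilon=1$ this goes through $\bigwedge^{-\infty,0}V^1$, with the trace twist invisible to $\mathfrak{sl}(\mathbb Z)$;
\item check that Leonard's highest weight order agrees with our inverse dominance order, possibly after the relabelling of residues;
\item check that his degenerate affine Hecke action (Casimir/Arakawa--Suzuki type) satisfies (CA1)--(CA4) with our normalization $x|_{F_i}-i$ nilpotent. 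If his residues differ by a shift or sign, I would compose with the corresponding automorphism of $\mathfrak{sl}(\mathbb Z)$, or renormalize $x\mapsto\pm x+c$. That renormalization preserves the $H_d$ relations only up to the standard twist $t\mapsto -t$, and I would track it.
\end{itemize}
I also need each block of $\mathcal O^{++}_\epsilon(n)$ to be essentially finite highest weight with finite-dimensional modules in the sense of case~(i). This follows from Leonard's description, since parabolic objects have finite length and the blocks have finite lower sets of standards for each fixed label.

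\emph{Step 3: uniqueness.} Theorem~\ref{thm:level-two-tpc-uniqueness} gives label-preserving strongly equivariant equivalences from each $\mathcal O_\epsilon^{++}(n)$ to the arc category. These are automatically highest weight equivalences, because they preserve labelled simples and hence standards and costandards. Composing one with the inverse of the other gives a strongly equivariant equivalence $\mathcal O_0^{++}(n)\simeq\mathcal O_1^{++}(n)$. This equivalence matches simples labelled by the same monomial, which is exactly the Cheng--Lam correspondence of highest weights, so it is classical super duality in the sense of \cite{cheng2010irreducible} and \cite{leonard2020graded}.

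The main obstacle is Step~2: aligning Leonard's parameter conventions, the weight-poset orientation and the Hecke normalization with ours, especially the $\epsilon=1$ Fock space realized through $V^1$. I would handle it by first checking the action of $F_i$ on standard labels against the subset dictionary \eqref{eq:arc-subset-dictionary}, and then fixing any residual shift by an automorphism of $\mathbb Z$.
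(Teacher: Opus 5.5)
Your proposal is correct and follows the paper's proof. Leonard's Theorem~4.23, together with his Proposition~3.5, which gives the monomial identification of the two semi-infinite wedges that you obtain from the proposition of \cite{cheng2008super}, gives both categories type $(\bigwedge^nV^0,L^+(\boldsymbol\varpi_0))$. Theorems~\ref{thm:level-two-tpc-uniqueness} and~\ref{thm:arc-tensor-product-categorifications} then give the equivalences, Lemma~\ref{lem:arc-tpc-blocks} gives the blocks, and the label-preserving composite is super duality; the paper identifies it with Leonard's Corollary~4.24.

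One small slip: these weight posets are upper finite with infinite lower sets, since a $\vee$ can move arbitrarily far left. So essential finiteness should rest on the finite standard flags of projectives, or simply on Leonard's theorem, not on finiteness of lower sets.
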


\begin{proof}
By \cite[Proposition~3.5 and Theorem~4.23]{leonard2020graded},
both categories have tensor product type
$(\bigwedge^nV^0,L^+(\boldsymbol\varpi_0))$ under the
monomial identification of the two semi-infinite wedges.
The equivalences follow from
Theorems~\ref{thm:level-two-tpc-uniqueness}
and~\ref{thm:arc-tensor-product-categorifications},
and the block description follows from
Lemma~\ref{lem:arc-tpc-blocks}.
Their label-preserving composite is the super duality
equivalence of \cite[Corollary~4.24]{leonard2020graded}.
\end{proof}

\section{Results of Brundan--Stroppel V }
\label{sec:Brundan-Stroppel-V}

\subsection{Representations of the oriented Brauer category}
\label{subsec:representations-oriented-Brauer}
Let $\Bbbk$ be algebraically closed of characteristic zero and
fix $\delta\in\mathbb Z$. We use left modules.

\begin{definition}\label{def:oriented-brauer-category}
\cite[\S III.1]{reynolds2015oriented}
The oriented Brauer category $\mathcal{OB}(\delta)$ has finite
words in $\uparrow,\downarrow$ as objects, including
$\varnothing$.
A basis of $\operatorname{Hom}(\mathbf a,\mathbf b)$ consists
of matchings of the bottom vertices $\mathbf a$ and top vertices
$\mathbf b$. A strand joins equal arrows on different boundaries,
or opposite arrows on the same boundary.
Crossings carry no extra data.
Composition is vertical concatenation, with each closed component
replaced by $\delta$; tensor product is horizontal concatenation.

We also write $\mathcal{OB}(\delta)$ for the locally unital algebra
$\bigoplus_{\mathbf a,\mathbf b}\operatorname{Hom}(\mathbf a,\mathbf b)$,
with multiplication given by composition when defined and zero
otherwise, and idempotents $1_{\mathbf a}$.
Its category $\mathcal{OB}(\delta)\text{-}\operatorname{mod}_{\mathrm{lfd}}$
consists of unital left modules
$M=\bigoplus_{\mathbf a}1_{\mathbf a}M$ with finite-dimensional
components.
A diagram $d:\mathbf a\to\mathbf b$ acts from
$1_{\mathbf a}M$ to $1_{\mathbf b}M$ by $v\mapsto dv$.
The walled Brauer algebra is
$B_{r,s}(\delta)=\operatorname{End}_{\mathcal{OB}(\delta)}
(\uparrow^r\downarrow^s)$.
\end{definition}

Let $\mathcal{OB}^0$ be the subalgebra spanned by diagrams
whose strands all join the two boundaries, and
$\mathcal{OB}^\sharp$ the subalgebra spanned by diagrams
with no pairs on the top boundary.
For words with $r$ upward and $s$ downward arrows,
the corresponding summand of $\mathcal{OB}^0$ is Morita
equivalent to $\Bbbk S_r\otimes\Bbbk S_s$;
see \cite[\S\S III.2--III.4]{reynolds2015oriented}.
For a bipartition $\lambda=(\lambda^L,\lambda^R)$, let
$S(\lambda)$ correspond to
$S^{\lambda^L}\otimes S^{\lambda^R}$, with $\lambda^L$
labelling upward arrows.
Inflate $S(\lambda)$ to $\mathcal{OB}^\sharp$ by letting
diagrams with a bottom pair act as zero, and put
\[
 \Delta(\lambda)=
 \mathcal{OB}(\delta)\otimes_{\mathcal{OB}^\sharp}S(\lambda).
\]

Equip the bipartitions with the inverse dominance order
$\preceq$ of Section~\ref{subsec:semi-infinite-wedges},
using the ordered pair $(\lambda^R,\lambda^L)$ to index
$L^-(-\boldsymbol\varpi_{-\delta})\otimes
L^+(\boldsymbol\varpi_0)$.

\begin{lemma}\label{lem:ob-standards}
\cite[Theorem V.2.5]{reynolds2015oriented}
The category
$\mathcal{OB}(\delta)\text{-}\operatorname{mod}_{\mathrm{lfd}}$
is an upper finite highest weight category with standards
$\Delta(\lambda)$ and order $\preceq$.
\end{lemma}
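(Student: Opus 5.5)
The plan is to verify the axioms of an upper finite highest weight category (Definition~\ref{def:upper-finite-highest-weight}) for $A=\mathcal{OB}(\delta)$ viewed as a locally unital algebra with idempotents $1_{\mathbf a}$. This is the standard triangular-decomposition argument, as in Reynolds and in the Brundan--Stroppel treatment of Cartan/triangular algebras. We use the factorization
\[
 \mathcal{OB}(\delta)\cong\mathcal{OB}^-\otimes\mathcal{OB}^0\otimes\mathcal{OB}^\sharp{}^{\!+}.
\]
Here every diagram is written uniquely as a top part containing only top cups, a middle permutation part, and a bottom part containing only bottom caps. This gives the multiplication isomorphism $\mathcal{OB}^-\otimes_{\mathcal{OB}^0}\mathcal{OB}^\sharp\xrightarrow{\sim}\mathcal{OB}(\delta)$, so $\mathcal{OB}(\delta)$ is free as a right $\mathcal{OB}^\sharp$-module. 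Consequently $\Delta(\lambda)\cong\mathcal{OB}^-\otimes_{\mathcal{OB}^0}S(\lambda)$ as vector spaces, and its weight spaces $1_{\mathbf a}\Delta(\lambda)$ are finite-dimensional. They are nonzero only for words $\mathbf a$ containing at least $|\lambda^L|$ up-arrows and $|\lambda^R|$ down-arrows, with the excess in cup pairs.

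The steps are as follows. First, fix a degree: set $d(\mathbf a)=$ the number of up-arrows minus $|\lambda^L|$, which counts cups. For each $\mathbf a$, the simple $\mathcal{OB}^0$-modules appearing in $1_{\mathbf a}$-truncations are the $S(\mu)$ with $|\mu^L|\le r(\mathbf a)$ and $|\mu^R|\le s(\mathbf a)$, and $\mu^L,\mu^R$ differ from $(r,s)$ by the same number $k$ of removed boxes. Second, show that $\Delta(\lambda)$ has simple head $L(\lambda)$, and that the composition factors $L(\mu)$ of $\operatorname{rad}\Delta(\lambda)$ satisfy $|\mu|>|\lambda|$ in the sense of more cups. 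Third, construct projectives: $P_{\mathbf a}=\mathcal{OB}(\delta)1_{\mathbf a}$ has a finite filtration, indexed by the number of caps in the bottom part, with sections $\Delta$-filtered by $\mathcal{OB}(\delta)\otimes_{\mathcal{OB}^\sharp}(\mathcal{OB}^\sharp 1_{\mathbf a}/\text{more caps})$, which are direct sums of $\Delta(\mu)$'s. Fourth, use BGG reciprocity via the duality $\circledast$ induced by diagram reflection to get the costandards and the $\operatorname{Ext}$-vanishing.

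The order relevant here is the one coming from the cup count. The main obstacle is matching this order with the inverse dominance order $\preceq$ on $L^-(-\boldsymbol\varpi_{-\delta})\otimes L^+(\boldsymbol\varpi_0)$ in the stated parameterization $(\lambda^R,\lambda^L)$. Concretely, the filtration above puts $\Delta(\mu)$ with $\mu$ obtained by removing boxes (more caps) as subquotients of $P(\lambda)$, and these must be $\succ\lambda$. Removing a box from $\lambda^L$ moves the $L^+$ weight up, and removing one from $\lambda^R$ moves the $L^-$ weight down, so inverse dominance is consistent. However, the cup-count order is only a coarsening, and upper finiteness requires that $\{\mu\succeq\lambda\}$ be finite. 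For this I would appeal to the block decomposition: within a fixed total weight (determined via contents and $\delta$ by the central action of the Jucys--Murphy-type elements), only finitely many bipartitions with $|\mu|\le|\lambda|$ occur. Since I expect the precise refinement to require Reynolds' linkage analysis, I would ultimately cite \cite[Theorem V.2.5]{reynolds2015oriented} for the identification of the order, and carry out only the triangular-decomposition verification above.
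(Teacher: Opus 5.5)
In substance your proposal is the paper's treatment. The paper gives no argument for this lemma beyond citing \cite[Theorem V.2.5]{reynolds2015oriented}, and your argument also rests on that theorem for the decisive step, so it is complete exactly to the extent the citation is. The part you propose to verify yourself, however, only yields a highest weight structure for the coarse order by number of boxes. You have also located the remaining difficulty in the wrong place.

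\emph{Upper finiteness is not the issue.} By the explicit form of inverse dominance, $\mu\succeq\lambda$ means equal total weight together with $\mu^L\subseteq\lambda^L$ and $\mu^R\subseteq\lambda^R$. So $\{\mu\mid\mu\succeq\lambda\}$ is finite outright, with no block argument needed.

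\emph{What is actually missing} is the axioms for $\preceq$ itself. Two conditions are required:
every composition factor $L(\nu)$ of $\Delta(\lambda)$ must satisfy $\nu\preceq\lambda$, since otherwise $\Delta(\lambda)$ is not the largest quotient of $P(\lambda)$ supported on $\Xi_{\preceq\lambda}$; and every section $\Delta(\mu)$ of $K(\lambda)$ must satisfy $\mu\succ\lambda$.
Your filtration by bottom caps controls only $|\mu|$. Your remark that removing boxes moves the two factor weights in compatible directions presupposes that the sections arise from $\lambda$ by removing boxes with matching contents across the wall, which is exactly what must be shown.

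Knowing only \emph{same block and fewer boxes} is not enough. Take $\delta=0$, $\lambda=((1^3),(3))$ and $\mu=((2),(1^2))$. Both have the total weight of $(\varnothing,\varnothing)$ and $|\mu|<|\lambda|$, yet $\mu^L\not\subseteq\lambda^L$, so they are $\preceq$-incomparable.

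Two inputs are therefore needed.
\begin{enumerate}
\item \emph{Containment} is elementary. Put $(p,q)=(|\mu^L|,|\mu^R|)$ and $\mathbf a=\uparrow^{p+k}\downarrow^{q+k}$. As a $\Bbbk S_{p+k}\otimes\Bbbk S_{q+k}$-module, $1_{\mathbf a}\Delta(\mu)\cong\bigoplus_{\rho\vdash k}\operatorname{Ind}(S^{\mu^L}\boxtimes S^\rho)\boxtimes\operatorname{Ind}(S^{\mu^R}\boxtimes S^\rho)$. Also $1_{\mathbf a}L(\nu)\cong S^{\nu^L}\boxtimes S^{\nu^R}$ when $(|\nu^L|,|\nu^R|)=(p+k,q+k)$. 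Hence the Littlewood--Richardson rule forces $\nu^L\supseteq\mu^L$ and $\nu^R\supseteq\mu^R$ for every composition factor $L(\nu)$ of $\Delta(\mu)$.
\item \emph{Equality of total weights} is a genuine linkage statement. It requires central elements acting on every $1_{\mathbf a}\Delta(\mu)$ by content scalars. It cannot be borrowed from the arc side: Corollary~\ref{cor:ob-arc-equivalence} is obtained by applying Theorem~\ref{thm:level-two-tpc-uniqueness} to the categorification of Theorem~\ref{thm:oriented-brauer-tpc}, whose highest weight structure is this very lemma.
\end{enumerate}

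Finally, $\mathcal{OB}(\delta)$ with its idempotents $1_{\mathbf a}$ is not pointed. So Definition~\ref{def:upper-finite-highest-weight} can only be checked after passing to a basic Morita equivalent algebra, as the paper does in Theorem~\ref{thm:oriented-brauer-tpc}.
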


Write $L(\lambda)$ for the simple head of $\Delta(\lambda)$
and $P(\lambda)$ for its projective cover.

For a word $\mathbf a$, write $\mathbf a\uparrow$ and
$\mathbf a\downarrow$ for the words obtained by appending
one arrow on the right. Define $F,E$ by
\[
 1_{\mathbf a}(FM)=1_{\mathbf a\downarrow}M,
 \qquad
 1_{\mathbf a}(EM)=1_{\mathbf a\uparrow}M.
\]
A diagram $d:\mathbf a\to\mathbf b$ acts on these components
by $d\otimes1_{\downarrow}$ and $d\otimes1_{\uparrow}$,
respectively; module homomorphisms act componentwise.
We fix the biadjunctions given by the oriented cups and caps.
On representables, bending the appended strand gives
\[
 F\bigl(\mathcal{OB}(\delta)1_{\mathbf a}\bigr)
       \cong\mathcal{OB}(\delta)1_{\mathbf a\uparrow},
 \qquad
 E\bigl(\mathcal{OB}(\delta)1_{\mathbf a}\bigr)
       \cong\mathcal{OB}(\delta)1_{\mathbf a\downarrow};
\]
see \cite[Proposition IV.1.1]{reynolds2015oriented}.

For $\mathbf a=a_1\cdots a_q$, let $s_{j,q+1}$ interchange
the indicated equal arrows. For opposite arrows, let
$e_{j,q+1}$ join them on each boundary, with all other
strands vertical. Put
\begin{equation}\label{eq:ob-jucys-murphy}
\begin{aligned}
 J_{\mathbf a\uparrow}
   &=\sum_{a_j=\uparrow}s_{j,q+1}
       -\sum_{a_j=\downarrow}e_{j,q+1},\\
 J_{\mathbf a\downarrow}
   &=-\delta\,1_{\mathbf a\downarrow}
       -\sum_{a_j=\downarrow}s_{j,q+1}
       +\sum_{a_j=\uparrow}e_{j,q+1}.
\end{aligned}
\end{equation}
Empty sums are zero. This is the normalization with
$m=0$, $m'=-\delta$ in
\cite[\S\S III.2 and IV.2]{reynolds2015oriented}, with
strands appended on the right.
On $1_{\mathbf a}(FM)$ define
$x_M(v)=J_{\mathbf a\downarrow}v$.
Its adjunction mate on $1_{\mathbf a}(EM)$ is
$v\mapsto J_{\mathbf a\uparrow}v$.
On $1_{\mathbf a}(F^2M)=1_{\mathbf a\downarrow\downarrow}M$,
define $t_M(v)=s_{q+1,q+2}v$.
All these products use the original diagram action on $M$.

On $1_{\mathbf a}(F^dM)=1_{\mathbf a\downarrow^d}M$,
number the appended strands from right to left, as in
Definition~\ref{def:categorical-slI-action}.
Then $t_j$ interchanges appended strands $j,j+1$, and
\[
 x_jv=
 \left(J_{\mathbf a\downarrow^{d-j+1}}
               \otimes1_{\downarrow^{j-1}}\right)v
 \qquad(1\leq j\leq d).
\]

For $i\in\mathbb Z$, define $1_{\mathbf a}(F_iM)$ to be
the vectors $v\in1_{\mathbf a\downarrow}M$ satisfying
$(J_{\mathbf a\downarrow}-i1_{\mathbf a\downarrow})^Nv=0$
for some $N\geq1$.
Define $1_{\mathbf a}(E_iM)$ by the same condition with
$\uparrow$ in place of $\downarrow$.

For a box $\square$ in row $a$ and column $b$, put
$\operatorname{ct}(\square)=b-a$.

\begin{theorem}\label{thm:oriented-brauer-tpc}
\cite[Theorem V.3.2]{reynolds2015oriented}
The category
$\mathcal{OB}(\delta)\text{-}\operatorname{mod}_{\mathrm{lfd}}$,
with the functors and transformations above, is a tensor
product categorification of
\[
 L^-(-\boldsymbol\varpi_{-\delta})
       \otimes L^+(\boldsymbol\varpi_0),
 \qquad
 [\Delta(\lambda)]\longmapsto
 v_{\lambda^R}^-\otimes v_{\lambda^L}^+.
\]
For the ambient convention in
Definition~\ref{def:categorical-slI-action}{\rm (ii)},
use the opposite of a basic algebra Morita equivalent to
$\mathcal{OB}(\delta)$.
\end{theorem}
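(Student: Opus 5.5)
The plan is to verify the axioms of a level-two tensor product categorification (Definition~\ref{def:level-two-tpc}) directly, in the upper finite setting supplied by Lemma~\ref{lem:ob-standards}. I will first check the categorical action of Definition~\ref{def:categorical-slI-action} in case~(ii), and then compute the standard filtrations of $F\Delta(\lambda)$ and $E\Delta(\lambda)$. For the ambient convention, I would pass to a basic algebra Morita equivalent to $\mathcal{OB}(\delta)$, taking one representable $\mathcal{OB}(\delta)1_{\mathbf a}$ for each projective cover $P(\lambda)$ and forming its opposite. All constructions are transported along this Morita equivalence.

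\emph{Action.} The functors $F,E$ are exact: they are restrictions to components, with the action twisted by $d\mapsto d\otimes1$. The oriented cups and caps give $F\dashv E$ and $E\dashv F$ by the zigzag relations. The Hecke relations (CA2) are identities inside $\mathcal{OB}(\delta)$. The relation $t_j^2=1$ and the braid relations are immediate. For $t_1x_2-x_1t_1=1$, I would write $x_2$ for the appended strand $q+1$ and $x_1$ for strand $q+2$. Conjugating $J_{\mathbf a\downarrow\downarrow}$ by $s_{q+1,q+2}$ produces $J_{\mathbf a\downarrow}\otimes1$ together with the single extra term $-s_{q+1,q+2}$, coming from the sum over downward arrows. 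This gives the relation with the stated sign. The mate of $x$ is computed by bending the strand through the cap: $e_{j,q+1}$ and $s_{j,q+1}$ exchange roles, and the term $-\delta$ is absorbed by the closed loop, which yields $J_{\mathbf a\uparrow}$. For (CA1), I set $F_i,E_i$ to be the generalized eigenspaces of $x$. This requires that $J$ acts locally finitely with integer eigenvalues. I would get this on representables $\mathcal{OB}1_{\mathbf b}$ by filtering by the number of caps, so that $J$ acts triangularly through the Jucys--Murphy elements of $\Bbbk S_r\otimes\Bbbk S_s$ on $\Delta$-sections, whose eigenvalues are contents. General modules follow by adjunction, as in the proof of Lemma~\ref{lem:arc-standard-action}.

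\emph{Standard filtrations (TPC1--TPC2).} This is the main computation. The key step is a Mackey-type short exact sequence
\[
0\to \Delta\text{-part from adding a box to }\lambda^L
\to F\Delta(\lambda)\to
\Delta\text{-part from removing a box from }\lambda^R\to0 .
\]
I would obtain it by writing $F\Delta(\lambda)=\mathcal{OB}1_{\cdot\downarrow}\otimes_{\mathcal{OB}^\sharp}S(\lambda)$ and splitting the diagrams according to whether the appended strand propagates or caps with an upward strand. The two pieces are $\operatorname{Ind}$ of $S^{\lambda^R}$ to $S_{s+1}$ and the restriction through a cap, which identifies with $\operatorname{Res}_{S_{r-1}}S^{\lambda^L}$. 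Applying the branching rule and reading the eigenvalue of $J$ on each section as the content of the added or removed box, with the shift $-\delta$ on the $\lambda^R$ side, identifies $F_i$ with the Chevalley action on $v^-_{\lambda^R}\otimes v^+_{\lambda^L}$ through the subset dictionary \eqref{eq:arc-subset-dictionary}. $E$ is handled similarly. The matching of the residues $i$, in particular the signs and the shift by $-\delta$, is the step I expect to cause the most trouble. I would check it against the weight formula for $v_\nu^{c,\pm,s}$ on one-box examples.

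\emph{Remaining axioms.} Local finiteness holds because only finitely many residues occur on each $P(\lambda)$. Integrability (CA4) follows, as in Theorem~\ref{thm:arc-tensor-product-categorifications}, from unitriangularity of the projectives' standard expansions. Preservation of $\mathcal C^\Delta$ follows from the sequence above together with exactness.
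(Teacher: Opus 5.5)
The paper gives no proof of this statement; it quotes Reynolds' thesis. Verifying the axioms directly, as you propose, is the standard route. Your Hecke computations are correct: $t_1x_2-x_1t_1=1$ holds, and the mate of $x$ is $J_{\mathbf a\uparrow}$. The central Mackey step, however, is wrong as written, in two ways.

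\emph{The bending is reversed.} Since $1_{\mathbf a}(FM)=1_{\mathbf a\downarrow}M$, one has $F(\mathcal{OB}(\delta)1_{\mathbf c})\cong\mathcal{OB}(\delta)1_{\mathbf c\uparrow}$. A source word ending in $\downarrow$ belongs to $E$, so your module $\mathcal{OB}1_{\cdot\downarrow}\otimes_{\mathcal{OB}^\sharp}S(\lambda)$ is, after bending, $E\Delta(\lambda)$. That is why your two pieces, $\operatorname{Ind}$ of $S^{\lambda^R}$ and $\operatorname{Res}$ of $S^{\lambda^L}$, contradict the sequence you display for $F$. Taken literally, they make $F$ add boxes to $\lambda^R$. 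This is incompatible with $f_i$ on $v^-_{\lambda^R}\otimes v^+_{\lambda^L}$, which removes a box from the $L^-$ factor or adds one to the $L^+$ factor. For $F$, the new source strand is an $\uparrow$. It either propagates, giving $\operatorname{Ind}S^{\lambda^L}$, or is capped with a $\downarrow$ of $\mathbf c$, giving $\operatorname{Res}S^{\lambda^R}$.

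\emph{The order is reversed.} Left multiplication never removes a source cap, but it can cap the top end of the propagating $\uparrow$ against a strand coming from a $\downarrow$ of $\mathbf c$. Hence the capped part, which removes a box from $\lambda^R$, is the submodule, and the part adding a box to $\lambda^L$ is the quotient. This is the order in Lemma~\ref{lem:tpc-standard-costandard-sequences}, whose first factor is indexed by $\lambda^R$. For a concrete failure of your order, take $\delta=0$ and $\lambda=(\varnothing,(1))$. Then $F\Delta(\lambda)\cong\mathcal{OB}(0)1_{\downarrow\uparrow}$ has local endomorphism ring $B_{1,1}(0)$, so it is $P((1),(1))$, with simple head $L((1),(1))$. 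Both sections have residue $0$, so the eigenspace splitting does not help. $\Delta((1),(1))$ must be the top section and cannot be a submodule with quotient $\Delta(\varnothing,\varnothing)$.

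With the correct order the rest of your plan works. After bending, $x$ is right multiplication by $J_{\mathbf c\uparrow}$, and it preserves the capped submodule. On the summand $S^{\lambda^R-\square}$ its eigenvalue is $-\delta-\operatorname{ct}(\square)$, so the content enters with a minus sign, not merely shifted by $-\delta$. On the quotient summand $S^{\lambda^L+\square}$ its eigenvalue is $\operatorname{ct}(\square)$. Thus $F_i$ adds a box of content $i$ to $\lambda^L$ or removes a box of content $-\delta-i$ from $\lambda^R$, which is exactly $f_i$ under \eqref{eq:arc-subset-dictionary}.

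Three smaller points remain. First, you never check that $x$ is natural, i.e.\ that $J_{\mathbf b'\downarrow}(d\otimes1_\downarrow)=(d\otimes1_\downarrow)J_{\mathbf b\downarrow}$ also for cups and caps $d$. For an $\uparrow\downarrow$ cup at positions $k,k+1$, the new terms $e_{k,q+3}$ and $-s_{k+1,q+3}$ cancel. This naturality is also what gives $x_ax_b=x_bx_a$ and $t_rx_a=x_at_r$ in (CA2). Second, unitriangularity alone does not make $[P(\lambda)]$ a weight vector. Realize $P(\lambda)$ as a summand of a refined composite of $E_i$'s and $F_i$'s applied to $P(\varnothing,\varnothing)$; its class is a weight vector by (TPC2), and nonnegativity of $\Delta$-multiplicities finishes the argument. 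Third, the basic algebra needs one primitive idempotent per label, not one representable.
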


Put
\[
 K(\delta)=K\bigl(L^-(-\boldsymbol\varpi_{-\delta}),
                    L^+(\boldsymbol\varpi_0)\bigr).
\]
The bipartition $\lambda=(\lambda^L,\lambda^R)$ corresponds
to the weight diagram with subsets
\[
 I_\vee(\lambda)=\{a-\delta-\lambda^R_a\mid a\geq1\},
 \qquad
 I_\wedge(\lambda)=\{1-a+\lambda^L_a\mid a\geq1\}.
\]
This is the algebra $K(\delta)$ of
\cite[\S1]{BrundanStroppelV}, the sum of its
$K_\infty^\infty$-blocks.

\begin{corollary}\label{cor:ob-arc-equivalence}
There is a label-preserving strongly equivariant equivalence
\[
 \mathcal G:\mathcal{OB}(\delta)\text{-}\operatorname{mod}_{\mathrm{lfd}}
       \xrightarrow{\sim}K(\delta)\text{-}\operatorname{mod}_{\mathrm{lfd}}.
\]
\end{corollary}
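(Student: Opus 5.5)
This follows from the uniqueness theorem in the upper finite setting. Both categories are level-two tensor product categorifications of the same ordered type $(X,Y)=(L^-(-\boldsymbol\varpi_{-\delta}),L^+(\boldsymbol\varpi_0))$, so Theorem~\ref{thm:level-two-tpc-uniqueness} gives a label-preserving strongly equivariant equivalence between them. The plan is to check the hypotheses on each side and then confirm that the two labellings agree.

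\emph{Oriented Brauer side.} By Theorem~\ref{thm:oriented-brauer-tpc}, $\mathcal{OB}(\delta)\text{-}\operatorname{mod}_{\mathrm{lfd}}$ is a tensor product categorification of $L^-(-\boldsymbol\varpi_{-\delta})\otimes L^+(\boldsymbol\varpi_0)$ with $[\Delta(\lambda)]\mapsto v^-_{\lambda^R}\otimes v^+_{\lambda^L}$. By Lemma~\ref{lem:ob-standards}, it is upper finite with respect to the order $\preceq$ on the ordered pair $(\lambda^R,\lambda^L)$. To fit case~(ii) of Definition~\ref{def:categorical-slI-action}, pass to a basic algebra Morita equivalent to $\mathcal{OB}(\delta)$, as indicated in Theorem~\ref{thm:oriented-brauer-tpc}. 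The Hecke-form action transfers to KLR form by Theorem~\ref{thm:hecke-klr-action-equivalence}.

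\emph{Arc side.} By Theorem~\ref{thm:arc-tensor-product-categorifications} applied to the last pair in \eqref{eq:arc-tpc-types}, with $s=-\delta$ and $t=0$, $K(\delta)\text{-}\operatorname{mod}_{\mathrm{lfd}}$ is an upper finite tensor product categorification of the same ordered type. Its labels are weight diagrams, built from the subset dictionary \eqref{eq:arc-subset-dictionary}.

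\emph{Matching the labels.} Under \eqref{eq:arc-subset-dictionary}, the monomial $v^-_{\lambda^R}$ of $L^-(-\boldsymbol\varpi_{-\delta})$ has subset $\{-\delta+a-\lambda^R_a\}=I_\vee(\lambda)$. The monomial $v^+_{\lambda^L}$ of $L^+(\boldsymbol\varpi_0)$ has subset $\{1-a+\lambda^L_a\}=I_\wedge(\lambda)$. So the bijection $\lambda\mapsto$ (weight diagram with subsets $I_\vee(\lambda),I_\wedge(\lambda)$) is exactly the identification of both index sets with $\Xi_X\times\Xi_Y$. It intertwines the two standard-basis isomorphisms onto $X\otimes Y$, and hence also the inverse dominance orders, by Lemma~\ref{lem:arc-tpc-blocks}.

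\emph{Conclusion and main obstacle.} Theorem~\ref{thm:level-two-tpc-uniqueness}, in the upper finite case of Remark~\ref{rem:tpc-uniqueness-semi-infinite}, then gives $\mathcal G$ together with $\zeta$ satisfying (E1)--(E3), and $\mathcal G L(\lambda)\cong L(\lambda)$. The step I expect to need the most care is reconciling the ambient conventions. The oriented Brauer category uses modules over a locally unital algebra that is not basic and has possibly non-pointed idempotents $1_{\mathbf a}$. The Morita passage must therefore preserve the functors $E,F$ with their fixed adjunctions and the transformations $x,t$, and must keep the generalized eigenspace decomposition into the $F_i$ locally finite. I would handle this by transporting everything along the Morita equivalence given by a projective generator built from the indecomposable summands of the $\mathcal{OB}(\delta)1_{\mathbf a}$. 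Since that equivalence is exact and preserves finitely generated projectives, standards and the lfd condition, all the axioms transfer verbatim.
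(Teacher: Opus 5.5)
Your proposal is correct and follows essentially the paper's own route: the paper places the corollary as a direct consequence of Theorem~\ref{thm:level-two-tpc-uniqueness} in the upper finite case, applied to Theorem~\ref{thm:oriented-brauer-tpc} and to Theorem~\ref{thm:arc-tensor-product-categorifications} for the pair $(L^-(-\boldsymbol\varpi_{-\delta}),L^+(\boldsymbol\varpi_0))$, with the labels matched through $I_\vee(\lambda)$ and $I_\wedge(\lambda)$. Your check of the subset dictionary, and your passage to a basic algebra Morita equivalent to $\mathcal{OB}(\delta)$, are exactly what the paper indicates.
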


Let $Q_\varnothing=K(\delta)e_{\varnothing,\varnothing}$.
For a word $\mathbf a$, define
$Q_{\mathbf a\uparrow}=FQ_{\mathbf a}$ and
$Q_{\mathbf a\downarrow}=EQ_{\mathbf a}$, using the total
arc functors $F=\bigoplus_iF_i$ and $E=\bigoplus_iE_i$.
The empty-word representable is
$\Delta(\varnothing,\varnothing)=P(\varnothing,\varnothing)$;
strong equivariance and the formulas on representables give
$\mathcal G(\mathcal{OB}(\delta)1_{\mathbf a})\cong Q_{\mathbf a}$.

For $r,s\geq0$, set
\[
 \Lambda_{r,s}=
 \{\lambda\mid |\lambda^L|=r-k,\ |\lambda^R|=s-k,\
                    0\leq k\leq\min(r,s)\}.
\]
Let $\dot\Lambda_{r,s}=\Lambda_{r,s}$, except that
$(\varnothing,\varnothing)$ is removed when
$\delta=0$ and $r=s>0$.
Put
\[
 e_{r,s}=\sum_{\lambda\in\dot\Lambda_{r,s}}e_\lambda,
 \qquad
 K_{r,s}(\delta)=e_{r,s}K(\delta)e_{r,s},
 \qquad
 Q_{r,s}=E^sF^rQ_\varnothing.
\]
Thus $Q_{r,s}=Q_{\uparrow^r\downarrow^s}$.

For $\lambda\in\dot\Lambda_{r,s}$, write
$D_{r,s}(\lambda):=1_{\uparrow^r\downarrow^s}L(\lambda)$ for
the corresponding simple walled Brauer module
\cite[Theorem VI.3.1]{reynolds2015oriented}.

\begin{corollary}\label{cor:walled-brauer-arc-truncation}
There are algebra isomorphisms
\[
 B_{r,s}(\delta)
 \cong \operatorname{End}_{K(\delta)}(Q_{r,s})^{\mathrm{op}}
 \cong \operatorname{End}_{K_{r,s}(\delta)}
                       (e_{r,s}Q_{r,s})^{\mathrm{op}}.
\]
The module $e_{r,s}Q_{r,s}$ is a projective generator.
Consequently $B_{r,s}(\delta)$ is Morita equivalent to
$K_{r,s}(\delta)$, with simple labels $\dot\Lambda_{r,s}$.
\end{corollary}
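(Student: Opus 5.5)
The plan starts from Yoneda for the locally unital algebra $\mathcal{OB}(\delta)$. Left modules $M$ satisfy $1_{\mathbf a}M\cong\operatorname{Hom}(\mathcal{OB}(\delta)1_{\mathbf a},M)$, and right multiplication gives $\operatorname{End}(\mathcal{OB}(\delta)1_{\mathbf a})\cong(1_{\mathbf a}\mathcal{OB}(\delta)1_{\mathbf a})^{\mathrm{op}}$. With $\mathbf a=\uparrow^r\downarrow^s$ this reads $B_{r,s}(\delta)\cong\operatorname{End}(\mathcal{OB}(\delta)1_{\mathbf a})^{\mathrm{op}}$. Applying the equivalence $\mathcal G$ of Corollary~\ref{cor:ob-arc-equivalence} and the identification $\mathcal G(\mathcal{OB}(\delta)1_{\mathbf a})\cong Q_{\mathbf a}=Q_{r,s}$ recorded above yields the first isomorphism. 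The representable is a finitely generated projective, so this transport is harmless.

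For the second isomorphism, I will use that $Q_{r,s}$ is projective. It is obtained from $P(\varnothing,\varnothing)$ by the exact biadjoint functors $E,F$, which preserve finitely generated projectives. Hence $Q_{r,s}\cong\bigoplus P(\lambda)^{\oplus m_\lambda}$. The functor $M\mapsto e_{r,s}M$ from $K(\delta)$-modules to $K_{r,s}(\delta)$-modules is fully faithful on $\operatorname{add}\{P(\lambda)\mid \lambda\in\dot\Lambda_{r,s}\}$, since $\operatorname{Hom}(K e_\lambda,K e_\mu)=e_\lambda K e_\mu$ is a corner of $K_{r,s}(\delta)$ whenever $\lambda,\mu\in\dot\Lambda_{r,s}$. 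So both the second isomorphism and the generator statement reduce to one claim: the indecomposable summands $P(\lambda)$ of $Q_{r,s}$ are exactly those with $\lambda\in\dot\Lambda_{r,s}$, each occurring at least once. Given this, $e_{r,s}Q_{r,s}$ contains every principal projective $K_{r,s}(\delta)e_\lambda$, so it is a projective generator. Morita equivalence with simple labels $\dot\Lambda_{r,s}$ follows.

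The multiplicity $m_\lambda$ equals $\dim\operatorname{Hom}(Q_{r,s},L(\lambda))$. Transporting back through $\mathcal G$, this is $\dim 1_{\uparrow^r\downarrow^s}L(\lambda)$, which is nonzero exactly when $D_{r,s}(\lambda)\neq0$. The main step, and the main obstacle, is deciding when $1_{\mathbf a}L(\lambda)\neq0$. I would cite the classification of simple walled Brauer modules, \cite[Theorem VI.3.1]{reynolds2015oriented}: the nonzero $1_{\uparrow^r\downarrow^s}L(\lambda)$ are indexed precisely by $\dot\Lambda_{r,s}$.

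If I wanted a self-contained argument, I would proceed as follows. First, $1_{\mathbf a}\Delta(\lambda)\neq0$ forces $\lambda\in\Lambda_{r,s}$, by counting through strands. Second, for $\lambda\in\Lambda_{r,s}$ with $k$ cap pairs, the vector built from $k$ nested caps tensored with $S(\lambda)$ generates the head. This fails only for $\delta=0$, $\lambda=(\varnothing,\varnothing)$, $r=s>0$: there the image of $1_{\mathbf a}\Delta$ in the head is killed because closing the caps produces factors of $\delta=0$. This matches the definition of $\dot\Lambda_{r,s}$.

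On the arc side, a cross-check is Proposition~\ref{prop:geometric-functor-projectives}. Iterating the elementary matchings from the vacuum computes the summands of $Q_{r,s}$ directly, and the exceptional case arises from a vanishing upper line.
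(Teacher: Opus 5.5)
Your proposal is correct and follows the paper's proof essentially step for step. Yoneda and the equivalence $\mathcal G$ give the first isomorphism, the multiplicity of $P(\lambda)$ in $Q_{r,s}$ is $\dim 1_{\uparrow^r\downarrow^s}L(\lambda)$, \cite[Theorem VI.3.1]{reynolds2015oriented} makes this nonzero exactly for $\lambda\in\dot\Lambda_{r,s}$ so that $\operatorname{add}(Q_{r,s})=\operatorname{add}(K(\delta)e_{r,s})$, and full faithfulness of $M\mapsto e_{r,s}M$ together with Morita theory finishes the argument.

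Your closing arc-side ``cross-check'' is not needed, and it is inaccurate as stated. For $\delta=0$ the summand $P(\eta)$ is missing not because of a vanishing upper line. Rather, the first step $F_0$ contributes a top cup, and this forces a bottom cap in every composite matching that returns to the block of $\eta$; for example, $Q_{1,1}=E_0F_0P(\eta)\cong P((1),(1))$.
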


\begin{proof}
By \cite[Theorem VI.3.1]{reynolds2015oriented}, the labels with
$1_{\uparrow^r\downarrow^s}L(\lambda)\neq0$ are exactly
$\dot\Lambda_{r,s}$. Since these dimensions are the projective
multiplicities in the corresponding representable,
Corollary~\ref{cor:ob-arc-equivalence} gives
$\operatorname{add}(Q_{r,s})
 =\operatorname{add}(K(\delta)e_{r,s})$.
The functor $M\mapsto e_{r,s}M$ is fully faithful on this
additive category and sends $Q_{r,s}$ to a projective generator.
The result follows from that corollary and Morita theory.
\end{proof}

Replace each $\uparrow$ in $\mathbf a$ by the letter $E$ of
\cite{BrundanStroppelV}, and each $\downarrow$ by its letter $F$.
The resulting sequence $R$ satisfies
$Q_{\mathbf a}\cong{}_R^*P(\eta)$ after forgetting the grading:
both sides use the same matching bimodules, and
$\eta=(\varnothing,\varnothing)$.
Thus $\operatorname{End}_{K(\delta)}(Q_{\mathbf a})^{\mathrm{op}}$
is the algebra $B_R(\delta)$ defined in
\cite[(6.3)]{BrundanStroppelV}.
Thus the last two corollaries recover the ungraded isomorphism
of \cite[Corollary 8.2]{BrundanStroppelV} and the Morita
equivalence of \cite[Theorem 1.1]{BrundanStroppelV}.
The grading on the arc projective functors also grades this
endomorphism algebra.
In general $B_{r,s}(\delta)$ is not isomorphic to its basic
corner $K_{r,s}(\delta)$: the projective multiplicities in
$Q_{r,s}$ must be retained.

\subsection{Mixed super Schur--Weyl duality}
\label{subsec:mixed-tensors}
Let $m,n\geq0$, put $\delta=m-n$, and work over the
algebraically closed field $\Bbbk$ of characteristic zero.
We first identify Deligne's category with the projective
category of $K(\delta)$, and realize $\mathcal F(m|n)$ by
arc modules. Evaluation then gives a strongly equivariant
functor between these two arc categories. We use it to
describe mixed tensor powers and their indecomposable summands.

Now let $\mathfrak g=\mathfrak{gl}(m|n)$, write
$V=\Bbbk^{m|n}$ for its natural supermodule, and put $W=V^*$.
For a homogeneous vector $v$, write $|v|$ for its parity.
All homomorphisms below are even.
Let $\mathcal F(m|n)$ be the category of finite-dimensional
$\mathfrak g$-supermodules which are direct sums of integral
weight spaces, with the weight space of
$\nu=(\nu_1,\ldots,\nu_{m+n})$ concentrated in parity
$\nu_{m+1}+\cdots+\nu_{m+n}\pmod2$.
This chooses one of the two parity shifts of each simple module.
The category contains $\Bbbk,V,W$ and is closed under tensor
products and duals.

Let $e_{ab}$ be the matrix units, with $|a|=0$ for $a\leq m$
and $|a|=1$ otherwise. On $\mathcal F(m|n)$ put
\[
 F(M)=M\otimes V,\qquad E(M)=M\otimes W,\qquad
 \Omega=\sum_{a,b=1}^{m+n}(-1)^{|b|}e_{ab}\otimes e_{ba}.
\]
Take $x_M=\Omega_{M,V}$, where the subscripts indicate the
two modules on which $\Omega$ acts, and let $t_M$ interchange
the last two factors by the super symmetry.
Use the usual tensor adjunction $F\dashv E$; the mate of $x$
on $E$ is $-\delta-\Omega_{M,W}$.
Write $F_i,E_i$ for the corresponding generalized
$i$-eigenspace functors.

Since $\operatorname{sdim}V=\delta$, the universal property of
$\mathcal D_\delta$ gives a symmetric monoidal functor
$\operatorname{ev}_{m,n}:\mathcal D_\delta\to\mathcal F(m|n)$
sending $\uparrow$ to $V$ and $\downarrow$ to $W$
\cite[Proposition 3.5.1]{comes2012deligne}.
It evaluates crossings by the super symmetry and cups and
caps by coevaluation and evaluation.
For a word $\mathbf a$, set
$T_{\mathbf a}=\operatorname{ev}_{m,n}(\mathbf a)$, and write
$T_{r,s}=V^{\otimes r}\otimes W^{\otimes s}$.

We next recall the arc realization of $\mathcal F(m|n)$.
For the upper triangular Borel, the simple objects
$L_{\mathfrak g}(\nu)$ are indexed by
\[
 \Xi(m|n)=\{\nu\in\mathbb Z^{m+n}\mid
       \nu_1\geq\cdots\geq\nu_m,\quad
       \nu_{m+1}\geq\cdots\geq\nu_{m+n}\}.
\]
Let $\mathfrak p=\mathfrak g_{\overline0}\oplus\mathfrak g_1$,
where $\mathfrak g_1$ is the upper right odd block.
Extend the simple $\mathfrak g_{\overline0}$-module
$L_{\overline0}(\nu)$ to $\mathfrak p$ by making
$\mathfrak g_1$ act as zero, and define
$\Delta_{\mathfrak g}(\nu)
=U(\mathfrak g)\otimes_{U(\mathfrak p)}L_{\overline0}(\nu)$.
Its highest weight vector has the parity specified above.

Identify $\nu$ with the weight diagram given by
\begin{equation}\label{eq:glmn-weight-diagram}
 I_\vee(\nu)=\{\nu_a+1-a\mid1\leq a\leq m\},\qquad
 I_\wedge(\nu)=
   \mathbb Z\setminus\{j-m-\nu_{m+j}\mid1\leq j\leq n\}.
\end{equation}
These are the subsets for monomials in
$\bigwedge^mV^0$ and $\bigwedge^nV^1$, respectively, under
\eqref{eq:arc-subset-dictionary}. Denote their tensor monomial
by $v_\nu$.

\begin{theorem}\label{thm:glmn-tensor-product-categorification}
\cite[Theorems 3.8 and 3.10]{brundan2017tensor}
The category $\mathcal F(m|n)$ is essentially finite highest
weight, with standards $\Delta_{\mathfrak g}(\nu)$ and the
inverse dominance order on the monomials $v_\nu$.
The functors and transformations above make it a tensor
product categorification of type
$(\bigwedge^mV^0,\bigwedge^nV^1)$, with
\[
 \Bbbk\otimes_{\mathbb Z}K_0(\mathcal F(m|n)^\Delta)
 \xrightarrow{\sim}\bigwedge^mV^0\otimes\bigwedge^nV^1,
 \qquad [\Delta_{\mathfrak g}(\nu)]\longmapsto v_\nu.
\]
\end{theorem}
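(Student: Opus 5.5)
The plan is to verify the axioms directly, in three stages: highest weight structure, categorical action, and the Grothendieck group identification.

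\emph{Highest weight structure.} For $\mathcal F(m|n)$ I would take the Kac modules $\Delta_{\mathfrak g}(\nu)$ as standards and the dual Kac modules, induced from the opposite parabolic, as costandards. The key inputs are:
\begin{itemize}
\item $\Delta_{\mathfrak g}(\nu)$ has simple head $L_{\mathfrak g}(\nu)$, and its other composition factors have labels strictly lower for the inverse dominance order on the monomials $v_\nu$. Under \eqref{eq:glmn-weight-diagram}, lowering a weight by an odd root moves a $\vee$ leftward or a $\wedge$ rightward.
\item Since $U(\mathfrak g)$ is free of finite rank over $U(\mathfrak p)$, the projective covers in $\mathcal F(m|n)$ are obtained by inducing projective covers from the category of integral $\mathfrak g_{\overline0}$-modules, truncated by central character.
\item These projectives have finite Kac flags, with BGG reciprocity $(P(\nu):\Delta(\mu))=[\nabla(\mu):L(\nu)]$.
\end{itemize}
Each central-character block has finitely many weights below any given one, but the poset is infinite. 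This is exactly the essentially finite setting of Appendix~\ref{sec:semi-infinite-highest-weight-categories}.

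\emph{Categorical action.}
\begin{itemize}
\item $F=-\otimes V$ and $E=-\otimes W$ are exact. They are biadjoint via the coevaluation and evaluation maps for $V$ and $W=V^*$, using $V\cong W^*$ in $\mathcal F(m|n)$.
\item The relations of $H_d$ from Definition~\ref{def:degenerate-affine-hecke-algebra} follow from the Arakawa--Suzuki computation in the super setting. The tensor Casimir $\Omega$ commutes with $\mathfrak g$, and $\Omega_{V,V}$ is the super flip. Hence $t\,x_{j+1}-x_j\,t=1$, the remaining relations being formal.
\item Writing $C$ for the Casimir, $\Omega_{M,V}=\tfrac12(C_{M\otimes V}-C_M-C_V)$. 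This shows that $x$ acts on each Kac section of $F\Delta_{\mathfrak g}(\nu)$ by an integer scalar.
\end{itemize}

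\emph{Grothendieck group identification.} The module $\Delta_{\mathfrak g}(\nu)\otimes V$ has a Kac flag with sections $\Delta_{\mathfrak g}(\nu+\boldsymbol\varepsilon_a)$, $1\le a\le m+n$, one for each weight that stays dominant for $\mathfrak g_{\overline0}$. This follows from the tensor identity $U(\mathfrak g)\otimes_{U(\mathfrak p)}(L_{\overline0}(\nu)\otimes V)$ and the $\mathfrak g_{\overline0}$-Pieri rule for $V=\Bbbk^m\oplus\Bbbk^{0|n}$. The Casimir computation then gives the scalar of $x$ on each section:
\begin{itemize}
\item for $a\le m$ it is $\nu_a+1-a$;
\item for $a=m+j$ it is the appropriate shifted content $j-m-\nu_{m+j}-1$, up to the sign convention built into $(-1)^{|b|}$ in $\Omega$.
\end{itemize}
Under \eqref{eq:glmn-weight-diagram}, the section with eigenvalue $i$ is precisely the monomial obtained by replacing $i$ by $i+1$ in one of the two subsets. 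So $F_i$ sends standard-filtered objects to standard-filtered objects with the prescribed sections, giving (TPC1) and (TPC2) for $F_i$. Adjunction gives $E_i$, and the stated mate $-\delta-\Omega_{M,W}$ identifies the $E_i$-eigenvalues.

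Once the standard sections are known, the remaining axioms follow:
\begin{itemize}
\item local finiteness holds because only finitely many $i$ occur for each $\nu$;
\item integrability and the weight-vector condition (CA4) follow by the argument of Theorem~\ref{thm:arc-tensor-product-categorifications}, since $E_i^3$ and $F_i^3$ kill standards;
\item local nilpotence of $x-i$ on $F_i$ holds by definition of generalized eigenspaces.
\end{itemize}

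\emph{Expected obstacle.} The main difficulty is the sign and normalization bookkeeping in the eigenvalue computation on the odd part. One must check that $C_{\Delta(\nu+\boldsymbol\varepsilon_{m+j})}-C_{\Delta(\nu)}-C_V$, computed with the supertrace form and the $\rho$-shift for the distinguished Borel, yields exactly the residue matching the complement description of $I_\wedge$. One must also check that the parity convention for $\mathcal F(m|n)$ keeps the relevant maps even. I would first test this on the vacuum monomials and then on $m=n=1$, where everything is explicit, before doing the general computation.
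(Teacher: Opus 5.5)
The paper gives no proof of this statement. It is imported from Brundan--Losev--Webster, who treat $\mathcal F(m|n)$ as the parabolic case, with Levi $\mathfrak g_{\overline0}$, of their tensor product categorification of integral super category $\mathcal O$. There the highest weight structure comes from the general theory of parabolic category $\mathcal O$, and the action comes from the super Arakawa--Suzuki calculation of $\Omega$ on standard flags.

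Your proposal runs that argument directly on Kac modules. This makes it self-contained and avoids category $\mathcal O$, but you must build the highest weight structure of $\mathcal F(m|n)$ by hand; the citation gets it uniformly for all Borels and parabolics.

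The step you flag as the main obstacle works out with no sign correction. For $C=\sum_{a,b}(-1)^{|b|}e_{ab}e_{ba}$, the element $C$ acts on a highest weight module of highest weight $\lambda$ by
\[
 c_\lambda=\sum_a(-1)^{|a|}\lambda_a^2
 +\sum_{a<b}\bigl((-1)^{|b|}\lambda_a-(-1)^{|a|}\lambda_b\bigr),
\]
and $c_V=c_W=\delta$. Hence $\tfrac12(c_{\nu+\varepsilon_k}-c_\nu-\delta)$ is $\nu_k+1-k$ for $k\le m$ and $j-m-\nu_{m+j}-1$ for $k=m+j$, exactly your values. Likewise $-\delta-\Omega_{M,W}$ acts on the section $\Delta_{\mathfrak g}(\nu-\varepsilon_k)$ of $\Delta_{\mathfrak g}(\nu)\otimes W$ by $\nu_k-k$, respectively $j-m-\nu_{m+j}$. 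These are the residues for $E_i$.

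Several statements need repair, although none of them breaks the strategy.
\begin{itemize}
\item The claim that each block has only finitely many weights below a given one is false. An atypical block has weight poset $\Lambda_a^\infty$, where a $\vee$ can move arbitrarily far left. For $\mathfrak{gl}(1|1)$ the principal block is $(\mathbb Z,\leq)$; compare \S\ref{subsec:defect-one-infinite}. The essentially finite setting needs only interval-finiteness, which follows from the coordinatewise description of inverse dominance at fixed total weight, together with finite Kac flags on projectives.
\item The projectives are the modules $\operatorname{Ind}_{\mathfrak g_{\overline0}}^{\mathfrak g}L_{\overline0}(\nu)$; induction from $\mathfrak p$ gives Kac modules instead.
  \begin{itemize}
  \item Write the induced module as $U(\mathfrak g)\otimes_{U(\mathfrak p)}(\Lambda(\mathfrak g_1)\otimes L_{\overline0}(\nu))$ and filter by $\Lambda$-degree. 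This gives a Kac flag with top $\Delta_{\mathfrak g}(\nu)$. Every other label $\mu$ has $\mu_a\geq\nu_a$ for $a\leq m$, hence $\mu\succ\nu$ within the block.
  \item Pass to the summand $P_{\mathfrak g}(\nu)$ by the usual $\operatorname{Ext}^1(-,\nabla)$ criterion. The same induced modules are injective, which supplies enough injectives.
  \item Dually, the non-head composition factors of $\Delta_{\mathfrak g}(\nu)$ lie below $\nu$ because they share its central character, which is the total weight, and satisfy $\mu_a\leq\nu_a$. Your single-odd-root heuristic does not prove this.
  \end{itemize}
\item In the tensor identity, $V|_{\mathfrak p}$ is not a direct sum. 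It is an extension with $\Bbbk^{m|0}$ as submodule and $\mathfrak g_1$ acting trivially on both sections, and exactness of induction uses that filtration. For $W$ the submodule is $(\Bbbk^{0|n})^*$.
\item Adjunction alone does not give (TPC1) for $E_i$; repeat the Kac-flag computation with $W$, as above.
\item For (CA3), note that $F_i$ shifts total weight by $-\boldsymbol\alpha_i$ and the category splits into total-weight (central character) summands. So the left adjoint of $F_i$ under the second tensor adjunction is the $+\boldsymbol\alpha_i$ component of $E$, namely $E_i$.
\end{itemize}
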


Write $K(m|n)$ for the arc algebra of type
$(\bigwedge^mV^0,\bigwedge^nV^1)$, and let
$\operatorname{rep}K(m|n)$ denote its finite-dimensional
ungraded left modules. Write $\zeta$ for the diagram of
$\nu=0$ in \eqref{eq:glmn-weight-diagram}.
Applying Theorem~\ref{thm:level-two-tpc-uniqueness} to the
preceding theorem and
Theorem~\ref{thm:arc-tensor-product-categorifications} gives
the following result.

\begin{corollary}\label{cor:glmn-arc-equivalence}
\textup{(cf.\ \cite[Theorems 5.11--5.12]{brundanstroppelIV})}
There is a strongly equivariant equivalence
\[
 \mathcal G_{m,n}:\mathcal F(m|n)
       \xrightarrow{\sim}\operatorname{rep}K(m|n)
\]
which sends $L_{\mathfrak g}(\nu)$,
$\Delta_{\mathfrak g}(\nu)$ and $P_{\mathfrak g}(\nu)$
to the simple, standard and projective arc modules with
diagram $\nu$, respectively.
In particular, $\mathcal G_{m,n}(\Bbbk)=L(\zeta)$.
Strong equivariance gives
$\mathcal G_{m,n}(T_{r,s})\cong E^sF^rL(\zeta)$.
\end{corollary}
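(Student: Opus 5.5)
The plan is to combine Theorem~\ref{thm:level-two-tpc-uniqueness} with Theorem~\ref{thm:glmn-tensor-product-categorification} and Theorem~\ref{thm:arc-tensor-product-categorifications}. Both $\mathcal F(m|n)$ and $\operatorname{rep}K(m|n)$ are essentially finite tensor product categorifications of the same ordered type $(\bigwedge^mV^0,\bigwedge^nV^1)$. The monomial identification \eqref{eq:glmn-weight-diagram} matches the standard labels $\nu\in\Xi(m|n)$ with the weight diagrams of $\Lambda(\bigwedge^mV^0,\bigwedge^nV^1)$. First I will check that this dictionary is a bijection $\Xi(m|n)\to\Lambda(\bigwedge^mV^0,\bigwedge^nV^1)$. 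It sends the inverse dominance order to the inverse dominance order, and by Lemma~\ref{lem:arc-tpc-blocks} the latter is the Bruhat order on each block. The two sets of standard classes are then carried to the same monomials $v_\nu$.

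Theorem~\ref{thm:level-two-tpc-uniqueness} then supplies a label-preserving strongly equivariant equivalence $\mathcal G_{m,n}$ with $\mathcal G_{m,n}L_{\mathfrak g}(\nu)\cong L(\nu)$. Since an equivalence of highest weight categories preserving simple labels and the weight poset preserves standards, the labelled standards go to the labelled standards. Concretely, $\Delta(\nu)$ is the largest quotient of the projective cover $P(\nu)$ with composition factors labelled $\le\nu$, and this is a categorical description. The same argument handles projective covers. This is the analogue, for essentially finite categories, of the final sentence of Theorem~\ref{thm:finite-level-two-tpc-uniqueness}.

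For $\nu=0$, the trivial module $\Bbbk=L_{\mathfrak g}(0)$ is sent to $L(\zeta)$ by label preservation. Next, strong equivariance gives $\zeta:F'\mathcal G\xrightarrow{\sim}\mathcal GF$. By (E1) the mate $\mathcal GE\xrightarrow{\sim}E'\mathcal G$ is also an isomorphism, and this is automatic for an equivalence. Here $F=-\otimes V$ and $E=-\otimes W$ are the total functors on $\mathcal F(m|n)$; they coincide with $\bigoplus_iF_i$ and $\bigoplus_iE_i$ because generalized eigenspaces of $x$ decompose them. Iterating gives
\[
\mathcal G_{m,n}(T_{r,s})=\mathcal G_{m,n}(E^sF^r\Bbbk)\cong E^sF^r\mathcal G_{m,n}(\Bbbk)\cong E^sF^rL(\zeta).
\]
The only point needing attention is that the integral eigenvalues of $\Omega_{M,V}$ exhaust $F$. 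On $\mathcal F(m|n)$, condition (CA1) of the tensor product categorification in Theorem~\ref{thm:glmn-tensor-product-categorification} ensures that $x-i$ is locally nilpotent on $F_i$ and that $F=\bigoplus_{i\in\mathbb Z}F_i$.

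I expect the main obstacle to be the bookkeeping of the label dictionary, that is, checking that the block and order conventions of Theorem~\ref{thm:glmn-tensor-product-categorification} literally agree with those of $\mathcal C(\bigwedge^mV^0,\bigwedge^nV^1)$. The sign normalization $v_i^1(v_j^0)=(-1)^i\delta_{ij}$ must make the Chevalley generators act with coefficient one on both sides. I would verify this directly from \eqref{eq:arc-subset-dictionary}, since there $F_i$ replaces $i$ by $i+1$ in the complement subset for $V^1$.
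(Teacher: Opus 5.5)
Your proposal is correct and takes the paper's route: the paper simply applies Theorem~\ref{thm:level-two-tpc-uniqueness} to Theorem~\ref{thm:glmn-tensor-product-categorification} and Theorem~\ref{thm:arc-tensor-product-categorifications}. Label preservation then gives the assertions on standards, projectives and $\Bbbk=L_{\mathfrak g}(0)$, and iterated strong equivariance gives $\mathcal G_{m,n}(T_{r,s})\cong E^sF^rL(\zeta)$. The bookkeeping you flag is already absorbed by those two theorems, since both identify standard classes with the same monomials $v_\nu$ via \eqref{eq:glmn-weight-diagram}, and $F=\bigoplus_iF_i$ on $\mathcal F(m|n)$ is part of the categorical action asserted there rather than a consequence of (CA1).
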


Deligne's category is the additive Karoubi completion
\[
 \mathcal D_\delta:=\underline{\operatorname{Rep}}(GL_\delta)
   =\operatorname{Kar}\bigl(\operatorname{Add}
                           \mathcal{OB}(\delta)\bigr);
\]
see \cite[\S3.5]{comes2012deligne}.
Thus its objects are direct summands of finite sums of words
in $\uparrow,\downarrow$, and its tensor unit is the empty word.
On this category use $F(X)=X\otimes\uparrow$ and
$E(X)=X\otimes\downarrow$, with the adjunctions given by cups
and caps. On a word $\mathbf a$, take
$x_{\mathbf a}=J_{\mathbf a\uparrow}$ and let
$t_{\mathbf a}$ interchange the last two upward strands.
The mate of $x$ is $J_{\mathbf a\downarrow}$, with the
normalization in \eqref{eq:ob-jucys-murphy}.
These maps extend to sums and summands; their generalized
eigensummands define $F_i,E_i$. We use strong
equivariance for the resulting additive categories to mean
the compatibility conditions of
Definition~\ref{def:strongly-equivariant-slI}.

\begin{proposition}\label{prop:deligne-arc-projectives}
\textup{(cf.\ \cite[Proposition IV.1.1 and \S IV.2]
{reynolds2015oriented}; \cite[Theorem 4.6.2]{comes2012deligne})}
The equivalence in Corollary~\ref{cor:ob-arc-equivalence}
induces a strongly equivariant equivalence
\[
 \mathcal H_\delta:\mathcal D_\delta
       \xrightarrow{\sim}
       K(\delta)\text{-}\operatorname{proj}_{\mathrm{fg}},
 \qquad \mathbf a\longmapsto Q_{\mathbf a}.
\]
It sends the indecomposable object with bipartition label
$\lambda$ to $P(\lambda)$. In particular,
$\mathcal H_\delta(\mathbf1)=P(\eta)$, where
$\eta=(\varnothing,\varnothing)$.
\end{proposition}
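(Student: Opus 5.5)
The idea is to restrict the label-preserving strongly equivariant equivalence $\mathcal G$ of Corollary~\ref{cor:ob-arc-equivalence} to finitely generated projectives, and then pass through the Yoneda embedding of $\mathcal{OB}(\delta)$.

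\emph{Step 1: $\mathcal{OB}(\delta)$ inside the projectives.} The representables $\mathcal{OB}(\delta)1_{\mathbf a}$ are finitely generated projective left modules. By Yoneda,
\[
\operatorname{Hom}(\mathcal{OB}(\delta)1_{\mathbf a},\mathcal{OB}(\delta)1_{\mathbf b})\cong 1_{\mathbf a}\mathcal{OB}(\delta)1_{\mathbf b}=\operatorname{Hom}_{\mathcal{OB}}(\mathbf b,\mathbf a),
\]
acting by right multiplication. So $\mathbf a\mapsto\mathcal{OB}(\delta)1_{\mathbf a}$ is a fully faithful contravariant embedding of $\mathcal{OB}(\delta)$ into finitely generated projectives. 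I compose it with the diagram reflection $d\mapsto d^*$, which is an anti-involution of $\mathcal{OB}(\delta)$ exchanging the roles of top and bottom while fixing words. This gives a fully faithful covariant functor $\mathcal{OB}(\delta)\to\mathcal{OB}(\delta)\text{-}\operatorname{proj}_{\mathrm{fg}}$. Every finitely generated projective is a summand of a finite sum of representables, and idempotents split in the module category. Hence the functor extends to an equivalence $\mathcal D_\delta=\operatorname{Kar}(\operatorname{Add}\mathcal{OB}(\delta))\xrightarrow{\sim}\mathcal{OB}(\delta)\text{-}\operatorname{proj}_{\mathrm{fg}}$. Composing with $\mathcal G$, which preserves finitely generated projectives because it is an equivalence, and using $\mathcal G(\mathcal{OB}(\delta)1_{\mathbf a})\cong Q_{\mathbf a}$, gives $\mathcal H_\delta$ with $\mathbf a\mapsto Q_{\mathbf a}$.

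\emph{Step 2: strong equivariance.} This is the step I expect to be the main obstacle, because of orientation bookkeeping. On modules, $F$ appends $\downarrow$ on the component side, and on representables $F(\mathcal{OB}1_{\mathbf a})\cong\mathcal{OB}1_{\mathbf a\uparrow}$ by bending the strand. Under Yoneda and reflection, this matches $F(X)=X\otimes\uparrow$ on $\mathcal D_\delta$.

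It remains to check that the Jucys--Murphy element $J_{\mathbf b\downarrow}$ acting on $1_{\mathbf b\downarrow}M$, with $M=\mathcal{OB}1_{\mathbf a}$, corresponds to $J_{\mathbf a\uparrow}$ acting on $\mathbf a\uparrow$. I would check this on the generating diagrams $s_{j,q+1}$ and $e_{j,q+1}$. Bending the last strand turns a crossing of two $\downarrow$ strands into a crossing of two $\uparrow$ strands. It turns the cup--cap pair $e_{j,q+1}$ into a crossing of opposite arrows. The $-\delta$ term and the signs in \eqref{eq:ob-jucys-murphy} are arranged so that the two formulas agree. The same bending carries $t=s_{q+1,q+2}$ to the crossing of the last two upward strands.

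Composing with the strong-equivariance isomorphism $\zeta$ of $\mathcal G$ then intertwines $x$ and $t$. Condition (E1) is automatic for an equivalence. The generalized eigensummands $F_i,E_i$ correspond because $x$ does. If the sign or $\delta$-shift comes out wrong, I would reconcile it with the mate statement: the mate of $x$ is $J_{\mathbf a\downarrow}$ on $\mathcal D_\delta$, while on modules the mate is $J_{\mathbf a\uparrow}$. The two sides are therefore exchanged consistently.

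\emph{Step 3: labels.} The indecomposable object of $\mathcal D_\delta$ labelled $\lambda$ is, in Comes--Wilson's classification, a summand of $\mathbf a$ built from the primitive idempotent attached to $\lambda$. Under Step 1 it corresponds to the projective cover $P(\lambda)$ in $\mathcal{OB}(\delta)\text{-}\operatorname{mod}_{\mathrm{lfd}}$. I would verify this by comparing the head with $L(\lambda)$ via \cite[Theorem VI.3.1]{reynolds2015oriented}, after matching the Comes--Wilson and Reynolds labellings (the \S IV.2 comparison).

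Since $\mathcal G$ preserves labels, it sends $P(\lambda)$ to the arc projective $P(\lambda)$. The unit $\mathbf 1=\varnothing$ goes to $Q_\varnothing=K(\delta)e_\eta=P(\eta)$.
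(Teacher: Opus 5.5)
Your route is the paper's: precompose Yoneda with the reflection $\sigma$, extend to the Karoubi envelope, restrict $\mathcal G$, and reduce strong equivariance to the bending identity
\[
 (J_{\mathbf b\downarrow}f)^\flat=f^\flat J_{\mathbf a\uparrow}
 \qquad(f:\mathbf a\to\mathbf b\downarrow,\ f^\flat:\mathbf a\uparrow\to\mathbf b).
\]
You correctly isolate this identity as what remains in Step~2. The gap is in how you propose to verify it.

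\emph{Why the proposed check fails.} The identity is not a summand-by-summand comparison of the two Jucys--Murphy elements. $J_{\mathbf b\downarrow}$ acts on the strands of $\mathbf b$ above $f$, $J_{\mathbf a\uparrow}$ acts on the strands of $\mathbf a$ below $f$, and the identity must hold for every $f$. The local correspondences you state are also wrong. Bending only the last strand leaves strand $j$ with its orientation. So a crossing of two $\downarrow$ strands slides to the cup--cap $e_{j,q+1}$ joining a $\downarrow$ to the new $\uparrow$. It is the cup--cap between an upward strand $j$ and the last $\downarrow$ that becomes the $\uparrow\uparrow$ crossing $s_{j,q+1}$. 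Your $\uparrow\uparrow$ statement is right only for $t$, where both strands are bent.

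\emph{A correct argument.} Write $f=(f^\flat\otimes1_\downarrow)\circ(1_{\mathbf a}\otimes c)$, with $c:\varnothing\to\uparrow\downarrow$ the coevaluation. The relation
\[
 J_{\mathbf b\downarrow}(g\otimes1_\downarrow)=(g\otimes1_\downarrow)J_{\mathbf c\downarrow}
 \qquad(g:\mathbf c\to\mathbf b)
\]
says precisely that $x_M$ is a module map, so it is part of the input Theorem~\ref{thm:oriented-brauer-tpc}. Applied to $g=f^\flat$, it reduces the identity to
\[
 J_{\mathbf a\uparrow\downarrow}(1_{\mathbf a}\otimes c)
 =(J_{\mathbf a\uparrow}\otimes1_\downarrow)(1_{\mathbf a}\otimes c).
\]
With $q=|\mathbf a|$, this is a local slide along $c$:
\begin{itemize}
\item $-s_{j,q+2}$ with $a_j=\downarrow$ becomes $-e_{j,q+1}$;
\item $+e_{j,q+2}$ with $a_j=\uparrow$ becomes $+s_{j,q+1}$;
\item the extra term $+e_{q+1,q+2}$ closes a loop and contributes $+\delta$, which cancels the $-\delta$.
\end{itemize}
Applying $\flat$ and using $\sigma(J_{\mathbf a\uparrow})=J_{\mathbf a\uparrow}$ gives compatibility with $x$. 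There is no sign or $\delta$-shift to reconcile.

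\emph{Step~3.} Yoneda after $\sigma$ sends the summand of $\mathbf a$ cut out by a primitive idempotent $e$ to $\mathcal{OB}(\delta)\sigma(e)$, not to $\mathcal{OB}(\delta)e$. You therefore need that reflection preserves the bipartition label of a primitive idempotent. The paper records exactly this before invoking the labelling of primitive summands.
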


\begin{proof}
For left modules, Yoneda sends a word $\mathbf a$ to
$\mathcal{OB}(\delta)1_{\mathbf a}
=\operatorname{Hom}_{\mathcal{OB}(\delta)}(\mathbf a,-)$
contravariantly. Let $\sigma$ be horizontal reflection of
diagrams, fixing objects and reversing morphisms.
Precomposing Yoneda with $\sigma$ gives a covariant equivalence
from $\mathcal D_\delta$ to the finitely generated projective
left $\mathcal{OB}(\delta)$-modules. Compose it with the
restriction of $\mathcal G$ from
Corollary~\ref{cor:ob-arc-equivalence} to obtain
$\mathcal H_\delta$.
Reflection preserves the cellular labels, so the bipartition
labels agree by the labeling of primitive
summands in Corollary~\ref{cor:walled-brauer-arc-truncation}.

It remains to check the transformations on representables.
For $f:\mathbf a\to\mathbf b\downarrow$, write
$f^\flat:\mathbf a\uparrow\to\mathbf b$ for its adjoint.
Bending the last strand gives
\[
 (J_{\mathbf b\downarrow}f)^\flat
       =f^\flat J_{\mathbf a\uparrow}.
\]
Thus the action of $x$ on the representable agrees with
right composition by $J_{\mathbf a\uparrow}$.
Reflection fixes this element and the crossing of the last
two strands. Bending also identifies the cup and cap maps
with the fixed adjunction maps on representables.
These identities prove compatibility with $x,t$ and the
adjunctions; see \cite[Proposition IV.1.1 and \S IV.2]
{reynolds2015oriented}.
They extend to sums and summands, and the strong equivariance
of $\mathcal G$ completes the proof.
\end{proof}

Choose a strongly equivariant quasi-inverse of
$\mathcal H_\delta$ and put
$\mathcal E_{m,n}:=\mathcal G_{m,n}\circ
\operatorname{ev}_{m,n}\circ\mathcal H_\delta^{-1}$.
Thus we have a functor
\begin{equation}\label{eq:mixed-evaluation-on-projectives}
 \mathcal E_{m,n}:
 K(\delta)\text{-}\operatorname{proj}_{\mathrm{fg}}
       \longrightarrow\operatorname{rep}K(m|n).
\end{equation}
The source blocks are of type $K_\infty^\infty$, and the
target blocks are of type $K_a^\infty$, with
$0\leq a\leq\min(m,n)$.
From now on, we identify $\mathcal F(m|n)$ with
$\operatorname{rep}K(m|n)$ using
Corollary~\ref{cor:glmn-arc-equivalence} and suppress the
equivalence from the notation. In particular, $\Bbbk$ is
identified with $L(\zeta)$, $T_{r,s}$ with $E^sF^rL(\zeta)$,
and $L_{\mathfrak g}(\nu)$ with $L(\nu)$.

\begin{theorem}\label{thm:strongly-equivariant-evaluation}
\textup{(cf.\ \cite[Theorem 4.7.1 and \S8.3]{comes2012deligne};
\cite[Lemma 7.4]{BrundanStroppelV})}
The functor $\mathcal E_{m,n}$ is full and strongly equivariant.
In particular, it commutes with $E_i,F_i$ and preserves
$x,t$ and the fixed adjunction maps. Moreover,
$\mathcal E_{m,n}P(\eta)\cong L(\zeta)$ and
$\mathcal E_{m,n}Q_{\mathbf a}\cong T_{\mathbf a}$.
\end{theorem}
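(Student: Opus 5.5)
Since $\mathcal E_{m,n}=\mathcal G_{m,n}\circ\operatorname{ev}_{m,n}\circ\mathcal H_\delta^{-1}$, and both $\mathcal G_{m,n}$ (Corollary~\ref{cor:glmn-arc-equivalence}) and a strongly equivariant quasi-inverse of $\mathcal H_\delta$ (Proposition~\ref{prop:deligne-arc-projectives}) are strongly equivariant equivalences, the plan reduces every assertion to a statement about $\operatorname{ev}_{m,n}:\mathcal D_\delta\to\mathcal F(m|n)$. I first check that strong equivariance is stable under composition: given $\zeta:F'G\xrightarrow{\sim}GF$ and $\zeta':F''G'\xrightarrow{\sim}G'F'$, the composite $G'\zeta\circ\zeta'G$ satisfies (E2) and (E3) by pasting. (E1) holds for equivalences, and for $\operatorname{ev}_{m,n}$ it will be verified directly.

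\textbf{Strong equivariance of evaluation.} Since $\operatorname{ev}_{m,n}$ is symmetric monoidal, there are canonical isomorphisms
\[
\operatorname{ev}(X\otimes\uparrow)\cong\operatorname{ev}(X)\otimes V,
\qquad
\operatorname{ev}(X\otimes\downarrow)\cong\operatorname{ev}(X)\otimes W .
\]
I take the first as $\zeta$. Because cups and caps go to coevaluation and evaluation, the mate of $\zeta$ is the second monoidal isomorphism, which gives (E1). Crossings go to super symmetries, which gives (E3).

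The substantive check is (E2): on a word $\mathbf a$, $\operatorname{ev}(J_{\mathbf a\uparrow})=\Omega_{T_{\mathbf a},V}$. Write $\Omega_{T_{\mathbf a},V}=\sum_j\Omega_{j,q+1}$ as a sum over tensor factors, since $\Omega$ is built from the coproduct. I then verify two local identities.
\begin{itemize}
\item On $V\otimes V$, $\Omega$ is the super flip, so it equals $\operatorname{ev}(s_{j,q+1})$.
\item On $W\otimes V$, $\Omega$ equals minus the composite of evaluation $W\otimes V\to\Bbbk$ and coevaluation $\Bbbk\to W\otimes V$, that is $-\operatorname{ev}(e_{j,q+1})$. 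This is a direct computation with the matrix units and the dual basis.
\end{itemize}
Together these match \eqref{eq:ob-jucys-murphy}. The stated mate formula $-\delta-\Omega_{M,W}$ and the formula for $J_{\mathbf a\downarrow}$ then agree automatically. The main risk here is sign and parity conventions (the $(-1)^{|b|}$, and which side the cap is on), so I would do this check carefully on $\mathbf a$ of length one. Since $x$ is preserved, the generalized eigenspace decompositions are preserved, so $\operatorname{ev}$ commutes with each $F_i,E_i$. Everything extends to sums and summands by additivity.

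\textbf{Objects.} From the definitions, $\mathcal H_\delta^{-1}P(\eta)\cong\mathbf 1$, $\operatorname{ev}(\mathbf 1)=\Bbbk$ and $\mathcal G_{m,n}(\Bbbk)=L(\zeta)$, so $\mathcal E_{m,n}P(\eta)\cong L(\zeta)$. Since $Q_{\mathbf a}=\mathcal H_\delta(\mathbf a)$, we get $\mathcal E_{m,n}Q_{\mathbf a}\cong\mathcal G_{m,n}(T_{\mathbf a})$, which is identified with $T_{\mathbf a}$ under the suppressed equivalence.

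\textbf{Fullness.} This is the main obstacle. It suffices to show that $\operatorname{ev}_{m,n}$ is full on words, since fullness passes to sums and summands. Using rigidity, $\operatorname{Hom}(\mathbf a,\mathbf b)\cong\operatorname{End}(\uparrow^r\downarrow^s)$-type spaces after bending strands; more precisely, it reduces to $\operatorname{Hom}(\mathbf 1,\uparrow^r\downarrow^s)\to\operatorname{Hom}_{\mathfrak g}(\Bbbk,V^{\otimes r}\otimes W^{\otimes s})$. Bending is compatible with $\operatorname{ev}$ because $\operatorname{ev}$ is monoidal and preserves duality. Surjectivity here is the first fundamental theorem of invariant theory for $\mathfrak{gl}(m|n)$: the mixed super Schur--Weyl duality surjection $B_{r,s}(\delta)\twoheadrightarrow\operatorname{End}_{\mathfrak g}(T_{r,s})$. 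I would cite \cite[Theorem 4.7.1]{comes2012deligne} and \cite[Lemma 7.4]{BrundanStroppelV}, or the Sergeev / Berele--Regev results, rather than reprove it. If time permitted an intrinsic argument, I would try to deduce fullness from strong equivariance. Both sides are generated from $P(\eta)$, respectively $L(\zeta)$, by $E,F$, and the Hom spaces are computed by adjunction. This reduces the question to surjectivity of $\operatorname{End}(F^dP(\eta))\to\operatorname{End}(F^dL(\zeta))$ for mixed sequences, which would follow from Hecke/KLR generation in the style of Theorem~\ref{thm:finite-hecke-endomorphisms}. However, cited invariant theory is the reliable route.
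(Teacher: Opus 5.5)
Your proposal is correct and follows essentially the same route as the paper. You reduce to $\operatorname{ev}_{m,n}$, read off $\zeta$, the mates and $t$ from the symmetric monoidal structure, identify $\operatorname{ev}_{m,n}(J_{\mathbf a\uparrow})$ with $\Omega_{T_{\mathbf a},V}$ summand by summand, and obtain fullness by bending strands to super Schur--Weyl duality.

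One point needs tightening. Fullness between words whose numbers of $\uparrow$ minus $\downarrow$ differ (in your formulation, invariants in $V^{\otimes r}\otimes W^{\otimes s}$ with $r\neq s$) is not addressed by an $\operatorname{End}(T_{r,s})$-type surjection. The paper handles that case by noting that the central identity matrix acts by distinct scalars, so the target Hom space is zero. In the matched case it bends all the way to the pure surjection $\Bbbk S_d\to\operatorname{End}_{\mathfrak g}(V^{\otimes d})$ of Sergeev and Berele--Regev. That is also the right citation here, because the mixed surjection is deduced from this very fullness in Theorem~\ref{thm:mixed-tensor-centralizer}.
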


\begin{proof}
The equivalences on either side of $\operatorname{ev}_{m,n}$
are strongly equivariant, so it suffices to check evaluation.
Its monoidal structure identifies evaluation after $F,E$
with tensoring by $V,W$, respectively, and carries the
adjunction maps to the tensor adjunction maps.
It also carries $t$ to the super symmetry.

For $\mathbf a=a_1\cdots a_q$, the coproduct in $U(\mathfrak g)$
gives $\Omega_{T_{\mathbf a},V}=\sum_{j=1}^q\Omega_{j,q+1}$.
If $a_j=\uparrow$, the summand is the super permutation of
the two indicated $V$-factors. If $a_j=\downarrow$, it is
minus the contraction diagram on the indicated $W,V$-factors.
This is the Casimir calculation of
\cite[Lemma 7.4]{BrundanStroppelV}. Hence
\begin{equation}\label{eq:evaluation-hecke-generators}
 \operatorname{ev}_{m,n}(J_{\mathbf a\uparrow})
   =\sum_{a_j=\uparrow}\operatorname{ev}_{m,n}(s_{j,q+1})
    -\sum_{a_j=\downarrow}\operatorname{ev}_{m,n}(e_{j,q+1})
   =\Omega_{T_{\mathbf a},V}.
\end{equation}
Since evaluation preserves the adjunction, it also sends
$J_{\mathbf a\downarrow}$ to
$-\delta-\Omega_{T_{\mathbf a},W}$.
Thus it intertwines the $H_d$-actions on $F^d$ for every $d$.
All these identities extend to sums and summands.
The generalized eigenspace projections are polynomials in
the corresponding endomorphisms, so they are preserved as well.
This proves strong equivariance.

For fullness, take words with $r,s$ and $p,q$ upward and
downward arrows. If $r-s\neq p-q$, there are no homomorphisms
between their evaluations: the central identity matrix of
$\mathfrak g$ acts by the distinct scalars $r-s$ and $p-q$.
If $r-s=p-q$, put $d=r+q=p+s$.
Bending strands identifies the map on these Hom spaces with
the pure tensor action
$\Bbbk S_d\to\operatorname{End}_{\mathfrak g}(V^{\otimes d})$.
This map is surjective by super Schur--Weyl duality
\cite{sergeev1985tensor,berele1987hook}.
Evaluation is therefore full on words, and taking matrices
and idempotent corners proves fullness on $\mathcal D_\delta$.
Transport through the fixed equivalences proves the theorem.
\end{proof}

Apply $\mathcal E_{m,n}$ to endomorphisms of
$Q_{r,s}=E^sF^rP(\eta)$. Under the isomorphism
$B_{r,s}(\delta)\cong
\operatorname{End}_{K(\delta)}(Q_{r,s})^{\mathrm{op}}$
of Corollary~\ref{cor:walled-brauer-arc-truncation}, this gives
\[
 \Psi_{r,s}^{m,n}:B_{r,s}(\delta)
     \longrightarrow
     \operatorname{End}_{K(m|n)}(E^sF^rL(\zeta))^{\mathrm{op}}.
\]
By Proposition~\ref{prop:deligne-arc-projectives}, a diagram
$d$ acts as the evaluation of its horizontal reflection
$\sigma(d)$. Thus this is the usual right walled Brauer
action on $T_{r,s}$ under our identification.
Fullness gives the centralizer theorem; bending strands
reduces its kernel to the pure tensor case.
Here and below a missing part of a partition is zero.

\begin{theorem}\label{thm:mixed-tensor-centralizer}
\cite[Theorems 7.5 and 7.8]{BrundanStroppelV}
The map $\Psi_{r,s}^{m,n}$ is surjective, and
\[
 B_{r,s}(\delta)/\ker\Psi_{r,s}^{m,n}
 \cong\operatorname{End}_{K(m|n)}(E^sF^rL(\zeta))^{\mathrm{op}}
 \cong\operatorname{End}_{\mathfrak g}(T_{r,s})^{\mathrm{op}}.
\]
Let $d=r+s$, and let $z_\rho$ be the primitive central
idempotent of $\Bbbk S_d$ indexed by $\rho\vdash d$.
Bending the last $s$ strands gives a vector-space
isomorphism $\beta_{r,s}:\Bbbk S_d\to B_{r,s}(\delta)$,
under which
\begin{equation}\label{eq:mixed-tensor-kernel}
 \ker\Psi_{r,s}^{m,n}
   =\beta_{r,s}\left(
       \bigoplus_{\substack{\rho\vdash d\\\rho_{m+1}>n}}
                   \Bbbk S_dz_\rho\right).
\end{equation}
In particular, $\Psi_{r,s}^{m,n}$ is injective exactly when
$r+s<(m+1)(n+1)$.
\end{theorem}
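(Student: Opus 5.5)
The statement has three parts: surjectivity of $\Psi_{r,s}^{m,n}$ together with the identification of the quotient, the kernel formula \eqref{eq:mixed-tensor-kernel}, and the injectivity criterion. I would prove them in that order.

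\emph{Surjectivity and the quotient.} This is a transport of Theorem~\ref{thm:strongly-equivariant-evaluation}. Since $\mathcal E_{m,n}$ is full and $\mathcal E_{m,n}Q_{r,s}\cong E^sF^rL(\zeta)$, the induced map
\[
\operatorname{End}_{K(\delta)}(Q_{r,s})\to\operatorname{End}_{K(m|n)}(E^sF^rL(\zeta))
\]
is surjective. Composing with the isomorphism of Corollary~\ref{cor:walled-brauer-arc-truncation} (taking opposites) shows that $\Psi_{r,s}^{m,n}$ is surjective. Corollary~\ref{cor:glmn-arc-equivalence} identifies the target with $\operatorname{End}_{\mathfrak g}(T_{r,s})^{\mathrm{op}}$, which gives the displayed quotient isomorphism. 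Proposition~\ref{prop:deligne-arc-projectives} tells us a diagram $d$ acts by $\operatorname{ev}_{m,n}(\sigma(d))$. Hence $\Psi$ coincides, after this identification, with the classical right walled Brauer action on $V^{\otimes r}\otimes W^{\otimes s}$. So the kernel is the annihilator of that action, and I may compute it directly with $\mathfrak g$-modules.

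\emph{Kernel.} Use the bending isomorphism $\beta_{r,s}$. Bending the last $s$ strands via the cup/cap adjunction gives an isomorphism
\[
\operatorname{Hom}_{\mathfrak g}(T_{r,s},T_{r,s})\cong\operatorname{Hom}_{\mathfrak g}(V^{\otimes d},V^{\otimes d}),\qquad d=r+s,
\]
using $W^*\cong V$ with the supersign normalization. On the diagram side, the same bending is $\beta_{r,s}:\Bbbk S_d\to B_{r,s}(\delta)$. Because $\operatorname{ev}_{m,n}$ is monoidal and preserves cups and caps, the square formed by $\beta_{r,s}$, the two evaluations and the two bending isomorphisms commutes, as a square of linear maps only. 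Hence
\[
\ker\Psi_{r,s}^{m,n}=\beta_{r,s}\bigl(\ker(\Bbbk S_d\to\operatorname{End}(V^{\otimes d}))\bigr).
\]
Super Schur--Weyl duality \cite{sergeev1985tensor,berele1987hook} decomposes $V^{\otimes d}$ as $\bigoplus_\rho S^\rho\otimes L_\rho$, where $\rho$ runs over the $(m|n)$-hook partitions, that is, those with $\rho_{m+1}\leq n$. So the kernel of $\Bbbk S_d\to\operatorname{End}(V^{\otimes d})$ is the sum of the blocks $\Bbbk S_dz_\rho$ with $\rho_{m+1}>n$, which is \eqref{eq:mixed-tensor-kernel}.

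\emph{Injectivity criterion.} The kernel vanishes exactly when no $\rho\vdash d$ has $\rho_{m+1}>n$. Such a partition must contain the $(m+1)\times(n+1)$ rectangle. Conversely, if $d\geq(m+1)(n+1)$, that rectangle with the remaining boxes added to the first row is such a partition. So $\Psi_{r,s}^{m,n}$ is injective exactly when $r+s<(m+1)(n+1)$.

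The main obstacle is the kernel step: checking that bending intertwines the two evaluations with the correct signs, so that the walled Brauer kernel corresponds exactly to the symmetric-group kernel. I would first verify this on generators (crossings among $V$'s, crossings among $W$'s, and $e_{j,k}$). For crossings among the $W$'s, the bent strands are reversed, and I need to confirm that the super symmetry is carried to the super symmetry with no additional sign. Since $\beta_{r,s}$ is not multiplicative, I would not claim an algebra statement here: it suffices that bending is a linear isomorphism on both sides compatible with evaluation.
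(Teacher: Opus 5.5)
Your proposal is correct and follows essentially the same route as the paper. Surjectivity comes from fullness of $\mathcal E_{m,n}$; the kernel comes from transporting the hook-theorem kernel of $\Bbbk S_d$ through bending, used only as a linear isomorphism compatible with evaluation; and the injectivity criterion comes from the $(m+1)\times(n+1)$ rectangle. The sign check on bent strands that you flag as the main obstacle is exactly the point the paper delegates to \cite[Lemmas 7.6--7.7]{BrundanStroppelV}.
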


\begin{proof}
Since $\mathcal E_{m,n}$ is full and sends $Q_{r,s}$ to
$E^sF^rL(\zeta)$, its map on endomorphism rings is surjective.
This is precisely $\Psi_{r,s}^{m,n}$ after taking opposites,
so the first isomorphism follows by dividing out its kernel.
The second is our fixed identification with mixed tensors.

For the kernel, let $\Phi_d$ be the right super permutation
action of $\Bbbk S_d$ on $V^{\otimes d}$.
The hook theorem identifies its kernel with the sum of
$\Bbbk S_dz_\rho$ over $\rho_{m+1}>n$
\cite{sergeev1985tensor,berele1987hook}.
Tensor adjunction bends an endomorphism of $V^{\otimes d}$
into one of $T_{r,s}$: move each of the last $s$ inputs to
the output and each of the last $s$ outputs to the input.
This is an invertible linear operation, and on permutation
diagrams it is $\beta_{r,s}$. It commutes with evaluation,
so it carries $\ker\Phi_d$ onto $\ker\Psi_{r,s}^{m,n}$;
see \cite[Lemmas 7.6--7.7]{BrundanStroppelV}.
This proves \eqref{eq:mixed-tensor-kernel}; no compatibility
of bending with multiplication is needed.
The smallest partition outside the $(m,n)$-hook is
$(n+1)^{m+1}$, and such a partition exists at every larger
size. This gives the injectivity criterion.
\end{proof}

For each bipartition $\lambda$, put
$R_{m,n}(\lambda):=\mathcal E_{m,n}P(\lambda)$.

\begin{lemma}\label{lem:monoidal-evaluation-projectives}
\cite[Proposition 2.7.4 and Theorem 4.7.1]{comes2012deligne}
The nonzero images $R_{m,n}(\lambda)$ are pairwise
non-isomorphic and indecomposable. As $r,s$ vary, they give
all indecomposable summands of $E^sF^rL(\zeta)$, or
equivalently of the mixed tensor powers $T_{r,s}$.
\end{lemma}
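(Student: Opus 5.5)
The plan is to transport everything to Deligne's category $\mathcal D_\delta$ through the strongly equivariant equivalence $\mathcal H_\delta$ of Proposition~\ref{prop:deligne-arc-projectives}. Under $\mathcal H_\delta$ the functor $\mathcal E_{m,n}$ becomes, up to equivalences, the monoidal evaluation $\operatorname{ev}_{m,n}\colon\mathcal D_\delta\to\mathcal F(m|n)$. Likewise $P(\lambda)$ becomes the indecomposable object $X_\lambda$ of $\mathcal D_\delta$ with bipartition label $\lambda$. So $R_{m,n}(\lambda)\cong\mathcal G_{m,n}\operatorname{ev}_{m,n}(X_\lambda)$, and it suffices to prove the three assertions for $\operatorname{ev}_{m,n}(X_\lambda)$ in $\mathcal F(m|n)$.

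\emph{Indecomposability.} The argument rests on fullness from Theorem~\ref{thm:strongly-equivariant-evaluation}. The induced map
\[
\operatorname{End}(P(\lambda))\to\operatorname{End}(R_{m,n}(\lambda))
\]
is surjective, and its source is local because $P(\lambda)$ is an indecomposable projective with finite-dimensional endomorphism algebra. A quotient of a local finite-dimensional algebra is either zero or local. Hence a nonzero $R_{m,n}(\lambda)$ is indecomposable.

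\emph{Pairwise non-isomorphism.} Suppose $R_{m,n}(\lambda)\cong R_{m,n}(\mu)\neq0$. Fullness lets us lift the isomorphism and its inverse to maps $f\colon P(\lambda)\to P(\mu)$ and $g\colon P(\mu)\to P(\lambda)$. The composite $gf$ maps to the identity. If $gf$ lay in the radical of the local ring $\operatorname{End}(P(\lambda))$, its image would lie in the radical of the local ring $\operatorname{End}(R_{m,n}(\lambda))$, since a surjection of local rings maps radical into radical. That is impossible for the identity. So $gf$ is invertible, and likewise $fg$. Hence $P(\lambda)\cong P(\mu)$, and therefore $\lambda=\mu$.

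\emph{Exhaustion.} Any indecomposable summand $N$ of $E^sF^rL(\zeta)\cong\mathcal E_{m,n}(Q_{r,s})$ is cut out by a primitive idempotent of $\operatorname{End}(\mathcal E_{m,n}Q_{r,s})$. By fullness (equivalently, surjectivity of $\Psi^{m,n}_{r,s}$ in Theorem~\ref{thm:mixed-tensor-centralizer}), this idempotent lifts to an endomorphism of $Q_{r,s}$. Idempotents lift modulo the kernel of a surjection of finite-dimensional algebras, so we may take the lift $e$ to be idempotent. Decompose $e$ into primitive orthogonal idempotents. Each one cuts out some $P(\lambda)$ in $\operatorname{add}(Q_{r,s})$, and its image is either $0$ or a primitive idempotent. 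Therefore $N$ is isomorphic to some $R_{m,n}(\lambda)$ with $P(\lambda)$ a summand of $Q_{r,s}$, that is, with $\lambda\in\dot\Lambda_{r,s}$ by Corollary~\ref{cor:walled-brauer-arc-truncation}. Conversely, every nonzero $R_{m,n}(\lambda)$ with $\lambda\in\dot\Lambda_{r,s}$ is a summand of $E^sF^rL(\zeta)$. Finally, every $\lambda$ lies in some $\dot\Lambda_{r,s}$: take $r=|\lambda^L|$ and $s=|\lambda^R|$, except in the excluded case $\delta=0$, $\lambda=\varnothing$, which is covered by $r=s=0$. So as $r,s$ vary, all $R_{m,n}(\lambda)$ occur.

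The main subtlety is the lifting of idempotents and the local-ring argument: both require finite-dimensional endomorphism rings, which holds here since $\operatorname{End}(Q_{r,s})\cong B_{r,s}(\delta)^{\mathrm{op}}$. The statement does not ask us to determine which $\lambda$ give $R_{m,n}(\lambda)\neq0$ (the Comes--Wilson criterion), so no further work is needed there.
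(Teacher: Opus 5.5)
Your proposal is correct and follows the paper's proof. Fullness of $\mathcal E_{m,n}$ makes $\operatorname{End}R_{m,n}(\lambda)$ a quotient of the local ring $\operatorname{End}P(\lambda)$, and lifting an isomorphism and its inverse gives a composite that is a unit. The only difference is in the exhaustion step: the paper decomposes $Q_{r,s}$ into the $P(\lambda)$, applies $\mathcal E_{m,n}$ and invokes Krull--Schmidt, which makes your idempotent-lifting argument and the initial detour through $\mathcal D_\delta$ unnecessary.
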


\begin{proof}
By Theorem~\ref{thm:strongly-equivariant-evaluation},
the endomorphism ring of a nonzero
$R_{m,n}(\lambda)$ is a quotient of the local ring
$\operatorname{End}_{K(\delta)}P(\lambda)$, hence is local.
Thus $R_{m,n}(\lambda)$ is indecomposable.
If two such images are isomorphic, lift an isomorphism and
its inverse by fullness. Their composite on $P(\lambda)$
maps to the identity, so it is a unit in this local ring.
Consequently $P(\lambda)$ is a direct summand of the other
indecomposable projective, and their labels agree.
Every $P(\lambda)$ occurs in some $Q_{r,s}$ by
Corollary~\ref{cor:walled-brauer-arc-truncation}.
Conversely, decomposing $Q_{r,s}$ and applying
$\mathcal E_{m,n}$ writes $E^sF^rL(\zeta)$ as a sum of
the nonzero $R_{m,n}(\lambda)$. Krull--Schmidt proves
the assertion about all indecomposable summands.
\end{proof}

We now determine which images are nonzero and identify
their heads and socles. Recall that $\operatorname{def}(\lambda)$
is the number of cups in the cup diagram of $\lambda$.
All counts of $\circ,\times$ below refer to the
$K(\delta)$-diagram of $\lambda$; both counts are finite.

\begin{definition}\label{def:cross-bipartition}
\cite[\S8.7]{comes2012deligne}
A bipartition $\lambda=(\lambda^L,\lambda^R)$ is
\emph{$(m,n)$-cross} if
$\lambda^L_{a+1}+\lambda^R_{m-a+1}\leq n$
for some $0\leq a\leq m$.
\end{definition}

\begin{lemma}\label{lem:cross-defect-rank}
\cite[Lemma 8.16]{BrundanStroppelV}
A bipartition $\lambda$ is $(m,n)$-cross if and only if
\[
 \operatorname{def}(\lambda)+\min(\#\circ,\#\times)
       \leq\min(m,n).
\]
\end{lemma}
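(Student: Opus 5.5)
The plan is a direct combinatorial translation of the cross condition into a statement about cuts of the $K(\delta)$-diagram of $\lambda$, using the subsets
$S=I_\vee(\lambda)=\{a-\delta-\lambda^R_a\}$ and $T=I_\wedge(\lambda)=\{1-a+\lambda^L_a\}$ from the dictionary before Corollary~\ref{cor:ob-arc-equivalence}.

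\emph{Step 1: rewriting the cross condition.}
Listed increasingly, the elements of $S$ are $s_k=k-\delta-\lambda^R_k$ for $k=1,2,\ldots$. Listed decreasingly, the elements of $T$ are $t_k=1-k+\lambda^L_k$. Fix $a$ and put $p=t_{a+1}=\lambda^L_{a+1}-a$ and $q=s_{m-a+1}=m-a+1-\delta-\lambda^R_{m-a+1}$. Then
$\lambda^L_{a+1}+\lambda^R_{m-a+1}=p-q+m+1-\delta$.
Since $\delta=m-n$, the inequality $\lambda^L_{a+1}+\lambda^R_{m-a+1}\leq n$ is equivalent to $p<q$.

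Taking a cut $c$ with $p\leq c<q$ shows the following. The bipartition $\lambda$ is $(m,n)$-cross if and only if there is $c\in\mathbb Z$ with
\[
 \#\{t\in T\mid t>c\}+\#\{s\in S\mid s\leq c\}\leq m .
\]
Both counts are finite, because $T$ is bounded above and $S$ is bounded below. For the converse, given such a $c$, take $a=\#\{t\in T\mid t>c\}$.

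\emph{Step 2: rewriting the cut function in labels.}
In terms of labels, the left hand side is
\[
 f(c)=\#\{\wedge>c\}+\#\{\vee\leq c\}+\#\times .
\]
I then claim that $\min_c\bigl(\#\{\wedge>c\}+\#\{\vee\leq c\}\bigr)=\operatorname{def}(\lambda)$.

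For the lower bound, every cup of $\underline\lambda$ joins a $\vee$ to a $\wedge$ on its right. So for any cut, each cup contributes at least one counted endpoint: its $\vee$ if the cup lies left of $c$ or straddles $c$, and its $\wedge$ if it lies to the right. Rays contribute nonnegatively.

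For the upper bound, choose $c$ between the last $\wedge$-ray and the first $\vee$-ray; this is possible since all $\wedge$-rays lie left of all $\vee$-rays. I must check that such a cut can also be chosen so that no cup has both endpoints counted. No cup can nest a ray. Hence, placing $c$ at the level where all cups are "closed" (height zero in the ballot count, between the two ray families) gives contribution exactly one per cup and zero from rays. This ballot-sequence argument is the step needing care, and I would make it rigorous using the greedy construction of $\underline\lambda$ via the running count $\#\vee-\#\wedge$. Consequently cross $\iff\operatorname{def}(\lambda)+\#\times\leq m$.

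\emph{Step 3: the symmetric form.}
Finally I would show $\#\times-\#\circ=m-n=\delta$. Compare $S,T$ with their values at $\lambda=(\varnothing,\varnothing)$: then $S=\{a-\delta\}$ and $T=\{1-a\}$, so for $\delta\geq0$ the overlap consists of $\delta$ vertices $\times$, and for $\delta\leq0$ of $-\delta$ vertices $\circ$. The difference $\#\times-\#\circ$ is invariant under finite changes, since it measures $|S\cap T|-|\mathbb Z\setminus(S\cup T)|$ relative to a fixed charge, and moving a box preserves it.

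Hence $\operatorname{def}+\#\times\leq m$ is equivalent to $\operatorname{def}+\#\circ\leq n$. Whichever of $m,n$ is smaller corresponds to whichever of $\#\times,\#\circ$ is smaller, which gives
$\operatorname{def}(\lambda)+\min(\#\circ,\#\times)\leq\min(m,n)$.

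The main obstacle is Step~2's upper bound: exhibiting a cut at which the count equals $\operatorname{def}(\lambda)$ exactly in the infinite diagram.
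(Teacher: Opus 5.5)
Your proposal is correct and gives a complete, self-contained proof. It necessarily takes a different route from the paper, which offers no argument for this lemma and simply imports it from \cite[Lemma 8.16]{BrundanStroppelV}. The three steps check out as follows.
\begin{itemize}
\item With $p=t_{a+1}$ and $q=s_{m-a+1}$, the cross inequality is exactly $p<q$ when $\delta=m-n$.
\item The choice $a=\#\{t\in T\mid t>c\}$ gives the converse.
\item The cut count is $\#\{\wedge>c\}+\#\{\vee\leq c\}+\#\times$.
\end{itemize}

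The step you flag as the main obstacle is in fact immediate, and no ballot-sequence bookkeeping is needed. Take $c$ to be the position of the rightmost $\wedge$-ray. It exists because the $\wedge$ labels are bounded above. Also, the $\wedge$ labels far to the left have no $\vee$ to their left, so they are rays. A cup $(x,y)$ with $x\leq c<y$ would enclose the ray at $c$ strictly inside it, which is impossible. All $\wedge$-rays lie at positions $\leq c$ by choice of $c$. All $\vee$-rays lie to the right of every $\wedge$-ray because $\underline{\lambda}\lambda$ is oriented. Hence every cup contributes exactly one endpoint and no ray contributes, so the minimum $\operatorname{def}(\lambda)$ is attained at this $c$.

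For Step 3, the invariance is cleanest as a window count. On a large interval $W$ one has $\#\times-\#\circ=|S\cap W|+|T\cap W|-|W|$. Adding a box moves one element of $S$ or $T$ by one position inside $W$, so this quantity does not change. The paper uses the identity $\#\times-\#\circ=\delta$ without proof in the proof of Theorem~\ref{thm:mixed-tensor-summands}, so your argument supplies that fact as well.

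Your route gives a purely combinatorial proof on a single number line, stated in the paper's own conventions for $I_\vee(\lambda)$, $I_\wedge(\lambda)$ and the cross condition. Citing the lemma instead requires trusting that the conventions of \cite{BrundanStroppelV} and \cite{comes2012deligne} match the ones fixed here.
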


Construct a matching $t_\lambda$ as follows, with its source
number line at the top. Draw the cap diagram of $\lambda$
at the bottom. At the top, retain the fixed $\circ,\times$
vertices of $\eta$ and draw
$\operatorname{def}(\lambda)+\min(\#\circ,\#\times)$
nested cups across its $\wedge,\vee$ interface.
Pair the rightmost $\wedge$ with the leftmost $\vee$, then
the next nearest pair, and continue.
Join the remaining vertices in the unique order-preserving
way that is vertical outside a finite interval.

If $\lambda$ is cross, place $\zeta$ on the top number line.
Label the bottom caps anti-clockwise and carry the top labels
along the through-lines. Keeping the fixed vertices of
$\lambda$ gives a diagram in $\Xi(m|n)$.
Write $\lambda^\dagger$ for its highest weight.
This is the construction of \cite[(8.17)]{BrundanStroppelV}.

\begin{theorem}\label{thm:mixed-tensor-summands}
\cite[Theorem 8.7.6]{comes2012deligne};
\cite[(8.18), Lemma 8.18 and Theorem 8.19]{BrundanStroppelV}
The module $R_{m,n}(\lambda)$ is nonzero exactly when
$\lambda$ is $(m,n)$-cross. In that case
\begin{equation}\label{eq:mixed-summand-geometric}
 R_{m,n}(\lambda)=\mathcal E_{m,n}P(\lambda)
       \cong G^{t_\lambda}L(\zeta),
\end{equation}
where the source and target blocks contain $\zeta$ and
$\lambda^\dagger$, respectively, and the grading is forgotten.
Its head and socle are both $L(\lambda^\dagger)$.

For $\lambda\in\dot\Lambda_{r,s}$ and a primitive
$e\in B_{r,s}(\delta)$ of label $\lambda$, we have
$T_{r,s}e\cong R_{m,n}(\lambda)$, which is zero for a
non-cross label. Moreover,
\begin{equation}\label{eq:mixed-tensor-decomposition}
 T_{r,s}\cong
 \bigoplus_{\substack{\lambda\in\dot\Lambda_{r,s}\\
                      \lambda\text{ is }(m,n)\text{-cross}}}
 R_{m,n}(\lambda)^{\oplus\dim D_{r,s}(\lambda)}.
\end{equation}
As $r,s$ vary, all indecomposable mixed tensor summands are
therefore indexed by the $(m,n)$-cross bipartitions.
\end{theorem}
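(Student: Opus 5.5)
The plan is to compute $\mathcal E_{m,n}P(\lambda)$ by moving $P(\lambda)$ to the image of $P(\eta)$ under a geometric functor, and then to use the strong equivariance of $\mathcal E_{m,n}$ from Theorem~\ref{thm:strongly-equivariant-evaluation}.

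\emph{Step 1: express $P(\lambda)$ via a geometric functor.} We would write $P(\lambda)$ as a summand of $G^{u}P(\eta)$ for a composite $u$ of elementary matchings from $\eta$ to the block of $\lambda$. Proposition~\ref{prop:geometric-functor-projectives} gives such a $u$: take a matching whose upper reduction applied to $\overline\eta$ is $\overline\lambda$. The matching $t_\lambda$ is built for this purpose. Its bottom is the cap diagram of $\lambda$, and its top has $\operatorname{def}(\lambda)+\min(\#\circ,\#\times)$ nested cups at the $\wedge\vee$ interface of $\eta$, matching the cup structure of $\overline\eta$. The upper reduction of $t_\lambda\overline\eta$ is then $\overline\lambda$ with no removed circles, so $G^{t_\lambda}P(\eta)\cong P(\lambda)$ after forgetting the grading. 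Factoring $t_\lambda$ into elementary matchings writes $G^{t_\lambda}$ as a summand of a word in the $E_i,F_i$. The $F_i$-components are cut out by the generalized eigenspaces of $x$, and the chosen summand by idempotents coming from the KLR action of Lemma~\ref{lem:arc-klr-relations}, so $G^{t_\lambda}$ is a direct summand of that word in the $E_i,F_i$.

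\emph{Step 2: apply $\mathcal E_{m,n}$.} Strong equivariance preserves $E_i,F_i$, the dots and the crossings, hence also the idempotents that cut out $G^{t_\lambda}$. Together with $\mathcal E_{m,n}P(\eta)\cong L(\zeta)$, this gives $R_{m,n}(\lambda)\cong G^{t_\lambda}L(\zeta)$, with the matching now read between the block of $\zeta$ and the block of $\lambda^\dagger$ in $\operatorname{rep}K(m|n)$. The main obstacle is here. The diagram $t_\lambda$ lives in $K(\delta)$-blocks, where the $\circ,\times$ pattern of $\eta$ differs from that of $\zeta$. We must check that the same sequence of elementary moves, applied to $\zeta$, gives the matching described by the $K(m|n)$-diagram. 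First we would compare the $\circ,\times$ positions of $\eta$ and $\zeta$ via the subset dictionary \eqref{eq:arc-subset-dictionary}. Then we would check that the nested cups of $t_\lambda$ account for the $\min(\#\circ,\#\times)$ fixed vertices that become free $\vee\wedge$ pairs in $K(m|n)$.

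\emph{Step 3: nonvanishing, head and socle, decomposition.} Proposition~\ref{lem:geometric-functor-simple-head} says that $G^{t_\lambda}L(\zeta)\neq0$ exactly when all cups of $t_\lambda\zeta$ are anti-clockwise. In that case it is indecomposable with head and socle $L(\beta)$, where $\beta$ has all caps anti-clockwise; by construction $\beta=\lambda^\dagger$. The cups of $t_\lambda\zeta$ are orientable exactly when the $\operatorname{def}(\lambda)+\min(\#\circ,\#\times)$ nested cups fit within the $\min(m,n)$ free $\vee\wedge$ pairs of $\zeta$. By Lemma~\ref{lem:cross-defect-rank} this is the $(m,n)$-cross condition. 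For \eqref{eq:mixed-tensor-decomposition}, decompose $Q_{r,s}$ as $\bigoplus_\lambda P(\lambda)^{\oplus\dim D_{r,s}(\lambda)}$ using Corollary~\ref{cor:walled-brauer-arc-truncation}, and apply $\mathcal E_{m,n}$, which gives $T_{r,s}$. A primitive $e$ of label $\lambda$ corresponds to the summand $P(\lambda)$, so $T_{r,s}e\cong R_{m,n}(\lambda)$. The last assertion then follows from Lemma~\ref{lem:monoidal-evaluation-projectives}.
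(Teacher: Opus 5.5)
Your overall architecture matches the paper's: realize $P(\lambda)$ by geometric functors on $P(\eta)$, push through strong equivariance, then apply Proposition~\ref{lem:geometric-functor-simple-head}, Lemma~\ref{lem:cross-defect-rank} and the decomposition of $Q_{r,s}$. Step~3 is essentially the paper's argument. There are, however, two problems, in Steps~1 and~2.

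\textbf{Step 1.} You assert that $t_\lambda$ factors into elementary matchings through blocks of $\operatorname{Bl}(X,Y)$, so that $G^{t_\lambda}$ is a summand of a refined word in the $E_i,F_i$. Nothing you cite gives this. It is the essential link, because $\mathcal E_{m,n}$ is only known to commute with the $E_i,F_i$, not with arbitrary geometric functors.

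\begin{itemize}
\item What ties $t_\lambda$ to such words is the through-line count
\[
2(q-p)=\#\circ+\#\times-|\delta|=2\min(\#\circ,\#\times),
\]
where $p=\operatorname{def}(\lambda)$ and $q$ is the number of top cups. You never establish this count.
\item $\overline\eta$ has no caps, so the top cups are not there to match anything in $\overline\eta$. Rather, they must join opposite labels of $\eta$ without crossing, which forces them to be nested at the interface.
\item The idempotents of Lemma~\ref{lem:arc-klr-relations} live in $\operatorname{End}(F^d)$. They do not cut one copy of $G^u$ out of a mixed $E$/$F$ word $\cong(G^u)^{\oplus 2^c}$.
\end{itemize}

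The paper reverses the logic and needs neither ingredient. Every $P(\lambda)$ is a summand of some $Q_{r,s}=E^sF^rP(\eta)$. By \eqref{eq:geometric-functor-composition} and Proposition~\ref{prop:geometric-functor-projectives}, each refined word sends $P(\eta)$ to $0$ or to copies of a single indecomposable projective. Hence some refined word $\mathsf G$ satisfies $\mathsf GP(\eta)\cong P(\lambda)^{\oplus N}$. Its reduced matching has bottom caps $\overline\lambda$ and nested top cups, and the count fixes their number, so it is $t_\lambda$. Then
\[
R_{m,n}(\lambda)^{\oplus N}\cong\mathsf GL(\zeta)\cong\bigl(G^{t_\lambda}L(\zeta)\bigr)^{\oplus N},
\]
and Krull--Schmidt cancels $N$. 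No idempotents are needed.

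\textbf{Step 2.} The obstacle you name rests on a false premise. The $\circ,\times$ vertices of $\eta$ in $K(\delta)$ and of $\zeta$ in $K(m|n)$ coincide. There are crosses at $1-\delta,\ldots,0$ when $\delta>0$, and circles at $1,\ldots,-\delta$ when $\delta<0$. You can see this by comparing the beta-sets $I_\vee,I_\wedge$ of the empty bipartition with \eqref{eq:glmn-weight-diagram}.

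The matchings in \eqref{eq:elementary-arc-matchings} depend only on the fixed vertices, so this coincidence is exactly why the same stack computes $\mathsf GL(\zeta)$. The two diagrams differ only in their free labels: $\zeta$ turns the $\min(m,n)$ labels $\wedge$ nearest the interface into $\vee$, and every $\vee$ into $\wedge$. This makes the first $\min(m,n)$ nested cups anti-clockwise and gives every further cup two $\wedge$ ends. No fixed vertex becomes a free $\vee\wedge$ pair. The extra $\min(\#\circ,\#\times)$ cups come from the count on the $K(\delta)$ side.
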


\begin{proof}
Choose $r,s$ such that $P(\lambda)$ occurs in
$Q_{r,s}=E^sF^rP(\eta)$, and decompose this word into
composites of the refined functors $E_i,F_i$.
By Proposition~\ref{prop:geometric-functor-projectives},
each such composite sends $P(\eta)$ to zero or to copies
of a single indecomposable projective. We can therefore
choose a composite $\mathsf G$ with
$\mathsf GP(\eta)\cong P(\lambda)^{\oplus N}$, where $N>0$.

Stack the elementary matchings of $\mathsf G$ and remove
its internal circles. Let $t$ be the remaining matching.
Since $\eta$ has no caps, the projective formula introduces
no additional circles. Thus the removed circles account for
the multiplicity $N$, and the bottom caps of $t$ are precisely
the cap diagram of $\lambda$.
Every top cup must join opposite labels of $\eta$.
All its $\wedge$ labels lie to the left of all its $\vee$
labels, so noncrossing forces these cups to be nested at
the interface, starting with the nearest pair.
Let $p=\operatorname{def}(\lambda)$ and let $q$ be their number.
Counting ends of through-lines in a sufficiently large finite
interval gives
\[
 2(q-p)=\#\circ+\#\times-|\delta|
        =2\min(\#\circ,\#\times),
\]
since $\eta$ has $|\delta|$ fixed vertices and
$\#\times-\#\circ=\delta$ for $\lambda$.
It follows that $t=t_\lambda$.

Now apply $\mathcal E_{m,n}$. Strong equivariance and
$\mathcal E_{m,n}P(\eta)\cong L(\zeta)$ give
\[
 R_{m,n}(\lambda)^{\oplus N}\cong\mathsf GL(\zeta).
\]
The elementary matchings in \eqref{eq:elementary-arc-matchings}
depend only on the fixed vertices of the blocks.
Thus the same stack computes the right-hand side, with
$\zeta$ replacing $\eta$ on its top boundary.
Relative to $\eta$, the diagram $\zeta$ changes its rightmost
$\min(m,n)$ labels $\wedge$ to $\vee$ and all its labels
$\vee$ to $\wedge$, leaving the fixed vertices unchanged.
The first $\min(m,n)$ nested cups are therefore anti-clockwise;
any further cup has two $\wedge$ ends.
If $q>\min(m,n)$, no orientation of the $q$ cups is possible
in the source block, and the composite is zero.
If $q\leq\min(m,n)$, the cups are all anti-clockwise.
The composition formula \eqref{eq:geometric-functor-composition}
and Proposition~\ref{lem:geometric-functor-simple-head} then give
\[
 \mathsf GL(\zeta)
       \cong\bigl(G^{t_\lambda}L(\zeta)\bigr)^{\oplus N}
       \neq0.
\]
Lemma~\ref{lem:cross-defect-rank} identifies this inequality
with the cross condition. Cancelling the common multiplicity
by Krull--Schmidt proves \eqref{eq:mixed-summand-geometric}.
The target label with anti-clockwise caps is
$\lambda^\dagger$, so the same proposition identifies the
head and socle.

Finally, a primitive idempotent $e$ of label $\lambda$ selects
one copy of $P(\lambda)$ in $Q_{r,s}$.
The functor $\mathcal E_{m,n}$ sends that summand to
$R_{m,n}(\lambda)=T_{r,s}e$.
The multiplicity of $P(\lambda)$ in $Q_{r,s}$ is
$\dim\operatorname{Hom}_{K(\delta)}(Q_{r,s},L(\lambda))
=\dim D_{r,s}(\lambda)$ by Yoneda and
Corollary~\ref{cor:ob-arc-equivalence}.
Applying $\mathcal E_{m,n}$ to this projective decomposition
proves \eqref{eq:mixed-tensor-decomposition}.
Every bipartition occurs in some $Q_{r,s}$, and
Lemma~\ref{lem:monoidal-evaluation-projectives} shows that
the nonzero images are distinct and exhaust the indecomposable
mixed tensor summands.
\end{proof}

Equivalently, the simple modules of
$B_{r,s}(\delta)/\ker\Psi_{r,s}^{m,n}$ are precisely the
$D_{r,s}(\lambda)$ with cross labels in $\dot\Lambda_{r,s}$;
see \cite[Theorem 8.17]{BrundanStroppelV}.
The highest weight $\lambda^\dagger$ alone does not classify
the mixed tensor summand: different cross bipartitions can
have the same $\lambda^\dagger$.

\section{Jung-Kim conjecture}
% Replacement for section5.tex in JK(8).
% Uses the same diagram conventions, affine transformations and adjunctions.

Throughout this section, $\Bbbk$ is algebraically closed of
characteristic zero, and $r,s\geq0$.
We use the walled Brauer algebra of
Definition~\ref{def:oriented-brauer-category}.

\subsection{The centre on the arc side}
\label{subsec:jk-arc-centre}
Assume $\delta\in\mathbb Z$ and use the sets
$\Lambda_{r,s}$ and $\dot\Lambda_{r,s}$ from
Corollary~\ref{cor:walled-brauer-arc-truncation}.
For each weight-diagram block $\Gamma$ meeting $\dot\Lambda_{r,s}$,
put $e_\Gamma=\sum_{\lambda\in\Gamma\cap\dot\Lambda_{r,s}}e_\lambda$.
We ignore vertices labelled $\circ$ or $\times$ when pairing cups.
In a basis diagram $\underline{\lambda}\,\alpha\,\overline{\mu}$
of $K_{r,s}(\delta)$, only the outside labels $\lambda,\mu$
must lie in $\dot\Lambda_{r,s}$; the middle label need not lie there.

For a cup $C$ of $\underline{\lambda}$, let
$x_C\in e_\lambda K(\delta)e_\lambda$ be obtained from
$e_\lambda=\underline{\lambda}\,\lambda\,\overline{\lambda}$
by making the circle through $C$ clockwise.
The circle surgery rules give
\begin{equation}\label{eq:jk-diagonal-ring}
 e_\lambda K(\delta)e_\lambda
 \cong\Bbbk[x_C\mid C\text{ is a cup of }\underline{\lambda}]/(x_C^2),
 \qquad \deg x_C=2.
\end{equation}

\begin{definition}\label{def:jk-vertex-element}
For $j\in\mathbb Z$, define $X_j\in K_{r,s}(\delta)$ by
its diagonal components
\begin{equation}\label{eq:jk-vertex-family}
 e_\lambda X_j e_\lambda=
 \begin{cases}
 x_C,&j\text{ is an endpoint of a cup }C\text{ of }\underline{\lambda},\\
 0,&\text{otherwise},
 \end{cases}
 \qquad \lambda\in\dot\Lambda_{r,s},
\end{equation}
with all off-diagonal components zero.
Before truncation, the same notation denotes the family of
diagonal components given by this rule for all bipartitions.
\end{definition}

\begin{lemma}\label{lem:jk-annihilator}
Let $\lambda\in\Lambda_{r,s}$ and let $C=(a,b)$, $a<b$,
be a cup of $\underline{\lambda}$.
Let $\mu$ be obtained from $\lambda$ by interchanging the
labels at $a,b$.
Then $\mu\in\Lambda_{r,s}$ and both partition sizes decrease
by $b-a$. If $\underline{\lambda}$ has at least two cups,
then $\mu\in\dot\Lambda_{r,s}$.
Moreover, the annihilator of $e_\lambda K(\delta)e_\mu$
in $e_\lambda K(\delta)e_\lambda$ is
\begin{equation}\label{eq:jk-cyclic-module}
 \begin{cases}
 (x_C),&C\text{ is outermost},\\
 (x_C-x_P),&P\text{ is the smallest cup strictly containing }C.
 \end{cases}
\end{equation}
\end{lemma}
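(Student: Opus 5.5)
The lemma has two parts. The first is combinatorial: the effect of the swap on the bipartition. The second is a circle computation: the annihilator of the $(\lambda,\mu)$ corner inside $e_\lambda K(\delta)e_\lambda$. I treat them in that order.

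\emph{The swap.} Under the dictionary $I_\vee(\lambda)=\{a-\delta-\lambda^R_a\}$ and $I_\wedge(\lambda)=\{1-a+\lambda^L_a\}$, the cup $C=(a,b)$ of $\underline{\lambda}$ has $\vee$ at $a$ and $\wedge$ at $b$. Interchanging them moves a $\vee$ rightward by $b-a$ and a $\wedge$ leftward by $b-a$, with no fixed vertex touched.

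\begin{itemize}
\item In $L^+(\boldsymbol\varpi_0)$, moving one element of the subset left by $b-a$ past $b-a-1$ intermediate vertices removes a rim hook of size $b-a$ from $\lambda^L$. The point to check is that the intermediate vertices are exactly the $\wedge,\vee$ and $\circ,\times$ vertices, and each of these contributes to the hook length correctly. I would verify this by counting occupied positions: the $\wedge$-subset is $T$ (so $\times$ and $\wedge$), and the charge count gives size change $b-a$.
\item The same count on $L^-$ shows that $|\lambda^R|$ also drops by $b-a$.
\item Hence $\mu$ has $|\mu^L|=r-k'$ and $|\mu^R|=s-k'$ with $k'=k+(b-a)\le\min(r,s)$, because sizes stay nonnegative. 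So $\mu\in\Lambda_{r,s}$.
\item If $\underline{\lambda}$ has another cup, then $\underline{\mu}$ still has a cup (the interchange only reorients $C$ in Bruhat-smaller form, leaving the other cups' pairing intact). So $\mu\neq(\varnothing,\varnothing)$, since $\eta$ has defect zero, and therefore $\mu\in\dot\Lambda_{r,s}$.
\end{itemize}

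\emph{The corner.} The space $e_\lambda K(\delta)e_\mu$ has basis $(\underline{\lambda}\,\nu\,\overline{\mu})$. First I check that $\overline{\mu}$ is $\overline{\lambda}$ with the cup $C$ and its enclosed structure rearranged. Indeed $\mu$ is obtained by the simplest Bruhat move, so $\underline{\mu}$ agrees with $\underline{\lambda}$ except that the arcs are rejoined at $a,b$: $C$ is cut, and its endpoints connect to the parent cup $P$, or become rays if $C$ is outermost.

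\begin{itemize}
\item \emph{Parent case.} The diagram $\underline{\lambda}\,\overline{\mu}$ then merges the circle of $C$ with the circle of $P$. I would verify this on the local picture, a nested cup inside a cup.
\item \emph{Outermost case.} The circle of $C$ merges with rays into a line.
\end{itemize}

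The corner is therefore a cyclic $e_\lambda K e_\lambda$-module generated by the minimal-degree diagram $g$, with all merged components anti-clockwise. Multiplication by $x_D$ acts by the surgery rule $m(z\otimes\cdot)$ on the component through $D$.

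\begin{itemize}
\item \emph{Outermost case.} The component through $C$ is a line, so $x_C g=0$. The remaining $x_D$ act freely on distinct circles.
\item \emph{Parent case.} $x_C$ and $x_P$ act on the same merged circle, so $x_Cg=x_Pg$, and $(x_C-x_P)g=0$.
\end{itemize}

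\emph{Equality of annihilators.} To conclude that the annihilator is exactly the stated ideal, I compare dimensions. Since the corner has a basis indexed by orientations of the components of $\underline{\lambda}\,\overline{\mu}$ compatible with the rays, its dimension is $2^{\operatorname{def}(\lambda)-1}$. By \eqref{eq:jk-diagonal-ring}, the quotient of $\Bbbk[x_D]/(x_D^2)$ by either $(x_C)$ or $(x_C-x_P)$ also has dimension $2^{\operatorname{def}(\lambda)-1}$. For $(x_C-x_P)$ this uses $x_C^2=x_P^2=0$, which gives $x_Cx_P=0$ modulo $x_C-x_P$; the quotient is $\Bbbk[x_P,\ldots]/(x_P^2)$. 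Since $g$ generates the corner, the induced surjection from the quotient onto the corner is then an isomorphism.

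\emph{Main obstacle.} I expect the hardest step to be the precise identification of $\overline{\mu}$ relative to $\overline{\lambda}$, and the claim that the merged component involves exactly $C$ and its smallest enclosing cup. If the parent picture fails, I would instead compute $\underline{\mu}$ by the cup algorithm from the explicit labels. After the swap, $a$ carries $\wedge$ and $b$ carries $\vee$. The vertices strictly between $a$ and $b$ are fully paired. So the $\wedge$ at $a$ pairs leftward with whatever $\vee$ the parent's left end previously matched, and $b$ symmetrically, which reproduces the parent case.
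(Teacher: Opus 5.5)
Your proposal is correct and follows essentially the same route as the paper: the beta-set shift for the partition sizes, the description of $\underline{\mu}$ with $P,C$ replaced by $(u,a),(b,v)$, the observation that in $\underline{\lambda}\,\overline{\mu}$ the cup $C$ lies either on a line or on one circle with $P$, and the identification of the cyclic corner with the corresponding quotient of the diagonal algebra (you use a dimension count, while the paper uses the explicit bijection between square-free monomials in the $x_D$, $D\neq C$, and the basis diagrams). One small correction: when $C$ is outermost, its endpoints need not both become rays, since one of them can pair with an existing ray of $\underline{\lambda}$ (for $\delta=0$, $\lambda=((2),(1,1))$ has $C=(1,2)$ with a $\vee$-ray at $0$, and $\underline{\mu}$ for $\mu=((1),(1))$ has the cup $(0,1)$); $C$ still lies on a line in $\underline{\lambda}\,\overline{\mu}$, so your count of $\operatorname{def}(\lambda)-1$ circles and the rest of the argument are unaffected.
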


\begin{proof}
In the beta-sets defining the weight diagram, the reversal
replaces $a$ by $b$ in $I_\vee(\lambda)$ and $b$ by $a$ in
$I_\wedge(\lambda)$. Hence both partition sizes decrease by
$b-a$, so $\mu\in\Lambda_{r,s}$.
If $C$ is contained in the smallest cup $P=(u,v)$,
the cup diagram of $\mu$ replaces $P,C$ by $(u,a),(b,v)$.
If $C$ is outermost, its endpoints become rays, except that
one may pair with an existing ray.
All other cups remain unchanged.
Thus $\mu$ still has a cup whenever $\lambda$ has at least
two, and cannot be the deleted empty label.

Overlay $\underline{\lambda}$ and $\overline{\mu}$.
In the outermost case, $C$ lies on a line and each other cup
lies on a separate circle.
In the other case, $C,P$ lie on the same circle and each
remaining cup lies on a separate circle.
By the surgery rules, multiplication by $x_D$ reverses the
anti-clockwise circle containing $D$, and is zero if that
component is a line or already clockwise.
Thus $x_C$ acts as zero in the first case, while $x_C$ and
$x_P$ act identically in the second.

There are no further relations.
The diagram $\underline{\lambda}\,\mu\,\overline{\mu}$
has all its circles anti-clockwise.
The cups $D\ne C$ represent these circles exactly once,
so the square-free monomials in their variables send this
diagram to all oriented basis diagrams, each once; the ray
labels are forced.
The same monomials form a basis of the diagonal algebra
modulo $(x_C)$ or $(x_C-x_P)$, respectively.
This identifies $e_\lambda K(\delta)e_\mu$ with that quotient
as a left module, proving the stated annihilator.
\end{proof}

\begin{lemma}\label{lem:jk-cup-intersection}
For $\lambda\in\dot\Lambda_{r,s}$, the cup variables satisfy
\begin{equation}\label{eq:jk-cup-intersection}
 \bigcap_{C\text{ outermost}}(x_C)
 \ \cap\!
 \bigcap_{\substack{C\text{ not outermost}\\
                   P\text{ smallest cup strictly containing }C}}(x_C-x_P)
 =\Bbbk\prod_{C\text{ cup of }\underline{\lambda}}x_C
\end{equation}
inside $e_\lambda K(\delta)e_\lambda$.
\end{lemma}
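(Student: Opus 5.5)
The plan is to compute inside the exterior-type algebra
$A=\Bbbk[x_C]/(x_C^2)$ from \eqref{eq:jk-diagonal-ring}, which has the square-free monomials $x_S=\prod_{C\in S}x_C$ ($S$ a set of cups) as basis. The first step is a change of variables along the nesting forest of cups. For each cup $C$ put $y_C=x_C$ if $C$ is outermost, and $y_C=x_C-x_P$ if $P$ is the smallest cup strictly containing $C$. This substitution is unitriangular with respect to the nesting partial order, because $x_C=y_C+y_P+y_{P'}+\cdots$ summed along the chain of ancestors. The left-hand side of \eqref{eq:jk-cup-intersection} is then $\bigcap_C (y_C)$. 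The inclusion $\supseteq$ is immediate, since $\prod_C x_C$ is divisible by every $x_C$, and $x_C\prod_D x_D=0$ forces $(x_C-x_P)\prod_D x_D$ to be a multiple of $\prod_D x_D$. Indeed, $\prod_D x_D=\pm y_C\cdot(\text{product})$ in the new variables, as shown next.

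The key step is to identify the ideals $(y_C)$. The $y_C$ do not square to zero: $y_C^2=-2x_Cx_P$. So $A$ is not an exterior algebra in them, and I would avoid treating it as one. Instead I would compute annihilators, using that $A$ is a Frobenius (Gorenstein) local algebra with one-dimensional socle $\Bbbk x_{\mathrm{all}}$, where $x_{\mathrm{all}}=\prod_C x_C$. In a Frobenius algebra, $\bigcap_C(y_C)=\operatorname{Ann}\bigl(\sum_C\operatorname{Ann}(y_C)\bigr)$. So it suffices to show that the ideal $J=\sum_C\operatorname{Ann}(y_C)$ is the maximal ideal $\mathfrak m=(x_C)$, since $\operatorname{Ann}(\mathfrak m)$ is the socle $\Bbbk\prod x_C$. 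The inclusion $J\subseteq\mathfrak m$ holds because each $y_C$ is nonzero.

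The work is therefore to show that every $x_D$ lies in $J$. We have $x_C\in\operatorname{Ann}(x_C)$ for each outermost $C$. For a non-outermost $C$ with parent $P$, the annihilator of $x_C-x_P$ contains $x_C+x_P$, because $(x_C-x_P)(x_C+x_P)=x_C^2-x_P^2=0$. Starting from the outermost cups and proceeding downward by induction on depth, $x_P\in J$ and $x_C+x_P\in J$ give $x_C\in J$. Here we need $\operatorname{char}\Bbbk=0$ only to avoid nothing, since in fact no division is used.

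It remains to justify the Frobenius identity $\operatorname{Ann}(\sum I_k)=\bigcap\operatorname{Ann}(I_k)$ together with the double-annihilator property $\operatorname{Ann}\operatorname{Ann}(y)=(y)$. The latter holds for principal ideals in a finite-dimensional commutative Frobenius algebra; $A$ is Frobenius because it is a tensor product of the Frobenius algebras $\Bbbk[x]/(x^2)$. Combining these gives $\bigcap_C(y_C)=\operatorname{Ann}(J)=\operatorname{Ann}(\mathfrak m)=\Bbbk\prod_C x_C$, which is the claimed equality. The hypothesis $\lambda\in\dot\Lambda_{r,s}$ plays no role beyond fixing the cup set, and if there are no cups both sides are $\Bbbk$. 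The main obstacle I anticipate is the Frobenius duality step; if it proves awkward, the fallback is a direct induction on the forest that removes one leaf cup at a time.
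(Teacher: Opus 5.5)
Your argument is correct, but it takes a different route from the paper.

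\textbf{The paper's route.} The paper stays inside the square-free monomial basis. It takes $f$ in the intersection, notes that $f$ is divisible by $x_C$ for each outermost $C$, and then propagates divisibility inward. If $x_P\mid f$, it writes $f=x_Pg+x_Px_Ch$ and applies the substitution $x_C=x_P$, that is, the quotient map by $(x_C-x_P)$, to get $x_Pg=0$ and hence $g=0$. The reverse inclusion comes from $x_Cx_P=(x_C-x_P)x_P$.

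\textbf{Your route.} You use that $A=\Bbbk[x_C]/(x_C^2)$ is a commutative local Frobenius algebra with socle $\Bbbk\prod_C x_C$. Double annihilation turns $\bigcap_C(y_C)$ into $\operatorname{Ann}\bigl(\sum_C\operatorname{Ann}(y_C)\bigr)$. This reduces the problem to the linear statement that these annihilators generate $\mathfrak m$, which your observation $x_C+x_P\in\operatorname{Ann}(x_C-x_P)$, applied by induction on depth, settles.

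\textbf{Comparison.} Your approach proves both inclusions at once and uses no division, so it works in any characteristic. It also isolates the general principle: in a finite-dimensional commutative local Frobenius algebra, an intersection of ideals is the socle exactly when their annihilators generate $\mathfrak m$. The paper's approach needs no duality theory and uses only the explicit basis and one substitution homomorphism.

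\textbf{Two small points.} Your preliminary sentence on $\supseteq$ is garbled. The identity $(x_C-x_P)\prod_D x_D=0$ says nothing about whether $\prod_D x_D\in(x_C-x_P)$, and the promised factorization ``as shown next'' never appears. This is harmless only because your Frobenius chain consists of equalities; if you want the inclusion directly, use $x_Cx_P=(x_C-x_P)x_P$. Also, the unitriangular change of variables plays no role in the argument you actually run, so you can drop it.
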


\begin{proof}
Let $f$ belong to the intersection.
It is divisible by $x_C$ for every outermost cup $C$.
Suppose it is divisible by $x_P$, and $P$ is the smallest
cup strictly containing $C$.
Using the square-free monomial basis, write
$f=x_Pg+x_Px_Ch$, where $g,h$ involve neither $x_P$ nor $x_C$.
Since $f\in(x_C-x_P)$, substituting $x_C=x_P$ gives
$x_Pg=0$ and therefore $g=0$.
Thus $f$ is also divisible by $x_C$.
Proceeding from the outermost cups inwards shows that every
monomial of $f$ contains every cup variable.
Hence $f$ is a scalar multiple of the full product.

Conversely, the full product belongs to each displayed ideal:
it is divisible by every $x_C$, and
$x_Cx_P=(x_C-x_P)x_P$ since $x_P^2=0$.
\end{proof}

\begin{theorem}\label{thm:jk-arc-centre-generation}
The $e_\Gamma$ are the block idempotents of $K_{r,s}(\delta)$.
Its centre is generated by these idempotents and the
degree-two elements $X_j$.
For $k\geq1$, its degree-$2k$ part has basis
\begin{equation}\label{eq:jk-arc-centre-basis}
 \left\{e_\Gamma
 \prod_{(a,b)\text{ cup of }\underline{\lambda}}X_b
 \ \middle|\ \lambda\in\Gamma\cap\dot\Lambda_{r,s},\quad
                  \operatorname{def}(\lambda)=k\right\}.
\end{equation}
Here $\Gamma$ ranges over the weight-diagram blocks meeting
$\dot\Lambda_{r,s}$. Its degree-zero part has basis the
$e_\Gamma$, and its odd-degree parts vanish.
\end{theorem}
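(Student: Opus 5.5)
We will compute the centre of the finite-dimensional graded algebra $A=K_{r,s}(\delta)=e_{r,s}K(\delta)e_{r,s}$ directly from its diagram basis. A central element $z$ commutes with every $e_\lambda$, so it is determined by its diagonal components $z_\lambda=e_\lambda ze_\lambda\in e_\lambda K(\delta)e_\lambda$, which is the ring in \eqref{eq:jk-diagonal-ring}.

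\textbf{Step 1: necessary conditions on the components.} For each $\lambda\in\dot\Lambda_{r,s}$ and each cup $C$ of $\underline\lambda$, let $\mu$ be the weight of Lemma~\ref{lem:jk-annihilator}. When $\mu\in\dot\Lambda_{r,s}$, the degree-one generator $y=\underline\lambda\,\mu\,\overline\mu$ of $e_\lambda Ae_\mu$ lies in $A$, and centrality gives $z_\lambda y=yz_\mu$. Writing $z_\mu$ in square-free monomials, the relation $yz_\mu=\phi(z_\mu)y$ holds, where $\phi$ identifies cup variables of $\mu$ with those of $\lambda$ through the shared circles of the overlay. Hence $z_\lambda-\phi(z_\mu)$ lies in the annihilator \eqref{eq:jk-cyclic-module}. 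For $\lambda$ of defect $\geq2$, Lemma~\ref{lem:jk-annihilator} guarantees $\mu\in\dot\Lambda_{r,s}$. The defect-one case, with $\mu$ possibly the deleted empty label when $\delta=0$, will need a separate check. Lemma~\ref{lem:jk-cup-intersection} then says that the ambiguity in $z_\lambda$, once the lower-defect neighbours $z_\mu$ are fixed, is a multiple of $\prod_Cx_C$.

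We proceed by induction on the degree $2k$. Fix $z$ central and homogeneous of degree $2k$. For $\lambda$ with $\operatorname{def}(\lambda)<k$ we have $z_\lambda=0$ for degree reasons. For $\lambda$ with $\operatorname{def}(\lambda)=k$, all neighbours $\mu$ have smaller defect, so $z_\mu$ has degree $2k$ in fewer than $k$ variables and vanishes. Therefore $z_\lambda$ lies in the intersection and equals $c_\lambda\prod x_C$. Subtracting $\sum c_\lambda\,e_\Gamma\prod_{(a,b)}X_b$ kills all these components. It remains to show that a central element vanishing on defect $\leq k$ vanishes everywhere. For this we use the relations upward: if $\lambda$ has defect $>k$ and $z_\mu=0$ for all cup-neighbours, then $z_\lambda\in$ the intersection, which lies in degree $2\operatorname{def}(\lambda)\neq2k$, so $z_\lambda=0$. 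Inducting on $\operatorname{def}(\lambda)$ gives uniqueness. The same argument in degree $0$ shows that $z_\lambda$ is a constant along cup-moves. Odd degrees vanish because the diagonal rings are evenly graded.

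\textbf{Step 2: the listed elements are central.} This is the main obstacle. We first show that each $X_j$ is central. The plan is to identify $X_j$ with the local surgery ``attach a clockwise circle at vertex $j$'', that is, the sum of $x_C$ over the circle through $j$ in each diagonal component. Then $X_j\cdot(\alpha\lambda\beta)$ and $(\alpha\lambda\beta)\cdot X_j$ both reverse the circle through vertex $j$ of the middle diagram, or vanish on lines and clockwise circles. This follows from associativity and locality of surgery, as in the proof of Lemma~\ref{lem:arc-klr-relations}. In particular, $X_j$ acting on a basis diagram only depends on the component through $j$, so it commutes with everything. For the products in \eqref{eq:jk-arc-centre-basis}, we check that $e_\Gamma\prod_{(a,b)}X_b$ has diagonal component $\prod x_C$ at $\lambda$ and zero at every other label of the block. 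At another label, two right endpoints $b,b'$ lie on a common circle or a line, and $X_b X_{b'}$ then acts as $x^2=0$. Verifying this vanishing combinatorially is the delicate point.

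\textbf{Step 3: blocks and bases.} The $e_\Gamma$ are central, since $K(\delta)$ has no morphisms between blocks. They are primitive, because the degree-zero analysis in Step 1 shows that constants propagate along cup-moves, which connect each block of $\dot\Lambda_{r,s}$. Linear independence of \eqref{eq:jk-arc-centre-basis} follows from their distinct diagonal supports.
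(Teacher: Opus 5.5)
Your overall strategy matches the paper's: diagonal components, Lemmas~\ref{lem:jk-annihilator} and~\ref{lem:jk-cup-intersection}, and products of $X_b$ over right endpoints. However, two steps fail as written.

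\textbf{The support claim in Step~2 is false.} Write $R(\lambda)$ for the set of right endpoints of the cups of $\underline{\lambda}$ and $M_\lambda=e_\Gamma\prod_{b\in R(\lambda)}X_b$. You claim $M_\lambda$ has zero component at every label other than $\lambda$; this is not true, so the ``delicate point'' cannot be verified. For example, in $K_{2,2}(0)$ the labels $\lambda=((1,1),(2))$ and $\mu=((1),(1))$ both lie in the block of $(\varnothing,\varnothing)$, with single cups $(-1,0)$ and $(0,1)$. Thus $M_\lambda=e_\Gamma X_0$, while $e_\mu X_0e_\mu=x_{(0,1)}\neq0$. Consequently, subtracting $\sum c_\lambda M_\lambda$ does not kill the defect-$k$ components, and linear independence does not follow from ``distinct supports''.

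What is true, and what the paper uses, is
$e_\mu M_\lambda e_\mu=c_{\lambda\mu}\prod_Dx_D$, where $c_{\lambda\mu}\in\{0,1\}$ equals $1$ exactly when $R(\lambda)$ picks one endpoint from each cup of $\underline{\mu}$. In that case $\sum_{b\in R(\lambda)}b\leq\sum_{b\in R(\mu)}b$, with equality only if $R(\lambda)=R(\mu)$. Right endpoints determine the cup diagram within a block, so $(c_{\lambda\mu})$ is unitriangular for the order by sum of right endpoints. This gives both spanning of the possible defect-$k$ components and independence.

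\textbf{Cup reversal does not lower the defect.} Reversing a cup $C=(a,b)$ inside its smallest enclosing cup $P=(u,v)$ replaces $P,C$ by $(u,a),(b,v)$, so the defect is unchanged. Your uniqueness induction on $\operatorname{def}(\lambda)$ is therefore circular: the neighbours of a label of defect $d>k$ may again have defect $d$, and are not yet known to vanish. At defect exactly $k$ you need no neighbours at all, since the degree-$2k$ part of the diagonal ring is already $\Bbbk\prod x_C$.

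The quantity that strictly decreases under cup reversal is the size $|\lambda^L|+|\lambda^R|$, which drops by $b-a$. The paper takes $\lambda$ of minimal size with $z_\lambda\neq0$. It notes $\operatorname{def}(\lambda)>k\geq1$, so every cup-reversal neighbour lies in $\dot\Lambda_{r,s}$ and has $z_\mu=0$. It then concludes that $z_\lambda\in\Bbbk\prod x_C$, which has the wrong degree.

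The same size-decreasing chain is what actually proves the degree-zero connectivity you only assert in Step~3. The chain ends at the unique cupless label of the block. If that label is the deleted $(\varnothing,\varnothing)$, one stops at $((1),(1))$, the only label with a cup whose reversal gives $(\varnothing,\varnothing)$. Combined with nilpotence of the positive-degree part, this shows that $e_\Gamma$ is a block idempotent.
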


\begin{proof}
First, $X_j$ acts on either side of an oriented basis diagram
by reversing the anti-clockwise circle through vertex $j$,
or by zero if that component is a line or already clockwise.
This follows from the same surgery calculation as in
Lemma~\ref{lem:jk-annihilator}.
The left and right actions therefore agree, so $X_j$ is
central. Only finitely many $X_j$ are nonzero.
The $e_\Gamma$ are central because basis diagrams never join
distinct weight-diagram blocks.

Fix $\Gamma$, and put $V=\Gamma\cap\dot\Lambda_{r,s}$ and
$A=e_\Gamma K_{r,s}(\delta)e_\Gamma$.
A central element has only diagonal components:
$z=\sum_{\lambda\in V}z_\lambda$, with
$z_\lambda=e_\lambda ze_\lambda$.
Centrality is equivalent to
\begin{equation}\label{eq:jk-central-components}
 z_\lambda u=uz_\mu
 \qquad(u\in e_\lambda A e_\mu).
\end{equation}

Let $k\geq1$.
We first show that $z\in Z(A)_{2k}$ is determined by its
components at labels with exactly $k$ cups.
Suppose those components vanish but $z\ne0$, and choose
$\lambda$ of minimal $|\lambda^L|+|\lambda^R|$ with
$z_\lambda\ne0$.
By \eqref{eq:jk-diagonal-ring}, a label with fewer than $k$
cups has no diagonal component of degree $2k$.
Thus $\operatorname{def}(\lambda)>k$.
For each cup $C$, let $\mu$ be obtained by reversing its
endpoint labels. Lemma~\ref{lem:jk-annihilator} gives
$\mu\in V$ of smaller size, so $z_\mu=0$.
Equation~\eqref{eq:jk-central-components} then places
$z_\lambda$ in every annihilator of that lemma.
By Lemma~\ref{lem:jk-cup-intersection}, it is a scalar
multiple of the product of all cup variables.
That product has degree $2\operatorname{def}(\lambda)>2k$,
a contradiction.

For each $\lambda\in V$ with $\operatorname{def}(\lambda)=k$,
let $R(\lambda)$ be the set of right endpoints of its cups,
and put $M_\lambda=e_\Gamma\prod_{b\in R(\lambda)}X_b$.
For another such label $\mu$, the definition of $X_b$ gives
\begin{equation}\label{eq:jk-basis-components}
 e_\mu M_\lambda e_\mu
 =c_{\lambda\mu}\prod_{D\text{ cup of }\underline{\mu}}x_D,
 \qquad c_{\lambda\mu}\in\{0,1\},
\end{equation}
where $c_{\lambda\mu}=1$ exactly when $R(\lambda)$ selects
one endpoint from each cup of $\underline{\mu}$.
Indeed, an unpaired vertex contributes zero, and selecting
both endpoints of one cup produces the square of its variable.
In particular, $c_{\lambda\lambda}=1$.

If $c_{\lambda\mu}=1$, each selected endpoint is at most the
right endpoint of its cup. Hence
$\sum_{b\in R(\lambda)}b\leq\sum_{b\in R(\mu)}b$,
with equality only if $R(\lambda)=R(\mu)$.
The right endpoints determine the cup diagram: process them
from left to right, pairing each with the nearest unused
vertex to its left, ignoring $\circ,\times$.
All vertices strictly inside that cup have already been paired.
The remaining ray labels are fixed by the block and the
requirement that all $\wedge$ rays precede all $\vee$ rays.
Thus equality implies $\lambda=\mu$.
Ordering these labels by increasing sum of right endpoints
makes $(c_{\lambda\mu})$ upper triangular with diagonal entries
one. It follows that the $M_\lambda$ realize a basis of all
possible components at labels with $k$ cups.
The uniqueness just proved therefore makes them a basis of
$Z(A)_{2k}$, including when there are no such labels.

There are no odd-degree central elements because the diagonal
algebras have only even degrees.
In degree zero, \eqref{eq:jk-central-components} forces the
scalar components to agree along every cup reversal, since
$\underline{\lambda}\,\mu\,\overline{\mu}\ne0$.
Repeated reversals decrease size and reach the unique cupless
label of the weight-diagram block.
They stay in $\Lambda_{r,s}$ by Lemma~\ref{lem:jk-annihilator}.
If that last label is the deleted empty bipartition, the
preceding label must be $((1),(1))$: the only cup whose
reversal gives the empty diagram is $(0,1)$.
Stopping there connects all retained labels as well.
Thus $Z(A)_0=\Bbbk e_\Gamma$.
Since the positive-degree ideal is nilpotent, $A$ has no
nontrivial central idempotents, so $e_\Gamma$ is a block
idempotent.
Combining the blocks proves the basis and generation statements.
\end{proof}

We next choose central elements and compare their left and
right actions on geometric bimodules.
For $k\geq1$, let $F_k$ be the polynomial determined by
\begin{equation}\label{eq:jk-discrete-primitive}
 F_k(z+1)-F_k(z)=kz^{k-1},\qquad F_k(0)=0.
\end{equation}
Finite difference lowers degree by one, so this polynomial
exists and is unique. It is monic of degree $k$.
We use the signs from \eqref{eq:arc-klr-maps}:
\begin{equation}\label{eq:jk-stable-sign}
 \epsilon_j(\Gamma)
 =(-1)^{j-\#\{a\leq j\mid\Gamma_a\in\{\circ,\times\}\}}.
\end{equation}
The set counted here is finite. This agrees with the earlier
definition using an even left cutoff.

\begin{definition}\label{def:jk-weighted-circle-sum}
For $k\geq1$, define
\begin{equation}\label{eq:jk-boundary-potential}
 H_k:=\sum_\Gamma\sum_{j\in\mathbb Z}
       F_k(j)\epsilon_j(\Gamma)e_\Gamma X_j
       \ \in Z(K_{r,s}(\delta)).
\end{equation}
The sum is finite because only finitely many $X_j$ are nonzero.
\end{definition}

The endpoint signs of a cup are opposite. Combining its two
contributions gives
\begin{equation}\label{eq:jk-boundary-corner}
 e_\lambda H_k e_\lambda
 =\sum_{(a,b)\text{ cup of }\underline{\lambda}}
       \epsilon_a(\Gamma)(F_k(a)-F_k(b))x_{(a,b)}
 \qquad(\lambda\in\Gamma).
\end{equation}
As for $X_j$, this formula defines compatible components before
truncation and hence left and right actions on geometric bimodules.

\begin{corollary}\label{cor:jk-degree-two-span}
The centre of $K_{r,s}(\delta)$ is generated by the $H_k$,
$k\geq1$, and the block idempotents $e_\Gamma$.
\end{corollary}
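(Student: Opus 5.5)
The approach is to reduce everything to degree two using Theorem~\ref{thm:jk-arc-centre-generation}. That theorem says the centre of $K_{r,s}(\delta)$ is spanned by the $e_\Gamma$ together with products $e_\Gamma\prod X_b$. Since the $e_\Gamma$ are orthogonal central idempotents and the $X_j$ are central, each such product equals $\prod_b (e_\Gamma X_b)$. Let $Z(A)_2$ be the degree-two centre of the block $A=e_\Gamma K_{r,s}(\delta)e_\Gamma$. It therefore suffices to prove that, for each block $\Gamma$ meeting $\dot\Lambda_{r,s}$,
\[
 Z(A)_2=\operatorname{span}_\Bbbk\{e_\Gamma X_j\mid j\in\mathbb Z\}
 \quad\text{equals}\quad
 \operatorname{span}_\Bbbk\{e_\Gamma H_k\mid k\geq1\}.
\]
The subalgebra generated by the $H_k$ and the $e_\Gamma$ contains every $e_\Gamma H_k=e_\Gamma H_ke_\Gamma$, hence every $e_\Gamma X_b$, hence the whole centre. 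The first equality is itself part of Theorem~\ref{thm:jk-arc-centre-generation}, since the degree-two centre is spanned by the degree-two generators cut by block idempotents.

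\textbf{Step 1: a linear map.} Let $S\subset\mathbb Z$ be the finite set of vertices $j$ with $e_\Gamma X_j\neq0$. Consider the linear map
\[
 \Theta:\operatorname{Fun}(S,\Bbbk)\to Z(A)_2,\qquad
 g\mapsto\sum_{j\in S}g(j)\,\epsilon_j(\Gamma)\,e_\Gamma X_j .
\]
Each $\epsilon_j(\Gamma)=\pm1$, so $\Theta$ is surjective onto the span of the $e_\Gamma X_j$, i.e.\ onto $Z(A)_2$. By Definition~\ref{def:jk-weighted-circle-sum}, $e_\Gamma H_k=\Theta(F_k|_S)$.

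\textbf{Step 2: constants lie in the kernel.} I claim $\Theta(1)=0$. Compute the diagonal component at $\lambda\in\Gamma\cap\dot\Lambda_{r,s}$: each cup $(a,b)$ of $\underline\lambda$ contributes $(\epsilon_a(\Gamma)+\epsilon_b(\Gamma))x_{(a,b)}$. This vanishes because the endpoint signs of a cup are opposite; this is the $k=0$ analogue of \eqref{eq:jk-boundary-corner}. Unpaired vertices contribute nothing, and there are no off-diagonal components, so $\Theta(1)=0$.

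\textbf{Step 3: interpolation.} Each $F_k$ is monic of degree $k$ with $F_k(0)=0$, so $\operatorname{span}\{F_k\mid k\geq1\}$ is exactly the space of polynomials vanishing at $0$. Together with the constants this gives all polynomials. Restricted to the finite set $S$, polynomials give every function, by Lagrange interpolation. Hence
\[
 \operatorname{Fun}(S,\Bbbk)=\operatorname{span}\{F_k|_S\mid k\geq1\}+\Bbbk\cdot1 .
\]
Applying $\Theta$ and using Step~2 shows that the $e_\Gamma H_k$ span $Z(A)_2$.

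Summing over blocks completes the argument. The only delicate point is Step~2, the sign cancellation. Without it, the normalization $F_k(0)=0$ would lose the constant function whenever $0\in S$. The cancellation rests on the stated parity identity for $\epsilon$ across a cup: the number of $\bullet$ vertices strictly between the endpoints of a cup is even. I would double-check this carefully against \eqref{eq:jk-stable-sign} before relying on it.
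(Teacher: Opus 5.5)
Your proposal is correct and follows the paper's proof. You reduce via Theorem~\ref{thm:jk-arc-centre-generation} to recovering the $X_j$ (cut by $e_\Gamma$), expand an interpolating function in the basis $1,F_1,F_2,\ldots$, and kill the constant term using opposite endpoint signs. You work block by block through $\Theta$, where the paper picks one global interpolating polynomial. The parity point you flag does hold: the $\bullet$ vertices strictly inside a cup $(a,b)$ are all paired by cups nested inside it, since no ray can lie under a cup, so $\epsilon_a(\Gamma)\epsilon_b(\Gamma)=(-1)^{\#\{a<j\le b\mid\Gamma_j=\bullet\}}=-1$.
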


\begin{proof}
By Theorem~\ref{thm:jk-arc-centre-generation}, it suffices to
recover each nonzero $X_j$. Choose a polynomial $f$ with value
one at $j$ and zero at every other cup endpoint among the
labels in $\dot\Lambda_{r,s}$.
Since $F_k$ is monic of degree $k$, write
$f=a_0+\sum_{k\geq1}a_kF_k$ with finitely many nonzero coefficients.
The constant term contributes zero, because the two endpoint
signs of each cup are opposite. Thus the definition of $H_k$ gives
$X_j=\sum_\Gamma\epsilon_j(\Gamma)e_\Gamma\sum_{k\geq1}a_kH_k$.
\end{proof}

\begin{lemma}\label{lem:jk-vertex-action}
Decompose
\begin{equation}\label{eq:jk-residue-decomposition}
 Q_{r,s}=E^sF^rQ_\varnothing
 =\bigoplus_{\mathbf i=(i_1,\ldots,i_{r+s})\in\mathbb Z^{r+s}}
 E_{i_{r+s}}\cdots E_{i_{r+1}}
 F_{i_r}\cdots F_{i_1}Q_\varnothing.
\end{equation}
Let $m$ belong to a nonzero summand with residues $\mathbf i$
and final block $\Gamma$.
For $a\leq r$, let $\xi_a$ insert the map $\xi_{i_a}$ of
\eqref{eq:arc-klr-maps} in the factor $F_{i_a}$;
for $a>r$, insert its adjunction mate $\xi_{i_a}^*$ in
$E_{i_a}$. All other factors carry identity maps.
These maps have degree two and square zero.
For every $j\in\mathbb Z$,
\begin{equation}\label{eq:jk-vertex-action}
 X_jm=\epsilon_j(\Gamma)\left(
 \sum_{a=1}^r(\delta_{j,i_a+1}-\delta_{j,i_a})\xi_a(m)
 -\sum_{a=r+1}^{r+s}(\delta_{j,i_a+1}-\delta_{j,i_a})\xi_a(m)
 \right).
\end{equation}
Here $\delta_{j,i}$ is the Kronecker delta, and $X_j$ denotes
the compatible components of Definition~\ref{def:jk-vertex-element}.
The formula also holds after applying $e_{r,s}$.
\end{lemma}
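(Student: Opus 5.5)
We argue by induction on the length $r+s$ of the word, one elementary matching at a time. The summand $E_{i_{r+s}}\cdots F_{i_1}Q_\varnothing$ is, after forgetting the grading, the geometric bimodule of the stacked elementary matchings $t_{i_1},\ldots,t_{i_{r+s}}$ (and their reflections for the $E$ factors), tensored with $Q_\varnothing=K(\delta)e_{\varnothing,\varnothing}$. Its basis diagrams are therefore oriented stacked diagrams whose bottom is the cupless diagram of $\eta$. The element $X_j$ acts from the left through the cup diagram $\underline{\lambda}$ of the outside label. By the surgery description used in Theorem~\ref{thm:jk-arc-centre-generation}, it reverses the anti-clockwise component through vertex $j$ of the top number line, or acts by zero. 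Each $\xi_a$ is also defined by the bimodule map $\epsilon_{i_a}y_{i_a}$. This map reverses the component containing the nonvertical part of the $a$-th matching, with a sign computed in the intermediate block. The proof compares these two component reversals, working on basis diagrams before tensoring with modules. The claimed identity is an identity of bimodule maps, so it holds on every summand; applying $e_{r,s}$ changes nothing.

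\emph{Step 1: topology.} In a stacked diagram starting from the rays of $\eta$, each component meeting the top line at $j$ is either a line or a circle. A circle must contain nonvertical pieces of some elementary matchings. We follow the strand from vertex $j$ downwards. Its horizontal position changes only at cups, caps and diagonal segments of the elementary matchings, and each such piece at positions $i,i+1$ lies on a unique component. We then use the following fact about the surgery rule: reversing a circle is the same operation whichever matching piece on it is used to identify it. Consequently $X_jm$ equals the sum, over those $a$ whose matching piece lies on the component through $j$, of the reversal at $a$, counted with suitable coefficients. This has to be made precise using a telescoping count. The signed coefficients $\delta_{j,i_a+1}-\delta_{j,i_a}$, summed over the pieces on one circle, should give exactly $\pm1$ when $j$ lies on that circle and $0$ otherwise. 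The mechanism is that each piece moves the strand between $i_a$ and $i_a+1$, so the total contribution is a winding or boundary count of the circle. Since $\xi_a^2=0$ and any two reversals of the same circle agree, the sum collapses to a single reversal.

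\emph{Step 2: signs.} We must check that $\epsilon_{i_a}$ in the intermediate block $\Gamma_a$ combines with the global sign to give $\epsilon_j(\Gamma)$. The relevant tools are \eqref{eq:arc-sign-change}, which records the sign change under an elementary move, and \eqref{eq:jk-stable-sign}. We must also check that the $E$ factors carry the opposite sign. For them, the adjunction mate $\xi^*_{i}$ is a reversal computed in the target block $\Gamma+\boldsymbol\alpha_i$, while the matching is reflected. This accounts both for the minus sign in front of the second sum and for the shift $\delta_{j,i_a+1}$ versus $\delta_{j,i_a}$.

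The main obstacle is Step~1 together with the sign bookkeeping in Step~2. Concretely, we must show that the alternating count of positions $i_a$ and $i_a+1$ along one circle really produces a single reversal with sign $\epsilon_j(\Gamma)$, and not some multiple or a cancelling pair. The plan is to reduce by induction to appending a single factor. By Proposition~\ref{prop:geometric-functor-projectives}, the projective $F_{i}Q$ or $E_iQ$ either merges the new piece into an existing component, splits a circle, or creates a cup or cap. We verify \eqref{eq:jk-vertex-action} case by case for the new factor, using the local table \eqref{eq:elementary-arc-matchings}, and take the older factors from the induction hypothesis.
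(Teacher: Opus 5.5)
\textbf{Gap.} Your Step~1 carries the whole content of the lemma, but it is only stated as an expectation. You need the following: for fixed $j$, the coefficients $\pm\epsilon_{i_a}(\cdot)(\delta_{j,i_a+1}-\delta_{j,i_a})$ sum to $\epsilon_j(\Gamma)$ over the pieces on the component through $j$, and to $0$ over the pieces of every other component. You offer nothing beyond a ``winding count'' for this, and the global count is delicate for two reasons.
\begin{itemize}
\item The iterated tensor product is not reduced. It contains internal circles meeting neither boundary, and their contributions must cancel.
\item To say that $\xi_a$ and $\xi_b$ reverse ``the same circle'', you need an identification of $E_{i_{r+s}}\cdots F_{i_1}Q_\varnothing$ with a stacked bimodule that is compatible with every $\xi_a$. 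Proposition~\ref{prop:geometric-functor-adjunctions} does not supply this: it only gives an abstract isomorphism after deleting internal circles.
\end{itemize}
The fallback induction via Proposition~\ref{prop:geometric-functor-projectives} has the same defect. It gives abstract isomorphisms of projectives that track neither $X_j$ nor the $\xi_a$, so it never connects $X_j$ on the new boundary to the induction hypothesis on the old one. Two smaller points: $\xi_a^2=0$ plays no role in collapsing a linear combination of maps, since only the coefficient sums matter; and for an $E$ step the Kronecker deltas are not shifted, there is only an overall sign.

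\textbf{Missing idea.} Instead of following circles, compare the two boundary actions of $X_j$ on a single elementary bimodule. Take a basis diagram $d$ of $F_i|_\Omega$, whose lower block is $\Omega-\boldsymbol\alpha_i$.
\begin{itemize}
\item For $j\in\{i,i+1\}$, the four rows of \eqref{eq:elementary-arc-matchings} and the surgery rules show that each of $X_jd$ and $dX_j$ is either $y_i^\Omega(d)$ or $0$.
\item For every other $j$, the vertex is fixed or joined vertically, so $X_jd=dX_j$.
\end{itemize}
Together with \eqref{eq:arc-sign-change} this gives
\[
 \epsilon_j(\Omega-\boldsymbol\alpha_i)X_jd-\epsilon_j(\Omega)dX_j
 =(\delta_{j,i+1}-\delta_{j,i})\xi_i(d).
\]
Reflection, i.e.\ passing to adjunction mates, interchanges the two boundaries. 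It gives the negative identity for $E_i$, with $\xi_i^*$ in place of $\xi_i$.

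The balancing relation $(dX_j)\otimes m'=d\otimes(X_jm')$ then moves $X_j$ one factor inward. Each factor $a$ contributes exactly its term $\pm(\delta_{j,i_a+1}-\delta_{j,i_a})\xi_a(m)$ of \eqref{eq:jk-vertex-action}. The process stops at $X_jQ_\varnothing=0$, which holds because $X_j$ is central and $\eta$ has no cups. This is the induction on factors you were aiming for. With it, no circle ever has to be followed, and your global count becomes a consequence rather than an input.
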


\begin{proof}
Use the geometric bimodules of
Section~\ref{subsec:extended-Khovanov-algebras} and the circle
map \eqref{eq:arc-klr-maps}.
Let $\Delta=\Omega-\boldsymbol\alpha_i$ and let
$d=\underline{\lambda}\,\alpha\,t_i(\Omega)\,
\beta\,\overline{\mu}$ be a basis diagram for $F_i|_\Omega$,
with $\lambda\in\Delta$ and $\mu\in\Omega$.
Write $Y=y_i^\Omega$ for the unsigned circle map, and
write $X_j^L(d)=X_jd$ and $X_j^R(d)=dX_j$ for the lower
and upper boundary actions, respectively.
Multiplication by $X_j$ attaches a clockwise circle at
the corresponding boundary.
On the component meeting the nonvertical part of
$t_i(\Omega)$, the surgery rules give
\[
\begin{array}{c|cccc}
 &X_i^L&X_i^R&X_{i+1}^L&X_{i+1}^R\\ \hline
 \text{top cup}&0&Y&0&Y\\
 \text{bottom cap}&Y&0&Y&0\\
 \text{top }i\text{ to bottom }i+1&0&Y&Y&0\\
 \text{top }i+1\text{ to bottom }i&Y&0&0&Y
\end{array}
\]
Here $Y$ vanishes when this component is a line or is
already clockwise.
Using the signs of the four block configurations in
\eqref{eq:elementary-arc-matchings}, together with
$\epsilon_i(\Delta)=-\epsilon_i(\Omega)$, the signed
difference is $-\epsilon_i(\Omega)Y=-\xi_i$ at $j=i$
and $+\xi_i$ at $j=i+1$.
All other vertices are joined vertically and their
contributions cancel. Hence
\begin{equation}\label{eq:jk-vertex-boundary-difference}
 \epsilon_j(\Delta)X_jd-\epsilon_j(\Omega)dX_j
 =
 (\delta_{j,i+1}-\delta_{j,i})\xi_i(d).
\end{equation}
Taking adjunction mates interchanges the two boundary
actions. Thus an $E_i$ step gives the negative of this
formula, with $\xi_i^*$ in place of $\xi_i$.
Apply these identities successively to the geometric
bimodule factors of the summand.
The balancing relation $(dh)\otimes d'=d\otimes(hd')$
identifies the right action at one step with the left action
at the preceding step, so all intermediate actions cancel.
The initial action on $Q_\varnothing$ is zero, since $X_j$
is central and the empty label has no cups.
The remaining left action is $\epsilon_j(\Gamma)X_jm$,
which gives \eqref{eq:jk-vertex-action}.
The degree and square-zero assertions follow from
Lemma~\ref{lem:arc-klr-relations} by insertion and adjunction.
All maps are $K(\delta)$-linear, so the formula restricts to
$e_{r,s}Q_{r,s}$.
\end{proof}

\begin{lemma}\label{lem:jk-iterated-action}
For $m$ and $\xi_a$ as in Lemma~\ref{lem:jk-vertex-action},
\begin{equation}\label{eq:jk-telescoping}
 H_km=k\left(\sum_{a=1}^r i_a^{k-1}\xi_a(m)
        -\sum_{a=r+1}^{r+s}i_a^{k-1}\xi_a(m)\right)
 \qquad(k\geq1).
\end{equation}
\end{lemma}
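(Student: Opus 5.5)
Substitute the definition of $H_k$ into Lemma~\ref{lem:jk-vertex-action} and telescope in $j$ using the finite-difference identity \eqref{eq:jk-discrete-primitive}. The element $m$ lies in a summand with final block $\Gamma$, so only the $\Gamma$-component of $H_k$ acts:
\[
 H_km=\sum_{j\in\mathbb Z}F_k(j)\epsilon_j(\Gamma)X_jm .
\]
We use $\epsilon_j(\Gamma)^2=1$ and insert \eqref{eq:jk-vertex-action}. This gives
\[
 H_km=\sum_{a=1}^{r}\Bigl(\sum_j F_k(j)(\delta_{j,i_a+1}-\delta_{j,i_a})\Bigr)\xi_a(m)
 -\sum_{a=r+1}^{r+s}\Bigl(\sum_j F_k(j)(\delta_{j,i_a+1}-\delta_{j,i_a})\Bigr)\xi_a(m).
\]
The interchange of the two sums is harmless: only finitely many $X_j$ act nonzero, and the inner sums have just two nonzero terms.

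Each inner sum equals $F_k(i_a+1)-F_k(i_a)=k\,i_a^{k-1}$ by \eqref{eq:jk-discrete-primitive}, which is exactly \eqref{eq:jk-telescoping}. For $k=1$ we use the convention $i_a^0=1$, consistent with $F_1(z)=z$.

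The main point to check is that the sign $\epsilon_j(\Gamma)$ in \eqref{eq:jk-vertex-action} is the same block sign as in the definition of $H_k$. Both refer to the final block $\Gamma$ of the summand containing $m$, since $X_j$ acts on $m$ from the left through its $e_\Gamma$-component. The signs therefore square to one and drop out.

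A second check is that \eqref{eq:jk-stable-sign} agrees with the cutoff definition used in \eqref{eq:arc-klr-maps}; this is asserted just after \eqref{eq:jk-stable-sign}. If $m$ is taken in $e_{r,s}Q_{r,s}$, Lemma~\ref{lem:jk-vertex-action} still applies there, so the formula holds in the truncated setting as well.
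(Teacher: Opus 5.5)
Your proposal is correct and follows the paper's proof: multiply \eqref{eq:jk-vertex-action} by $F_k(j)\epsilon_j(\Gamma)$, sum over $j$, and use $\epsilon_j(\Gamma)^2=1$. The coefficient of each $\xi_a(m)$ then collapses to $F_k(i_a+1)-F_k(i_a)=k\,i_a^{k-1}$ by \eqref{eq:jk-discrete-primitive}.
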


\begin{proof}
Multiply \eqref{eq:jk-vertex-action} by
$F_k(j)\epsilon_j(\Gamma)$ and sum over $j$.
The left side is $H_km$ by \eqref{eq:jk-boundary-potential}.
Since $\epsilon_j(\Gamma)^2=1$, the coefficient of each
$\xi_a(m)$ on the right is, with the indicated sign,
$F_k(i_a+1)-F_k(i_a)=ki_a^{k-1}$.
This proves the formula.
\end{proof}

\subsection{Jucys--Murphy elements and the conjecture}
\label{subsec:jk-elements}

Number the vertices of $\uparrow^r\downarrow^s$ from $1$ to $r+s$.
Let $s_{a,b}$ transpose two vertices on the same side of the wall.
For $a\leq r<b$, let $e_{a,b}$ join $a,b$ on each boundary,
with all other strands vertical; thus $e_{a,b}^2=\delta e_{a,b}$.

\begin{definition}\label{def:jk-jucys-murphy}
The Jucys--Murphy elements are
\begin{equation}\label{eq:jk-elements}
 L_a=
 \begin{cases}
  \displaystyle\sum_{1\leq b<a}s_{b,a},&1\leq a\leq r,\\[4pt]
  \displaystyle\delta\,1-\sum_{b=1}^r e_{b,a}
      +\sum_{r<b<a}s_{b,a},&r<a\leq r+s.
 \end{cases}
\end{equation}
Empty sums are zero. We use the normalization of
\cite[Definition 2.1]{JungKim2020}, including the constant
$\delta$ on the right of the wall.
\end{definition}

The elements in \eqref{eq:jk-elements} are the elements
$J_{\mathbf a}$ of \eqref{eq:ob-jucys-murphy}, with a sign
change for downward strands. More precisely, extend each
prefix diagram by vertical strands on the right. Then
\begin{equation}\label{eq:jk-J-L-comparison}
 L_a=
 \begin{cases}
 J_{\uparrow^a}\otimes1_{\uparrow^{r-a}\downarrow^s},
     &1\leq a\leq r,\\[3pt]
 -J_{\uparrow^r\downarrow^{a-r}}\otimes1_{\downarrow^{r+s-a}},
     &r<a\leq r+s.
 \end{cases}
\end{equation}
Here $1_{\varnothing}$ means that no strand is appended.
Thus $J$ on an upward prefix gives $L_a$, whereas $J$ on a
prefix ending in $\downarrow$ gives $-L_a$.
There is no further scalar shift: the term $-\delta\,1$ in
$J$ becomes the term $+\delta\,1$ in $L_a$ on the right of
the wall. For example, in $B_{1,1}(\delta)$,
$L_2=\delta\,1-e_{1,2}=-J_{\uparrow\downarrow}$.

\begin{definition}\label{def:jk-supersymmetric}
A polynomial in $x_1,\ldots,x_r,y_1,\ldots,y_s$ is
\emph{supersymmetric} if it is symmetric in each group separately
and its specialization at $x_r=t$, $y_1=-t$ is independent of $t$.
Omit the latter condition when $rs=0$.
Write $\mathscr S_{r,s}$ for this algebra.
\end{definition}

In characteristic zero it is generated by
\begin{equation}\label{eq:jk-power-sums}
 p_k(x;y)=\sum_{a=1}^r x_a^k
                  +(-1)^{k+1}\sum_{b=1}^s y_b^k
 \qquad(k\geq1);
\end{equation}
see \cite{Stembridge1985}.
Write $p_k(L)$ for evaluation at
$(L_1,\ldots,L_r;L_{r+1},\ldots,L_{r+s})$.

\begin{proposition}\label{prop:jk-centrality}
The $L_a$ commute, and evaluation maps
$\mathscr S_{r,s}$ into $Z(B_{r,s}(\delta))$.
\end{proposition}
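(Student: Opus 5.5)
The plan is to reduce both claims to facts about the oriented Brauer category, using the identification \eqref{eq:jk-J-L-comparison} of the $L_a$ with the elements $J$ of \eqref{eq:ob-jucys-murphy}. The argument does not use the arc algebras. It also works for arbitrary $\delta\in\Bbbk$, since the relations involved are polynomial in $\delta$.

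\emph{Step 1: commutativity.} Take $a<b$. By \eqref{eq:jk-J-L-comparison}, $L_a$ is $\pm J$ on the prefix of length $a$ tensored with identity strands. The element $L_b$ is $\pm J$ on the prefix of length $b$. The defining sum for $L_b$ runs over all strands $c<b$, so it is a sum of generators pairing strand $b$ with each earlier strand. It therefore commutes with every diagram supported on strands $1,\ldots,b-1$. For a permutation $w$ of the first $b-1$ strands, conjugation by $w$ permutes the terms $s_{c,b}$ and $e_{c,b}$ among themselves. It respects the wall, because $w$ preserves each side. A diagram $e_{c,c'}$ across the wall with $c,c'<b$ also commutes with $\sum_{c}(\pm s_{c,b}\mp e_{c,b})$; this is the standard walled Brauer identity, checked directly on the four affected terms. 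In categorical language the same point reads: the map $x$ on $F$ (or its mate on $E$) is a natural transformation, so it commutes with $\operatorname{End}(\mathbf a)\otimes 1$. Naturality of $x_M$ is part of Theorem~\ref{thm:oriented-brauer-tpc}, applied to the representables $\mathcal{OB}(\delta)1_{\mathbf a}$. Since $L_a$ lies in $\operatorname{End}$ of the shorter prefix tensored with $1$, the elements commute.

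\emph{Step 2: symmetry and the generators.} The algebra $B_{r,s}(\delta)$ is generated by $s_{a,a+1}$ for $a\neq r$, together with $e_{r,r+1}$. First, $s_{a,a+1}$ commutes with $L_b$ for $b\neq a,a+1$. For $b<a$ the two are supported on disjoint strands. For $b>a+1$ this is the naturality argument of Step~1. Second, the relation $s_{a,a+1}L_a s_{a,a+1}=L_{a+1}-s_{a,a+1}$ on the left of the wall is the $H_d$ relation (CA2). On the right of the wall, the sign flip in \eqref{eq:jk-J-L-comparison} turns the mate relation into $s L_{a+1}s=L_a+s$ with the appropriate sign. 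In both cases this gives the standard fact that the subalgebra generated by these elements on each side is the degenerate affine Hecke image. Hence symmetric polynomials in $L_1,\dots,L_r$, and separately in $L_{r+1},\dots,L_{r+s}$, commute with all $s_{a,a+1}$. Consequently every element of $\mathscr S_{r,s}$ commutes with every $s_{a,a+1}$, $a\neq r$.

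\emph{Step 3: the cross-wall generator $e=e_{r,r+1}$.} This is the main obstacle. I would work with the power sums $p_k(L)$, which generate $\mathscr S_{r,s}$ by \cite{Stembridge1985}, and aim to show that $e$ commutes with each $p_k(L)$. Write $p_k=A+B+(L_r^k+(-1)^{k+1}L_{r+1}^k)$. Here $A$ collects the terms with $a<r$ and $B$ those with $b>r+1$. The term $A$ commutes with $e$ by disjointness of supports. The term $B$ commutes with $e$ by the symmetric-group argument, since $e$ commutes with the sums in Step~2. The remaining task is to show that $e$ commutes with $L_r^k+(-1)^{k+1}L_{r+1}^k$, modulo the part of $A$ involving strand $r$. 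More precisely, I would prove the two identities
\[
 e(L_r+L_{r+1})=(L_r+L_{r+1})e=\delta' e
\]
for a suitable combination, together with $eL_r^k e=\ldots$. The cleanest route is to prove
\[
 e\,f(L_r,L_{r+1})=e\,f(t,-t)\big|_{\text{absorbed}}
\]
that is, $e(L_r+L_{r+1})=0$ and $(L_r+L_{r+1})e=0$ in the normalization of \cite{JungKim2020}. This follows by bending the strand: $L_{r+1}$ is, up to sign, the mate of $x$, and by Theorem~\ref{thm:hecke-klr-action-equivalence} the mate differs from $x$ by the transposition across the cup. Then $e$ times a polynomial $g$ in $L_r,L_{r+1}$ that is symmetric in the sense of supersymmetry equals $e$ times $g(t,-t)$. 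Combined with the symmetrization in Step~2, the specialization condition of Definition~\ref{def:jk-supersymmetric} is exactly what makes $e\,p(L)=p(L)\,e$. I expect this sign and normalization bookkeeping for $e(L_r+L_{r+1})$ to be the delicate point. I would verify it first in $B_{1,1}(\delta)$, using $L_2=\delta-e_{1,2}$ and $L_1=0$, before treating the general case.
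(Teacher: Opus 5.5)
Your route is the paper's. In both, the $L_a$ commute because each $L_b$ commutes with the diagrams on the preceding strands. Next come the relations $s_iL_{i+1}-L_is_i=1=L_{i+1}s_i-s_iL_i$ for $i\ne r$. For $e=e_{r,r+1}$ one uses $e(L_r+L_{r+1})=(L_r+L_{r+1})e=0$ and $[e,L_a]=0$ for $a\ne r,r+1$, and then concludes by divisibility, or equivalently by specialization at $x_r=t$, $y_1=-t$.

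The gap is that the identity carrying Step~3 is never established. You defer it to a later check, and the reason you give does not supply it. Theorem~\ref{thm:hecke-klr-action-equivalence} only compares $x|_{F_i}$ with $i+\xi_i$, and its mate with $i+\xi_i^*$. It says nothing about a mate differing from $x$ by a transposition across a cup.

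The identity is a direct diagram computation. Since $L_r=\sum_{b<r}s_{b,r}$ and $L_{r+1}=\delta-e-\sum_{b<r}e_{b,r+1}$,
\[
 L_r+L_{r+1}=\delta-e+\sum_{b<r}\bigl(s_{b,r}-e_{b,r+1}\bigr).
\]
The relations $e^2=\delta e$, $e\,s_{b,r}=e\,e_{b,r+1}$ and $s_{b,r}\,e=e_{b,r+1}\,e$ for $b<r$ then give both vanishings.

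If you want to keep your bending idea, make it precise with the general mate identity $(x^\vee F)\circ\eta=(Ex)\circ\eta$ for the cup adjunction. At $\mathbf a=\uparrow^{r-1}$ it reads $J_{\uparrow^r\downarrow}\circ\mathrm{cup}=(J_{\uparrow^r}\otimes1_\downarrow)\circ\mathrm{cup}$. By \eqref{eq:jk-J-L-comparison} this is $(L_r+L_{r+1})e=0$, and horizontal reflection gives $e(L_r+L_{r+1})=0$.

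Two smaller corrections. The vanishing $[e,L_b]=0$ for $b>r+1$ comes from your Step~1, since $L_b$ commutes with $B_{r,b-r-1}\otimes1$; it is not a symmetric-group argument. Also, the target identity has no $\delta'$ term.

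With these fixed, your specialization remark slightly strengthens the paper's divisibility step. Supersymmetry lets you write $f=f_0+(x_r+y_1)h$ with $f_0=f|_{x_r=y_1=0}$. This gives $e\,f(L)=f_0(L)\,e=f(L)\,e$ for every $f\in\mathscr S_{r,s}$. Centrality of all of $\mathscr S_{r,s}$ then follows without Stembridge's generation theorem, which the paper's argument through the $p_k(L)$ relies on.
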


\begin{proof}
Each $L_a$ commutes with the algebra on the preceding strands,
so the $L_a$ commute. Diagram concatenation gives, for
$s_i=s_{i,i+1}$ and $i\ne r$,
\[
 s_iL_{i+1}-L_i s_i=1=L_{i+1}s_i-s_iL_i,
 \qquad [s_i,L_a]=0\quad(a\ne i,i+1).
\]
Thus $s_i$ commutes with $L_i+L_{i+1}$ and $L_iL_{i+1}$,
and hence with $p_k(L)$.
When $rs>0$, writing $e=e_{r,r+1}$ gives
\[
 e(L_r+L_{r+1})=(L_r+L_{r+1})e=0,
 \qquad [e,L_a]=0\quad(a\ne r,r+1).
\]
The polynomial $u^k+(-1)^{k+1}v^k$ is divisible by $u+v$,
so $e$ also commutes with $p_k(L)$.
These diagrams generate $B_{r,s}(\delta)$, proving the claim;
see \cite[Propositions 2.4, 2.6--2.7]{JungKim2020}.
\end{proof}

For $\delta\notin\mathbb Z$, Theorem~\ref{thm:jk-centre}
follows from \cite[Proposition 1.1 and Theorem 3.5]{JungKim2020}.
The result over $\mathbb C$ extends to $\Bbbk$ by base change
from $\mathbb Q(\delta)$.
Until the final theorem, assume $\delta\in\mathbb Z$.

\begin{lemma}\label{lem:jk-morita-centres}
\cite[Lemma 18.41 and Corollary 18.42]{lam1999lectures}
Let $A,B$ be unital algebras and let ${}_AP_B$ induce a Morita
equivalence $P\otimes_B-:B\text{-}\operatorname{Mod}
\xrightarrow{\sim}A\text{-}\operatorname{Mod}$.
There is a unique algebra isomorphism
$\theta:Z(B)\xrightarrow{\sim}Z(A)$ such that
$\theta(z)p=pz$ for every $z\in Z(B)$ and $p\in P$.
\end{lemma}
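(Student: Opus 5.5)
The statement to prove is the standard Morita-theoretic identification of centres, quoted from Lam. I will construct $\theta$ directly from the bimodule and check its properties from the Morita context, taking as given that $P$ is a progenerator whose endomorphism ring is identified with $A$.

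\emph{Step 1: the structure maps are isomorphisms.} Since $P\otimes_B-$ is an equivalence, standard Morita theory gives the following. The module ${}_AP$ is a finitely generated projective generator. The left action map $A\to\operatorname{End}(P_B)$ is an isomorphism. The right action map $B^{\mathrm{op}}\to\operatorname{End}({}_AP)$ is also an isomorphism.

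\emph{Step 2: construct $\theta$.} Take $z\in Z(B)$. Right multiplication $\rho_z\colon p\mapsto pz$ is left $A$-linear. Because $z$ is central, it is also right $B$-linear: $(pb)z=p(zb)=(pz)b$. So $\rho_z\in\operatorname{End}(P_B)$. By Step 1 there is a unique $a\in A$ with $ap=pz$ for all $p$. Put $\theta(z)=a$.

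Next, $\theta(z)$ is central in $A$. For $a'\in A$, the element $a'\theta(z)$ acts on $P$ by $p\mapsto a'(pz)$. The element $\theta(z)a'$ acts by $p\mapsto(a'p)z$. These agree by the bimodule axiom, so faithfulness of the $A$-action gives $a'\theta(z)=\theta(z)a'$.

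\emph{Step 3: properties of $\theta$.} Uniqueness is immediate from faithfulness of $A$ on $P$. The map $\theta$ is additive and sends $1$ to $1$. It is multiplicative: for $z,w\in Z(B)$,
\[
\theta(z)\theta(w)p=\theta(z)(pw)=(\theta(z)p)w=pzw=\theta(zw)p .
\]
For injectivity: if $\theta(z)=0$ then $Pz=0$. Since $P_B$ is faithful, $z=0$.

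\emph{Step 4: surjectivity.} This is the one step needing more than bookkeeping. Given $c\in Z(A)$, left multiplication $\lambda_c\colon p\mapsto cp$ is right $B$-linear. Because $c$ is central, it is also left $A$-linear. By the isomorphism $B^{\mathrm{op}}\cong\operatorname{End}({}_AP)$ of Step 1, there is $b\in B$ with $cp=pb$ for all $p$.

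It remains to show $b\in Z(B)$. For $b'\in B$,
\[
p(b'b)=(pb')b=c(pb')=(cp)b'=p(bb'),
\]
and faithfulness of $P_B$ gives $b'b=bb'$. Then $\theta(b)=c$.

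The main obstacle is Step 1: justifying both double-centraliser isomorphisms from the mere hypothesis that $P\otimes_B-$ is an equivalence. I would first cite Morita's theorem that $P_B$ is a progenerator with $A\cong\operatorname{End}(P_B)$. I would then use the symmetric statement for ${}_AP$, obtained from the inverse equivalence $\operatorname{Hom}_A(P,-)$.
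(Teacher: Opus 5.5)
Your proposal is correct and follows essentially the same route as the paper's proof. Both arguments use the Morita identifications $A\cong\operatorname{End}(P_B)$ and $B^{\mathrm{op}}\cong\operatorname{End}({}_AP)$ to define $\theta(z)$ as the element of $A$ acting as right multiplication by $z$, prove centrality from the bimodule axiom and faithfulness, and obtain surjectivity from the symmetric construction for $c\in Z(A)$. The only cosmetic difference is that you prove injectivity separately from faithfulness of $P_B$, whereas the paper reads off bijectivity directly from the converse construction.
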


\begin{proof}
Morita theory identifies $A$ with $\operatorname{End}_B(P)$
by left multiplication and $B^{\mathrm{op}}$ with
$\operatorname{End}_A(P)$ by right multiplication.
In particular, $P$ is faithful on both sides.
For $z\in Z(B)$, the map $p\mapsto pz$ is right $B$-linear,
so there is a unique $a\in A$ with $ap=pz$ for all $p$.
For $a'\in A$ we have
$(aa')p=(a'p)z=a'(pz)=(a'a)p$.
Faithfulness gives $aa'=a'a$, so $a\in Z(A)$; set $\theta(z)=a$.

Conversely, for $a\in Z(A)$ the map $p\mapsto ap$ is
left $A$-linear, so it is right multiplication by a unique
$z\in B$. For $b\in B$, the equality
$(pb)z=a(pb)=(ap)b=(pz)b$ and right faithfulness give
$bz=zb$. Hence $z\in Z(B)$, proving that $\theta$ is bijective.
The identity $\theta(z)p=pz$ and faithfulness show that
$\theta$ preserves addition, multiplication and the identity,
and also prove its uniqueness.
\end{proof}

\begin{definition}\label{def:jk-centre-isomorphism}
Fix the identification
$\mathcal G(\mathcal{OB}(\delta)1_{\uparrow^r\downarrow^s})
\cong Q_{r,s}$ from Corollary~\ref{cor:ob-arc-equivalence}.
Give $Q_{r,s}$ its usual left $K(\delta)$-action and the
right $B_{r,s}(\delta)$-action obtained by applying $\mathcal G$
to right multiplication on the representable module.
These actions commute, so the right action restricts to
$e_{r,s}Q_{r,s}$.
By Corollary~\ref{cor:walled-brauer-arc-truncation}, this is
a Morita bimodule. Define
\begin{equation}\label{eq:jk-centre-isomorphism}
 \zeta:Z(B_{r,s}(\delta))\xrightarrow{\sim}Z(K_{r,s}(\delta))
\end{equation}
to be the isomorphism of Lemma~\ref{lem:jk-morita-centres}
for this bimodule, characterized by
$\zeta(z)m=mz$ for $z\in Z(B_{r,s}(\delta))$ and
$m\in e_{r,s}Q_{r,s}$.
\end{definition}

\begin{lemma}\label{lem:jk-normalized-comparison}
Let $m$ belong to the summand of $Q_{r,s}$ with residues
$\mathbf i=(i_1,\ldots,i_{r+s})$ in
\eqref{eq:jk-residue-decomposition}, and use the maps
$\xi_a$ of Lemma~\ref{lem:jk-vertex-action}. Then
\begin{equation}\label{eq:jk-normalized-comparison}
 mL_a=
 \begin{cases}
 i_a m+\xi_a(m),&1\leq a\leq r,\\
 -i_a m-\xi_a(m),&r<a\leq r+s.
 \end{cases}
\end{equation}
The same formula holds after applying $e_{r,s}$.
\end{lemma}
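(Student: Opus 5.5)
The plan is to transport the right action of $L_a$ on the representable $\mathcal{OB}(\delta)1_{\uparrow^r\downarrow^s}$ through $\mathcal G$, using strong equivariance, and then read off the Hecke-to-KLR change of generators from Theorem~\ref{thm:hecke-klr-action-equivalence}.

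\emph{Step 1: rewrite $L_a$ as an iterated dot on the oriented Brauer side.} By \eqref{eq:jk-J-L-comparison}, for $a\leq r$ the element $L_a$ is $J_{\uparrow^a}$ extended by vertical strands on the right. For $a>r$ it is $-J_{\uparrow^r\downarrow^{a-r}}$ extended in the same way. On the representable, right multiplication by a prefix element $J_{\mathbf b\uparrow}\otimes1$ is the action of the transformation $x$ inserted in the corresponding appended factor. This is exactly the computation in the proof of Proposition~\ref{prop:deligne-arc-projectives}, where bending strands gives $(J_{\mathbf b\downarrow}f)^\flat=f^\flat J_{\mathbf a\uparrow}$. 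Prefixes ending in $\downarrow$ give the mate of $x$ on $E$. Concretely:
\begin{itemize}
\item for $a\leq r$, right multiplication by $L_a$ is $x$ inserted in the $a$-th applied functor $F$;
\item for $a>r$, it is minus the mate of $x$ inserted in the corresponding factor $E$ of $E^sF^rQ_\varnothing$.
\end{itemize}
I must check carefully that the reflection $\sigma$ and the left-module Yoneda convention turn right multiplication by a diagram into the functorial insertion in the correct factor. In particular, the $a$-th letter of $\uparrow^r\downarrow^s$ must correspond to the $a$-th functor applied to $Q_\varnothing$, consistent with $Q_{\mathbf a\uparrow}=FQ_{\mathbf a}$.

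\emph{Step 2: transport through $\mathcal G$.} Since $\mathcal G$ is strongly equivariant, the identification $\mathcal G(\mathcal{OB}(\delta)1_{\uparrow^r\downarrow^s})\cong Q_{r,s}$ is built from iterated $\zeta$'s. By (E2) and its mate version, it carries $x$ in a given factor to $x$ in the same factor of the arc categorification. It also carries the mate of $x$ to the mate of $x$. The Hecke $x$ on the arc side is obtained from the KLR data by Theorem~\ref{thm:hecke-klr-action-equivalence}. On the summand with residue $i_a$ this gives
\[
x|_{F_{i_a}}=i_a+\xi_{i_a},
\]
and the mate on $E_{i_a}$ is $i_a+\xi^*_{i_a}$. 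This yields $mL_a=i_am+\xi_a(m)$ for $a\leq r$, and $mL_a=-(i_am+\xi_a(m))$ for $a>r$.

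One point needs care: the arc functor $F$ corresponds to the $\mathcal{OB}$ functor through the identification $F(\mathcal{OB}1_{\mathbf a})\cong\mathcal{OB}1_{\mathbf a\uparrow}$. The residue decomposition \eqref{eq:jk-residue-decomposition} must then match the generalized eigenspaces of $J$. This holds because the eigenspace projections are polynomials in $x$ and are therefore preserved.

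\emph{Step 3: truncation.} Every map above is left $K(\delta)$-linear, so the formula restricts to $e_{r,s}Q_{r,s}$.

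\emph{Main obstacle.} I expect the main difficulty to be the bookkeeping of sign and ordering conventions. These are:
\begin{itemize}
\item right versus left actions;
\item the numbering of appended strands from right to left in Definition~\ref{def:categorical-slI-action};
\item the fact that the $E$-steps carry the mate $J_{\mathbf a\uparrow}$ while $F$ carries $J_{\mathbf a\downarrow}$ on the $\mathcal{OB}$ side, since $F$ appends $\downarrow$ on components but sends representables to $\uparrow$-extended words.
\end{itemize}
To settle these, I would verify everything first on $B_{1,0}$ and $B_{1,1}$, checking that $L_2=\delta-e_{1,2}$ acts on $EFQ_\varnothing$ as $-(i_2+\xi_2)$.
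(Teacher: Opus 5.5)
Your proposal is correct and follows the paper's own proof essentially step for step. Bending identifies right multiplication by $J_{\mathbf a\uparrow}$ and $J_{\mathbf a\downarrow}$ with $x$ and its mate on representables, and \eqref{eq:jk-J-L-comparison} turns $L_a$ into $x$ at an upward step and minus the mate at a downward step. Strong equivariance of $\mathcal G$ transports these maps, \eqref{eq:hecke-klr-dot} and its mate give the formula, and $K(\delta)$-linearity lets it restrict along $e_{r,s}$. Your remark that the fixed identification $\mathcal G(\mathcal{OB}(\delta)1_{\uparrow^r\downarrow^s})\cong Q_{r,s}$ must be the one assembled from the iterated $\zeta$'s and their mates is left implicit in the paper, and it is exactly what makes the insertion at position $a$ match.
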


\begin{proof}
On the oriented Brauer representables, the bending
isomorphisms identify the affine transformation on $F$ with
right multiplication by $J_{\mathbf a\uparrow}$, and its
mate on $E$ with right multiplication by
$J_{\mathbf a\downarrow}$;
see \cite[Proposition IV.1.1 and \S IV.2]{reynolds2015oriented}
and the normalization in \eqref{eq:ob-jucys-murphy}.
Thus \eqref{eq:jk-J-L-comparison} makes right multiplication
by $L_a$ the insertion of $x$ at an upward step and the
negative of its mate at a downward step.
Strong equivariance of $\mathcal G$ preserves these maps.
The Brundan--Kleshchev comparison \eqref{eq:hecke-klr-dot}
gives $x|_{F_i}=i\operatorname{id}+\xi_i$ and, by taking
mates, $x^*|_{E_i}=i\operatorname{id}+\xi_i^*$.
Inserting these at position $a$ proves the formula.
All maps commute with $e_{r,s}$, so it also holds after truncation.
\end{proof}

\begin{proposition}\label{prop:jk-explicit-formula}
For $\lambda$ in a block $\Gamma$, the scalar
\begin{equation}\label{eq:jk-content-character}
 \chi_{\lambda,k}=
 \sum_{\square\in\lambda^L}\operatorname{ct}(\square)^k
 -\sum_{\square\in\lambda^R}(-\delta-\operatorname{ct}(\square))^k
 \qquad(k\geq1)
\end{equation}
depends only on $\Gamma$; write it as $\chi_{\Gamma,k}$.
In $Z(K_{r,s}(\delta))$, one has
\begin{equation}\label{eq:jk-explicit-formula}
 \zeta(p_k(L))=\sum_\Gamma\chi_{\Gamma,k}e_\Gamma+H_k.
\end{equation}
\end{proposition}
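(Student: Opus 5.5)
The plan is to compute the right action of $p_k(L)$ on each residue summand of $Q_{r,s}$ in \eqref{eq:jk-residue-decomposition}. We then compare it with the left action of $\sum_\Gamma\chi_{\Gamma,k}e_\Gamma+H_k$, and conclude with the uniqueness in Lemma~\ref{lem:jk-morita-centres}. Because $\zeta(z)$ is characterized by $\zeta(z)m=mz$ for all $m\in e_{r,s}Q_{r,s}$, and the bimodule is faithful on the left, it suffices to prove
\[
 m\,p_k(L)=\Bigl(\sum_\Gamma\chi_{\Gamma,k}e_\Gamma+H_k\Bigr)m
\]
for $m$ in a single summand with residues $\mathbf i$ and final block $\Gamma$. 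Here $e_\Gamma m=m$ and $e_{\Gamma'}m=0$ for $\Gamma'\neq\Gamma$.

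First, we expand $m\,p_k(L)$ using Lemma~\ref{lem:jk-normalized-comparison}. Right multiplication by $L_a$ is the insertion at position $a$ of $x$ (for $a\le r$) or of $-x^*$ (for $a>r$). Hence $mL_a^k$ is the insertion of $x^k$, respectively $(-x^*)^k$. On the summand these act as $i_a+\xi_a$, respectively $-(i_a+\xi_a)$, and $\xi_a^2=0$, so
\[
 mL_a^k=i_a^km+k\,i_a^{k-1}\xi_a(m)\quad(a\le r),\qquad
 (-1)^{k+1}mL_a^k=-i_a^km-k\,i_a^{k-1}\xi_a(m)\quad(a>r).
\]
Summing and using Lemma~\ref{lem:jk-iterated-action} gives
\[
 m\,p_k(L)=c_{\mathbf i}\,m+H_km,\qquad
 c_{\mathbf i}=\sum_{a\le r}i_a^k-\sum_{a>r}i_a^k .
\]
The only point to watch is that the insertions at different positions commute and that the powers are computed within one factor. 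Both follow from the Hecke relations, since the $x_j$ commute.

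Second, we identify $c_{\mathbf i}$ with $\chi_{\Gamma,k}$. Let $\varphi_k$ be the linear functional on the root lattice with $\varphi_k(\boldsymbol\alpha_i)=i^k$. Each $F_{i}$ step lowers the total weight by $\boldsymbol\alpha_i$, and each $E_i$ step raises it. Hence a nonzero summand has
\[
 \operatorname{wt}(\eta)-\operatorname{wt}(\Gamma)
 =\sum_{a\le r}\boldsymbol\alpha_{i_a}-\sum_{a>r}\boldsymbol\alpha_{i_a},
\]
and therefore $c_{\mathbf i}=\varphi_k(\operatorname{wt}(\eta)-\operatorname{wt}(\Gamma))$. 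This depends only on $\Gamma$, because blocks are exactly the sets of labels with the same total weight (Lemma~\ref{lem:arc-tpc-blocks}).

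Evaluating at any $\lambda\in\Gamma$ via the weight formula of Section~\ref{subsec:semi-infinite-wedges} gives
\[
 \operatorname{wt}(v^{0,+,0}_{\lambda^L})=\boldsymbol\lambda_0^+-\sum_{\square\in\lambda^L}\boldsymbol\alpha_{\operatorname{ct}(\square)},
 \qquad
 \operatorname{wt}(v^{0,-,-\delta}_{\lambda^R})=\boldsymbol\lambda_{-\delta}^-+\sum_{\square\in\lambda^R}\boldsymbol\alpha_{-\delta-\operatorname{ct}(\square)}.
\]
From these,
\[
 \operatorname{wt}(\eta)-\operatorname{wt}(\lambda)
 =\sum_{\square\in\lambda^L}\boldsymbol\alpha_{\operatorname{ct}(\square)}
 -\sum_{\square\in\lambda^R}\boldsymbol\alpha_{-\delta-\operatorname{ct}(\square)} .
\]
Applying $\varphi_k$ yields \eqref{eq:jk-content-character}. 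This shows simultaneously that $\chi_{\lambda,k}$ depends only on $\Gamma$, and that $c_{\mathbf i}=\chi_{\Gamma,k}$.

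The main obstacle is this sign and normalization bookkeeping. We must match the ordering $(\lambda^R,\lambda^L)$ with $L^-(-\boldsymbol\varpi_{-\delta})\otimes L^+(\boldsymbol\varpi_0)$, the subsets $I_\vee,I_\wedge$, and the convention that $F_i$ moves $i\mapsto i+1$. Together these must give exactly the contents $\operatorname{ct}$ and $-\delta-\operatorname{ct}$ with the stated signs. I would first check the case $B_{1,0}$ and $B_{0,1}$, where $L_{r+1}=\delta$ on $Q_{0,1}$, against the single-step residues to fix the signs. Finally, both sides commute with $e_{r,s}$, so the identity holds on $e_{r,s}Q_{r,s}$. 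Faithfulness then gives \eqref{eq:jk-explicit-formula}.
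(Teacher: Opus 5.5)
Your proposal is correct and follows the paper's proof. You expand $m\,p_k(L)$ via Lemma~\ref{lem:jk-normalized-comparison}, $\xi_a^2=0$ and Lemma~\ref{lem:jk-iterated-action}, then conclude by faithfulness of $e_{r,s}Q_{r,s}$, exactly as the paper does. The only difference is cosmetic: you get block-invariance and $c_{\mathbf i}=\chi_{\Gamma,k}$ from total weights, Lemma~\ref{lem:arc-tpc-blocks} and the functional $\boldsymbol\alpha_j\mapsto j^k$, whereas the paper uses the generating function $D_\lambda(q)$ and beta-set occupancies. That generating function is just the image of $\operatorname{wt}(\eta)-\operatorname{wt}(\lambda)$ under $\boldsymbol\alpha_j\mapsto q^j$, and the paper reuses this occupancy form in Lemma~\ref{lem:jk-block-idempotents}.
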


\begin{proof}
Put $D_\lambda(q)=
\sum_{\square\in\lambda^L}q^{\operatorname{ct}(\square)}
-\sum_{\square\in\lambda^R}q^{-\delta-\operatorname{ct}(\square)}$.
Summing row by row gives
\begin{equation}\label{eq:jk-content-occupancy}
 (q-1)D_\lambda(q)
 =\sum_{a\geq1}
 \bigl(q^{1-a+\lambda^L_a}-q^{1-a}
       +q^{a-\delta-\lambda^R_a}-q^{a-\delta}\bigr).
\end{equation}
The coefficient of $q^j$ is the number of occurrences of $j$
in $I_\wedge(\lambda),I_\vee(\lambda)$ minus that number for
the empty bipartition. The two beta sets contain $j$ zero
times at a $\circ$, once at a $\vee$ or $\wedge$, and twice
at a $\times$. Hence the coefficients are fixed by the block,
so $D_\lambda$ and
its moments $\chi_{\lambda,k}$ depend only on $\Gamma$.

An $F_i$ step moves one beta-set entry from $i$ to $i+1$,
so \eqref{eq:jk-content-occupancy} increases $D_\lambda(q)$
by $q^i$. An $E_i$ step decreases it by $q^i$.
Starting from the empty bipartition thus gives, for the
final block of a nonzero summand in
\eqref{eq:jk-residue-decomposition},
\[
 D_\lambda(q)=\sum_{a=1}^r q^{i_a}
                 -\sum_{a=r+1}^{r+s}q^{i_a},
 \qquad
 \chi_{\Gamma,k}=\sum_{a=1}^r i_a^k
                 -\sum_{a=r+1}^{r+s}i_a^k.
\]
Now let $m$ belong to this summand after applying $e_{r,s}$.
Since $\xi_a^2=0$,
\[
\begin{aligned}
 mp_k(L)
 &\overset{\ref{lem:jk-normalized-comparison}}{=}
 \left(\sum_{a=1}^r i_a^k-\sum_{a=r+1}^{r+s}i_a^k\right)m
 +k\left(\sum_{a=1}^r i_a^{k-1}\xi_a(m)
         -\sum_{a=r+1}^{r+s}i_a^{k-1}\xi_a(m)\right)\\
 &\overset{\ref{lem:jk-iterated-action}}{=}
 \chi_{\Gamma,k}m+H_km.
\end{aligned}
\]
These summands span $e_{r,s}Q_{r,s}$.
Its faithfulness and the characterization of $\zeta$ give
\eqref{eq:jk-explicit-formula}.
\end{proof}

\begin{lemma}\label{lem:jk-block-idempotents}
For every block $\Gamma$ of $K_{r,s}(\delta)$,
$\zeta^{-1}(e_\Gamma)\in\Bbbk[p_k(L)\mid k\geq1]$.
Equivalently, every block idempotent of $B_{r,s}(\delta)$
is a polynomial in the $p_k(L)$.
\end{lemma}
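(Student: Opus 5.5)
Since $\zeta$ is an algebra isomorphism, I will work in $Z(K_{r,s}(\delta))$ and show that each $e_\Gamma$ lies in the subalgebra $\mathcal A$ generated by the elements $\zeta(p_k(L))=\sum_\Gamma\chi_{\Gamma,k}e_\Gamma+H_k$ of Proposition~\ref{prop:jk-explicit-formula}. The argument has three steps: reduce modulo the nilpotent part, separate blocks by the scalars $\chi_{\Gamma,k}$, and then lift idempotents.

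\textbf{Step 1: reduction modulo nilpotents.} The elements $H_k$ lie in positive degree. By Theorem~\ref{thm:jk-arc-centre-generation} the positive-degree part $N$ of the centre is a nilpotent ideal, and $Z/N\cong\prod_\Gamma\Bbbk e_\Gamma$ is finite-dimensional. The image of $\zeta(p_k(L))$ in $Z/N$ is the function $\Gamma\mapsto\chi_{\Gamma,k}$.

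\textbf{Step 2: separation of blocks.} This is the step I expect to be the real content. I need that distinct blocks $\Gamma\neq\Gamma'$ meeting $\dot\Lambda_{r,s}$ have different sequences $(\chi_{\Gamma,k})_{k\geq1}$. By the proof of Proposition~\ref{prop:jk-explicit-formula}, $\chi_{\Gamma,k}$ is the $k$-th moment $\sum_j c_j j^k$ of the Laurent polynomial $D_\Gamma(q)=\sum_j c_jq^j$, and $(q-1)D_\Gamma(q)$ records exactly the positions of the $\circ$ and $\times$ vertices relative to the empty bipartition. Hence $D_\Gamma$ determines the block.

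It remains to show that the moments for $k\geq1$ determine $D_\Gamma$. Finitely many integer points $j$ with coefficients $c_j$ are determined by $\sum_j c_jj^k$ for all $k\geq0$, by a Vandermonde argument. The $k=0$ moment is not among the $p_k$, but it is recovered as follows. Only the point $j=0$ is invisible to $k\geq1$; all other $c_j$ are determined by the moments with $k\geq1$ via a Vandermonde argument on the nonzero points. The missing quantity is then $c_0$. Here $D_\Gamma(1)=\sum_j c_j=|\lambda^L|-|\lambda^R|=r-s$ is the same for all labels in $\Lambda_{r,s}$, so $c_0=(r-s)-\sum_{j\neq0}c_j$ is also determined. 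Thus the map $\Gamma\mapsto(\chi_{\Gamma,k})_k$ is injective.

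\textbf{Step 3: interpolation and lifting.} Since there are finitely many blocks and the moments separate them, only finitely many $k\leq K$ are needed. Fix $\Gamma$. Take a polynomial $g$ in $K$ variables with $g(\chi_{\Gamma',1},\dots,\chi_{\Gamma',K})=\delta_{\Gamma\Gamma'}$, for instance a product of linear factors. Then $u=g(\zeta(p_1(L)),\dots,\zeta(p_K(L)))\in\mathcal A$ satisfies $u\equiv e_\Gamma \pmod N$.

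Lifting to a genuine idempotent is also done inside $\mathcal A$. Since $u$ commutes with everything and $u^2-u$ is nilpotent, the standard idempotent-lifting polynomial in $u$ (e.g.\ Newton iteration $u\mapsto 3u^2-2u^3$) gives an idempotent $e\in\Bbbk[u]\subseteq\mathcal A$ with $e\equiv e_\Gamma\pmod N$. A central idempotent congruent to $e_\Gamma$ modulo the nilpotent ideal $N$ equals $e_\Gamma$, since central idempotents of $Z$ lift uniquely from $Z/N$.

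Alternatively, I can avoid the lifting: $e_\Gamma u$ is $e_\Gamma$ plus a nilpotent element of $e_\Gamma Z$, so it is a unit there. Directly, $e_\Gamma=u^M$ for large $M$ in each block component, because in $\Gamma'\neq\Gamma$ the component of $u$ is nilpotent and in $\Gamma$ it is $1+$nilpotent. Then $u^M=e_\Gamma(1+n)^M$, and applying $(u^M)^{-1}$-type polynomial corrections handles the factor $(1+n)^M$. Either way $e_\Gamma\in\mathcal A$.

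Applying $\zeta^{-1}$ gives $\zeta^{-1}(e_\Gamma)\in\Bbbk[p_k(L)\mid k\geq1]$, and block idempotents of $B_{r,s}(\delta)$ correspond under the Morita isomorphism of centres, which proves the lemma.
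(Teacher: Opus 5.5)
Your proof is correct and follows essentially the same route as the paper. Both arguments separate blocks by the moments $\chi_{\Gamma,k}$, using $D_\lambda(1)=r-s$ together with a Vandermonde argument, and then build $e_\Gamma$ as a polynomial in the $\zeta(p_k(L))$ modulo the nilpotent positive-degree part of the centre.

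The only difference is technical. The paper passes to a single generic element $h=\sum_k a_kp_k(L)$ with pairwise distinct block scalars $c_\Gamma$, and removes the nilpotent parts directly with a one-variable CRT polynomial that has factors $(T-c_\Delta)^N$. You interpolate in several variables and then lift by Newton iteration, using uniqueness of idempotent lifts in a commutative ring. Your secondary ``$u^M$'' alternative is stated loosely, since $u^M\neq e_\Gamma$ in general, but you do not need it.
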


\begin{proof}
We first show that distinct blocks have different sequences
$(\chi_{\Gamma,k})_{k\geq1}$.
Suppose $\lambda,\mu\in\dot\Lambda_{r,s}$ have equal positive
moments. For the Laurent polynomials in the proof of
Proposition~\ref{prop:jk-explicit-formula}, their zeroth
moments also agree: $D_\lambda(1)=r-s=D_\mu(1)$.
Write $D_\lambda-D_\mu=\sum_{j\in S}u_jq^j$, where $S$
is finite. Equality of moments gives
$\sum_{j\in S}u_jj^k=0$ for $0\leq k<|S|$.
The Vandermonde matrix on the distinct integers in $S$
is invertible, so all $u_j$ vanish.
Equation~\eqref{eq:jk-content-occupancy} then gives the same
$\circ$ and $\times$ positions for $\lambda$ and $\mu$,
so they lie in the same block.

There are finitely many blocks. Choose finitely many moments
that distinguish every pair, and then scalars $a_k$ such that
the numbers $c_\Gamma=\sum_k a_k\chi_{\Gamma,k}$ are pairwise
distinct. Such a choice exists because $\Bbbk$ is infinite:
each unwanted equality excludes only a proper hyperplane.
Put $h=\sum_k a_kp_k(L)$.
Proposition~\ref{prop:jk-explicit-formula} gives
$\zeta(h)e_\Delta=c_\Delta e_\Delta+n_\Delta$, where
$n_\Delta=\sum_k a_kH_ke_\Delta$ has positive degree.
Since $K_{r,s}(\delta)$ is finite-dimensional and nonnegatively
graded, choose $N\geq1$ with $n_\Delta^N=0$ for every block.

Fix $\Gamma$. Choose a polynomial $g_\Gamma(T)\in\Bbbk[T]$
such that, for every block $\Delta$,
\[
 g_\Gamma(T)\equiv
 \begin{cases}
 1,&\Delta=\Gamma,\\
 0,&\Delta\ne\Gamma
 \end{cases}
 \pmod{(T-c_\Delta)^N}.
\]
Here the first condition means that $g_\Gamma(T)-1$ is
divisible by $(T-c_\Gamma)^N$; the other conditions mean
that $g_\Gamma(T)$ is divisible by $(T-c_\Delta)^N$ for
every $\Delta\ne\Gamma$.
The Chinese remainder theorem supplies such a polynomial
because the $c_\Delta$ are distinct.
These powers are needed to remove the nilpotent terms:
substituting $\zeta(h)e_\Delta=c_\Delta e_\Delta+n_\Delta$
and using $n_\Delta^N=0$ gives
\[
 g_\Gamma(\zeta(h))e_\Delta=
 \begin{cases}
 e_\Gamma,&\Delta=\Gamma,\\
 0,&\Delta\ne\Gamma.
 \end{cases}
\]
Summing over $\Delta$ yields
$\zeta(g_\Gamma(h))=e_\Gamma$.
Thus $g_\Gamma(h)=\zeta^{-1}(e_\Gamma)$ is the required block
idempotent, and it belongs to $\Bbbk[p_k(L)\mid k\geq1]$.
\end{proof}

\begin{theorem}\label{thm:jk-centre}
Let $\Bbbk$ be an algebraically closed field of characteristic
zero. For every $r,s\geq0$ and $\delta\in\Bbbk$, the centre
of the walled Brauer algebra is generated by supersymmetric
polynomials in its Jucys--Murphy elements. More precisely,
\begin{equation}\label{eq:jk-conjecture}
 Z(B_{r,s}(\delta))
 =\{f(L_1,\ldots,L_r;L_{r+1},\ldots,L_{r+s})
                         \mid f\in\mathscr S_{r,s}\}
 =\Bbbk[p_k(L)\mid k\geq1].
\end{equation}
\end{theorem}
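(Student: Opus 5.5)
The argument splits into the nonintegral and integral cases for $\delta$.

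For $\delta\notin\mathbb Z$ I would not argue anything new. I would invoke the semisimple result \cite[Proposition~1.1 and Theorem~3.5]{JungKim2020} as already indicated, with base change from $\mathbb Q(\delta)$ to $\Bbbk$. So assume $\delta\in\mathbb Z$.

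The two equalities in \eqref{eq:jk-conjecture} are handled separately.

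\emph{Second equality.} This is Stembridge's generation of $\mathscr S_{r,s}$ by the power sums \eqref{eq:jk-power-sums} in characteristic zero. Evaluation at the $L_a$ is an algebra homomorphism, because the $L_a$ commute by Proposition~\ref{prop:jk-centrality}. Its image is therefore the subalgebra generated by the $p_k(L)$.

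\emph{Inclusion $\supseteq$.} This is Proposition~\ref{prop:jk-centrality}.

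\emph{Inclusion $\subseteq$.} I would work entirely through the centre isomorphism $\zeta$ of Definition~\ref{def:jk-centre-isomorphism}, which is an algebra isomorphism by Lemma~\ref{lem:jk-morita-centres}. It suffices to show that $\zeta\bigl(\Bbbk[p_k(L)\mid k\ge1]\bigr)$ contains a generating set of $Z(K_{r,s}(\delta))$. By Corollary~\ref{cor:jk-degree-two-span}, such a generating set is the block idempotents $e_\Gamma$ together with the $H_k$.

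The $e_\Gamma$ are handled by Lemma~\ref{lem:jk-block-idempotents}: for each block, $\zeta^{-1}(e_\Gamma)$ is a polynomial $g_\Gamma(h)$ in the $p_k(L)$.

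For $H_k$, I would use Proposition~\ref{prop:jk-explicit-formula}:
\[
 H_k=\zeta(p_k(L))-\sum_\Gamma\chi_{\Gamma,k}e_\Gamma
 =\zeta\Bigl(p_k(L)-\sum_\Gamma\chi_{\Gamma,k}\,g_\Gamma(h)\Bigr).
\]
Hence $H_k$ lies in the image of $\Bbbk[p_k(L)]$.

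Since $\zeta^{-1}$ is an algebra homomorphism, it maps the subalgebra generated by the $e_\Gamma$ and the $H_k$, which is all of $Z(K_{r,s}(\delta))$, into $\Bbbk[p_k(L)]$. This gives $Z(B_{r,s}(\delta))\subseteq\Bbbk[p_k(L)]$.

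\emph{Degenerate cases.} I would check that nothing breaks when $r=s=0$ or $rs=0$. When $r=s=0$ both sides are $\Bbbk$. When $rs=0$ the relevant Lemmas still apply, and empty sums are zero.

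\emph{Main obstacle.} All of the substantive work sits in the preceding results, so this final assembly is essentially formal. The step I would double-check is unitality and the identity element. The subalgebra $\Bbbk[p_k(L)]$ is unital by convention, and $1=\sum_\Gamma e_\Gamma$ corresponds to $\zeta^{-1}(1)=1$. This matters so that the generation statement of Corollary~\ref{cor:jk-degree-two-span}, which includes idempotents, transfers correctly to $Z(B_{r,s}(\delta))$.
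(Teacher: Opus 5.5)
Your proposal is correct and follows essentially the same route as the paper. In the integral case the paper also combines Corollary~\ref{cor:jk-degree-two-span}, Proposition~\ref{prop:jk-explicit-formula} and Lemma~\ref{lem:jk-block-idempotents} through $\zeta$, and it cites Jung--Kim otherwise; you additionally spell out the Stembridge step and the inclusion from Proposition~\ref{prop:jk-centrality}, which the paper leaves implicit.
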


\begin{proof}
Use Corollary~\ref{cor:jk-degree-two-span}, Proposition~\ref{prop:jk-explicit-formula} and Lemma~\ref{lem:jk-block-idempotents} via $\zeta$ for $\delta\in\mathbb Z$, and \cite[Theorem 3.5]{JungKim2020} otherwise.
\end{proof}

\begin{corollary}\label{cor:jk-centre-dimension}
Let $p(d)$ denote the number of partitions of $d$, with
$p(0)=1$. For every $r,s\geq0$ and $\delta\in\Bbbk$,
\begin{equation}\label{eq:jk-centre-dimension}
 \dim Z(B_{r,s}(\delta))
 =
 |\Lambda_{r,s}|
 =
 \sum_{a=0}^{\min(r,s)}p(r-a)p(s-a).
\end{equation}
For $\delta\in\mathbb Z$, give the centre the grading
transported through \eqref{eq:jk-centre-isomorphism},
and let $b_{r,s}(\delta)$ be the number of
weight-diagram blocks meeting $\dot\Lambda_{r,s}$.
Then
\begin{equation}\label{eq:jk-centre-hilbert-series}
 \operatorname{Hilb}\bigl(Z(B_{r,s}(\delta));q\bigr)
 =
 b_{r,s}(\delta)
 +
 \sum_{\substack{\lambda\in\dot\Lambda_{r,s}\\
                  \operatorname{def}(\lambda)>0}}
 q^{2\operatorname{def}(\lambda)}.
\end{equation}
\end{corollary}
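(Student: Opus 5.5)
The plan is to treat integral and nonintegral $\delta$ separately, and in both cases reduce the dimension count to counting blocks and labels.

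\emph{Integral $\delta$.} The isomorphism $\zeta$ of Definition~\ref{def:jk-centre-isomorphism} identifies $Z(B_{r,s}(\delta))$ with $Z(K_{r,s}(\delta))$, and by definition it transports the grading. Theorem~\ref{thm:jk-arc-centre-generation} gives a homogeneous basis of the latter:
\begin{itemize}
\item in degree $0$, one idempotent $e_\Gamma$ for each weight-diagram block meeting $\dot\Lambda_{r,s}$, which contributes $b_{r,s}(\delta)$;
\item in degree $2k$ with $k\geq1$, one element for each $\lambda\in\dot\Lambda_{r,s}$ with $\operatorname{def}(\lambda)=k$, which contributes $q^{2\operatorname{def}(\lambda)}$;
\item no odd-degree elements.
\end{itemize}
This is exactly \eqref{eq:jk-centre-hilbert-series}. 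Setting $q=1$ gives $\dim Z=b_{r,s}(\delta)+\#\{\lambda\in\dot\Lambda_{r,s}\mid\operatorname{def}(\lambda)>0\}$.

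It remains to show this equals $|\Lambda_{r,s}|$. Each weight-diagram block $\Gamma$ has a unique cupless label, namely the one with all $\wedge$ before all $\vee$ among the free vertices. So the defect-zero labels in $\Lambda_{r,s}$ are in bijection with the blocks meeting $\Lambda_{r,s}$. Here I use that repeated cup reversals stay in $\Lambda_{r,s}$, as in Lemma~\ref{lem:jk-annihilator} and the degree-zero argument of Theorem~\ref{thm:jk-arc-centre-generation}, so every block meeting $\Lambda_{r,s}$ contains its cupless label inside $\Lambda_{r,s}$.

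When $\dot\Lambda_{r,s}=\Lambda_{r,s}$ the count is immediate. When $\delta=0$ and $r=s>0$, the deleted label $(\varnothing,\varnothing)$ is cupless. Its block still meets $\dot\Lambda_{r,s}$ through $((1),(1))$, which has one cup, by the argument in that proof. So we lose one defect-zero label, but the block count is unchanged, giving
\[
b_{r,s}(\delta)+\#\{\lambda\in\dot\Lambda_{r,s}\mid\operatorname{def}(\lambda)>0\}=|\Lambda_{r,s}|-1+1,
\]
which is $|\Lambda_{r,s}|$ again. In the case $\dot\Lambda_{r,s}=\Lambda_{r,s}$ the same expression is $(\#\text{defect-zero labels})+(\#\text{positive-defect labels})=|\Lambda_{r,s}|$.

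\emph{Nonintegral $\delta$.} Here $B_{r,s}(\delta)$ is semisimple by \cite{JungKim2020}, and its simple modules are indexed by $\Lambda_{r,s}$, via cellularity or the standard classification. So the centre has dimension $|\Lambda_{r,s}|$, the number of Wedderburn components.

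\emph{The closed formula.} Finally, $|\Lambda_{r,s}|=\sum_{a=0}^{\min(r,s)}p(r-a)p(s-a)$ follows directly from the definition of $\Lambda_{r,s}$: for each $a=k$, choose a partition of $r-a$ and a partition of $s-a$.

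The main obstacle is the bookkeeping in the $\delta=0$, $r=s>0$ case: verifying that each block meeting $\Lambda_{r,s}$ has its cupless label in $\Lambda_{r,s}$, and that removing $(\varnothing,\varnothing)$ does not remove its block. I would handle this by citing the reversal argument already given in the proof of Theorem~\ref{thm:jk-arc-centre-generation}.
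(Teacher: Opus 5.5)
Your proposal is correct and follows the paper's proof essentially step for step. The steps match in order: the graded formula comes from Theorem~\ref{thm:jk-arc-centre-generation} transported through $\zeta$; the count at $q=1$ uses cup reversals that stay in $\Lambda_{r,s}$ and end at the unique cupless label of each block; the case $\delta=0$, $r=s>0$ is handled by noting that the block is retained through $((1),(1))$; and semisimplicity covers $\delta\notin\mathbb Z$.

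One presentational point: your step $|\Lambda_{r,s}|-1+1$ would be clearer stated directly. In that case $b_{r,s}(\delta)$ equals the number of defect-zero labels in $\Lambda_{r,s}$, while $\dot\Lambda_{r,s}$ and $\Lambda_{r,s}$ have the same positive-defect labels.
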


\begin{proof}
The graded formula follows from
Theorem~\ref{thm:jk-arc-centre-generation} via $\zeta$.
Successive cup reversals stay in $\Lambda_{r,s}$ and
reach the unique cupless label of each block.
Thus, when $\dot\Lambda_{r,s}=\Lambda_{r,s}$,
evaluation at $q=1$ gives $|\Lambda_{r,s}|$.

When $\delta=0$ and $r=s>0$, the deleted empty
bipartition is cupless, but its block still meets
$\dot\Lambda_{r,s}$ at $((1),(1))$.
Its contribution is therefore retained in
$b_{r,s}(\delta)$, giving the same total dimension.
For $\delta\notin\mathbb Z$, the algebra is semisimple
with simples indexed by $\Lambda_{r,s}$.
Finally, the partition sum counts this indexing set.
\end{proof}

\appendix

\section{Semi-infinite highest weight categories}
\label{sec:semi-infinite-highest-weight-categories}
We use the conventions of \cite{brundanstroppelsemiinfinite},
with right modules over an algebraically closed field $\Bbbk$
of characteristic zero. Simple objects $L(a)$ are indexed by
an interval-finite poset $(\Xi,\preceq)$.
The poset is lower (respectively upper) finite if every
principal ideal $\Xi_{\preceq a}$ (respectively coideal
$\Xi_{\succeq a}$) is finite.

\begin{definition}\label{def:essentially-finite-highest-weight}
An \emph{essentially finite abelian category} is essentially
small, Hom-finite and of finite length, with enough projectives
and injectives. It is \emph{highest weight} if the largest
quotient $\Delta(a)$ of the projective cover $P(a)$ supported
on $\Xi_{\preceq a}$ satisfies $[\Delta(a):L(a)]=1$ and
\begin{equation}\label{eq:highest-weight-projective-flag}
 0\longrightarrow K(a)\longrightarrow P(a)
   \longrightarrow\Delta(a)\longrightarrow0,
\end{equation}
where $K(a)$ has a finite flag by $\Delta(b)$ with $b\succ a$.
The category is \emph{finite highest weight} if $\Xi$ is finite.
\end{definition}

\begin{definition}\label{def:lower-finite-highest-weight}
A \emph{lower finite highest weight category} is an essentially
small Hom-finite abelian category of finite-length objects,
with $\Xi$ lower finite, such that every finite ideal
$\Omega\subseteq\Xi$ defines a finite highest weight Serre
subcategory $\mathcal C_\Omega$ with labels $\Omega$.
Standards and costandards are those in these subcategories.
\end{definition}

\begin{definition}\label{def:upper-finite-highest-weight}
An \emph{upper finite highest weight category} is
$\mathcal C=\operatorname{mod}_{\mathrm{lfd}}\!\text{-}A$,
where $A=\bigoplus_{a,b\in\Xi}e_aAe_b$ is pointed locally
unital with finite-dimensional corners and $\Xi$ upper finite.
Here $\dim Me_a<\infty$, $P(a)=e_aA$, and
$\dim L(a)e_b=\delta_{a,b}$.
The largest quotient $\Delta(a)$ of $P(a)$ supported on
$\Xi_{\preceq a}$ satisfies $\dim\Delta(a)e_a=1$ and
\eqref{eq:highest-weight-projective-flag}.
\end{definition}

See \cite[Corollary 2.20 and Definitions 3.7, 3.34, 3.50]
{brundanstroppelsemiinfinite}.
In the essentially finite and upper finite cases, $\nabla(a)$
is the largest subobject of the injective hull of $L(a)$
supported on $\Xi_{\preceq a}$.
Write $\mathcal C^\Delta,\mathcal C^\nabla$ for the finite-flag
subcategories, with their inherited exact structures.
In the upper finite case the sections themselves may have
infinite length. For an ideal $J$, let $\mathcal C_J$ be the
subcategory supported on $J$; support means composition-factor
labels in the finite-length cases and $\{a\mid Me_a\ne0\}$ otherwise.

\begin{lemma}\label{lem:finite-highest-weight-subquotients}
\cite[Theorems 3.17--3.18 and 3.41--3.42]{brundanstroppelsemiinfinite}
For ideals $J'\subseteq J$ with $S=J\setminus J'$ finite,
$\mathcal C[S]=\mathcal C_J/\mathcal C_{J'}$ is finite highest
weight, with standards the images of $\Delta(a)$ for $a\in S$.
These truncations are transitive and retain $\mathcal C_\Omega$
as a Serre subcategory whenever $\Omega\subseteq S$ is an ideal
of $\Xi$.
In the essentially finite and upper finite cases, projectives
whose standard labels lie in $S$ descend to the corresponding
projective covers, and truncation preserves all Hom spaces
between them.
\end{lemma}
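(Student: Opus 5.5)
The plan is to reduce everything to idempotent truncations of the locally unital algebra. I would treat the essentially finite and upper finite cases together, realized as $\operatorname{mod}_{\mathrm{lfd}}\text{-}A$ or $\operatorname{mod}\text{-}A$ for a pointed locally unital $A$ with finite-dimensional corners. The lower finite case is handled afterwards by passing to finite ideal subcategories.

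\textbf{Setup.} For an ideal $J$, the subcategory $\mathcal C_J$ consists of modules $M$ with $Me_b=0$ for $b\notin J$. These are exactly the modules over $A_J=A/A e_{\Xi\setminus J}A$. Put $e_S=\sum_{a\in S}e_a$, a finite sum. I would then show that the Serre quotient $\mathcal C_J/\mathcal C_{J'}$ is realized by the exact functor $M\mapsto Me_S$ to modules over the finite-dimensional algebra $e_SA_Je_S$. This is the standard recollement for an idempotent: the kernel of $M\mapsto Me_S$ on $\mathcal C_J$ is precisely $\mathcal C_{J'}$, because $J'=J\setminus S$.

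\textbf{Highest weight structure.} Since $J$ is an ideal, $\Delta(a)$ lies in $\mathcal C_J$ for every $a\in J$. The projective cover of $L(a)$ in $\mathcal C_J$ is $P_J(a)=P(a)/P(a)e_{\Xi\setminus J}A$. I would show that this module has a finite $\Delta$-flag with sections $\Delta(b)$, $b\in J$, by refining \eqref{eq:highest-weight-projective-flag}. Order the flag of $P(a)$ so that sections with labels outside $J$ occur at the bottom. This uses the vanishing $\operatorname{Ext}^1(\Delta(c),\Delta(b))=0$ for $b\not\succ c$, which lets one reorder sections. The trace of $e_{\Xi\setminus J}$ is then exactly the submodule filtered by those bottom sections.

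Applying $-\,e_S$ gives projective covers $P_J(a)e_S$ of the simples $L(a)e_S$ for $a\in S$. Their flags have sections $\Delta(b)e_S$ with $b\in S$, because sections with $b\in J'$ are killed. The image $\Delta(a)e_S$ is the largest quotient supported on $S_{\preceq a}$ and has $[\Delta(a)e_S:L(a)e_S]=1$. Since $S$ is finite, this proves that $\mathcal C[S]$ is finite highest weight with the stated standards.

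\textbf{Remaining assertions.} Transitivity holds because truncating by $e_S$ and then by $e_{S'}$ with $S'\subseteq S$ is truncation by $e_{S'}$, and quotients by ideals compose. If $\Omega\subseteq S$ is an ideal of $\Xi$, then $\mathcal C_\Omega\subseteq\mathcal C_J$ meets $\mathcal C_{J'}$ trivially. For $M\in\mathcal C_\Omega$, the module $M$ is generated by $Me_S$ and $M$ has no nonzero submodule killed by $e_S$. Hence $\operatorname{Hom}$ is preserved and the image is closed under subquotients and extensions.

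For Hom preservation between projectives, let $P$ be a projective whose standard labels lie in $S$. Then $P$ is a summand of a finite sum of $P(a)$ with $a\in S$, and it lies in $\mathcal C_J$. For any module $Q$ we have
\[
 \operatorname{Hom}(P(a),Q)=Qe_a=(Qe_S)e_a=\operatorname{Hom}(e_aAe_S,Qe_S),
\]
which gives the assertion.

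\textbf{Lower finite case.} Choose a finite ideal containing $J$, or work inside each finite ideal subcategory $\mathcal C_\Omega$ of Definition~\ref{def:lower-finite-highest-weight}. The finite highest weight category $\mathcal C_\Omega$ is modules over a finite-dimensional quasi-hereditary algebra, so the same idempotent argument applies verbatim.

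\textbf{Main obstacle.} I expect the hardest step to be the $\Delta$-flag of $P_J(a)$ in the upper finite case, where sections may have infinite length. I would first try to settle it using only the finite flag \eqref{eq:highest-weight-projective-flag} and the Ext-vanishing reordering. If that fails, I would fall back on \cite[Theorems 3.41--3.42]{brundanstroppelsemiinfinite}.
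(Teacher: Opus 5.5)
Your idempotent-truncation route is a reasonable self-contained substitute for the citation to Brundan--Stroppel that the paper gives in place of a proof, and most of it works. The reordering of the finite flag of $P(a)$ is legitimate: a section $\Delta(c)$ with $c\notin J$ lying directly above $\Delta(b)$ with $b\in J$ splits off, since $c\prec b$ would force $c\in J$. The resulting flag of $P_J(a)=e_aA_J$ gives the finite highest weight structure on $e_SA_Je_S$-modules and the Hom statement for projectives, in the upper finite case too, so the obstacle you anticipate is not one.

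The genuine gap is the Serre subcategory claim. From $M=Me_S$ for $M\in\mathcal C_\Omega$ you do get full faithfulness and closure under subquotients. Closure under extensions does not follow from ``$M$ is generated by $Me_S$ and has no submodule killed by $e_S$''. An extension in $\mathcal C[S]$ of two objects supported on $\Omega$ is an $e_SA_Je_S$-module $N$ with $Ne_{S\setminus\Omega}=0$. It comes from $\mathcal C_\Omega$ only if $e_\Omega A_Je_{J'}A_Je_\Omega$ acts by zero on $N$, and nothing you wrote controls paths leaving $\Omega$ through $J'$ and returning.

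Without highest weight input this fails. Take the path algebra of the quiver with one arrow $1\to2$ and one arrow $2\to1$, modulo paths of length three, with $J=\{1,2\}$, $J'=\{2\}$ and $S=\Omega=\{1\}$. Then $\mathcal C[S]$ is modules over $e_1Ae_1\cong\Bbbk[c]/(c^2)$. The image of $\mathcal C_\Omega$ is the modules on which $c$ acts by zero, which is not closed under extensions.

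The repair uses your own flag result, applied to the ideal $J'\cup\Omega$. For $c\in\Omega$, the flag labels $b\succeq c$ of $P_{J'\cup\Omega}(c)$ cannot lie in the ideal $J'$. So they lie in $\Omega$, and $P_{J'\cup\Omega}(c)e_{J'}=0$. Since $A_{J'\cup\Omega}=A_J/A_Je_{S\setminus\Omega}A_J$, this gives $e_\Omega A_Je_{J'}A_Je_\Omega\subseteq e_\Omega A_Je_{S\setminus\Omega}A_Je_\Omega$. That ideal acts by zero on $N$ because it factors through $Ne_{S\setminus\Omega}=0$. This essential surjectivity onto the $\Omega$-supported objects is exactly what Remark~\ref{rem:tpc-uniqueness-semi-infinite} uses.

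Your lower finite reduction also has a hole. Only $S$ is finite, so in general there is no finite ideal containing $J$. Take instead $\Omega_0=\Xi_{\preceq S}$, a finite ideal contained in $J$, and show that the truncation stabilizes. Write $A^\Omega$ for a finite-dimensional algebra with $\mathcal C_\Omega\simeq\operatorname{mod}\text{-}A^\Omega$. For a finite ideal $\Omega$ with $\Omega_0\subseteq\Omega\subseteq J$ and $a\in S$, the flag labels of $P_\Omega(a)$ are $\succeq a$ and lie in $J\setminus J'=S$. Hence $P_\Omega(a)$ is supported on $\Omega_0$, and the algebras $e_SA^\Omega e_S$ do not depend on $\Omega$. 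Since $\mathcal C_J$ is the directed union of these $\mathcal C_\Omega$, the quotient $\mathcal C_J/\mathcal C_{J'}$ is $\mathcal C_{\Omega_0}/\mathcal C_{\Omega_0\cap J'}$. Both fixes are short, but as written neither step is justified.
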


Tiltings have finite standard and costandard flags in the lower
finite case, and ascending standard and descending costandard
flags in the upper finite case, in the sense of
\cite[Definition 3.35 and (4.10)]{brundanstroppelsemiinfinite}.

\begin{proposition}\label{prop:semi-infinite-indecomposable-tiltings}
\cite[Theorems 4.2 and 4.18]{brundanstroppelsemiinfinite}
In either setting, there is a unique indecomposable tilting
$T(a)$ supported on $\Xi_{\preceq a}$ with $[T(a):L(a)]=1$.
Its standard flag starts with $\Delta(a)$ and its costandard
flag has top section $\nabla(a)$.
In the upper finite case, $[M:L(a)]$ means $\dim Me_a$.
\end{proposition}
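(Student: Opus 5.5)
The plan is to treat the two settings separately: reduce the lower finite case to Ringel's finite theory inside one finite ideal, and build the upper finite tilting as a compatible family of finite tiltings on idempotent truncations. Throughout, Lemma~\ref{lem:finite-highest-weight-subquotients} will be used to pass between the semi-infinite category and its finite highest weight subquotients.

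\emph{Lower finite case.} Here $\Omega=\Xi_{\preceq a}$ is a finite ideal, so $\mathcal C_\Omega$ is a finite highest weight category. Ringel's classical theory then gives a unique indecomposable tilting $T_\Omega(a)$ in $\mathcal C_\Omega$ with the stated flags and $[T_\Omega(a):L(a)]=1$. Set $T(a)=T_\Omega(a)$.

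To see that $T(a)$ is tilting in the whole category, take any finite ideal $\Omega'\supseteq\Omega$. The inclusion $\mathcal C_\Omega\subseteq\mathcal C_{\Omega'}$ sends standards and costandards with labels in $\Omega$ to the standards and costandards of $\mathcal C_{\Omega'}$. Hence the finite $\Delta$- and $\nabla$-flags of $T_\Omega(a)$ remain such flags in $\mathcal C_{\Omega'}$. Since standards and costandards are defined through these subcategories (Definition~\ref{def:lower-finite-highest-weight}), $T(a)$ is tilting.

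For uniqueness, any indecomposable tilting supported on $\Xi_{\preceq a}$ lies in $\mathcal C_\Omega$. Its flags there are flags in $\mathcal C_\Omega$, so finite uniqueness applies.

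\emph{Upper finite case.} Now $\Xi_{\preceq a}$ may be infinite, and a limiting construction is needed. I would proceed in four steps.

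\begin{enumerate}
\item Put $J=\Xi_{\preceq a}$ and choose finite subsets $a\in S_1\subseteq S_2\subseteq\cdots$ exhausting $J$. Each $S_n$ should be upward closed in $J$; this is possible by upper finiteness, taking for example $S_n=J\cap\Xi_{\succeq}$ of a growing finite set of labels. Then $J'_n=J\setminus S_n$ is an ideal. Lemma~\ref{lem:finite-highest-weight-subquotients} identifies $\mathcal C_J/\mathcal C_{J'_n}$ with modules over the corner $e_nAe_n$, where $e_n=\sum_{b\in S_n}e_b$, and this is a finite highest weight category. Let $T_n$ be its indecomposable tilting of label $a$.

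\item Show that the exact truncation $M\mapsto Me_n$ sends $T_{n+1}$ to $T_n$. The functor sends $\Delta(b)$ and $\nabla(b)$ to the corresponding standard and costandard for $b\in S_n$. For $b\in S_{n+1}\setminus S_n$ it sends them to zero, because every label $\preceq b$ lies outside the upward closed set $S_n$. So $T_{n+1}e_n$ is tilting. It is indecomposable with top label $a$, since $\operatorname{End}$ only shrinks to a quotient of a local ring. By the intertwining of both flags, $T_{n+1}e_n\cong T_n$.

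\item Fix compatible isomorphisms and define $T(a)$ by $T(a)e_b:=T_ne_b$ for $b\in S_n$, with the $A$-action read off at any stage containing the relevant idempotents. This is a well-defined locally finite-dimensional module. The finite flags of the $T_n$ assemble into an ascending standard flag and a descending costandard flag in the sense of \cite[Definition 3.35]{brundanstroppelsemiinfinite}. The equality $[T(a):L(a)]=\dim T(a)e_a=1$ holds by construction.

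\item For indecomposability and uniqueness, write $\operatorname{End}(T(a))=\varprojlim\operatorname{End}(T_n)$. The transition maps are surjective, which I would check via the Ext-vanishing between the $\Delta$- and $\nabla$-flags. An inverse limit of local finite-dimensional algebras with surjective transitions is local. Any other such tilting $T'$ truncates to $T_n$ at every stage, and compatible isomorphisms glue to $T'\cong T(a)$.
\end{enumerate}

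I expect the main obstacle to be step 2, together with the surjectivity of the transition maps in step 4. Both need care about how the quotient by $\mathcal C_{J'_n}$ interacts with infinite-length sections. I would first try to derive them from the Hom and Ext identities $\operatorname{Hom}(\Delta(b),\nabla(c))=\delta_{bc}\Bbbk$ and $\operatorname{Ext}^1(\Delta(b),\nabla(c))=0$, applied to the truncated flags, falling back on \cite[Theorems 4.2 and 4.18]{brundanstroppelsemiinfinite} if this fails.
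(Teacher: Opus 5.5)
Your lower finite argument is complete. It is the standard reduction to Ringel's theorem inside the finite highest weight category $\mathcal C_{\Xi_{\preceq a}}$; the paper itself only cites \cite{brundanstroppelsemiinfinite}. In the upper finite case the truncate-and-glue strategy can be made to work, but the real gap is step~3, not steps~2 and~4.

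\textbf{The gap in step~3.} The sections of an ascending standard flag of $T(a)$ must be the standards $\Delta(b)$ of $\mathcal C$. These usually have infinite length, and no single $T_n$ sees them; it only sees $\Delta(b)e_n$. Moreover, the finite flags of the various $T_n$ are not canonical. So ``assembling'' them has no content until you exhibit actual subobjects of $T(a)$ in $\mathcal C$.

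\textbf{The missing idea} is to use the left adjoint of truncation. Put $\bar A=A/\langle e_b\mid b\notin J\rangle$. This is also the correct algebra in step~1: $\mathcal C_J/\mathcal C_{J'_n}$ is the category of right $e_n\bar Ae_n$-modules, not $e_nAe_n$-modules. Set $j^n_!=-\otimes_{e_n\bar Ae_n}e_n\bar A$. Followed by truncation to $S_m$, this functor is induction from the coideal $S_n\subseteq S_m$ inside the finite category $\mathcal C[S_m]$. The truncations are exact and together detect exactness, so everything you need reduces to finite highest weight facts:
\begin{itemize}
\item $j^n_!$ is exact on standard flags and sends $\Delta_{S_n}(b)$ to $\Delta(b)$ for $b\in S_n$.
\item The counit $j^n_!(T(a)e_n)\to T(a)$ is injective. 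After truncation to $S_m$ it is the inclusion of the piece of a standard flag of $T_m$ with labels in $S_n$. That piece can be moved to the bottom, because $\operatorname{Ext}^1(\Delta(b),\Delta(c))=0$ unless $b\prec c$, and $b\prec c$ with $b\in S_n$ forces $c\in S_n$.
\end{itemize}
Taking $S_1=\{a\}$, let $U_n\subseteq T(a)$ be the submodule generated by $T(a)e_n$. Then $U_1=\Delta(a)$, each $U_{n+1}/U_n$ has a finite standard flag with labels in $S_{n+1}\setminus S_n$, and $\bigcup_nU_n=T(a)$. This is the ascending standard flag starting with $\Delta(a)$. Dually, the kernels of the unit maps $T(a)\to\operatorname{Hom}_{e_n\bar Ae_n}(\bar Ae_n,T_n)$ give the descending costandard flag, with top section $\nabla(a)$.

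\textbf{The steps you flagged are routine.} Step~2 and the surjectivity in step~4 follow from one finite fact. In a finite highest weight category, $\operatorname{Hom}(M,N)\to\operatorname{Hom}(Me_n,Ne_n)$ is surjective for $M$ standard-filtered and $N$ costandard-filtered. This is a diagram chase along the flags, using $\operatorname{Ext}^1$-vanishing in both categories.

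\textbf{Uniqueness needs two more remarks.} You must say why $T'e_n$ is an indecomposable tilting object of $\mathcal C[S_n]$.
\begin{itemize}
\item It is tilting: only finitely many sections of the ascending flag of $T'$ have labels in $S_n$, since $\dim T'e_b<\infty$, and its descending flag becomes zero after truncation. So both truncated flags are finite.
\item It is indecomposable: $\operatorname{End}(T')=\varprojlim_n\operatorname{End}(T'e_n)$ holds for every module supported on $J$. The transition maps are surjective by the fact above, so each $\operatorname{End}(T'e_n)$ is a quotient of the local ring $\operatorname{End}(T')$.
\end{itemize}

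\textbf{Countability.} The exhausting sequence $S_1\subseteq S_2\subseteq\cdots$ and the inductive choice of compatible isomorphisms presuppose that $\Xi_{\preceq a}$ is countable. This holds for every poset in the paper, but you should state it as an assumption.
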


For an essentially finite category, use the tiltings in its
locally finite-dimensional envelope
\cite[\S4.3]{brundanstroppelsemiinfinite}.

\begin{definition}\label{def:tilting-bounded-highest-weight}
The category is \emph{tilting-bounded} if
$(\dim\operatorname{Hom}(T(a),T(b)))_{a,b\in\Xi}$
has finite support in every row and column.
\end{definition}

In the essentially finite case this ensures that $T(a)$ lies
in $\mathcal C$ and has finite standard and costandard flags
\cite[Definition 4.20 and Lemma 4.21]{brundanstroppelsemiinfinite}.

\begin{definition}\label{def:semi-infinite-ringel-dual}
\cite[Definitions 4.24, 4.26 and 4.33]{brundanstroppelsemiinfinite}
Put
\begin{equation}\label{eq:finite-entry-tilting-algebra}
 B=\bigoplus_{a,b}\operatorname{Hom}(T(a),T(b)),
\end{equation}
with matrix composition. The \emph{Ringel dual} is
$\mathcal C^\vee=\operatorname{mod}_{\mathrm{lfd}}\!\text{-}B$
when $\mathcal C$ is lower finite, and
$\mathcal C^\vee=\operatorname{mod}_{\mathrm{fd}}\!\text{-}B$
when it is tilting-bounded essentially finite.
When $\mathcal C$ is upper finite, put $T=\bigoplus_aT(a)$
and $\widehat B=\operatorname{End}(T)$, with the topology of
restriction to finitely many components $Te_a$.
Then $\mathcal C^\vee$ consists of finite-dimensional continuous
right $\widehat B$-modules, given the discrete topology.
\end{definition}

\begin{theorem}\label{thm:semi-infinite-ringel-duality}
\cite[Theorems 4.25, 4.27, 4.34 and Corollaries 4.29--4.30, 4.35]
{brundanstroppelsemiinfinite}
The Ringel dual has weight poset $\Xi^{\mathrm{op}}$ and is,
respectively, upper finite, lower finite, or tilting-bounded
essentially finite. There are exact equivalences
\begin{equation}\label{eq:ringel-finite-flag-equivalences}
 \mathcal R:\mathcal C^\nabla\xrightarrow{\sim}(\mathcal C^\vee)^\Delta,
 \qquad
 \mathcal S:\mathcal C^\Delta\xrightarrow{\sim}(\mathcal C^\vee)^\nabla,
\end{equation}
sending $\nabla(a)$ to $\Delta^\vee(a)$ and $\Delta(a)$ to
$\nabla^\vee(a)$, respectively.
The dual is independent of tilting representatives, up to
label-preserving highest weight equivalence, and
$(\mathcal C^\vee)^\vee\simeq\mathcal C$ with these ambient conventions.
\end{theorem}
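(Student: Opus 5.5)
We will follow the strategy of \cite{brundanstroppelsemiinfinite}: realize everything through the tilting functor. Put $T=\bigoplus_aT(a)$, using the tiltings of Proposition~\ref{prop:semi-infinite-indecomposable-tiltings}, and define
\[
\mathcal R:=\operatorname{Hom}(T,-):\mathcal C^\nabla\longrightarrow(\mathcal C^\vee)^\Delta .
\]
Here $\operatorname{Hom}(T,M)$ is regarded as a right module over $B$, respectively over $\widehat B$ in the upper finite case.

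\textbf{Step 1: exactness and values of $\mathcal R$.} Since $\operatorname{Ext}^1(T,\nabla(b))=0$, the functor $\mathcal R$ is exact on $\mathcal C^\nabla$. We then identify three kinds of objects:
\begin{itemize}
\item $\mathcal R(T(a))=\operatorname{Hom}(T,T(a))$ is an indecomposable projective $B$-module, since $\operatorname{End}(T(a))$ is local;
\item set $\Delta^\vee(a):=\mathcal R(\nabla(a))$;
\item set $L^\vee(a)$ to be the head of $\mathcal R(T(a))$.
\end{itemize}

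\textbf{Step 2: the highest weight structure on $\mathcal C^\vee$.} The costandard flag of $T(a)$ has top section $\nabla(a)$, and all its other sections are $\nabla(b)$ with $b\prec a$. Applying $\mathcal R$ gives a short exact sequence
\[
0\longrightarrow K\longrightarrow\mathcal R(T(a))\longrightarrow\Delta^\vee(a)\longrightarrow0,
\]
in which $K$ is filtered by $\Delta^\vee(b)$ with $b\prec a$. This is the projective flag \eqref{eq:highest-weight-projective-flag} for the opposite order.

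We next check that $\Delta^\vee(a)$ is the largest quotient of $\mathcal R(T(a))$ supported on $\Xi^{\mathrm{op}}_{\succeq a}$. The multiplicity $[\Delta^\vee(a):L^\vee(b)]$ equals the multiplicity of $\mathcal R(T(b))$ in a projective cover computation, which reduces to $\dim\operatorname{Hom}(T(b),\nabla(a))=(T(b):\Delta(a))$. This vanishes unless $a\preceq b$, and equals $1$ when $a=b$.

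The finiteness type follows from the same support condition. We have $\operatorname{Hom}(T(a),T(b))\neq0$ only if $a$ and $b$ are linked through $\Xi_{\preceq a}\cap\Xi_{\preceq b}$-flags. Reversing the order therefore swaps lower finite with upper finite. In the tilting-bounded case, finiteness holds by Definition~\ref{def:tilting-bounded-highest-weight}. This yields the three cases, with poset $\Xi^{\mathrm{op}}$.

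\textbf{Step 3: $\mathcal R$ is an equivalence on flags.} Full faithfulness of $\mathcal R$ on $\mathcal C^\nabla$ is proved by induction on flag length, using $\operatorname{Ext}^1$-vanishing and the five lemma. The base case is $\operatorname{Hom}(\nabla(a),\nabla(b))\cong\operatorname{Hom}(\Delta^\vee(a),\Delta^\vee(b))$, which we check via tilting coresolutions of length one.

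For essential surjectivity onto $(\mathcal C^\vee)^\Delta$, we build objects by iterated extensions. This uses that $\mathcal R$ induces isomorphisms on $\operatorname{Ext}^1$ between flagged objects, since $\operatorname{Ext}^1(\nabla(a),\nabla(b))$ is computed by a tilting coresolution.

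Dually, $\mathcal S:=\operatorname{Hom}(-,T)^{\circledast}$, or equivalently $T\otimes$-adjoint, sends $\Delta(a)$ to $\nabla^\vee(a)$. Independence of the tilting representatives follows because any two choices give Morita equivalent $B$.

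\textbf{Step 4: double duality.} We show $(\mathcal C^\vee)^\vee\simeq\mathcal C$ by noting that the tiltings of $\mathcal C^\vee$ are $\mathcal R(P(a))$, so that the double Ringel dual algebra is $\bigoplus\operatorname{Hom}(P(a),P(b))$. This algebra recovers $\mathcal C$, either as its module category or, in the lower finite case, as the union of its finite ideal subcategories.

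The main obstacle is the upper finite case. There $T$ is an infinite sum and $\widehat B$ carries a topology. We must check that the modules $\operatorname{Hom}(T,M)$ are continuous and finite-dimensional, and that ascending standard and descending costandard flags behave correctly under $\mathcal R$. We would first reduce to finite truncations $\mathcal C[S]$ via Lemma~\ref{lem:finite-highest-weight-subquotients}, apply classical finite Ringel duality there, and then pass to the limit, checking compatibility of the resulting equivalences as $S$ grows.
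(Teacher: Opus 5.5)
\paragraph{Verdict.} The paper proves nothing here; it imports the result from Brundan--Stroppel. Your $\operatorname{Hom}(T,-)$ strategy is the standard one, but the proposal has two genuine gaps: the upper finite case is not handled, and the double-duality step misidentifies the tiltings of $\mathcal C^\vee$.

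\paragraph{What already works.} In the lower finite and tilting-bounded essentially finite settings, Steps~1--3 go through after small repairs. There every $T(a)$ has finite standard and costandard flags, and $\mathcal R(T(a))=e_aB$. Every $M\in\mathcal C^\nabla$ has a presentation $T_1\to T_0\to M\to0$ with $T_i\in\operatorname{add}\{T(a)\}$ and kernels in $\mathcal C^\nabla$. Together with $\operatorname{Hom}_B(e_aB,X)=Xe_a$, this gives full faithfulness and the $\operatorname{Ext}^1$ comparison directly. Two repairs are needed. Your ``tilting coresolution of length one'' for $\nabla(a)$ should be the resolution beginning with $T(a)\twoheadrightarrow\nabla(a)$. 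Also, a five-lemma induction on flag length already needs the $\operatorname{Ext}^1$ comparison that you only establish afterwards.

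\paragraph{Gap 1: the upper finite case.} You only announce this case, and Steps~1--3 do not apply to it.
\begin{itemize}
\item $T(a)$ has only an ascending standard and a descending costandard flag, so in general $T(a)\notin\mathcal C^\nabla$.
\item $\operatorname{End}(T(a))$ can be infinite-dimensional, which is why $\widehat B$ carries a topology. For $(K_1^{+\infty})^!$ in Appendix~\ref{sec:defect-one-examples}, $T^!_r$ has sections $\Delta^!_s$ and $\nabla^!_s$ for every $s\geq r$, so $\operatorname{End}(T^!_r)$ is infinite-dimensional.
\item Hence $\mathcal R(T(a))$ is not an object of the lower finite category $\mathcal C^\vee$, which has no projectives. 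The projective flag of Step~2, the Yoneda argument and the $\operatorname{Ext}^1$ comparison have nothing to act on. The $\operatorname{add}T$-presentations also have kernels outside $\mathcal C^\nabla$.
\end{itemize}
Truncation is the right idea, but here it is the whole proof. For a finite coideal $\Gamma$ with $e=\sum_{a\in\Gamma}e_a$, you must show the following.
\begin{itemize}
\item $T(a)e$ is the indecomposable tilting of $\mathcal C[\Gamma]=\operatorname{mod}\text{-}eAe$ for $a\in\Gamma$, and is zero otherwise.
\item Restriction $\widehat B\to\operatorname{End}_{eAe}(Te)$ is surjective.
\item Its kernels are cofinal among the basic open ideals, so that $\mathcal C^\vee$ is the union of the finite Ringel duals of the $\mathcal C[\Gamma]$, realized as its finite ideal subcategories.
\item $\mathcal R$ is compatible with these truncations.
\end{itemize}

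\paragraph{Gap 2: Step~4.} A projective $P(a)$ need not lie in $\mathcal C^\nabla$, and $\mathcal R(P(a))$ is generally not tilting. For $K^1_1$ in Appendix~\ref{sec:defect-one-examples}, $\operatorname{Hom}(T,P_1)$ has dimension vector $(1,1)$ on the labels $(0,1)$. The indecomposable tiltings of the dual have dimension vectors $(0,1)$ and $(1,2)$, and no sum of these gives $(1,1)$.

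The tiltings of $\mathcal C^\vee$ are instead $T^\vee(a)\cong\mathcal R(I(a))\cong\mathcal S(P(a))$. In the essentially finite and upper finite cases, injective hulls lie in $\mathcal C^\nabla$, and $\operatorname{Ext}^1(\nabla(b),I(a))=0$ transports to $\operatorname{Ext}^1(\Delta^\vee(b),\mathcal R I(a))=0$. This gives the double dual algebra $\bigoplus_{a,b}\operatorname{Hom}(I(a),I(b))$.

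In the essentially finite case, this identification is also what proves that $\mathcal C^\vee$ is tilting-bounded, since $\dim\operatorname{Hom}(I(a),I(b))=[I(a):L(b)]=[P(b):L(a)]$. Definition~\ref{def:tilting-bounded-highest-weight} applied to $\mathcal C$ only makes $e_aB$ and $Be_a$ finite-dimensional.

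In the lower finite case, $\mathcal C$ has neither projective covers nor injective hulls, and $(\mathcal C^\vee)^\vee$ is defined through the topological algebra $\operatorname{End}(\bigoplus_aT^\vee(a))$. There you need three things:
\begin{itemize}
\item the injective hulls $I(a)$ in $\operatorname{Ind}(\mathcal C)$, which have ascending costandard flags;
\item an extension of $\mathcal R$ to such objects that sends $I(a)$ to $T^\vee(a)$ fully faithfully;
\item an identification of the finite-dimensional continuous $\operatorname{End}(\bigoplus_aI(a))$-modules with $\mathcal C$.
\end{itemize}
Saying ``the union of its finite ideal subcategories'' does not supply these.
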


\section{Defect one examples}\label{sec:defect-one-examples}

We use Alperin diagrams to describe the structure of finite-length
modules; see \cite{Alperin1980}.
Vertices represent occurrences of simple composition factors, and
a downward edge from $S$ to $T$ records a nonsplit extension of
$S$ by $T$ occurring as a subquotient.
In the interpretation that describes the entire submodule lattice,
submodules correspond to arrow-closed subsets of vertices, with
unions and intersections corresponding to sums and intersections
of submodules. Here arrow-closed means closed under following
downward arrows.

Such a diagram exists precisely when the submodule lattice is
finite and distributive: one takes the Hasse diagram of its
nonzero join-irreducible elements, labelled by their simple heads;
see \cite{Musson2017}.
This applies, in particular, to multiplicity-free modules and to
uniserial modules, including those with repeated composition factors.
It does not apply to every finite-length module: for example,
$S\oplus S$, with $S$ simple, has a nondistributive submodule lattice.
A diagram displaying only the Loewy layers should therefore be
distinguished from one describing all submodules.

\subsection{$K^m_1$}\label{subsec:defect-one-finite}
Assume $m\geq1$. Number the weights of $\Lambda^m_1$ by
$0,\ldots,m$, where $r$ denotes the weight with its unique
$\vee$ at position $r+1$. The Bruhat order is then
$0<1<\cdots<m$. We use the left-module and grading
conventions from the main text.

\paragraph{Quiver and modules.}
Let $Q^m_1$ be the doubled line
\[
 0
 \mathrel{\mathop{\rightleftarrows}^{x_0}_{y_0}}
 1
 \mathrel{\mathop{\rightleftarrows}^{x_1}_{y_1}}
 2
 \mathrel{\mathop{\rightleftarrows}^{x_2}_{y_2}}
 \cdots
 \mathrel{\mathop{\rightleftarrows}^{x_{m-1}}_{y_{m-1}}}
 m,
\]
with arrows
$x_r:r\to r+1$ and $y_r:r+1\to r$ for $0\leq r<m$.
The algebra $K^m_1$ is the quotient of $\Bbbk Q^m_1$ by
\begin{equation}\label{eq:km1-relations}
\begin{aligned}
 x_{r+1}x_r=y_ry_{r+1}&=0 &&(0\leq r\leq m-2),\\
 y_rx_r&=x_{r-1}y_{r-1} &&(1\leq r\leq m-1),\\
 x_{m-1}y_{m-1}&=0.
\end{aligned}
\end{equation}
Paths are composed from right to left, and every arrow has
degree one. These relations follow directly from the surgery
rules. A basis consists of the $m+1$ vertex idempotents,
the $2m$ arrows, and the $m$ cycles $y_rx_r$ for $0\leq r<m$;
all paths of length three vanish.
With the order above, $K^m_1$ is positively graded and
quasi-hereditary.

Write $L_r,P_r,I_r,\Delta_r,\nabla_r,T_r$ for the corresponding
simple, projective, injective, standard, costandard and tilting
modules. Thus $L_r$ lies in degree zero and $P_r=K^m_1e_r$.
The anti-involution $e_r^*=e_r$, $x_r^*=y_r$, $y_r^*=x_r$
induces the duality $\circledast$ from the main text, with
$I_r=P_r^{\circledast}$ and
$\nabla_r=\Delta_r^{\circledast}$.
We normalize $T_r$ by a degree-zero inclusion
$\Delta_r\hookrightarrow T_r$.

The following diagrams show composition factors and extensions;
their gradings are omitted. We have
\[
 \Delta_0=\nabla_0=T_0=L_0,\qquad
 \Delta_r=
 \begin{tikzpicture}[baseline=(current bounding box.center),scale=0.65]
 \node (a) at (0,1) {$L_r$};
 \node (b) at (0,0) {$L_{r-1}$};
 \draw (a)--(b);
 \end{tikzpicture},\qquad
 \nabla_r=
 \begin{tikzpicture}[baseline=(current bounding box.center),scale=0.65]
 \node (a) at (0,1) {$L_{r-1}$};
 \node (b) at (0,0) {$L_r$};
 \draw (a)--(b);
 \end{tikzpicture}
 \quad(1\leq r\leq m).
\]
The projective modules are
\[
 P_0=
 \begin{tikzpicture}[baseline=(current bounding box.center),scale=0.65]
 \node (a) at (0,2) {$L_0$};
 \node (b) at (0,1) {$L_1$};
 \node (c) at (0,0) {$L_0$};
 \draw (a)--(b)--(c);
 \end{tikzpicture},\qquad
 P_r=
 \begin{tikzpicture}[baseline=(current bounding box.center),scale=0.65]
 \node (a) at (0,2) {$L_r$};
 \node (b) at (-1,1) {$L_{r-1}$};
 \node (c) at (1,1) {$L_{r+1}$};
 \node (d) at (0,0) {$L_r$};
 \draw (a)--(b)--(d);
 \draw (a)--(c)--(d);
 \end{tikzpicture}
 \quad(1\leq r<m),\qquad
 P_m=\Delta_m.
\]
The layers of $P_r$ have degrees $0,1,2$ for $r<m$.
The precise graded identifications are
\begin{equation}\label{eq:km1-injectives-tiltings}
\begin{aligned}
 I_r&\cong P_r\langle-2\rangle &&(0\leq r<m),\\
 T_r&\cong P_{r-1}\langle-1\rangle &&(1\leq r\leq m),
\end{aligned}
\end{equation}
and $I_m=\nabla_m$.
In particular, after forgetting the grading, $P_r\cong I_r$
for $r<m$, and every $T_r$ is self-dual.

\paragraph{Resolutions and Ringel duality.}
The projective and tilting filtrations give short exact sequences
\begin{equation}\label{eq:km1-standard-sequences}
\begin{aligned}
 0&\longrightarrow\Delta_{r+1}\langle1\rangle
 \longrightarrow P_r\longrightarrow\Delta_r\longrightarrow0
 &&(0\leq r<m),\\
 0&\longrightarrow\Delta_r\longrightarrow T_r
 \longrightarrow\Delta_{r-1}\langle-1\rangle\longrightarrow0
 &&(1\leq r\leq m).
\end{aligned}
\end{equation}
Splicing the first family gives the minimal projective resolution
\begin{equation}\label{eq:km1-standard-resolution}
 0\longrightarrow P_m\langle m-r\rangle
 \longrightarrow\cdots\longrightarrow P_r
 \longrightarrow\Delta_r\longrightarrow0.
\end{equation}
Its term in homological degree $j$ is $P_{r+j}\langle j\rangle$
for $0\leq j\leq m-r$; for $r=m$ it is just $P_m=\Delta_m$.
The resolution is linear. Applying $\circledast$ gives a linear
injective coresolution of every costandard, so $K^m_1$ is
standard Koszul.

Splicing the second family gives the linear tilting coresolution
\begin{equation}\label{eq:km1-tilting-coresolution}
 0\longrightarrow\Delta_r\longrightarrow T_r
 \longrightarrow T_{r-1}\langle-1\rangle
 \longrightarrow\cdots\longrightarrow T_0\langle-r\rangle
 \longrightarrow0
 \qquad(1\leq r\leq m),
\end{equation}
while $\Delta_0=T_0$.
The endomorphisms between these tiltings also show that $K^m_1$
is Ringel self-dual. More precisely, the graded algebra
$\operatorname{End}_{K^m_1}(\bigoplus_{r=0}^mT_r)^{\operatorname{op}}$,
using all homogeneous maps, has the same doubled-line quiver
and interior relations as \eqref{eq:km1-relations}, but the
zero boundary cycle is at $T_0$. Relabelling $T_r$ by $m-r$
gives a graded isomorphism with $K^m_1$.

Every simple has a linear BGG resolution:
\begin{equation}\label{eq:km1-bgg-resolution}
 0\longrightarrow\Delta_0\langle r\rangle
 \longrightarrow\Delta_1\langle r-1\rangle
 \longrightarrow\cdots\longrightarrow\Delta_r
 \longrightarrow L_r\longrightarrow0
 \qquad(1\leq r\leq m),
\end{equation}
and $L_0=\Delta_0$. This follows by splicing the sequences
$0\to L_{r-1}\langle1\rangle\to\Delta_r\to L_r\to0$.

For comparison, the projective resolutions of the costandards
are generally not linear. The diagrams give
\begin{equation}\label{eq:km1-costandard-sequence}
 0\longrightarrow\nabla_{r-1}\langle1\rangle
 \longrightarrow P_{r-1}\langle-1\rangle
 \longrightarrow\nabla_r\longrightarrow0
 \qquad(1\leq r\leq m).
\end{equation}
Together with the resolution of $\nabla_0=\Delta_0$, these
sequences determine the minimal projective resolution of
$\nabla_r$. For $r\geq1$, its term in homological degree $j$ is
\begin{equation}\label{eq:km1-costandard-resolution}
 Q_j^{(r)}=
 \begin{cases}
 P_{r-1-j}\langle j-1\rangle,&0\leq j<r,\\
 P_{j-r}\langle j\rangle,&r\leq j\leq m+r.
 \end{cases}
\end{equation}
Thus the resolution is
$0\to Q_{m+r}^{(r)}\to\cdots\to Q_0^{(r)}\to\nabla_r\to0$.
The map between the two copies of $P_0$ is right multiplication
by $y_0x_0$; it accounts for the jump of two in the grading shifts.
All differentials have image in the radical, so the resolutions
above are minimal. In particular,
\[
 \operatorname{pd}\Delta_r=m-r,\qquad
 \operatorname{pd}\nabla_r=m+r
 \qquad(0\leq r\leq m).
\]

\paragraph{The corner algebra.}
Set $e=\sum_{r=0}^{m-1}e_r$. The corresponding Khovanov arc
algebra is $H^m_1=eK^m_1e$.
For $m\geq2$, its quiver is the doubled line
\[
 0
 \mathrel{\mathop{\rightleftarrows}^{x_0}_{y_0}}
 1
 \mathrel{\mathop{\rightleftarrows}^{x_1}_{y_1}}
 \cdots
 \mathrel{\mathop{\rightleftarrows}^{x_{m-2}}_{y_{m-2}}}
 m-1,
\]
with relations
\begin{equation}\label{eq:hm1-relations}
\begin{aligned}
 x_{r+1}x_r=y_ry_{r+1}&=0 &&(0\leq r\leq m-3),\\
 y_rx_r&=x_{r-1}y_{r-1} &&(1\leq r\leq m-2),
\end{aligned}
\end{equation}
and all paths of length three are zero. The last condition
follows from the quadratic relations when $m\geq3$, but
must be imposed separately when $m=2$.
There is a nonzero length-two cycle at every vertex.
Thus $H^m_1$ is the zigzag algebra of type $A_m$.

Equivalently, as an ungraded algebra,
\[
 H^m_1\cong
 T\!\left(\Bbbk Q/\operatorname{rad}^2(\Bbbk Q)\right),
\]
where $Q$ is any orientation of $A_m$ and
$T(B)=B\oplus\operatorname{Hom}_{\Bbbk}(B,\Bbbk)$ is the
trivial extension. In particular, $H^m_1$ is symmetric.
For $m=1$, one has $H^1_1\cong\Bbbk[c]/(c^2)$ with
$\deg c=2$.

\paragraph{The Koszul dual.}
The quadratic dual has the doubled-line quiver
\[
 0
 \mathrel{\mathop{\rightleftarrows}^{a_0}_{b_0}}
 1
 \mathrel{\mathop{\rightleftarrows}^{a_1}_{b_1}}
 \cdots
 \mathrel{\mathop{\rightleftarrows}^{a_{m-1}}_{b_{m-1}}}
 m,
\]
with arrows
$a_r:r\to r+1$ and $b_r:r+1\to r$, and relations
\begin{equation}\label{eq:km1-koszul-dual}
 b_0a_0=0,\qquad
 b_ra_r+a_{r-1}b_{r-1}=0\quad(1\leq r<m).
\end{equation}
There is no relation killing the cycle at vertex $m$.
Replacing $b_r$ by $(-1)^rb_r$ changes the interior relations
to commutativity relations. These are the standard relations
for the Auslander algebra of $R=\Bbbk[t]/(t^{m+1})$;
see \cite[\S3]{IyamaZhang2020}. Hence
\[
 (K^m_1)^!\cong
 \operatorname{End}_{R}
 \left(\bigoplus_{j=1}^{m+1}R/(t^j)\right)^{\operatorname{op}}.
\]
Here vertex $r$ corresponds to $R/(t^{r+1})$; the arrows are
represented by the natural projections and multiplication by $t$.
For $m=1$, reversing the two vertices gives
$(K^1_1)^!\cong K^1_1$.
Finally, imposing the additional relation $a_{m-1}b_{m-1}=0$
gives, after the same rescaling, the preprojective algebra of
type $A_{m+1}$; see also \cite[\S5]{IyamaZhang2020}.

\subsection{$K^\infty_1$}\label{subsec:defect-one-infinite}
% Replacement for the body of the K_1^infty subsection.
% Uses the left-module and grading conventions of the main text.
% All citation keys below are already present in refs.bib.

We use locally unital left modules and the grading convention
$(M\langle d\rangle)_j=M_{j-d}$.
The algebra $K_1^\infty$ is the path algebra of the doubly infinite
quiver
\[
\cdots
\mathrel{\mathop{\rightleftarrows}^{x_{-2}}_{y_{-2}}}
-1
\mathrel{\mathop{\rightleftarrows}^{x_{-1}}_{y_{-1}}}
0
\mathrel{\mathop{\rightleftarrows}^{x_0}_{y_0}}
1
\mathrel{\mathop{\rightleftarrows}^{x_1}_{y_1}}
2
\mathrel{\mathop{\rightleftarrows}^{x_2}_{y_2}}
\cdots,
\]
where $x_r:r\to r+1$ and $y_r:r+1\to r$ for $r\in\mathbb Z$,
modulo the relations
\begin{equation}\label{eq:kinf1-relations}
 x_{r+1}x_r=0,\qquad
 y_ry_{r+1}=0,\qquad
 y_rx_r=x_{r-1}y_{r-1}
 \qquad(r\in\mathbb Z).
\end{equation}
Paths are composed from right to left, and every arrow has
degree one. All paths of length three vanish. A basis consists
of the vertex idempotents, the arrows and the cycles $y_rx_r$.
In particular, both $K_1^\infty e_r$ and $e_rK_1^\infty$ have
dimension four, so $K_1^\infty$ is an essentially
finite-dimensional locally unital algebra.

The category $\operatorname{rep}(K_1^\infty)$ of finite-dimensional
ungraded left modules is an essentially finite highest weight
category with weight poset $(\mathbb Z,\leq)$.
Write $L_r$, $P_r$, $I_r$, $\Delta_r$, $\nabla_r$ and $T_r$
for its simple,
indecomposable projective, indecomposable injective, standard,
costandard and indecomposable tilting modules, respectively,
and use the same symbols for their graded lifts.
We place $L_r$ in degree zero, take $P_r=K_1^\infty e_r$,
and give $\Delta_r$ its quotient grading.

The degree-preserving anti-involution defined by
$e_r^*=e_r$, $x_r^*=y_r$ and $y_r^*=x_r$ induces the duality
\[
 (M^\circledast)_j=\operatorname{Hom}_{\Bbbk}(M_{-j},\Bbbk),
 \qquad (a\cdot f)(v)=f(a^*v).
\]
We normalize $I_r=P_r^\circledast$ and
$\nabla_r=\Delta_r^\circledast$, and normalize $T_r$ by a
degree-zero inclusion $\Delta_r\hookrightarrow T_r$.
Thus the heads of the standards and the socles of the
costandards lie in degree zero.

The following Alperin diagrams omit the gradings:
\[
\Delta_r=
\begin{tikzpicture}[baseline=(current bounding box.center),scale=0.9]
\node (a) at (0,1) {$L_r$};
\node (b) at (0,0) {$L_{r-1}$};
\draw (a)--(b);
\end{tikzpicture},
\qquad
\nabla_r=
\begin{tikzpicture}[baseline=(current bounding box.center),scale=0.9]
\node (a) at (0,1) {$L_{r-1}$};
\node (b) at (0,0) {$L_r$};
\draw (a)--(b);
\end{tikzpicture},
\qquad
P_r=
\begin{tikzpicture}[baseline=(current bounding box.center),scale=0.75]
\node (a) at (0,2) {$L_r$};
\node (b) at (-1,1) {$L_{r-1}$};
\node (c) at (1,1) {$L_{r+1}$};
\node (d) at (0,0) {$L_r$};
\draw (a)--(b)--(d);
\draw (a)--(c)--(d);
\end{tikzpicture}
\qquad(r\in\mathbb Z).
\]
The three layers of $P_r$ have degrees $0,1,2$.
The graded identifications are
\begin{equation}\label{eq:kinf1-duality-and-tiltings}
 L_r^\circledast\cong L_r,\qquad
 P_r^\circledast=I_r\cong P_r\langle-2\rangle,\qquad
 T_{r+1}\cong P_r\langle-1\rangle.
\end{equation}
In particular, every $T_r$ is self-dual. After forgetting the
grading, $T_{r+1}\cong I_r\cong P_r$; hence projective and
injective objects in $\operatorname{rep}(K_1^\infty)$ coincide.

The projective and tilting filtrations give short exact sequences
\begin{equation}\label{eq:kinf1-standard-sequences}
\begin{aligned}
 0&\longrightarrow\Delta_{r+1}\langle1\rangle
 \longrightarrow P_r\longrightarrow\Delta_r\longrightarrow0,\\
 0&\longrightarrow\Delta_r\longrightarrow T_r
 \longrightarrow\Delta_{r-1}\langle-1\rangle\longrightarrow0.
\end{aligned}
\end{equation}
The first sequence verifies the required standard filtration
of each projective. Splicing the first family gives the minimal
linear projective resolution
\begin{equation}\label{eq:kinf1-standard-resolution}
 \cdots\longrightarrow P_{r+3}\langle3\rangle
 \longrightarrow P_{r+2}\langle2\rangle
 \longrightarrow P_{r+1}\langle1\rangle
 \longrightarrow P_r\longrightarrow\Delta_r\longrightarrow0.
\end{equation}
For $j\geq1$, the differential from $P_{r+j}\langle j\rangle$
is right multiplication by $x_{r+j-1}$.
Applying $\circledast$ gives a linear injective coresolution
of $\nabla_r$. Thus $K_1^\infty$ is standard Koszul.
The displayed minimal resolution never terminates, so
$\operatorname{pd}\Delta_r=\infty$ and $K_1^\infty$ has
infinite global dimension.

Splicing the sequences
$0\to L_{r-1}\langle1\rangle\to\Delta_r\to L_r\to0$
gives a linear BGG resolution
\begin{equation}\label{eq:kinf1-bgg-resolution}
 \cdots\longrightarrow\Delta_{r-2}\langle2\rangle
 \longrightarrow\Delta_{r-1}\langle1\rangle
 \longrightarrow\Delta_r\longrightarrow L_r\longrightarrow0.
\end{equation}
Its dual is a linear coresolution by costandard modules.
There is also an exact coresolution by standard modules:
\begin{equation}\label{eq:kinf1-standard-coresolution}
 0\longrightarrow L_r
 \longrightarrow\Delta_{r+1}\langle-1\rangle
 \longrightarrow\Delta_{r+2}\langle-2\rangle
 \longrightarrow\Delta_{r+3}\langle-3\rangle
 \longrightarrow\cdots.
\end{equation}
This is obtained by splicing
$0\to L_r\to\Delta_{r+1}\langle-1\rangle
\to L_{r+1}\langle-1\rangle\to0$.

Coulembier proved uniqueness, up to equivalence, of upper finite
and lower finite highest weight structures in the presence of a
simple-preserving duality
\cite{coulembier2020classification,coulembier2020indcompletions}.
The present example illustrates the failure of this uniqueness
in the essentially finite setting; see also
\cite[Remark~3.68 and \S6.7]{brundanstroppelsemiinfinite}.
Indeed, the orders $(\mathbb Z,\leq)$ and $(\mathbb Z,\geq)$
give two distinct highest weight structures on the fixed category
$\operatorname{rep}(K_1^\infty)$, despite its
simple-preserving duality. For each $r$, the corresponding
standard modules have head $L_r$ and socles $L_{r-1}$ and
$L_{r+1}$, respectively. Neither order is upper finite or
lower finite. Here the simple labels are held fixed: reflection
$r\mapsto-r$ interchanges the two structures by an
autoequivalence of the underlying category.

These two structures are Ringel dual to each other.
Indeed, using all homogeneous maps, the locally unital
Ringel dual algebra is
\begin{equation}\label{eq:kinf1-ringel-dual}
 \left(\bigoplus_{r,s\in\mathbb Z}
 \operatorname{Hom}_{K_1^\infty}(T_r,T_s)\right)^{\operatorname{op}}
 \cong K_1^\infty.
\end{equation}
This follows from $T_r\cong P_{r-1}\langle-1\rangle$:
the idempotent corresponding to $T_r$ is identified with
$e_{r-1}$, and the reversed order on the tilting labels
becomes the opposite order on the vertices.

The algebra $K_1^\infty$ is Koszul by
\cite[Corollary~5.13]{brundanstroppelII}.
Its Koszul dual $(K_1^\infty)^!$ is the path algebra of the
doubly infinite quiver
\[
\cdots
\mathrel{\mathop{\rightleftarrows}^{a_{-2}}_{b_{-2}}}
-1
\mathrel{\mathop{\rightleftarrows}^{a_{-1}}_{b_{-1}}}
0
\mathrel{\mathop{\rightleftarrows}^{a_0}_{b_0}}
1
\mathrel{\mathop{\rightleftarrows}^{a_1}_{b_1}}
2
\mathrel{\mathop{\rightleftarrows}^{a_2}_{b_2}}
\cdots,
\]
where $a_r:r\to r+1$ and $b_r:r+1\to r$ are dual to
$y_r$ and $x_r$, respectively, modulo the relations
\begin{equation}\label{eq:kinf1-koszul-dual-relations}
 b_ra_r+a_{r-1}b_{r-1}=0
 \qquad(r\in\mathbb Z).
\end{equation}
There are no quadratic relations on $a_{r+1}a_r$ or
$b_rb_{r+1}$. Indeed, the orthogonal complement of the
relation $y_rx_r-x_{r-1}y_{r-1}$ is spanned by the sum in
\eqref{eq:kinf1-koszul-dual-relations}, whereas each straight
length-two path lies in the original relation space.
Rescaling $b_r$ by $(-1)^r$ gives the usual preprojective
relations for the doubly infinite line.

\subsection{$K^{-\infty}_1$}\label{subsec:defect-one-minus-infinite}
We use locally unital left modules,
compose paths from right to left, and give every arrow degree one.
The quiver of $K_1^{-\infty}$ is
\[
 \cdots
 \mathrel{\mathop{\leftrightarrows}^{x_3}_{y_3}}3
 \mathrel{\mathop{\leftrightarrows}^{x_2}_{y_2}}2
 \mathrel{\mathop{\leftrightarrows}^{x_1}_{y_1}}1
 \mathrel{\mathop{\leftrightarrows}^{x_0}_{y_0}}0,
\]
where $x_r:r\to r+1$ and $y_r:r+1\to r$ for $r\geq0$.
The relations are
\begin{equation}\label{eq:kminf1-relations}
\begin{aligned}
 x_{r+1}x_r=y_ry_{r+1}&=0 &&(r\geq0),\\
 y_rx_r&=x_{r-1}y_{r-1} &&(r\geq1),\\
 y_0x_0&=0.
\end{aligned}
\end{equation}
All paths of length three vanish.
Thus $K_1^{-\infty}e_0$ has dimension two
and $K_1^{-\infty}e_r$ has dimension four for $r\geq1$.

The category $\operatorname{rep}(K_1^{-\infty})$ of
finite-dimensional modules is essentially finite highest weight
for the order $\cdots<3<2<1<0$.
This order is upper finite; the category
$K_1^{-\infty}\text{-mod}_{\mathrm{lfd}}$ of locally
finite-dimensional modules is an upper finite highest weight
category.
Use the notation $L_r$, $P_r$, $I_r$, $\Delta_r$, $\nabla_r$
and $T_r$ as in the preceding examples.
We take $P_r=K_1^{-\infty}e_r$, put $L_r$ in degree zero,
and normalize $\Delta_r$ by its head and $\nabla_r$ by its
socle in degree zero.
The anti-involution $e_r^*=e_r$, $x_r^*=y_r$, $y_r^*=x_r$
defines the graded duality $\circledast$, with
$I_r=P_r^\circledast$ and $\nabla_r=\Delta_r^\circledast$.

The Alperin diagrams, with gradings omitted, are
\[
 \Delta_r=
 \begin{tikzpicture}[baseline=(current bounding box.center),scale=.7,every node/.style={inner sep=1pt}]
 \node (a) at (0,1) {$L_r$};
 \node (b) at (0,0) {$L_{r+1}$};
 \draw (a)--(b);
 \end{tikzpicture},\qquad
 \nabla_r=
 \begin{tikzpicture}[baseline=(current bounding box.center),scale=.7,every node/.style={inner sep=1pt}]
 \node (a) at (0,1) {$L_{r+1}$};
 \node (b) at (0,0) {$L_r$};
 \draw (a)--(b);
 \end{tikzpicture}
 \quad(r\geq0),\qquad
 P_r=
 \begin{tikzpicture}[baseline=(current bounding box.center),scale=.7,every node/.style={inner sep=1pt}]
 \node (a) at (0,2) {$L_r$};
 \node (b) at (-1,1) {$L_{r-1}$};
 \node (c) at (1,1) {$L_{r+1}$};
 \node (d) at (0,0) {$L_r$};
 \draw (a)--(b)--(d);
 \draw (a)--(c)--(d);
 \end{tikzpicture}
 \quad(r\geq1).
\]
At the boundary, $P_0=\Delta_0$ and $I_0=\nabla_0$.
The layers of $P_r$ have degrees $0,1,2$ for $r\geq1$.
With the normalization $\Delta_r\hookrightarrow T_r$ of degree
zero, the graded identifications are
\begin{equation}\label{eq:kminf1-injectives-tiltings}
 I_r\cong P_r\langle-2\rangle\quad(r\geq1),
 \qquad T_r\cong P_{r+1}\langle-1\rangle\quad(r\geq0).
\end{equation}
In particular, the tiltings are self-dual.

For $r\geq1$, the standard module has minimal linear projective
resolution
\begin{equation}\label{eq:kminf1-standard-resolution}
 0\longrightarrow P_0\langle r\rangle
 \longrightarrow P_1\langle r-1\rangle
 \longrightarrow\cdots\longrightarrow P_r
 \longrightarrow\Delta_r\longrightarrow0.
\end{equation}
Hence $\operatorname{pd}\Delta_r=r$ for all $r\geq0$.
Duality gives linear injective coresolutions of the costandards,
so $K_1^{-\infty}$ is standard Koszul; its Koszulity also follows
from \cite[Corollary~5.13]{brundanstroppelII}.
The costandards have minimal projective resolutions
\begin{equation}\label{eq:kminf1-costandard-resolution}
 \cdots\longrightarrow P_{r+3}\langle1\rangle
 \longrightarrow P_{r+2}
 \longrightarrow P_{r+1}\langle-1\rangle
 \longrightarrow\nabla_r\longrightarrow0.
\end{equation}
Thus $\nabla_r\langle1\rangle$ has an infinite linear
projective resolution and $\operatorname{pd}\nabla_r=\infty$.
Every simple has a linear BGG resolution
\begin{equation}\label{eq:kminf1-bgg-resolution}
 \cdots\longrightarrow\Delta_{r+2}\langle2\rangle
 \longrightarrow\Delta_{r+1}\langle1\rangle
 \longrightarrow\Delta_r\longrightarrow L_r\longrightarrow0.
\end{equation}

The locally unital corner
$H_1^{-\infty}:=\bigoplus_{r,s\geq1}e_rK_1^{-\infty}e_s$
has quiver
\[
 \cdots
 \mathrel{\mathop{\leftrightarrows}^{x_3}_{y_3}}3
 \mathrel{\mathop{\leftrightarrows}^{x_2}_{y_2}}2
 \mathrel{\mathop{\leftrightarrows}^{x_1}_{y_1}}1,
\]
with the straight-path relations from \eqref{eq:kminf1-relations}
and the cycle relations for $r\geq2$, but no zero boundary cycle.
Since $T_r\cong P_{r+1}\langle-1\rangle$, the algebra
$\bigl(\bigoplus_{r,s\geq0}
\operatorname{Hom}_{K_1^{-\infty}}(T_r,T_s)\bigr)^{\operatorname{op}}$,
using all homogeneous maps, is $H_1^{-\infty}$.

\paragraph{The Koszul dual.}
The Koszul dual $(K_1^{-\infty})^!$ has quiver
\[
 \cdots
 \mathrel{\mathop{\leftrightarrows}^{a_3}_{b_3}}3
 \mathrel{\mathop{\leftrightarrows}^{a_2}_{b_2}}2
 \mathrel{\mathop{\leftrightarrows}^{a_1}_{b_1}}1
 \mathrel{\mathop{\leftrightarrows}^{a_0}_{b_0}}0,
\]
where $a_r:r\to r+1$ and $b_r:r+1\to r$ are dual to $y_r$
and $x_r$, respectively.
Its only defining relations are
\begin{equation}\label{eq:kminf1-dual-relations}
 b_ra_r+a_{r-1}b_{r-1}=0\qquad(r\geq1).
\end{equation}
In particular, there is no relation $b_0a_0=0$, nor any
straight-path relation.
The corner $e_0(K_1^{-\infty})^!e_0$ is the polynomial algebra
generated by $b_0a_0$.

The appropriate highest weight category is
$\operatorname{rep}\bigl((K_1^{-\infty})^!\bigr)$,
the category of finite-dimensional locally unital left
$(K_1^{-\infty})^!$-modules, with order $0<1<2<\cdots$.
For $N\geq0$, its Serre subcategory supported on $\{0,\ldots,N\}$
is the module category of
\begin{equation}\label{eq:kminf1-dual-truncation}
 (K_1^{-\infty})^!/
 \langle e_{N+1},e_{N+2},\ldots\rangle.
\end{equation}
For $N\geq1$ the additional boundary relation is
$a_{N-1}b_{N-1}=0$.
After rescaling arrows, this quotient is the Auslander algebra
of $\Bbbk[t]/(t^{N+1})$, with vertex $r$ corresponding to
$\Bbbk[t]/(t^{N+1-r})$; see \cite[\S3]{IyamaZhang2020}.
It is quasi-hereditary for $0<\cdots<N$.
Consequently,
$\operatorname{rep}\bigl((K_1^{-\infty})^!\bigr)$
is lower finite highest weight in the sense of
\cite[Definition~3.50]{brundanstroppelsemiinfinite}.

Write $S_r$, $\Delta_r^!$, $\nabla_r^!$ and $T_r^!$
for its simple, standard, costandard and indecomposable
tilting objects.
The anti-involution interchanging $a_r$ and $b_r$ gives the same
graded duality convention as above.
We have $\Delta_0^!=\nabla_0^!=T_0^!=S_0$, while
\[
 \Delta_r^!=
 \begin{tikzpicture}[baseline=(current bounding box.center),scale=.7,every node/.style={inner sep=1pt}]
 \node (a) at (0,2) {$S_r$};
 \node (b) at (0,1) {$\vdots$};
 \node (c) at (0,0) {$S_0$};
 \draw (a)--(b)--(c);
 \end{tikzpicture},\qquad
 \nabla_r^!=
 \begin{tikzpicture}[baseline=(current bounding box.center),scale=.7,every node/.style={inner sep=1pt}]
 \node (a) at (0,2) {$S_0$};
 \node (b) at (0,1) {$\vdots$};
 \node (c) at (0,0) {$S_r$};
 \draw (a)--(b)--(c);
 \end{tikzpicture}
 \qquad(r\geq1).
\]
These are uniserial chains through all the indicated consecutive
labels.
The factors of $\Delta_r^!$ from head to socle are
$S_r,S_{r-1},\ldots,S_0$, in degrees $0,1,\ldots,r$;
the costandard has the dual grading.

The category $\operatorname{rep}\bigl((K_1^{-\infty})^!\bigr)$
has no nonzero projective or injective objects.

\subsection{$K^{+\infty}_1$}\label{subsec:defect-one-plus-infinite}
We use locally unital left modules, compose paths from right
to left, and give every arrow degree one.
The algebra $K_1^{+\infty}$ has quiver
\[
 0
 \mathrel{\mathop{\rightleftarrows}^{x_0}_{y_0}}1
 \mathrel{\mathop{\rightleftarrows}^{x_1}_{y_1}}2
 \mathrel{\mathop{\rightleftarrows}^{x_2}_{y_2}}3
 \mathrel{\mathop{\rightleftarrows}^{x_3}_{y_3}}\cdots,
\]
where $x_r:r\to r+1$ and $y_r:r+1\to r$ for $r\geq0$,
with relations
\begin{equation}\label{eq:kplusinf1-relations}
\begin{aligned}
 x_{r+1}x_r=y_ry_{r+1}&=0 &&(r\geq0),\\
 y_rx_r&=x_{r-1}y_{r-1} &&(r\geq1).
\end{aligned}
\end{equation}
There is no relation $y_0x_0=0$.
All paths of length three vanish, so $K_1^{+\infty}e_0$
has dimension three and $K_1^{+\infty}e_r$ has dimension
four for $r\geq1$.

The category $\operatorname{rep}(K_1^{+\infty})$ of
finite-dimensional modules is both lower finite and
essentially finite highest weight for the order
$0<1<2<\cdots$.
Use the notation $L_r$, $P_r$, $I_r$, $\Delta_r$, $\nabla_r$
and $T_r$ as in the preceding examples.
We take $P_r=K_1^{+\infty}e_r$, put $L_r$ in degree zero,
and normalize $\Delta_r$ by its head and $\nabla_r$ by its
socle in degree zero.
The anti-involution $e_r^*=e_r$, $x_r^*=y_r$, $y_r^*=x_r$
defines the graded duality $\circledast$, with
$I_r=P_r^\circledast$ and $\nabla_r=\Delta_r^\circledast$.

The Alperin diagrams, with gradings omitted, are
\[
 \Delta_0=\nabla_0=T_0=L_0,\qquad
 \Delta_r=
 \begin{tikzpicture}[baseline=(current bounding box.center),scale=.7,every node/.style={inner sep=1pt}]
 \node (a) at (0,1) {$L_r$};
 \node (b) at (0,0) {$L_{r-1}$};
 \draw (a)--(b);
 \end{tikzpicture},\qquad
 \nabla_r=
 \begin{tikzpicture}[baseline=(current bounding box.center),scale=.7,every node/.style={inner sep=1pt}]
 \node (a) at (0,1) {$L_{r-1}$};
 \node (b) at (0,0) {$L_r$};
 \draw (a)--(b);
 \end{tikzpicture}
 \quad(r\geq1),
\]
and
\[
 P_0=
 \begin{tikzpicture}[baseline=(current bounding box.center),scale=.7,every node/.style={inner sep=1pt}]
 \node (a) at (0,2) {$L_0$};
 \node (b) at (0,1) {$L_1$};
 \node (c) at (0,0) {$L_0$};
 \draw (a)--(b)--(c);
 \end{tikzpicture},\qquad
 P_r=
 \begin{tikzpicture}[baseline=(current bounding box.center),scale=.7,every node/.style={inner sep=1pt}]
 \node (a) at (0,2) {$L_r$};
 \node (b) at (-1,1) {$L_{r-1}$};
 \node (c) at (1,1) {$L_{r+1}$};
 \node (d) at (0,0) {$L_r$};
 \draw (a)--(b)--(d);
 \draw (a)--(c)--(d);
 \end{tikzpicture}
 \quad(r\geq1).
\]
The layers of each $P_r$ have degrees $0,1,2$.
With the normalization $\Delta_r\hookrightarrow T_r$ of
degree zero, the graded identifications are
\begin{equation}\label{eq:kplusinf1-injectives-tiltings}
 I_r\cong P_r\langle-2\rangle,\qquad
 T_{r+1}\cong P_r\langle-1\rangle
 \qquad(r\geq0).
\end{equation}
Thus $K_1^{+\infty}$ is self-injective and all the tiltings
are self-dual.

With these weight orders, $K_1^{+\infty}$ and
$K_1^{-\infty}$ are mutually Ringel dual.
Relabelling vertex $r\geq1$ by $r-1$ gives a graded
isomorphism
$H_1^{-\infty}=\bigoplus_{r,s\geq1}
e_rK_1^{-\infty}e_s\cong K_1^{+\infty}$.

For every $r\geq0$, the standard module has minimal linear
projective resolution
\begin{equation}\label{eq:kplusinf1-standard-resolution}
 \cdots\longrightarrow P_{r+2}\langle2\rangle
 \longrightarrow P_{r+1}\langle1\rangle
 \longrightarrow P_r
 \longrightarrow\Delta_r\longrightarrow0.
\end{equation}
Hence $\operatorname{pd}\Delta_r=\infty$ for all $r\geq0$.
Duality gives linear injective coresolutions of the
costandards, so $K_1^{+\infty}$ is standard Koszul.
For $r\geq1$, the simple module has a finite linear BGG
resolution
\begin{equation}\label{eq:kplusinf1-bgg-resolution}
 0\longrightarrow\Delta_0\langle r\rangle
 \longrightarrow\Delta_1\langle r-1\rangle
 \longrightarrow\cdots\longrightarrow\Delta_r
 \longrightarrow L_r\longrightarrow0,
\end{equation}
while $L_0=\Delta_0$.

\paragraph{The Koszul dual.}
The Koszul dual $(K_1^{+\infty})^!$ has quiver
\[
 0
 \mathrel{\mathop{\rightleftarrows}^{a_0}_{b_0}}1
 \mathrel{\mathop{\rightleftarrows}^{a_1}_{b_1}}2
 \mathrel{\mathop{\rightleftarrows}^{a_2}_{b_2}}3
 \mathrel{\mathop{\rightleftarrows}^{a_3}_{b_3}}\cdots,
\]
where $a_r:r\to r+1$ and $b_r:r+1\to r$ are dual to
$y_r$ and $x_r$, respectively.
Its only defining relations are
\begin{equation}\label{eq:kplusinf1-dual-relations}
 b_0a_0=0,\qquad
 b_ra_r+a_{r-1}b_{r-1}=0\quad(r\geq1).
\end{equation}
There are no straight-path relations.
The algebra is locally finite-dimensional, with
$\dim e_s(K_1^{+\infty})^!e_r=\min(r,s)+1$.

The appropriate highest weight category is
$(K_1^{+\infty})^!\text{-mod}_{\mathrm{lfd}}$,
the category of locally finite-dimensional locally unital
left modules, with order $\cdots<2<1<0$.
It is upper finite highest weight in the sense of
\cite[Definition~3.34]{brundanstroppelsemiinfinite}.
Write $S_r$, $P_r^!$, $I_r^!$, $\Delta_r^!$, $\nabla_r^!$
and $T_r^!$ for its distinguished objects, with
$P_r^!=(K_1^{+\infty})^!e_r$.
The anti-involution interchanging $a_r$ and $b_r$ defines
vertexwise graded duality, so
$I_r^!=(P_r^!)^\circledast$ and
$\nabla_r^!=(\Delta_r^!)^\circledast$.

The standard and costandard modules are represented
schematically by the infinite chains
\[
 \Delta_r^!=
 \begin{tikzpicture}[baseline=(current bounding box.center),scale=.7,every node/.style={inner sep=1pt}]
 \node (a) at (0,3) {$S_r$};
 \node (b) at (0,2) {$S_{r+1}$};
 \node (c) at (0,1) {$S_{r+2}$};
 \node (d) at (0,0) {$\vdots$};
 \draw (a)--(b)--(c)--(d);
 \end{tikzpicture},\qquad
 \nabla_r^!=
 \begin{tikzpicture}[baseline=(current bounding box.center),scale=.7,every node/.style={inner sep=1pt}]
 \node (a) at (0,3) {$\vdots$};
 \node (b) at (0,2) {$S_{r+2}$};
 \node (c) at (0,1) {$S_{r+1}$};
 \node (d) at (0,0) {$S_r$};
 \draw (a)--(b)--(c)--(d);
 \end{tikzpicture}
 \qquad(r\geq0).
\]
The successive radical layers of $\Delta_r^!$ are
$S_r,S_{r+1},\ldots$, in degrees $0,1,\ldots$;
the costandard has the dual socle filtration and grading.
In particular, these objects are infinite-dimensional,
which necessitates working with locally finite-dimensional
modules.

We have $P_0^!=\Delta_0^!$, and for $r\geq1$ there is a
minimal linear projective resolution
\begin{equation}\label{eq:kplusinf1-dual-standard-resolution}
 0\longrightarrow P_{r-1}^!\langle1\rangle
 \longrightarrow P_r^!
 \longrightarrow\Delta_r^!\longrightarrow0.
\end{equation}
Thus $P_r^!$ has a finite standard filtration with sections
$\Delta_r^!,\Delta_{r-1}^!\langle1\rangle,\ldots,
\Delta_0^!\langle r\rangle$.
Duality gives the corresponding costandard filtrations
of the injectives; these filtrations verify the upper
finite highest weight structure.
Moreover, $\operatorname{pd}\Delta_0^!=0$ and
$\operatorname{pd}\Delta_r^!=1$ for $r\geq1$.

Normalize $T_r^!$ by requiring
$\Delta_r^!\hookrightarrow T_r^!$ to have degree zero.
It is self-dual and has an ascending standard filtration
with sections $\Delta_s^!\langle r-s\rangle$ for
$s\geq r$, each occurring once, together with the dual
descending costandard filtration.
In particular,
$\dim e_sT_r^!=\max\{s-r+1,0\}$,
so all these tiltings are infinite-dimensional.

The lower finite category
$\operatorname{rep}\bigl((K_1^{-\infty})^!\bigr)$ and the
upper finite category
$(K_1^{+\infty})^!\text{-mod}_{\mathrm{lfd}}$
are mutually Ringel dual, with opposite weight orders,
in the sense of
\cite[Theorems~4.25 and~4.27]{brundanstroppelsemiinfinite}.
Indeed, the direct sum of all homogeneous Hom-spaces
between the normalized indecomposable tiltings on the
lower finite side, with opposite composition, is
$(K_1^{+\infty})^!$.

\bibliographystyle{alpha}
\bibliography{refs}

\bigskip

\noindent
\textsc{Shunsuke Hirota} \\
\textsc{Department of Mathematics, Kyoto University} \\
Kitashirakawa Oiwake-cho, Sakyo-ku, 606-8502, Kyoto \\
\textit{E-mail address}:
{shun299509732@gmail.com}

\end{document}